\numberwithin{equation}{section}
\theoremstyle{plain}
\newtheorem{theorem}{Theorem}[section]
\newtheorem{lemma}[theorem]{Lemma}
\newtheorem{proposition}[theorem]{Proposition}
\newtheorem{conjecture}[theorem]{Conjecture}
\theoremstyle{definition}
\newtheorem{remark}[theorem]{Remark}
\theoremstyle{remark}
\newcommand{\OO}{\mathcal O}
\newcommand{\A}{\mathbb{A}}
\newcommand{\R}{\mathbb{R}}
\newcommand{\G}{\mathbb{G}}
\newcommand{\Q}{\mathbb{Q}}
\newcommand{\Z}{\mathbb{Z}}
\newcommand{\N}{\mathbb{N}}
\newcommand{\C}{\mathbb{C}}
\renewcommand{\H}{\mathbb{H}}
\newcommand{\F}{\mathbb{F}}
\newcommand{\D}{\mathbb{D}}
\newcommand{\diag}{\operatorname{diag}}
\newcommand{\ord}{\operatorname{ord}}
\newcommand{\GSpin}{\operatorname{GSpin}}
\newcommand{\Gspin}{\operatorname{GSpin}}
\newcommand{\SL}{\operatorname{SL}}
\newcommand{\cha}{\operatorname{Char}}
\newcommand{\pmat}[4]{\begin{pmatrix}
                 #1 & #2\\
                 #3 & #4
\end{pmatrix}}
\newcommand{\smat}[4]{\left(\begin{smallmatrix}
                 #1 & #2\\
                 #3 & #4
\end{smallmatrix}\right)}
\newcommand{\kzxz}[4]{\left(\begin{smallmatrix} #1 & #2 \\ #3 & #4\end{smallmatrix}\right) }
\newcommand{\oo}{\mathbf{0}}
\newcommand{\lp}{\left (}
\newcommand{\rp}{\right )}
\newcommand{\Ac}{{\mathcal{A}}}
\newcommand{\Zb}{\mathbb{Z}}
\newcommand{\Qb}{\mathbb{Q}}
\newcommand{\SO}{{\mathrm{SO}}}
\newcommand{\GL}{{\mathrm{GL}}}
\newcommand{\af}{\mathfrak{a}}
\newcommand{\ef}{\mathfrak{e}}
\newcommand{\ff}{\mathfrak{f}}
\newcommand{\wf}{\mathfrak{w}}
\newcommand{\uf}{\mathfrak{u}}
\newcommand{\vf}{\mathfrak{v}}
\newcommand{\pf}{\mathfrak{p}}
\newcommand{\Pf}{\mathfrak{P}}
\newcommand{\Nm}{{\mathrm{Nm}}}
\newcommand{\Ab}{\mathbb{A}}
\newcommand{\Nb}{\mathbb{N}}
\newcommand{\Hb}{\mathbb{H}}
\newcommand{\Rb}{\mathbb{R}}
\newcommand{\Cb}{\mathbb{C}}
\newcommand{\ebf}{{\mathbf{e}}}
\newcommand{\kk}{E}
\newcommand{\tr}{\operatorname{Tr}}
\newcommand{\norm}{\operatorname{N}}
\newcommand{\Gal}{\operatorname{Gal}}
\newcommand{\tth}{\textsuperscript{th }}
\newcommand{\dd}{\mathrm{d}}
\newcommand{\Kd}{K_d}
\newcommand{\fff}{\operatorname{if }}
\newcommand{\Cl}{\operatorname{Cl}}
\newcommand{\Ind}{\operatorname{Ind}}
\begin{document}

\title[On a Conejcture of Yui and Zagier]{On a Conjecture of Yui and Zagier}

\author[Y.~ Li]{Yingkun Li}
\author[T.~Yang]{Tonghai Yang}

\address{Fachbereich Mathematik,
Technische Universit\"at Darmstadt, Schlossgartenstrasse 7, D--64289
Darmstadt, Germany}
\email{li@mathematik.tu-darmstadt.de}

\address{Department of Mathematics, University of Wisconsin Madison, Van Vleck Hall, Madison, WI 53706, USA}
\email{thyang@math.wisc.edu} \subjclass[2000]{11G15, 11F41, 14K22}

\thanks{The first author is supported by the LOEWE research unit USAG.
The second author is partially supported by NSF grant DMS-1762289.}

\subjclass[2000]{14G40, 11F67, 11G18}

\date{\today}

\begin{abstract}
In this paper, we prove the conjecture of Yui and Zagier concerning the factorization of the resultants of minimal polynomials of Weber class invariants. The novelty of our approach is to systematically express differences of certain Weber functions as products of Borcherds products.
\end{abstract}

\maketitle

\section{Introduction}

In  his book \cite{Weber}, Weber proved the following well-known theorem in the theory of complex multiplication.
For a fundamental discriminant $d < 0$, let $\mathcal O_d=\Z[\theta]$ be the ring of integers of an imaginary quadratic field $\Kd=\Qb(\sqrt d)$.
Then the CM value of the famous $j$-invariant $j(\tau)$ at $\tau = \theta$ is an algebraic integer generating the Hilbert class field of $\Kd$.
The number $j(\theta)$ is called \textit{singular moduli} and plays an important role in the arithmetic of CM elliptic curves \cite{GZ}.
In the same book, Weber also considered some special modular functions $h$ of higher levels and observed that some of their CM values $h(\theta)$ still generate the Hilbert class field of $\Kd$ (for some choices of $\theta$), not the larger class fields as expected for general $h$.

These amusing observations were later studied by various authors, see for example \cite{Birch},  \cite{YZ}, and \cite{Gee}. In particular,  Gee gave a systematic proof of these facts using Shimura's reciprocity law. One of them concerns with the CM values of the three classical Weber functions of level $48$, which are defined by the following quotients of $\eta$-functions
\begin{align}\label{eq:Weber}
\mathfrak f(\tau) &:=\zeta_{48}^{-1} \frac{\eta(\frac{\tau+1}2)}{\eta(\tau)}=q^{-\frac{1}{48}} \prod_{n=1}^\infty (1+q^{n-\frac{1}2}), \notag
\\
\mathfrak f_1(\tau) &:=\frac{\eta(\frac{\tau}2)}{\eta(\tau)} = q^{-\frac{1}{48}}\prod_{n=1}^\infty (1-q^{n-\frac{1}2}),
\\
\mathfrak f_2(\tau) &:= \sqrt 2 \frac{\eta(2\tau)}{\eta(\tau)}  = \sqrt 2  q^{\frac{1}{24}} \prod_{n=1}^\infty (1+q^n). \notag
\end{align}
Together, they form a 3-dimensional, vector-valued modular function for $\SL_2(\Z)$ (see \ref{eq:Fd}).
In fact, the same holds for integral powers of these modular functions (see \cite[pg.\ 50]{Mi07}).
Furthermore, $\mathfrak f_2$ is a modular function for $\Gamma_0(2)$ with character $\chi$ of order $24$:
\begin{equation} \label{eq:chi}
\mathfrak f_2 (\gamma \tau) = \chi(\gamma) \mathfrak f_2(\tau),  \quad \gamma \in  \Gamma_0(2).
\end{equation}
The kernel of $\chi$, denoted by $\Gamma_\chi\subset \Gamma_0(2)$, is a congruence subgroup containing $\Gamma(48)$ (see \eqref{eq:Gamma24}).
In \cite{YZ}, Yui and Zagier studied the CM values of these modular functions.
The starting point of their work is the following result.

\begin{proposition}[\cite{YZ} Proposition]
\label{prop:YZ}
Let $d<0$ be a discriminant satisfying
\begin{equation} \label{eqd}
d \equiv 1 \bmod 8,  \hbox{  and }  3\nmid d.
\end{equation}
Denote $\varepsilon_d := (-1)^{(d-1)/8}$.
For each ideal $\mathfrak a=[a, \frac{-b+\sqrt d}2]$ of the order $\mathcal O_d := \Zb[\tfrac{1 + \sqrt{d}}{2} ]$ with $a>0$, let $\tau_{\mathfrak a} =\frac{-b+\sqrt{d}}{2a}$
be the associated CM point and
\begin{equation}
  \label{eq:classinv}
f(\mathfrak a) =\begin{cases}
 \zeta_{48}^{b(a-c-ac^2)}  \mathfrak f(\tau_{\mathfrak a} ), &\fff 2|(a, c),
 \\
  \varepsilon_d \zeta_{48}^{b(a-c-ac^2) } \mathfrak f_1(\tau_{\mathfrak a} ), &\fff 2|a, 2\nmid c,
  \\
  \varepsilon_d \zeta_{48}^{b(a-c+ac^2) } \mathfrak f_2(\tau_{\mathfrak a} ), &\fff 2\nmid a, 2|c.
  \end{cases}
\end{equation}
Then $f(\mathfrak a)$ is an algebraic integer depending only on the class of $\mathfrak a$ in the class group $\mathrm{Cl}(d)$ of $\mathcal{O}_d$, i.e.\ it is a class invariant. Moreover, $H_d := \Kd(f(\mathfrak a)) = \Kd(j(\tau_\mathfrak{a}))$ is the ring class field of $K_d$ corresponding $\mathcal{O}_d$.
\end{proposition}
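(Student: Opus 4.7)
The plan is to apply Shimura's reciprocity law to the vector-valued modular function $(\mathfrak{f},\mathfrak{f}_1,\mathfrak{f}_2)$ of level $48$ and to verify that the adelic stabilizer of the normalized CM value $f(\mathfrak{a})$ contains $K_d^\times\widehat{\mathcal{O}}_d^\times$. Since this is exactly the open subgroup of $\widehat{K}_d^\times$ cutting out the ring class field $H_d$ via class field theory, it will follow at once that $f(\mathfrak{a})\in H_d$ and that $f$ descends to a well-defined function on $\mathrm{Cl}(d)$.

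To set things up, fix $\mathfrak{a}=[a,\tfrac{-b+\sqrt d}{2}]$ and use the ordered basis $(\tau_\mathfrak{a},1)$ of the fractional ideal $a^{-1}\mathfrak{a}$ to define an embedding $q_\tau\colon K_d\hookrightarrow M_2(\mathbb{Q})$ by left multiplication. Extending adelically yields $\widehat{q}_\tau\colon \widehat{\mathcal{O}}_d^\times\to \mathrm{GL}_2(\widehat{\mathbb{Z}})$, whose image modulo $48$ can be written down explicitly in terms of $a$, $b$, $c$. For each $u\in\widehat{\mathcal{O}}_d^\times$ with Artin symbol $\sigma_u\in\mathrm{Gal}(K_d^{\mathrm{ab}}/K_d)$, Shimura's reciprocity law gives
\[
\mathfrak{f}_*(\tau_\mathfrak{a})^{\sigma_u}=\bigl(\mathfrak{f}_*\bigm|\widehat{q}_\tau(u)^{-1}\bigr)(\tau_\mathfrak{a}),
\]
where $\mathfrak{f}_*$ denotes any of the three Weber functions and the right-hand bar action combines the standard $\mathrm{SL}_2(\mathbb{Z}/48\mathbb{Z})$-action on $(\mathfrak{f},\mathfrak{f}_1,\mathfrak{f}_2)$ with the cyclotomic Galois action on Fourier coefficients dictated by the determinant.

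The main step, and the principal obstacle, is the local verification that $\widehat{q}_\tau(u)$ acts on the appropriate $\mathfrak{f}_*(\tau_\mathfrak{a})$ exactly by the root of unity $\zeta_{48}^{-b(a-c\mp ac^2)}$, so that the correction factor in \eqref{eq:classinv} precisely cancels the multiplier and leaves $f(\mathfrak{a})$ fixed. This proceeds prime-by-prime: at $\ell\notin\{2,3\}$ the level-$48$ structure is preserved automatically, while at $\ell=2,3$ one reads off the action of $\mathrm{SL}_2(\mathbb{Z}/48\mathbb{Z})$ on the Weber triple from the transformation laws for $\eta$. The hypotheses \eqref{eqd} enter here in an essential way: $d\equiv 1\bmod 8$ forces the $2$-adic image of $\widehat{\mathcal{O}}_d^\times$ to land in the subgroup stabilizing, up to an explicit $48$th root of unity, one of the three Weber functions (which one depends on the parities of $a$ and $c$, which is exactly how the three cases in \eqref{eq:classinv} arise), while $3\nmid d$ makes the $3$-adic contribution trivial. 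I expect the bookkeeping of these multipliers and of the permutation of $\{\mathfrak{f},\mathfrak{f}_1,\mathfrak{f}_2\}$ under the regular representation to be the bulk of the technical work.

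Once Galois-invariance under $\sigma_u$ is established, algebraic integrality of $f(\mathfrak{a})$ follows from the classical integrality of $\mathfrak{f}^{24},\mathfrak{f}_1^{24},\mathfrak{f}_2^{24}$ over $\mathbb{Z}[j]$ (via $j=(\mathfrak{f}^{24}-16)^3/\mathfrak{f}^{24}$ and its analogues) together with the integrality of $j(\tau_\mathfrak{a})$, since $\zeta_{48}$ is a unit. The equality $K_d(f(\mathfrak{a}))=K_d(j(\tau_\mathfrak{a}))=H_d$ then follows from two inclusions: $K_d(j(\tau_\mathfrak{a}))\subset K_d(f(\mathfrak{a}))$ is immediate from the rational expression of $j$ as a function of $\mathfrak{f}_*^{24}$, and the reverse inclusion is precisely the Shimura-reciprocity statement just proved, which places $f(\mathfrak{a})$ in $H_d$.
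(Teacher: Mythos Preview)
The paper does not supply its own proof of this proposition: it is stated as a result from \cite{YZ}, and the introduction notes that Gee \cite{Gee} gave a systematic proof ``using Shimura's reciprocity law,'' with Remark~\ref{rmk:Gee} pointing specifically to \cite[Prop.~22]{Gee} for the Galois equivariance. Your proposal is precisely this Shimura-reciprocity approach and is correct in outline; the local bookkeeping at $2$ and $3$ that you flag as the bulk of the work is indeed what Gee carries out.
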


\begin{remark}
  \label{rmk:BQF}
The class invariant in \cite{YZ} was defined using binary quadratic forms. It is a standard procedure to go between these and ideals in quadratic fields (see e.g.\ \cite{Cox}).
\end{remark}

\begin{remark}
  \label{rmk:Gee}
The sign $\varepsilon_d$ in the definition of $f(\af)$ ensures that the class invariants behave nicely under the action of the Galois group. In particular when $d< 0$ is fundamental,
\begin{equation}
  \label{eq:Galois}
\sigma_{\af_2}(  f(\af_1)) = f(\af_1 \af_2^{-1})
\end{equation}
for any $\mathcal{O}_d$-ideals $\af_1, \af_2$, where $\sigma_\af \in \Gal(H_d/K_d)$ is associated to the ideal class $[\af] \in \Cl(d)$ by Artin's map.
This was conjectured in \cite{YZ} and proved in \cite[Prop.\ 22]{Gee}.
\end{remark}
This class invariant is much better than the singular moduli in the sense that its minimal polynomial (class polynomial) has much smaller coefficients.
This gives a generator of the Hilbert class field with small height, which is crucial in the speed of elliptic curve primality test \cite{AM93}.
For example, according to \cite{YZ},  the minimal polynomial of $j(\frac{1+\sqrt{-55}}2)$ is
$$
x^4 + 3^3 5^3 29 \cdot 134219 x^3 - 3^7 5^3 23\cdot 101\cdot 32987x^2+ 3^9 5^7 11^2 83 \cdot 101 \cdot 110641 x- 3^{12} 5^6 11^3 29^3 41^3,
$$
while the minimal polynomial of $f(\mathcal O_{-55})$ is simply
$$
x^4 +x^3 -2x -1.
$$
In \cite{YZ}, Yui and Zagier made conjectures about the prime factorizations of the discriminants and resultants of such polynomials. The goal of this paper is to prove the conjecture about the factorizations of the resultants, which also clears the path to prove the conjecture about the discriminant (see Remark \ref{rmk:smallCM}).

For two co-prime, fundamental discriminants $d_1$ and $d_2$, Gross and Zagier proved in \cite{GZ} a beautiful factorization formula for the resultant of the class polynomials of $j(\frac{d_1+\sqrt{d_1}}2)$ and $j(\frac{d_2+\sqrt{d_2}}2)$, which is the norm of the difference $j(\frac{d_1+\sqrt{d_1}}2) -j(\frac{d_2+\sqrt{d_2}}2)$.
When $\left(\frac{d_1d_2}p\right) \ne -1$, set
$$
\epsilon(p) =\begin{cases}
   \left(\frac{d_1}p\right) &\fff p \nmid d_1,
   \\
    \left(\frac{d_2}p\right) &\fff p \nmid d_2.
    \end{cases}
$$
Define in general $\epsilon(n)=\prod_{p|n} \epsilon(p)^{\ord_p(n)}$ where $\ord_p(n)$ is the power of $p$ dividing $n$. For a positive integer $m$, if $\epsilon(m)=-1$,
define
\begin{equation}
  \label{eq:Ff}
  \mathfrak{F}(m) = \prod_{\substack{n n' = m\\ n, n' > 0}} n^{\epsilon(n')} \in \Nb,
\end{equation}
which is always a prime power. If $\epsilon(m) =1$ or is not defined, define $\mathfrak{F}(m)=1$.
The result of Gross and Zagier can be stated as follows.

\begin{theorem}[Theorem 1.3 in \cite{GZ}]
  \label{thm:GZ}
Let $d_1, d_2 < 0$ be co-prime, fundamental discriminants, and $w_j = |\mathcal{O}_{d_j}^\times|$. In the notations above, we have
\begin{equation}
  \label{eq:GZ}
J(d_1, d_2)^2 := \prod_{[\mathfrak a_j] \in \Cl(d_j),~ j = 1, 2}  |j(\tau_{\mathfrak a_1}) - j(\tau_{\mathfrak a_2})|^{8/(w_1 w_2)} =
\prod_{\substack{m \in \Nb,~ a\in \Zb\\ a^2 + 4m = d_1d_2}} \mathfrak{F}(m).
\end{equation}
\end{theorem}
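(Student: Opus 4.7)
My plan is to verify the identity prime-by-prime: since both sides of~\eqref{eq:GZ} are positive integers, I would compute the $p$-adic valuation $v_p$ of each side and match them for every rational prime $p$. The central tool is the arithmetic of CM elliptic curves in mixed characteristic, specifically Deuring's description of their reductions together with Serre--Tate deformation theory at a supersingular point.

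First I would identify which primes contribute. If $\mathfrak p \mid p$ is a prime of $\bar\Qb$ with $v_{\mathfrak p}(j(\tau_{\af_1})-j(\tau_{\af_2}))>0$, then the CM elliptic curves $E_{\af_1},E_{\af_2}$ acquire isomorphic reductions modulo $\mathfrak p$. Since $\gcd(d_1,d_2)=1$, their CM orders $\OO_{d_j}$ live in distinct imaginary quadratic fields, so if the common reduction were ordinary Deuring's lifting theorem would force a single CM field, a contradiction. Hence both reductions are supersingular and $p$ is non-split in both $\Kd_j$. A direct local check confirms these are exactly the primes appearing on the right, namely those $p$ for which some $m$ with $a^2+4m=d_1d_2$ and $\epsilon(m)=-1$ has $p\mid\mathfrak F(m)$.

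Next, fixing such a $p$ together with a supersingular $\widetilde E/\overline{\F_p}$ and $\mathrm{End}(\widetilde E)=R$ a maximal order in $B=B_{p,\infty}$, Deuring's correspondence identifies CM lifts of $\widetilde E$ by $\OO_{d_j}$ with optimal embeddings $\iota_j\colon \OO_{d_j}\hookrightarrow R$. For a pair $(\iota_1,\iota_2)$ the valuation $v_{\mathfrak p}(j(\tau_{\af_1})-j(\tau_{\af_2}))$ equals the length of the largest Artinian quotient of the universal deformation ring over which both $\iota_1$ and $\iota_2$ simultaneously lift; by Gross's analysis via quasi-canonical liftings this length equals $\tfrac12$ times a weighted count of $\lambda\in R$ interlacing the two embeddings with reduced norm $np^k$ for specific $n,k$ tied to the relation $d_1d_2=a^2+4m$. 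Summing over all $([\af_1],[\af_2])\in\Cl(d_1)\times\Cl(d_2)$ and all $\mathfrak p\mid p$ then converts the left-hand side of~\eqref{eq:GZ} into a global count of $\lambda\in B$ interlacing fixed embeddings $\Kd_1,\Kd_2\hookrightarrow B$ with prescribed trace $a$ and norm $m$ satisfying $a^2+4m=d_1d_2$. Decomposing this count as a product of local orbital integrals and invoking Eichler's theory of optimal embeddings expresses each local factor at a prime $q$ in terms of $\ord_q(n)\epsilon(n')$ for the factorization $nn'=m$; the factor $8/(w_1w_2)$ absorbs the unit symmetries in $\OO_{d_j}^\times$, and the product of the local factors yields exactly $\mathfrak F(m)$.

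The principal obstacle will be the local valuation formula in the intermediate step: precisely identifying $v_{\mathfrak p}(j(\tau_{\af_1})-j(\tau_{\af_2}))$ with the Gross-style multiplicity demands a delicate study of quasi-canonical lifts and their deformation lengths, and extra care is required in the wildly ramified regime $p\in\{2,3\}$ where crystalline input replaces clean formal-group arguments. A secondary but non-trivial task is the combinatorial bookkeeping in the global step: converting the quaternionic orbital count into the clean arithmetic function $\mathfrak F(m)$ relies on careful local-to-global arguments and on the genus theory of binary quadratic forms of discriminant $d_1d_2>0$, with the correct normalisations of Eichler mass formulas.
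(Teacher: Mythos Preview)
The paper does not contain a proof of this theorem: it is stated as background and attributed to Gross and Zagier \cite{GZ}, serving only as motivation for the Yui--Zagier conjecture that follows. So there is no ``paper's own proof'' to compare against.

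That said, your proposal is a faithful outline of the \emph{algebraic} proof in the original Gross--Zagier paper: reduce to supersingular primes via Deuring's lifting theorem, compute $v_{\mathfrak p}(j(\tau_{\af_1})-j(\tau_{\af_2}))$ as a deformation-theoretic intersection length at a supersingular point using Gross's theory of quasi-canonical lifts, globalize to a count of quaternionic elements, and unwind the local factors to recover $\mathfrak F(m)$. The obstacles you flag (wild primes $p=2,3$; careful bookkeeping in the mass formula) are exactly the ones Gross and Zagier had to address.

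It is worth noting, for contrast, that the present paper's own results (Theorem~\ref{thm:intro}) are proved by an entirely different mechanism: realise the relevant difference of modular functions as a product of Borcherds products, view the pair of CM points as a big CM cycle on an orthogonal Shimura variety, and apply the Bruinier--Kudla--Yang big CM value formula to reduce to local Whittaker computations. This analytic route, specialised to the $j$-invariant case, recovers Theorem~\ref{thm:GZ} as well (and is closer in spirit to the second, analytic proof in \cite{GZ}), so your algebraic outline and the paper's methodology represent the two classical strands of argument for results of this shape.
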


Inspired by this beautiful formula, Yui and Zagier gave a conjectural formula of the resultant of the minimal polynomials of the Weber class invariants defined above and provided numerical evidences in \cite{YZ}.
This conjecture was originally given using two tables with totally 48 entries (see pg.\ 1653 of \cite{YZ}), but can be simplified and formulated in the following elegant way (see e.g.\ ($14_?$) in \cite{YZ} for $d_1 \equiv d_2 \equiv 1 \bmod{24}$).

\begin{conjecture}[$(14_?)$ in \cite{YZ}]
  \label{conj:YZ}
Let $d_1, d_2$ be co-prime, fundamental discriminants satisfying \eqref{eqd} and $s \mid 24$.
Define the constant
\begin{equation}
  \label{eq:kappa3}
 \kappa_3(s) :=
 \begin{cases}
  \tfrac{1}{2},& \text{ if } (\tfrac{d_1}{3})  = (\tfrac{d_2}{3}) = -1 \text{ and } 3 \mid s, \\
1, & \text{ otherwise,}
 \end{cases}
\end{equation}
which only depends on $d_1, d_2$ and $s$.
 Then
\begin{equation}
  \label{eq:YZconj}
f_s(d_1, d_2) :=   \prod_{[\mathfrak a_j] \in \Cl(d_j),~ j = 1, 2}  |f(\mathfrak a_1)^{24/s} - f(\mathfrak a_2)^{24/s}| =  \prod_{\substack{m, a \in \Nb,~ r \mid s\\ a^2 + 16mr^2 = d_1d_2\\ m \equiv 19( d_1 + d_2 - 1) \bmod{s/r}}} \mathfrak{F}(m)^{\kappa_3(s)}.
\end{equation}
\end{conjecture}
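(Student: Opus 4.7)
The plan is to adapt the philosophy of Gross--Zagier to the higher-level Weber setting by systematically realizing the differences $f(\mathfrak a_1)^{24/s}-f(\mathfrak a_2)^{24/s}$ as CM values of Borcherds products on a Shimura variety of orthogonal type $(2,2)$, and then applying the Schofer--Bruinier--Yang CM value formula. This converts $\log f_s(d_1,d_2)$ into a finite sum of local contributions whose combinatorics should match the right-hand side of \eqref{eq:YZconj} term by term.

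First, I would geometrize $f_s(d_1,d_2)$: using Gee's Galois compatibility \eqref{eq:Galois}, the pair $(\tau_{\mathfrak a_1},\tau_{\mathfrak a_2})$ runs through a single Galois orbit of CM points on $Y_\chi\times Y_\chi$, where $Y_\chi=\Gamma_\chi\backslash\mathbb H$. One can view $Y_\chi\times Y_\chi$ as a finite cover of an open part of the Shimura variety $X$ attached to $V=M_2(\Qb)$ equipped with the quadratic form $-\det$, and the relevant CM cycle arises from the embedding $K_{d_1}\times K_{d_2}\hookrightarrow V$. The heart of the argument is to construct, for each $s\mid 24$, a weakly holomorphic vector-valued input $F_s$ of weight zero for the Weil representation of the discriminant group of a level-$48$ refinement of $V$ such that the multiplicative Borcherds lift $\Psi(F_s)$ equals the scalar modular function $\Phi_s(\tau_1,\tau_2):=f(\tau_1)^{24/s}-f(\tau_2)^{24/s}$, up to an explicit root-of-unity factor matching the pre-factors in \eqref{eq:classinv} and the character $\chi$ of \eqref{eq:chi}. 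I would try to write $\Phi_s$ as a product of several Borcherds lifts using the classical identities $\mathfrak f\mathfrak f_1\mathfrak f_2=\sqrt 2$ and $\mathfrak f^8=\mathfrak f_1^8+\mathfrak f_2^8$, together with the relations expressing $j$ as a rational function of $\mathfrak f^{24}$, $\mathfrak f_1^{24}$, $\mathfrak f_2^{24}$, in order to reduce the principal part of $F_s$ to the Borcherds input underlying the classical product expansion of $j(\tau_1)-j(\tau_2)$. Producing $F_s$ with the correct principal part, integral Fourier coefficients, and Weil representation transformation law is the first substantial technical obstacle.

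With $\Phi_s$ realized as a product of Borcherds lifts, the CM value formula expresses $\log|\Phi_s(\mathrm{CM})|$ as a finite sum over integers $m>0$ and $a\in\Zb$ with $a^2+16mr^2=d_1d_2$ -- the norm equation for rational elements of $V$ lying on the CM cycle -- of local factors $\ell_p(m)\log p$. Because the cycle arises from the embedding of $K_{d_1}\times K_{d_2}$ into the split quaternion algebra $M_2(\Qb)$, these local factors are precisely the $p$-adic valuations that produce $\mathfrak F(m)$: at primes with $\epsilon(m)=-1$ they give $\ord_p(\mathfrak F(m))\log p$, and they vanish otherwise, recovering the Gross--Zagier formula when $s=1$. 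The congruence $m\equiv 19(d_1+d_2-1)\pmod{s/r}$ in \eqref{eq:YZconj} will be automatic from the level structure of $F_s$, since only those Fourier coefficients $c^+(-m,\mu)$ whose $\mu$-class forces this congruence contribute to the divisor of $\Psi(F_s)$.

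The main remaining difficulty -- and where I expect the argument to be most delicate -- is the factor $\kappa_3(s)$. When $3\mid s$ and $(\tfrac{d_1}{3})=(\tfrac{d_2}{3})=-1$, the CM cycle meets the $3$-part of the Heegner divisor with multiplicity two rather than one, because the relevant orbit of the local orthogonal group $\SO(V\otimes\Qb_3)$ on the unit lattice splits into two; tracking this halving carefully through the principal part of $F_s$ and through the Whittaker local factors in the CM value formula should produce exactly the exponent $\kappa_3(s)$ and close the proof.
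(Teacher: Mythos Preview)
Your overall architecture---Borcherds products plus a big CM value formula plus local Whittaker computations---is correct and is exactly what the paper does. However, several of the specific mechanisms you propose are not how things actually work, and one of them is a genuine obstruction.

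\textbf{The main gap.} You propose to construct a single input $F_s$ for the Weil representation of a fixed level-$48$ refinement of $L$ whose Borcherds lift is $\Phi_s$. The paper reports that this fails: there is \emph{no} vector-valued modular function on the lattice $L_s$ (the lattice $L_1$ with quadratic form scaled by $s$) whose Borcherds product is $(\mathfrak f_2(z_1)^{24/s}-\mathfrak f_2(z_2)^{24/s})^s$. The actual construction is different and more rigid: for each divisor $\mathrm d\mid s$ one works on the lattice $L_{\mathrm d}$ (quadratic form scaled by $\mathrm d$, not by $s$), finds a specific vector $\mathfrak u_{\mathrm d}$ in the finite Weil representation satisfying $\omega_{\mathrm d}(g)\mathfrak u_{\mathrm d}=\chi(g)^{-24/\mathrm d}\mathfrak u_{\mathrm d}$ for $g\in\Gamma_0(2)$, and builds an input $\tilde F_{\mathrm d}$ whose three components are literally the $(-24/\mathrm d)$-th powers of $\mathfrak f/\sqrt2,\mathfrak f_1/\sqrt2,\mathfrak f_2/\sqrt2$. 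The Borcherds products $\Psi_{\mathrm d}$ then satisfy
\[
(\mathfrak f_2(z_1)^{24/s}-(\varepsilon\,\mathfrak f_2(z_2))^{24/s})^s=\prod_{\mathrm d\mid s}\Psi_{\mathrm d}(z_1,z_2)^{\varepsilon^{24/\mathrm d}},
\]
which is proved by matching divisors via the elementary identity $(1-X)^s=\prod_{\mathrm d\mid s}\prod_{b\mid \mathrm d}(1-X^{s/b})^{b\mu(\mathrm d/b)}$. Your fallback (``write $\Phi_s$ as a product of several Borcherds lifts using the classical Weber identities'') is pointing in the right direction, but the classical relations among $\mathfrak f,\mathfrak f_1,\mathfrak f_2$ are not the engine; the polynomial identity above, together with lattices of \emph{varying} scale, is. Note also the sign $\varepsilon=\varepsilon_{d_1}\varepsilon_{d_2}$, which you omit; it is forced by the Galois identification of CM points and reappears, unexpectedly, in the local Whittaker values at $2$.

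\textbf{Two further misidentifications.} First, the index $r\mid s$ and the congruence $m\equiv 19(d_1+d_2-1)\pmod{s/r}$ in \eqref{eq:YZconj} do \emph{not} come from the support of the principal part of the input (the paper explicitly warns that $r$ is unrelated to the index $\mathrm d\mid s$ in the Borcherds factorization); they emerge only after summing the local Whittaker values $\delta_2(\mathrm d_2,t)$ over $\mathrm d_2\mid s_2$ and simplifying. Second, your explanation of $\kappa_3(s)$ as a multiplicity-two intersection is not the mechanism. When $(\tfrac{d_1}{3})=(\tfrac{d_2}{3})=-1$ the prime $3$ splits in $F$ into two primes that are inert in $E/F$, and one of them can lie in $\mathrm{Diff}(W,t)$; the factor $\tfrac12$ comes from the \emph{derivative} of the local Whittaker function at that prime (specifically from the $\tfrac12(2v_3(\mathrm N(t))+3)\log 3$ term), not from any geometric doubling on the Heegner divisor side. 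A separate, nontrivial point you did not anticipate is that the preimage $\iota^{-1}(K_{\mathrm d})$ of the level group in the CM torus is \emph{independent of} $\mathrm d\mid 24$; without this the big CM cycles for the various $\Psi_{\mathrm d}$ would not line up and the sum over $\mathrm d\mid s$ could not be assembled into $\log f_s(d_1,d_2)$.
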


\begin{remark}
  \label{rmk:divisibility}
Because of the relation $j(\tau) = (\mathfrak{f}_2^{24}(\tau) - 16)^3/\mathfrak{f}^{24}_2(\tau)$, we know that $f_s(d_1, d_2) \mid J(d_1, d_2)$ for any co-prime, fundamental discriminants $d_1, d_2$ satisfying \eqref{eqd}.
Since the invariants are algebraic integers, it is also clear that $f_{s'}(d_1, d_2) \mid f_{s}(d_1, d_2)$ for any $s \mid s' \mid 24$.
The conjecture above also reflects such divisibilities since $\mathfrak{F}(m/r^2) \mid \mathfrak{F}(m)$ for all $m, r \in \Nb$ (see e.g.\ the explicit formula of $\mathfrak{F}(m)$ on pg.\ 1651 of \cite{YZ}).
\end{remark}

When $s = 1$, it was suggested in \cite{YZ} that one can try to prove this conjecture by adapting the analytic approach in \cite{GZ} with $\SL_2(\Zb)$ replaced by $\Gamma_0(2)$.
This was later carried out in \cite{Ros03}.
More recently, Yang and Yin gave another analytic proof of the conjecture for $s = 1$ in \cite{YY19}, where the new ingredients are Borcherds' regularized theta lift in \cite{Borcherds98} and the big CM formula in \cite{BKY}.
Although the spirits of the approaches are the same, the one in \cite{YY19} is conceptually easier to understand and opens the door to attack the conjecture for $s > 1$.
In this paper, we complete the proof of the conjecture for all $s \mid 24$.

\begin{theorem}
  \label{thm:intro}
Conjecture \ref{conj:YZ} is true for every $s \mid 24$.
\end{theorem}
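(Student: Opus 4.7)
The plan is to extend the analytic strategy of \cite{YY19} for the case $s=1$, which combines Borcherds' regularized theta lift with the big CM value formula of \cite{BKY}, to all divisors $s \mid 24$. Using the identity
\[
f(\af_1)^{24/s} - f(\af_2)^{24/s} = \prod_{\zeta^{24/s} = 1} \bigl(f(\af_1) - \zeta\, f(\af_2)\bigr),
\]
one reduces the computation of $\log f_s(d_1, d_2)$ to that of $\sum_{[\af_1],[\af_2]} \log|f(\af_1) - \zeta f(\af_2)|$ for each $(24/s)$-th root of unity $\zeta$. The aim is to realize each such sum as the logarithmic CM value of an explicit Borcherds product, and then to evaluate that CM value via the big CM formula.

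The key new step, as flagged in the abstract, is the first one: for each $\zeta$ and each pair of indices $(i_1,i_2) \in \{0,1,2\}^2$ (matching the three branches of \eqref{eq:classinv} corresponding to the parities of the entries of $\af_j$), I would realize $(\tau_1,\tau_2) \mapsto \mathfrak{f}_{i_1}(\tau_1) - \zeta\, \mathfrak{f}_{i_2}(\tau_2)$ as a Borcherds product on an orthogonal Shimura variety of signature $(2,2)$ covering a product of modular curves for $\Gamma_\chi$. Using the classical Weber identities $\mathfrak{f}^8 = \mathfrak{f}_1^8 + \mathfrak{f}_2^8$ and $\mathfrak{f}\mathfrak{f}_1\mathfrak{f}_2 = \sqrt 2$, each such difference expands into an infinite product in $q_1 = e^{2\pi i\tau_1}$ and $q_2 = e^{2\pi i\tau_2}$ whose exponents are to be identified with the Fourier coefficients of a vector-valued weakly holomorphic form $F_\zeta$ of weight $0$ for the Weil representation of a suitable even lattice of signature $(2,2)$, whose discriminant group is engineered to encode the character $\chi$. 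The Borcherds lift $\Psi(F_\zeta)$ should then agree with this difference up to an explicit $\eta$-quotient and a constant absorbing the twist $\zeta_{48}^{b(a - c \pm ac^2)}$ appearing in \eqref{eq:classinv}.

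With this representation in place, the big CM formula of \cite{BKY} evaluates $\log|\Psi(F_\zeta)|$ on the CM $0$-cycle $Z(d_1) \times Z(d_2)$ as a sum of Fourier coefficients of the central derivative of the incoherent weight-$1$ Eisenstein series attached to the quadratic space of discriminant $d_1 d_2$. A prime-by-prime unfolding of the local Whittaker integrals should then identify these coefficients with $\log \mathfrak{F}(m)$, with support constrained by $a^2 + 16 m r^2 = d_1 d_2$ and by a residue condition on $m$ read off from the principal part of $F_\zeta$; summing over $\zeta^{24/s} = 1$ collapses this residue condition to the single congruence $m \equiv 19(d_1 + d_2 - 1)\bmod s/r$ on the right-hand side of \eqref{eq:YZconj}. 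The main obstacle---and what I expect to be the technical heart of the proof---is the factor $\kappa_3(s) = \tfrac 12$ when $3 \mid s$ and $(\tfrac{d_1}{3}) = (\tfrac{d_2}{3}) = -1$. This factor cannot arise from a naive combinatorial count and must come from a local phenomenon at $p = 3$: in the inert--inert case the local quadratic space at $3$ is anisotropic, and the $(24/s)$-th roots of unity $\zeta$ of order divisible by $3$ should force a cancellation in the $p = 3$ Whittaker integral that exactly halves the contribution. Verifying this cancellation, and tracking it consistently through the assembly of the forms $F_\zeta$ into a single vector-valued input compatible with $\chi$ modulo $3$, is the delicate bottleneck that separates the $s > 1$ analysis from the $s = 1$ case treated in \cite{YY19}.
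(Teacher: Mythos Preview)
Your overall strategy---realize the relevant modular function as a Borcherds lift, then apply the big CM formula and evaluate local Whittaker integrals---is the right three-step framework, and matches the paper. But your first step, the one you correctly identify as the heart of the matter, is built on a false premise.

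You propose to factor $f(\af_1)^{24/s} - f(\af_2)^{24/s}$ over the $(24/s)$-th roots of unity and then realize each linear factor $\mathfrak f_{i_1}(\tau_1) - \zeta\, \mathfrak f_{i_2}(\tau_2)$ as a single Borcherds product on a signature-$(2,2)$ lattice whose discriminant group encodes $\chi$. The paper tried exactly the natural version of this (working with the lattice $L_s$ obtained by scaling $\det$ by $s$) and records the outcome explicitly: there is \emph{no} vector-valued modular function in $M^!(\omega_s)$ whose Borcherds product equals $(\mathfrak f_2(z_1)^{24/s} - \mathfrak f_2(z_2)^{24/s})^s$. If the $s$-th power of the full difference is not a Borcherds product on the expected lattice, the individual linear factors with arbitrary roots of unity $\zeta$ certainly will not be either. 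Your ``read off the exponents and match them to Fourier coefficients of some $F_\zeta$'' step would fail: the required $F_\zeta$ does not exist.

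The paper's actual resolution is a genuinely different decomposition. Instead of splitting by roots of unity, it uses the polynomial identity $(1-X)^s = \prod_{\dd \mid s} \prod_{b \mid \dd}(1 - X^{s/b})^{b\mu(\dd/b)}$ to write
\[
\bigl(\mathfrak f_2(z_1)^{24/s} - (\varepsilon\mathfrak f_2(z_2))^{24/s}\bigr)^s = \prod_{\dd \mid s} \Psi_\dd(z_1,z_2)^{\varepsilon^{24/\dd}},
\]
where each $\Psi_\dd$ is a Borcherds product on a \emph{different} lattice $L_\dd$ (same $\Zb$-module, quadratic form scaled by $\dd$). The input $\tilde F_\dd$ lives in a specific $3$-dimensional subrepresentation of $\omega_\dd$, and its components are precisely the $(-24/\dd)$-th powers of the Weber functions. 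This ``product of Borcherds products from different quadratic spaces'' is the paper's main new idea; your root-of-unity factorization does not lead there. Note also that the index $r \mid s$ appearing in the Yui--Zagier formula is \emph{not} the index $\dd \mid s$ in this product; it emerges only after the local Whittaker calculations (Theorem~\ref{thm:main1}) and the assembly in equations \eqref{eq:count2}--\eqref{eq:count3}, where the factor $\kappa_3(s)$ falls out of the case $(\tfrac{d_1}{3}) = (\tfrac{d_2}{3}) = -1$ via the explicit value of $\delta_3'(3,t)$, not from a cancellation among roots of unity.

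A secondary issue: you intend to handle all nine combinations $(i_1,i_2)$ of Weber functions separately. The paper avoids this by using Shimura reciprocity (Gee's result, equation~\eqref{eq:Galois}) to write every class invariant as a Galois conjugate of $\varepsilon_{d_j}\zeta_{48}^{-1}\mathfrak f_2(\tau_j)$, so only $\mathfrak f_2$ enters, at the price of the sign $\varepsilon = \varepsilon_{d_1}\varepsilon_{d_2}$ (Proposition~\ref{prop:CMFormula}). Matching the Galois orbit to the toric big CM cycle then requires the nontrivial Lemma~\ref{lemma:compact}, which shows that the preimage of the level structure in the torus is the same for all $\dd \mid 24$.
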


For $s = 1$, the proof of Theorem \ref{thm:intro} in \cite{YY19} consists of three steps:
\begin{enumerate}
\item Relate $\mathfrak f_2(z_1)^{24}-  \mathfrak f_2(z_2))^{24}$ to a Borcherds product on the Shimura variety associated to the rational quadratic space $(M_2(\Q),  \det)$.   \footnote{The Shimura variety is just the product of two modular curves in this case.}
\item View a pair of CM points $(\tau_1, \tau_2)$ as a big CM point on this Shimura variety in the sense of \cite{BKY}.  Apply the big CM value formula \cite[Theorem 5.2]{BKY} and express the CM value in terms of Fourier coefficients of incoherent Eisenstein series.
\item Compute the Fourier coefficients in Step (2) and obtain the formula. This is a local calculation.
\end{enumerate}
In the first step for $s = 1$, one can find a vector-valued modular function $\tilde{F}_1$ and identify $\mathfrak{f}_2(z_1)^{24} - \mathfrak{f}_2(z_2)^{24}$ with the Borcherds product $\Psi(z_1, z_2, \tilde{F}_1)$ associated to $\tilde{F}_1$.
Note $\ff_2(z)^{24} =2^{12} \frac{\Delta(2z)}{\Delta(z)}$ is a Hauptmodul of $\Gamma_0(2)$, and the Borcherds product $\Psi(z_1, z_2, \tilde{F}_1)$ is well-known in the literatures on VOA and moonshine (see e.g.\ \cite{Borcherds92}, \cite{Sch08}).
In the second step, one suitably identifies the Galois orbit of CM points with the toric orbit of big CM points, and apply Theorem 5.2 in \cite{BKY}.
This reduces the proof to the third step, where the local calculations have been completed in many special cases (see \cite{Yang05}, \cite[Section 4.6]{HYbook} and \cite{KY10}) and the most general result can be found in Appendix A of \cite{YYY}.

To execute this strategy for $s > 1$, we first need to relate $\mathfrak{f}_2(z_1)^{24/s} - \mathfrak{f}_2(z_2)^{24/s}$ to Borcherds product.
Since the function $\mathfrak{f}_2(z)^{24/s}$ is invariant with respect to $\Gamma_{\chi, s} := \langle \Gamma_\chi, T^s\rangle \supset \Gamma(2s)$, one would hope to find the analog of $\tilde{F}_1$ in $M^!(\omega_s)$, with $\omega_s$ the Weil representation of $\SL_2(\Zb)$ on the finite quadratic module associated to the lattice $L_s$ (see \eqref{eq:Ld}), which is the same as the lattice used in \cite{YY19} to produce $\Psi(z_1, z_2, \tilde{F}_1)$, but with the quadratic form scaled by $s$.
We have computationally decomposed the representation $\omega_s$ and analyzed the space of vector-valued modular functions. To our surprise, there is NO modular function whose Borcherds product equals to $(\mathfrak{f}_2(z_1)^{24/s} - \mathfrak{f}_2(z_2)^{24/s})^s$! Our new idea then is to express $(\mathfrak{f}_2(z_1)^{24/s} - \mathfrak{f}_2(z_2)^{24/s})^s$ as \textit{a product of Borcherds products}, which works out beautifully.

   \begin{theorem}[Theorems \ref{theo:BorcherdsLifting} and  \ref{thm:Blift}] \label{theo:factorization}
For every $\dd \mid 24$, there is a vector-valued modular function $\tilde F_\dd \in M^!(\omega_\dd)$ with associated Borcherds product $\Psi_\dd(z_1, z_2) := \Psi(z_1, z_2, \tilde F_\dd)$ such that
\begin{equation}
  \label{eq:product2}
  (\mathfrak f_2(z_1)^{24/s} - (\varepsilon \mathfrak f_2(z_2))^{24/s})^s = \prod_{\dd \mid s} \Psi_\dd(z_1, z_2)^{\varepsilon^{\frac{24}\dd}},
\end{equation}
for every $s \mid 24$ and any $\varepsilon =\pm 1$.
   \end{theorem}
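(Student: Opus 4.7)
The plan is to prove the identity in two steps: first construct the vector-valued modular forms $\tilde F_\dd \in M^!(\omega_\dd)$ individually, and then verify that their Borcherds lifts assemble into \eqref{eq:product2} by matching divisors on the relevant Shimura variety (a product of two modular curves, as in \cite{YY19}) and normalizing at a cusp. Concentrating first on $\varepsilon = 1$, M\"obius inversion shows that \eqref{eq:product2} is formally equivalent to
\begin{equation*}
\Psi_\dd(z_1, z_2) \;=\; \prod_{\ef \mid \dd} \bigl(\ff_2(z_1)^{24/\ef} - \ff_2(z_2)^{24/\ef}\bigr)^{\ef\, \mu(\dd/\ef)} \quad \text{for every } \dd \mid 24,
\end{equation*}
which I take as the definition of the target function $\Psi_\dd$; the task then becomes to realize each of these as a Borcherds product arising from an $\tilde F_\dd \in M^!(\omega_\dd)$.

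To construct $\tilde F_\dd$, I would exploit that $L_\dd$ is $L_1$ with its quadratic form rescaled by $\dd$, so the discriminant groups, and the Weil representations $\omega_1$ and $\omega_\dd$, are related by natural trace/restriction maps. Starting from $\tilde F_1$ produced in \cite{YY19}, I would average and modify by holomorphic terms to obtain a candidate in $M^!(\omega_\dd)$ whose principal part is exactly the one encoding the divisor of the right-hand side of the M\"obius formula above. The existence of such an $\tilde F_\dd$ with the required principal part is controlled by Borcherds' obstruction theorem and amounts to checking orthogonality of the principal part against the (small) space of holomorphic weight-$2$ cusp forms for the dual representation $\omega_\dd^\vee$; I would verify this directly case by case using the explicit decomposition of $\omega_\dd$ for $\dd \mid 24$.

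Once the $\tilde F_\dd$ are in hand, the divisor of $\prod_{\dd \mid s}\Psi_\dd$ on the Shimura variety coincides, by design, with that of the LHS of \eqref{eq:product2}, so the ratio is a holomorphic modular function with neither zeros nor poles and hence constant, pinned to $1$ by comparing leading $q$-expansions via \eqref{eq:Weber}. The $\varepsilon = -1$ case is non-trivial only when $24/s$ is odd, i.e.\ $s \in \{8, 24\}$, and in that range I would derive it from the $\varepsilon = 1$ identity by taking ratios: only the factors $\Psi_\dd$ with $24/\dd$ odd pick up a sign, and the M\"obius formula collapses the resulting ratio into $\bigl((\ff_2(z_1)^{24/s} - \ff_2(z_2)^{24/s})/(\ff_2(z_1)^{24/s} + \ff_2(z_2)^{24/s})\bigr)^s$ with the correct exponent pattern $\varepsilon^{24/\dd}$. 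The main obstacle, as the authors themselves emphasize, is the counterintuitive fact that no single vector-valued form lifts $(\ff_2(z_1)^{24/s} - \ff_2(z_2)^{24/s})^s$ to a single Borcherds product; it is exactly this failure that forces the \emph{product of Borcherds products} structure above, making the construction of the individual $\tilde F_\dd$ and the verification of the obstruction vanishing the most delicate step of the argument.
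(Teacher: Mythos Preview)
Your overall architecture---M\"obius-inverting \eqref{eq:product2} to isolate each $\Psi_\dd$, matching divisors on the product of modular curves, and pinning the constant via the $q$-expansion at the cusp---is exactly the skeleton of the paper's proof of Theorem~\ref{thm:Blift} (the M\"obius identity is Lemma~\ref{lemma:pol}, the divisor count is carried out using Lemma~\ref{lemma:YYdivisor} and the arithmetic identity $\sum_{\dd\mid s}a_\dd(k)=s\cdot\delta_{k,0}$). Your handling of $\varepsilon=-1$ by taking ratios is likewise equivalent to what the paper does.

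The substantive divergence, and the place where your proposal has a gap, is the construction of $\tilde F_\dd$. Your suggestion to obtain it from $\tilde F_1$ by ``trace/restriction maps'' between $\omega_1$ and $\omega_\dd$ is not well-posed: $L_\dd$ is $L_1$ with the quadratic form \emph{rescaled} by $\dd$, not a sublattice, so $\Ac_1$ (of order $4$) and $\Ac_\dd$ (of order $4\dd^4$) are not related by the standard induction/trace maps for Weil representations, and no averaging of $\tilde F_1$ will produce a principal part of the shape $q^{-1/\dd}(\cdots)$. Your fallback to Borcherds' obstruction criterion is a legitimate strategy in principle, but to even write down the candidate principal part you must first identify the element of $\Cb[\Ac_\dd]$ whose associated special divisor equals $\sum_{j}a_\dd(j)\,X_\dd^\Delta(j)$; this is precisely the vector $\uf_\dd$ of \eqref{eq:ufd}, and establishing its invariance property \eqref{eq:ufdinv} is the content of all of Section~\ref{sect3}. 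The paper then bypasses obstruction theory entirely: Proposition~\ref{prop:repembed} shows that $\uf_\dd$ generates a $3$-dimensional subrepresentation of $\omega_\dd$ isomorphic to $\varrho_\dd$, into which the vector $\bigl(\ff_2^{-24/\dd},\ff_1^{-24/\dd},\ff^{-24/\dd}\bigr)$ of Weber-function powers maps directly, giving $\tilde F_\dd$ explicitly (equations \eqref{eq:Fd}, \eqref{eq:Fdt}). Beyond elegance, this explicit form is essential downstream: the local Whittaker computations of Section~\ref{sec:local} require the Schwartz function $\phi_\dd$ attached to $\uf_\dd$ in closed form, which a pure existence argument via obstructions would not provide.
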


   \begin{remark}
The index $r \mid s$ in the summation on the right hand side of \eqref{eq:YZconj} is NOT directly related to the index $\dd \mid s$ in the product above! Instead, it comes out of local calculation in Section \ref{sec:local}.
   \end{remark}

   \begin{remark}
     Each Borcherds product $\Psi_\dd(z_1, z_2)$ comes from a different quadratic space depending on $\dd$, and is a meromorphic function on the Shimura variety $X_\dd^2$, which admits a natural covering map from $X_s^2$ when $\dd \mid s$ (see Section \ref{sect:Borcherds}). One can then pull back $\Psi_\dd$ to a function on $X_s^2$.
Notice that this decomposes the divisor of the LHS, which is a Heegner divisor on $X_s^2$, into a sum of pullbacks of Heegner divisors on $X_\dd^2$ with $\dd \mid s$.
When $s > 1$, the product $\prod_{\dd|s} \Psi_\dd(z_1, z_2)$ is itself \textit{not} a single Borcherds product on $X_s^2$.

\end{remark}
\begin{remark}
Theorem \ref{theo:factorization} naturally leads one to speculate a generalization of the converse theorem in \cite{Br14}, i.e.\ every Heegner divisor on an orthogonal Shimura variety associated to a lattice of signature $(n, 2)$ with Witt rank greater than or equal to 2 should be the divisor of a product of Borcherds products.
   \end{remark}

To arrive at this idea, we took $s = 2$ and started from the simple observation that
\begin{equation}
  \label{eq:observation}
(\ff_2(z_1)^{12} - \ff_2(z_2)^{12})^2 = (\ff_2(z_1)^{24} - \ff_2(z_2)^{24}) \cdot \frac{\ff_2(z_1)^{12} - \ff_2(z_2)^{12}}{\ff_2(z_1)^{12} + \ff_2(z_2)^{12}}.\end{equation}
We already know that the first factor on the RHS is a Borcherds product. If we can realize the second factor as a Borcherds product, then the LHS would be a product of Borcherds products (with different quadratic forms).
To do that, we can read off the divisor of the second factor, and deduce the principal part of the input to Borcherds' lift.
In this case, it is of the form $q^{-1/2} \uf_2$ for a suitable vector $\uf_2$ in a finite dimensional vector space $\Cb[\Ac_2]$, where $\SL_2(\Zb)$ acts via the Weil representation $\omega_2$ (see Section \ref{subsec:Lattice} for details). Then we find the irreducible representation in $\omega_2$ containing $\uf_2$, which is 3-dimensional, and hope to find the suitable vector-valued modular function $\tilde{F}_2$ with this principal part. Miraculously, this function exists and its three components are the $(-24/2)$\tth power of the three Weber functions $\frac{\ff}{\sqrt{2}}, \frac{\ff_1}{\sqrt{2}}, \frac{\ff_2}{\sqrt{2}}$.

The observation \eqref{eq:observation} generalizes to any $s \mid 24$ by substituting $X = \lp \varepsilon \frac{\ff_2(z_2)}{\ff_2(z_1)} \rp^{24/s}$ into the following simple identity in $\Q(X)$
\begin{equation}
  \label{eq:ratid}
  (1 - X)^s = \prod_{\dd \mid s} \prod_{b \mid \dd} (1 - X^{s/b})^{b \cdot \mu(\dd/b)},
\end{equation}
where $\mu$ is the M\"obius function, and multiplying by $\ff_2(z_1)^{24}$ on both sides.
Note that the identity in \eqref{eq:ratid} holds for any $s \in \N$ (see Lemma \ref{lemma:pol}).
Then the miracle continues to happen, and we find a family of vectors $\{\uf_\dd: \dd \mid 24\}$ (see \eqref{eq:ufd}) and vector-valued modular functions $\tilde{F}_\dd = q^{-1/\dd} \uf_\dd + O(q^{1/(2\dd)})$ producing the Borcherds lifts $\Psi_\dd$ (see \eqref{eq:Fd} and Remark \ref{rmk:Fd}).
For $\dd > 1$, the vector $\uf_\dd$ satisfies nice invariance properties (see Prop.\ \ref{prop:udinv}) and is of independent interest, whereas the components of $\tilde{F}_\dd$ are simply the $(-24/\dd)$\tth power of the three Weber functions $\frac{\ff}{\sqrt{2}}, \frac{\ff_1}{\sqrt{2}}, \frac{\ff_2}{\sqrt{2}}$!

\begin{remark} The $\varepsilon =\pm 1$ in Theorem \ref{theo:factorization} is there for a good reason.  To prove the Yui-Zagier conjecture, we need to choose $\varepsilon = \varepsilon_{d_1}\varepsilon_{d_2} = (-1)^{\frac{d_1 +d_2 -2}8}$ (see Proposition \ref{prop:CMFormula} and its proof).
It is also amusing to see that the same $\varepsilon$ appears when we calculate the Fourier coefficients of derivatives of certain Einsenstein series (see Theorem \ref{thm:main1}).

\end{remark}

To complete the proof, we can now apply the second step to each Borcherds product, obtain a big CM value formula, and add them together.
Note that the identification of the Galois orbit of $(\tau_{\af_1}, \tau_{\af_2})$ used in defining $f_s(d_1, d_2)$ with the big CM cycle in \cite{BKY} depends on the input $\tilde{F}_\dd$ in step one. Therefore, it is not a priori clear that this will work out. We prove this in Proposition \ref{prop:CMFormula}, which crucially depends on Lemma \ref{lemma:compact}.
This unexpected result was first observed with some computer calculations, and has been reduced to a computation with finite groups in $\GL_2(\Z/3\Z)$ and $\GL_2(\Z/16\Z)$.
Finally, we apply the local calculations in \cite{YYY} to finish off Step (3).

\begin{remark}
  \label{rmk:smallCM}
With Theorem \ref{theo:factorization}, one can now replace the big CM value formula in \cite{BKY} with the small CM value formula in \cite{Schofer} to prove the conjectural factorization of the discriminant of the minimal polynomials of the Weber invariants in \cite{YZ}.
In fact, Yui and Zagier wrote the discriminant as the norm of an algebraic integer in the Hilbert class field and gave a conjectural prime factorization of the ideal generate by it.
This refined conjecture can be studied by analyzing the arithmetic intersection calculations in \cite{AGHMP17}, which are generalizations of the algebraic approach in \cite{GZ}. Theorem \ref{theo:factorization} tells us how to decompose the divisor of modular function $(\ff_2(z_1)^{24/s} - (\varepsilon \ff_2(z_2))^{24/s})^s$ into sums of Heegner divisors, which will be vital in attacking this refined conjecture in \cite{YZ}. We plan to carry these out as a sequel to this work.
\end{remark}

   This paper is organized as follows. After setting up notation and defining basic terms in Section \ref{sect2}, we study in Section \ref{sect3} the action of certain subgroup $H_\dd' \subset \SO(L_\dd)/\Gamma_{L_\dd}$ on the finite quadratic module $\Ac_\dd := L^\vee_\dd/L_\dd$ and use it to decompose the Weil representation $\omega_\dd$ of $\SL_2(\Z)$ on $\C[\Ac_\dd]^{H_\dd'}$. The goal and main result is to construct certain element $\uf_\dd \in \C[\mathcal A_\dd]^{H_\dd'}$ satisfying (\ref{eq:ufdinv}).
This vector generates a 3-dimensional, $H'_\dd$-invariant subrepresentation of $\omega_\dd$, and will be crucial in finding the input $\tilde{F}_\dd$ that produces the Borcherds product $\Psi_\dd$.
In Section \ref{sect:Borcherds}, we view product of two modular curves as a Shimura variety of orthogonal type $(2, 2)$ associated to $L_\dd$, construct the Borcherds product $\Psi_\dd$, and prove Theorem \ref{theo:factorization}.  In Section \ref{sect:BigCM}, we view the pair $(\tau_{\mathfrak a_1}, \tau_{\mathfrak a_2})$ as a big CM point on the product of two modular curves and study its Galois orbit. The upshot is Prop.\ \ref{prop:CMFormula}, which relates the left hand side of Conjecture \ref{conj:YZ} to the big CM value of Borcherds products. By the second step of strategy, Conjecture \ref{conj:YZ} is reduced to local calculation of certain Eisenstein series and its derivative, which we carry out in Section \ref{subsec:local} using the results in the appendix of \cite{YYY}.
Finally in the appendix, we explicitly write down the cosets in the finite quadratic module used in constructing the Borcherds products, and include a numerical example for $d_1 = -31$ and $d_2 = -127$.
\vspace{.2in}

\textit{Acknowledgement}: We thank Hongbo Yin for a careful reading and helpful comments of an earlier version of this paper.

\section{Preliminaries} \label{sect2}
\subsection{Weil Representation.}
Let $(L, Q)$ be an even integral lattice of signature $(2, 2)$ and $V := L \otimes \Qb$ the rational quadratic space.
Denote $L'$ the dual lattice and $\Ac_L := L'/L$ the finite quadratic module.
The group $\SL_2(\Zb)$ acts on $U_L:= \Cb[\Ac_L]$ via the Weil representation $\omega_L$ given by
\begin{equation}
  \label{eq:omegaL}
  \omega_L(T)\ef_h = \ebf(-Q(h)) \ef_h,~ \omega_L(S) \ef_h = \frac{1}{\sqrt{|\Ac_L|}} \sum_{\mu \in \Ac_L} \ebf((\mu, h)) \ef_\mu,
\end{equation}
where $\{\ef_\mu: \mu \in \Ac_L\}$ is the standard basis of $U_L$ and
\begin{equation}
  \label{eq:TS}
    T:= \pmat{1}{1}{0}{1},\quad  S:= \pmat{0}{-1}{1}{0}.
\end{equation}
Note that this differs from the convention of Borcherds by complex conjugation.

Let $S(L)=\oplus_{\mu \in L'/L} \phi_\mu \subset S(V \otimes \A_f)$ with $\hat{L} = L \otimes \hat{\Zb}$ and
$$
\phi_\mu=\cha(\mu+\hat L).
$$
Under the isomorphism $U_L \to S(L)$ that maps $\ef_\mu$ to $\phi_\mu$, the representation $\omega_L$ becomes the restriction of the Weil representation $\omega = \omega_{V, \psi}$ (with the usual idelic character $\psi$ of $\Qb$) from $\SL_2(\Ab)$ to (the diagonally embedded) $\SL_2(\Zb)$.
We will sometimes switch the representation spaces between $U_L$ and $S(L)$.

\subsection{Weber Functions.}
For any finite dimensional, $\C$-representation $\rho: \Gamma \to V$ of a finite index subgroup $\Gamma \subset \SL_2(\Z)$, denote $M^!(\rho, \Gamma)$ the space of weakly holomorphic, vector-valued modular function with respect to $\rho$. We drop $\rho$, resp.\ $\Gamma$, from the notation if $\rho$ is trivial, resp. $\Gamma=\SL_2(\Z)$. For example, the three Weber functions defined by (\ref{eq:Weber}) form a vector-valued modular function
$\begin{pmatrix}
  \ff_2 \\ \ff_1 \\ \ff
\end{pmatrix} \in M^!(\overline{\varrho_{24}})$. Here, for a positive integer $\dd$ and $j \in (\Zb/2\dd\Zb)^\times$, the representation $\varrho_{\dd, j}:\SL_2(\Zb) \to \GL_3(\Cb)$ is defined by
\begin{equation}
  \label{eq:varrhod}
  \varrho_{\dd, j}(T) :=
  \begin{pmatrix}
    \zeta_{\dd}^{-j} & 0 & 0 \\
0 & 0 & \zeta_{2\dd}^{j}\\
0 & \zeta_{2\dd}^{j}  & 0
  \end{pmatrix},~
  \varrho_{\dd, j}(S) :=
  \begin{pmatrix}
0 & 1 & 0 \\
1 & 0 & 0 \\
0 & 0 & 1
  \end{pmatrix}.
\end{equation}
We simply write $\varrho_\dd$ for $\varrho_{\dd, 1}$. Finally, $\bar{\rho}(g)= \overline{\rho(g)}$. Later, the modular function
\begin{equation}
  \label{eq:Fd}
  F_\dd(\tau) :=
  \sqrt{2}^{24/\dd}
\begin{pmatrix}
  \ff_2^{-24/\dd}(\tau) \\ \ff_1^{-24/\dd}(\tau) \\ \ff^{-24/\dd}(\tau)
\end{pmatrix}
\in M^!(\varrho_\dd),~ \dd \mid 24
\end{equation}
will play an important role for us as the representation $\varrho_\dd$ defined above is a subrepresentation of certain Weil representation that we will consider.
\begin{remark}
  \label{rmk:FC}
For convenience later, we will denote
\begin{equation}
  \label{eq:adn}
  \sqrt{2}^{24/\dd} \ff_2^{-24/\dd}(\tau) =  \lp \frac{\eta(\tau)}{\eta(2\tau)}\rp^{24/\dd}  = \sum_{l \ge -1,~ l \equiv -1 \bmod{\dd}} c_\dd(l) q^{l/\dd}\in q^{-1/\dd}\Zb\llbracket q \rrbracket.
\end{equation}
Clearly $c_\dd(-1) = 1$ for every $\dd \mid 24$.
We will also denote $c_{-1}(l)$ the $l$\tth Fourier coefficient of $2^{12}\ff_1^{-24}(2\tau) = \ff_2^{24}(\tau) = 2^{12} \frac{\Delta(2\tau)}{\Delta(\tau)}$.
\end{remark}

Let $\chi: \Gamma_0(2) \to \Cb^\times$ be the character defined in \eqref{eq:chi}.
On the generators $T, S^2$ and $TB$ of $\Gamma_0(2)$, where
\begin{equation}
  \label{eq:B}
B:=ST^2S^{-1} = \pmat{1}{0}{-2}{1}, 
\end{equation}
the character $\chi$ is explicitly given by
\begin{equation}
  \label{eq:chi0}
  \chi(T) = \zeta_{24},  ~ \chi(S^2)=1, \, \hbox{ and }~  \chi(TB) = 1.
\end{equation}

The kernel of $\chi$ is a normal subgroup of $\Gamma_0(2)$ defined by
\begin{equation}
  \label{eq:Gamma24}
  \Gamma_\chi := \langle \Gamma_0(2)^{\mathrm{der}}, T^{24}, S^2,    TB \rangle \subset \Gamma_0(2),
\end{equation}
where $\Gamma_0(2)^{\mathrm{der}}$ is the derived subgroup of $\Gamma_0(2)$.  We remark that $\Gamma_\chi$ is the group $\Phi_0^0(24)$ in \cite{YY16}.
Furthermore, it contains the congruence subgroup $\Gamma_0(48) \cap \Gamma(24) $ and $\Gamma_0(2) / \Gamma_\chi \cong \Z/24$.
More generally, for any divisor $\dd \mid 24$, denote the kernel of $\chi^{24/\dd}$ by
\begin{equation}
  \label{eq:Gammachid}
  \Gamma_{\chi, \dd} := \langle \Gamma_{\chi}, T^{\dd} \rangle  \subset \Gamma_0(2).
\end{equation}
It has index $\dd$ in $\Gamma_0(2)$ and contains $\Gamma_{\chi} = \Gamma_{\chi, 24}$, as well as the congruence subgroup
\begin{equation}
  \label{eq:Gammad}
  \Gamma_\dd := \Gamma_1(2\dd) \cap \Gamma(\dd).
\end{equation}
In particular, $\Gamma_0(2) = \Gamma_{\chi, 1}$.
More generally for $\dd \mid \dd' \mid 24$, we have $\Gamma_{\dd} \supset \Gamma_{\dd'}$.
For future convenience, we also write $\dd_p$ for the $p$-part of $\dd$.
Then clearly $\dd = \dd_2 \dd_3$.

\section{Decomposition of Weil Representations }  \label{sect3}
\subsection{Lattice}
\label{subsec:Lattice}
For a divisor $\dd \mid 24$, consider the quadratic lattice
\begin{equation}
  \label{eq:Ld}
L_\dd = \left\{
\lambda = \pmat{\lambda_{00}}{\lambda_{01}}{2\lambda_{10}}{\lambda_{11}}:  \lambda_{ij} \in \Zb
\right\},~ Q_d(\lambda) := \dd \det(\lambda).
\end{equation}
The dual lattice is given by
\begin{equation}
  \label{eq:Ld'}
L_\dd' = \left\{ \lambda = \frac{1}{\dd} \pmat{\lambda_{00}}{\lambda_{01}/2}{\lambda_{10}}{\lambda_{11}}:  \lambda_{ij} \in \Zb   \right\}.
\end{equation}
The finite quadratic module $L_\dd' /L_\dd$ is then isomorphic to
\begin{equation}
  \label{eq:Acd}
  \Ac_\dd:= \left\{ h = [h_{0},h_{1},h_{2},h_{3}]:
        h_{0}, h_3 \in \Zb/\dd \Zb, h_1, h_2 \in \Zb/(2\dd \Zb) \right\},
    \end{equation}
    where the isomorphism is fixed throughout and given by
    \begin{equation}
      \label{eq:Acdiso}
      \begin{split}
    L_\dd' /L_\dd       &\cong         \Ac_\dd\\
      \frac{1}{\dd} \pmat{\lambda_{00}}{\lambda_{01}/2}{\lambda_{10}}{\lambda_{11}} + L_\dd
      &\mapsto \left[ \lambda_{00}, \lambda_{01}, \lambda_{10}, \lambda_{11}\right].
  \end{split}
    \end{equation}
Via this isomorphism, the quadratic form $Q_\dd$ on $\Ac_\dd$ becomes
\begin{equation}
  \label{eq:Qd}
  Q_\dd(h) := \frac{2h_0 h_3 - h_1h_2}{2\dd} \in \frac{1}{2\dd}\Zb/\Zb
\end{equation}
for $h = [h_0, h_1, h_2, h_3] \in \Ac_\dd$.
We can now map $L_\dd$ into $L_\dd'/L_\dd \cong \Ac_\dd$ via
\begin{equation}
  \label{eq:kappad}
  \begin{split}
    \kappa_\dd : L_\dd &\to L_\dd'/L_\dd \\
    \lambda &\mapsto \frac{1}{\dd} \lambda + L_\dd
  \end{split}
\end{equation}
which is compatible with the left and right action of $\Gamma_0(2)$, i.e.
\begin{equation}
  \label{eq:kappacompatible}
g_1 \cdot  \kappa_\dd(\lambda \cdot g_2) = \kappa_\dd(g_1 \cdot \lambda \cdot g_2)= \kappa_\dd(g_1 \cdot \lambda) \cdot g_2
\end{equation}
for all $g_1, g_2 \in \Gamma_0(2)$ and $\lambda \in L_\dd$.
By viewing $\Gamma_{\chi, 1} = \Gamma_0(2)$ as a subset of $L_\dd$, we can send it to a subset in $\Ac_\dd$.
If we denote
\begin{equation}
  \label{eq:Ac0}
  \Ac_{\dd}^0 := \{[h_0, h_1, h_2, h_3] \in \Ac_\dd: h_2 = 0 \in \Zb/(2\dd\Zb)\},
\end{equation}
 it will be helpful to know the parts of $\Gamma_{0}(2)$ that land in $\Ac_\dd^0$ under $\kappa_\dd$ when we simplify the expression of Borcherds products.
For this we need the following lemma, whose proof will follow from combining the corresponding local results in Lemma \ref{lemma:j3} and \ref{lemma:j2}.
\begin{lemma}
  \label{lemma:j}
  For any $j \in \Zb/\dd\Zb$, we have (viewing $\Gamma_0(2) \subset L_\dd$)
  $$
\kappa_\dd (T^j \Gamma_{\chi, \dd} ) \cap \Ac^0_\dd = \{\dd_3^{-1}[r, r(2j + (r^2 - 1)), 0, r]: r \in (\Zb/\dd\Zb)^\times\}.
  $$
\end{lemma}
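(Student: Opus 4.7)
The strategy is to decompose everything via the coprime factorization $\dd = \dd_2 \dd_3$ (where $\dd_3 \in \{1,3\}$), and reduce the statement to Lemma \ref{lemma:j2} and Lemma \ref{lemma:j3} applied separately to the $2$-part and $3$-part.

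First, I would establish three compatibilities. The finite quadratic module splits canonically via the Chinese Remainder Theorem as $\Ac_\dd \cong \Ac_{\dd_2} \oplus \Ac_{\dd_3}$, and this splitting restricts to $\Ac_\dd^0 \cong \Ac_{\dd_2}^0 \oplus \Ac_{\dd_3}^0$ since the condition $h_2 = 0$ is read off coordinate-wise. The map $\kappa_\dd \colon L_\dd \to \Ac_\dd$ commutes with this decomposition: for $\gamma \in \Gamma_0(2)$, the image $\kappa_\dd(\gamma)$ corresponds to the pair $(\kappa_{\dd_2}(\gamma), \kappa_{\dd_3}(\gamma))$. Finally, since $\Gamma_0(2)/\Gamma_\chi \cong \Zb/24\Zb$ is cyclic generated by the image of $T$ and the image of $\Gamma_{\chi, \dd_p}$ in this quotient is $\dd_p\Zb/24\Zb$, one gets $\Gamma_{\chi, \dd} = \Gamma_{\chi, \dd_2} \cap \Gamma_{\chi, \dd_3}$, and hence
$$T^j \Gamma_{\chi, \dd} = (T^{j_2}\Gamma_{\chi, \dd_2}) \cap (T^{j_3}\Gamma_{\chi, \dd_3})$$
where $j_p \equiv j \bmod \dd_p$, using that $T^{\dd_p} \in \Gamma_{\chi, \dd_p}$ so only the residue class of $j$ modulo $\dd_p$ matters.

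Combining these, the intersection in the lemma splits as
$$\kappa_\dd(T^j \Gamma_{\chi, \dd}) \cap \Ac_\dd^0 \;\cong\; \bigl(\kappa_{\dd_2}(T^{j_2}\Gamma_{\chi, \dd_2}) \cap \Ac_{\dd_2}^0\bigr) \oplus \bigl(\kappa_{\dd_3}(T^{j_3}\Gamma_{\chi, \dd_3}) \cap \Ac_{\dd_3}^0\bigr).$$
Applying Lemma \ref{lemma:j2} to the left factor and Lemma \ref{lemma:j3} to the right factor, each is parametrized by $r_p \in (\Zb/\dd_p\Zb)^\times$ through an explicit formula of shape $[r_p, r_p(2 j_p + (r_p^2-1)), 0, r_p]$. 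A pair $(r_2, r_3)$ then assembles, via the standard isomorphism $(\Zb/\dd\Zb)^\times \cong (\Zb/\dd_2\Zb)^\times \times (\Zb/\dd_3\Zb)^\times$, into a single $r \in (\Zb/\dd\Zb)^\times$, and the congruences $2j_p + (r_p^2-1) \equiv 2j + (r^2-1) \bmod{\dd_p}$ are automatic.

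The main bookkeeping obstacle is reconciling the two local descriptions into the single global vector $\dd_3^{-1}[r, r(2j+(r^2-1)),0,r]$: the scalar $\dd_3^{-1}$ encodes the relative normalization of the $3$-adic coordinates when they are glued to the $2$-adic ones inside $\Ac_\dd$, and should come out naturally from the statement of Lemma \ref{lemma:j3} after translating to the global indexing. Aside from carefully unwinding this CRT dictionary, the proof requires no new ideas beyond those already encoded in the two local lemmas.
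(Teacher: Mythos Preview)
Your proposal is correct and follows exactly the paper's approach: the paper's proof consists of the single sentence ``combining the corresponding local results in Lemma \ref{lemma:j3} and Lemma \ref{lemma:j2},'' and what you have written is precisely the CRT bookkeeping that makes this combination rigorous. One notational caution: you write $\Ac_{\dd_p}$ and $\kappa_{\dd_p}$, but the paper's local objects are $\Ac_{\dd,p} := \Ac_\dd \otimes \Zb_p$ and $\kappa_{\dd,p}$ (see \eqref{eq:Aciso}, \eqref{eq:Ac3}, \eqref{eq:kappa2}), which are not literally the modules and maps attached to $L_{\dd_p}$ --- in particular $\kappa_{\dd,2}$ carries the extra factor $\dd_3^{-1}$ you correctly flag at the end, and $\Ac_{\dd,3} \cong (\Zb/3\Zb)^4$ rather than $\Ac_3$.
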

\begin{remark}
  Note that $r^3 - r \bmod {2\dd}$ is well-defined for $r \in (\Zb/\dd\Zb)^\times$ when $\dd \mid 24$. Furthermore
  $$
  r^3 - r \equiv
  \begin{cases}
 \dd \bmod{2\dd},& \text{if } 8 \mid \dd \text{ and } r \equiv \pm 3 \bmod{8}, \\
  0 \bmod{2\dd},& \text{otherwise.}
  \end{cases}
  $$
\end{remark}

We want to analyze the action of the finite orthogonal group $H_\dd:= \SO(L_\dd)/\Gamma_{L_\dd}$ on $\Ac_\dd$, which intertwines the Weil action of $(G =\SL_2(\Z), \omega_\dd = \omega_{L_\dd})$ on
\begin{equation}
  \label{eq:Ud}
U_\dd:= \C[\Ac_\dd],
\end{equation}
 and will be useful for us to decompose the Weil representation $\omega_\dd$. In particular, we want to consider a subrepresentation of $\omega_\dd$ on the subspace $U_\dd^{H'_\dd} \subset U_\dd$ fixed by a subgroup $H'_\dd \subset H_\dd$.

 Let $\GL_2(\Q) \times \GL_2(\Q)$ acts on  $V_\dd =L_\dd \otimes \Q =M_2(\Q)$ via
 $$
 (g_1, g_2)\cdot X = g_1  X g_2^{-1}.
 $$
 This action gives an identification of $\Gspin(V)$ with $H=\{ (g_1, g_2):\,  \det g_1 =\det g_2\}$, and a commutative diagram of exact sequences:
 $$
\xymatrix{
 1 \ar[r]  &\{\pm 1\} \ar[r] \ar[d] &\SL_2 \times \SL_2 \ar[r] \ar[d] &\SO(V) \ar[r] \ar[d] &1
\cr
 1 \ar[r]  &\G_m \ar[r] &H \ar[r] &\SO(V) \ar[r] &1.
 \cr
}
 $$

For the particular lattice $L_\dd$, we have
\begin{equation}
  \label{eq:disck}
  \SO(L_\dd) = \overline{\Gamma_0(2) \times \Gamma_0(2)}= \Gamma_0(2) \times \Gamma_0(2)/\{ \pm (I_2, I_2)\},~
\end{equation}
As  $\{\pm (I_2, I_2)\}$ does not matter in this paper, we will simply identify $\SO(L_\dd)$ with $\Gamma_0(2) \times \Gamma_0(2)$ and  drop the over line.  Under this identification, we have $ \Gamma_{L_\dd} = {\Gamma_{\dd} \times \Gamma_{\dd}}$, where $\Gamma_\dd$ is defined in \eqref{eq:Gammad}.
In particular, we are interested in the action of the subgroup of $\SO(L_\dd)$ generated by the images of ${(T, T)}$  and ${\Gamma_{\chi, \dd} \times \Gamma_{\chi, \dd}}$.
We let $H_\dd'$ be its image in   $H_\dd =N_\dd \times N_\dd$, where
$N_\dd := \Gamma_0(2)/\Gamma_{\dd}$. Let $ N'_\dd := \Gamma_{\chi, \dd}/\Gamma_{\dd},
$
then we have
\begin{equation}
  \label{eq:contain}
{N_\dd' \times N_\dd'}
\subset H'_\dd \subset H_\dd =
{N_\dd \times N_\dd}.
\end{equation}
Since $\Gamma_0(2)/\Gamma_{\chi, \dd} \cong \Zb/\dd\Zb$, the quotient group $H_\dd/H'_\dd$ is isomorphic to $\Z/\dd\Z$.
For prime $p$ and an abelian group $K $, we denote
$
K_p := K \otimes_\Zb \Zb_p.
$
By carrying this through product and quotient, we can also make sense of $K_p$ for $K \in \{N_{\dd}, N_\dd', H_\dd, H_\dd'\}$.
Since $\dd$ is only divisible by $2$ and $3$ in our case, the Chinese remainder theorem implies
\begin{align*}
H_\dd &\cong H_{\dd, 2} \times H_{\dd, 3},~ H'_\dd \cong H'_{\dd, 2} \times H'_{\dd, 3},
\\
 {N'_{\dd, p} \times N'_{\dd, p}}&\subset  H'_{\dd, p}\subset H_{\dd, p} = {N_{\dd, p} \times N_{\dd, p}}   .
\end{align*}
For the same reason, we have the decomposition
\begin{equation}
  \label{eq:Aciso}
\Ac_\dd \cong \Ac_{\dd, 2} \times \Ac_{\dd, 3},~ \Ac_{\dd, p} := \Ac_\dd \otimes_\Z \Z_p.
\end{equation}
Using this isomorphism, we can write $\omega_{\dd} \cong \omega_{\dd, 2} \otimes \omega_{\dd, 3}$ and $U_\dd \cong U_{\dd, 2} \otimes U_{\dd,3}$, where $\omega_{\dd, p}$ is the Weil representation of $\SL_2(\Z_p)$ acting on $U_{\dd, p}$ associated to $\Ac_{\dd, p}$.

Now, we introduce the vector $\uf_\dd \in U_\dd$.
\begin{equation}
  \label{eq:ufd}
  \begin{split}
  \uf_\dd &:= \uf_{\dd, 2} \otimes \uf_{\dd, 3} =  \sum_{j \in \Zb/\dd\Zb} a_\dd(j) \lp \sum_{h \in \kappa_{\dd}(T^j \Gamma_{\chi, \dd})} \ef_h \rp,\\
a_\dd(j) &:= \lp \sum_{s \in (\Zb/\dd\Zb)^\times} \zeta_{\dd}^{sj} \rp = \mu\lp \frac{\dd}{(\dd, j)} \rp \frac{\varphi(\dd)}{\varphi(\dd/(\dd, j))} \in \Zb,
  \end{split}
\end{equation}
where $\uf_{\dd, p} = \uf_{\dd, p}(1, \dots, 1) \in U_{\dd, p}' \subset U_{\dd, p}$ is the vector defined in \eqref{eq:ud3c} and \eqref{eq:ud2c}, $\mu$ and $\varphi$ are the M\"obius and Euler $\varphi$-function respectively.
Note that $a_\dd(j)$ is defined for any $\dd \in \Nb$ and $j \in \Zb/\dd \Zb$.

\begin{remark} A natural question is where the element $\uf_{\dd, p}$ comes from and what it is good for?
  In the next two subsections, we will give some ideas where they come from.
  For now, we are satisfied to give its nice properties as below. See Prop.\ \ref{prop:repembed} below.
\end{remark}
\begin{lemma}
  \label{lemma:ufinv}
  For any $\dd, r \mid 24$, the vector $\uf_\dd  \in \Zb[\Ac_\dd]$ is invariant with respect to $\Gamma_{\chi, r} \times \Gamma_{\chi, r}$ if and only if $\dd \mid r$.
\end{lemma}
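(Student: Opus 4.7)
The plan is to convert invariance of $\uf_\dd$ under the orthogonal action of $\Gamma_{\chi, r} \times \Gamma_{\chi, r}$ into a periodicity condition on the integers $a_\dd(j)$, exploiting that $\Gamma_{\chi,\dd}$ is the kernel of a character on $\Gamma_0(2)$.

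Writing $\chi(g) = \zeta_{24}^{k(g)}$ for the surjective character $k: \Gamma_0(2) \to \Zb/24\Zb$, one has $\Gamma_{\chi,\dd} = \ker(k \bmod \dd)$, which is therefore normal in $\Gamma_0(2)$ with cyclic quotient of order $\dd$ generated by $T$, and $k(\Gamma_{\chi, r}) = r\Zb/24\Zb$. A one-line computation using normality yields
\begin{equation*}
  g_1 T^j \Gamma_{\chi,\dd} g_2^{-1} \;=\; T^{\,j+\delta}\, \Gamma_{\chi,\dd}, \qquad \delta := k(g_1) - k(g_2) \bmod \dd,
\end{equation*}
for any $(g_1,g_2) \in \Gamma_0(2) \times \Gamma_0(2)$. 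Applying $\kappa_\dd$ and invoking \eqref{eq:kappacompatible}, the pair $(g_1,g_2)$ permutes the subsets $\kappa_\dd(T^j\Gamma_{\chi,\dd}) \subset \Ac_\dd$ via the shift $j \mapsto j + \delta$.

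The key geometric input is that these $\dd$ subsets are pairwise disjoint. Indeed, setting the parameter $r = 1$ in Lemma \ref{lemma:j} shows that $\dd_3^{-1}[1, 2j, 0, 1]$ lies in $\kappa_\dd(T^j\Gamma_{\chi,\dd}) \cap \Ac_\dd^0$, and these elements of $\Ac_\dd$ are distinct as $j$ varies over $\Zb/\dd\Zb$ since the $h_1$-coordinate $2j \bmod 2\dd$ separates them. Consequently $(g_1,g_2)$ transforms
\begin{equation*}
  \uf_\dd \;\longmapsto\; \sum_{j \in \Zb/\dd\Zb} a_\dd(j - \delta) \sum_{h \in \kappa_\dd(T^j \Gamma_{\chi, \dd})} \ef_h,
\end{equation*}
so invariance amounts to $a_\dd(j) = a_\dd(j-\delta)$ for every $j \in \Zb/\dd\Zb$. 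As $(g_1,g_2)$ ranges over $\Gamma_{\chi, r} \times \Gamma_{\chi, r}$, the achievable $\delta$'s exhaust the image of $r\Zb$ in $\Zb/\dd\Zb$, namely $\gcd(r,\dd)\Zb/\dd\Zb$. Hence $\uf_\dd$ is $\Gamma_{\chi, r} \times \Gamma_{\chi, r}$-invariant if and only if $a_\dd$ is constant on cosets of this subgroup.

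If $\dd \mid r$ the subgroup is trivial, so the condition holds vacuously. Conversely, if $\dd \nmid r$, set $m := \gcd(r,\dd) < \dd$ and compare
\begin{equation*}
  a_\dd(0) = \varphi(\dd), \qquad a_\dd(m) = \mu(\dd/m)\,\varphi(\dd)/\varphi(\dd/m).
\end{equation*}
Since $\dd/m > 1$, either $\dd/m = 2$ and $a_\dd(m) = -\varphi(\dd)$, or $\varphi(\dd/m) \ge 2$ forces $|a_\dd(m)| \le \varphi(\dd)/2$; in either case $a_\dd(m) \ne a_\dd(0)$ and invariance fails. The main obstacle is the disjointness of the $\dd$ subsets $\kappa_\dd(T^j\Gamma_{\chi,\dd})$, which is exactly what Lemma \ref{lemma:j} furnishes; everything else reduces to bookkeeping with the character $\chi$ and the Ramanujan-sum evaluation of $a_\dd(j)$.
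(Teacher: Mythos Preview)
Your proof is correct and follows essentially the same route as the paper's: both compute $(g_1,g_2)\cdot\uf_\dd$ by using normality of $\Gamma_{\chi,\dd}$ in $\Gamma_0(2)$ to see that the action shifts the index $j$, and both distinguish $a_\dd(0)=\varphi(\dd)$ from $a_\dd(m)$ for $m=\gcd(r,\dd)<\dd$ via the Ramanujan-sum formula. Your write-up is in fact more careful than the paper's on one point: the paper silently assumes that the vectors $\sum_{h\in\kappa_\dd(T^j\Gamma_{\chi,\dd})}\ef_h$ are linearly independent (otherwise equality of coefficient sequences does not follow from equality of the sums), whereas you invoke Lemma~\ref{lemma:j} to produce, for each $j$, an element of $\Ac_\dd^0$ lying in $\kappa_\dd(T^j\Gamma_{\chi,\dd})$ and in no other $\kappa_\dd(T^{j'}\Gamma_{\chi,\dd})$. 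Note that what you actually establish is that the intersections with $\Ac_\dd^0$ are pairwise disjoint (Lemma~\ref{lemma:j} describes each such intersection completely, and matching the $h_0$ and $h_1$ coordinates forces $j'=j$); this already gives the needed linear independence, though it is slightly weaker than the full disjointness you claim.
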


\begin{proof}
If $\dd \mid r$, then $\Gamma_{\chi, r} \subset \Gamma_{\chi, \dd}$ and we just need to prove the case when $r = \dd$.
Let ${(g_1, g_2)} \in \SO(L_\dd)$ with $g_j \in \Gamma_{\chi, \dd}$. Then
  \begin{align*}
{    (g_1, g_2)} \cdot \uf_\dd &= \sum_{j \in \Zb/\dd\Zb} a_\dd(j) \lp \sum_{h \in \kappa_{\dd}(g_1 T^j \Gamma_{\chi, \dd} g_2^{-1})} \ef_h \rp = \uf_\dd,
  \end{align*}
  where we have used the fact that $\Gamma_{\chi, \dd}$ is normal in $\Gamma_{0}(2)$ with coset representatives $\{T^j: j \in \Zb/\dd\Zb\}$. Similarly, ${(T, T)}\cdot \uf_\dd = \uf_\dd$. Therefore $\uf_\dd$ is $\Gamma_{\chi, \dd} \times \Gamma_{\chi, \dd}$-invariant.
If $\dd \nmid r$, then $(T^r, 1) \in (\Gamma_{\chi, r}  \times \Gamma_{\chi, r} ) \backslash (\Gamma_{\chi, \dd} \times \Gamma_{\chi, \dd} )$.
It is easy to see that
  $$
{(T^r, 1)} \cdot \uf_\dd =  \sum_{j \in \Zb/\dd\Zb} a_\dd(j-r)\lp \sum_{h \in \kappa_{\dd}(T^j N'_{\dd})} \ef_h \rp \neq \uf_d
$$
since $\frac{a_\dd(-r)}{\varphi(\dd)} =  \frac{\mu(\dd/(\dd, r))}{\varphi(\dd/(\dd, r))} \neq 1 = \frac{a_\dd(0)}{\varphi(\dd)}$ when $\dd \nmid r$.
\end{proof}

\begin{proposition}
\label{prop:udinv}
  For any $\dd \mid 24$, we have
  \begin{equation}
    \label{eq:ufdinv}
   \omega_\dd(g)\uf_\dd = \chi(g)^{-24/\dd} \uf_\dd
 \end{equation}
 for all $g \in \Gamma_0(2)$.
\end{proposition}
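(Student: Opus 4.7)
The plan is to reduce the assertion to a verification on generators of $\Gamma_0(2)$. By \eqref{eq:chi0}, $\Gamma_0(2) = \langle T, S^2, TB\rangle$ with $\chi(T) = \zeta_{24}$ and $\chi(S^2) = \chi(TB) = 1$, so the claim is equivalent to the three identities
\[
\omega_\dd(T)\uf_\dd = \zeta_\dd^{-1}\uf_\dd, \qquad \omega_\dd(S^2)\uf_\dd = \uf_\dd, \qquad \omega_\dd(TB)\uf_\dd = \uf_\dd.
\]

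The first identity is immediate from the diagonal action $\omega_\dd(T)\ef_h = \ebf(-Q_\dd(h))\ef_h$. Every $h$ in the support of $\uf_\dd$ lies in some $\kappa_\dd(T^j\Gamma_{\chi,\dd})$, hence lifts to $\tfrac{T^j\gamma}{\dd}\in L_\dd'$ for some $\gamma\in\Gamma_{\chi,\dd}\subset\SL_2(\Z)$, so
$Q_\dd(h) \equiv \dd\cdot\det(T^j\gamma/\dd) = 1/\dd \pmod\Z$
uniformly. Hence $\omega_\dd(T)$ scales every $\ef_h$ in $\uf_\dd$ by the common factor $\ebf(-1/\dd) = \zeta_\dd^{-1}$.

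For the second identity, two applications of \eqref{eq:omegaL} together with Fourier orthogonality on $\Ac_\dd$ yield $\omega_\dd(S^2)\ef_h = \ef_{-h}$. Since $\chi(S^2)=1$ forces $-I\in\Gamma_{\chi,\dd}$, the coset $T^j\Gamma_{\chi,\dd}$ is stable under negation, so each support set $\kappa_\dd(T^j\Gamma_{\chi,\dd})$ is closed under $h\mapsto -h$, which gives $\omega_\dd(S^2)\uf_\dd = \uf_\dd$.

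The $TB$-case is the heart of the argument. Writing $B = ST^2S^{-1}$ and using the $S^2$-identity (which yields $\omega_\dd(S)^{-1}\uf_\dd = \omega_\dd(S)\uf_\dd$) together with the $T$-identity, the equality $\omega_\dd(TB)\uf_\dd = \uf_\dd$ becomes equivalent to
\[
\omega_\dd(T)^2 v = \zeta_\dd v, \qquad v := \omega_\dd(S)\uf_\dd,
\]
i.e., the support of the discrete Fourier transform $v$ must lie in $\{\mu\in\Ac_\dd : 2Q_\dd(\mu)\equiv -1/\dd\pmod\Z\}$. I would check this by using the CRT factorization \eqref{eq:Aciso} to split $\omega_\dd = \omega_{\dd,2}\otimes\omega_{\dd,3}$ and $\uf_\dd = \uf_{\dd,2}\otimes\uf_{\dd,3}$, reducing to a local support analysis of each $\omega_{\dd,p}(S)\uf_{\dd,p}$ for $p\in\{2,3\}$. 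With the explicit descriptions of $\uf_{\dd,p}$ recorded in \eqref{eq:ud3c} and \eqref{eq:ud2c} and the standard Gauss-sum formula for $\omega_{\dd,p}(S)$, the problem then turns into a finite verification inside $\GL_2(\Z/\dd_p\Z)$-type groups, in the same style as Lemma \ref{lemma:j}. This local support analysis is the step I expect to be the principal obstacle; the $T$ and $S^2$ cases are essentially formal.
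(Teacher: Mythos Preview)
Your approach is correct and essentially the same as the paper's. The paper organizes the argument by first passing to the local decomposition $\uf_\dd = \uf_{\dd,2}\otimes\uf_{\dd,3}$ and proving the eigenvector property separately at $p=2$ and $p=3$ (Propositions \ref{prop:udinv3} and \ref{prop:decomp2}); the global statement then follows by multiplying the characters, using that $\tfrac{1}{\dd} - (\tfrac{\dd_2}{\dd_3}+\tfrac{\dd_3}{\dd_2})\in\Z$ for $\dd\mid 24$. Within each local proof the paper checks exactly the generators $T$, $S^2$, $TB$ as you do, and the $TB$ case at $p=2$ is reduced precisely to your support claim that $\vf_{\dd,2}:=\omega_{\dd,2}(S)\uf_{\dd,2}$ is supported on $\{h: 2Q_\dd(h)=-\dd_3^{-1}/\dd_2\}$.

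The one point worth flagging is how the paper actually dispatches that support claim: it is not done by a direct Gauss-sum computation as you sketch, but rather by combining Lemma \ref{lemma:support} (a short hand argument showing $\vf_{\dd,2}$ and $\uf_{\dd,2}$ have disjoint supports) with Lemma \ref{lemma:computer}, which is an explicit computer enumeration of the $H'_{\dd,2}$-orbits in $\Ac_{\dd,2}$ establishing that every element of $U'_{\dd,2}$ is supported in $\{2Q_\dd(h)=-\dd_3^{-1}/\dd_2\}\cup\bigcup_j\kappa_{\dd,2}(T^jN'_{\dd,2})$. So your expectation that the $TB$ step is the principal obstacle and amounts to a finite verification is exactly right; the paper resolves it with a machine check rather than a closed-form calculation.
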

\begin{proof}
  This follows directly from the local results \ref{prop:udinv3} and \ref{prop:decomp2} as
$$
\omega_\dd(g) \uf_\dd = 
(\omega_{\dd, 2}(g)\uf_{\dd, 2}) \otimes (\omega_{\dd, 3}(g)\uf_{\dd, 3}) =
\chi(g)^{-24(\dd_2/\dd_3 + \dd_3/\dd_2)}\uf_{\dd, 2} \otimes \uf_{\dd, 3} =
\chi(g)^{-24/\dd}\uf_{\dd}.
$$
for all $g \in \Gamma_0(2)$. Here we have used $\frac{1}{\dd} - (\frac{\dd_2}{\dd_3} + \frac{\dd_3}{\dd_2}) \in \Zb$ when $\dd \mid 24$.
\end{proof}
Now, define two further vectors
\begin{equation}
  \label{eq:wv}
  \vf_\dd := \omega_\dd(S) \uf_\dd,~ \wf_\dd := \zeta_{2\dd}^{-1} \omega_\dd(T)\vf_\dd.
\end{equation}
Note that $\uf_\dd, \vf_\dd$ and $\wf_\dd$ are linearly independent for all $\dd \mid 24$.
The key to the input of Borcherds lifting is then constructed using these vectors in the following result.

\begin{proposition}
  \label{prop:repembed}
  The representations $\varrho_\dd$ defined in \eqref{eq:varrhod} is a subrepresentation of the Weil representation $\omega_\dd$ via the map
  \begin{equation}
    \label{eq:iotad}
    \begin{split}
          \iota_\dd: \Cb^3 &\to U_\dd^{H_\dd'} \subset U_\dd\\
    \begin{pmatrix}
  a \\ b \\ c
\end{pmatrix} &\mapsto a\uf_\dd + b \vf_\dd + c\wf_\dd.
    \end{split}
  \end{equation}
  Let $F_\dd$ be the modular function defined in \eqref{eq:Fd}.
  The function $\iota_\dd \circ F_\dd$ is then in $M^!(\omega_\dd)$ and invariant with respect to the orthogonal group $H_\dd' \subset \SO(L_\dd)/\Gamma_{L_\dd}$.
  Furthermore, it has the principal part
  \begin{equation}
    \label{eq:Fdprin}
    \iota_d \circ F_\dd(\tau) = q^{-1/\dd} \uf_\dd +
    \begin{cases}
    O(q^{1/2}),& \dd > 1,\\
    O(1),& \dd = 1,\\
    \end{cases}
  \end{equation}
\end{proposition}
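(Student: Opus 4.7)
The plan is to prove the three assertions in sequence. For the representation-theoretic part, I would check that the subspace $\mathrm{span}(\uf_\dd,\vf_\dd,\wf_\dd)\subset U_\dd$ is stable under $\omega_\dd(\SL_2(\Zb))$ and that, in the ordered basis $(\uf_\dd,\vf_\dd,\wf_\dd)$, the matrices of $\omega_\dd(S)$ and $\omega_\dd(T)$ coincide with $\varrho_\dd(S)$ and $\varrho_\dd(T)$ of \eqref{eq:varrhod}. Since $\SL_2(\Zb)=\langle S,T\rangle$, this amounts to six identities. Four are immediate: $\omega_\dd(T)\uf_\dd=\chi(T)^{-24/\dd}\uf_\dd=\zeta_\dd^{-1}\uf_\dd$ by Proposition \ref{prop:udinv} and \eqref{eq:chi0}; $\omega_\dd(T)\vf_\dd=\zeta_{2\dd}\wf_\dd$ and $\omega_\dd(S)\uf_\dd=\vf_\dd$ from \eqref{eq:wv}; and $\omega_\dd(S)\vf_\dd=\omega_\dd(S^2)\uf_\dd=\uf_\dd$ using $\chi(S^2)=1$.

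The remaining two identities require short manipulations in $\SL_2(\Zb)$. Rewriting \eqref{eq:B} as $T^2 S=SB$, one obtains $\omega_\dd(T)\wf_\dd=\zeta_{2\dd}^{-1}\omega_\dd(T^2 S)\uf_\dd=\zeta_{2\dd}^{-1}\omega_\dd(S)(\chi(B)^{-24/\dd}\uf_\dd)=\zeta_{2\dd}\vf_\dd$, where $\chi(B)=\chi(T)^{-1}=\zeta_{24}^{-1}$ by \eqref{eq:chi0}. For the last identity, I would use the relation $(ST)^3=S^2=-I$ in its rewritten form $STS=T^{-1}ST^{-1}$; unfolding $\omega_\dd(STS)\uf_\dd$ via this relation together with $\omega_\dd(T^{-1})\vf_\dd=\zeta_{2\dd}^{-1}\wf_\dd$ (obtained by inverting the previously established action of $T$ on $\wf_\dd$) produces $\omega_\dd(STS)\uf_\dd=\zeta_{2\dd}\wf_\dd$, hence $\omega_\dd(S)\wf_\dd=\wf_\dd$. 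Together with the linear independence of $\uf_\dd,\vf_\dd,\wf_\dd$ noted just before the proposition, this establishes $\iota_\dd$ as an $\SL_2(\Zb)$-equivariant embedding $\varrho_\dd\hookrightarrow\omega_\dd$, and therefore $\iota_\dd\circ F_\dd\in M^!(\omega_\dd)$ since $F_\dd\in M^!(\varrho_\dd)$.

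For the $H_\dd'$-invariance, $\uf_\dd$ lies in $U_\dd^{H_\dd'}$ by Lemma \ref{lemma:ufinv} together with the $(T,T)$-invariance observed in its proof; since the orthogonal action of $H_\dd'$ on $U_\dd$ commutes with the Weil action of $\SL_2(\Zb)$, the vectors $\vf_\dd=\omega_\dd(S)\uf_\dd$ and $\wf_\dd=\zeta_{2\dd}^{-1}\omega_\dd(T)\vf_\dd$ automatically inherit $H_\dd'$-invariance, so the entire image of $\iota_\dd$ lands in $U_\dd^{H_\dd'}$. The principal-part formula then follows from \eqref{eq:Weber} and \eqref{eq:adn}: the component $\sqrt{2}^{24/\dd}\ff_2^{-24/\dd}$ contributes the leading term $q^{-1/\dd}$ in the $\uf_\dd$-direction with next term of order $q^{(\dd-1)/\dd}$, while $\sqrt{2}^{24/\dd}\ff_1^{-24/\dd}$ and $\sqrt{2}^{24/\dd}\ff^{-24/\dd}$ both begin at the strictly positive $q$-power $q^{1/(2\dd)}$, so the only singular contribution to $\iota_\dd\circ F_\dd$ is $q^{-1/\dd}\uf_\dd$ and the remainder is holomorphic at the cusp as claimed. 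The main obstacle is the verification of the two $\wf_\dd$-identities above, the only place where precise bookkeeping of $\SL_2(\Zb)$-relations interacting with the character $\chi$ on $\Gamma_0(2)$ is genuinely needed; everything else reduces to either a direct application of earlier results or a routine $q$-expansion computation.
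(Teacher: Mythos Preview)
Your proof is correct and follows essentially the same approach as the paper: verify the action of $T$ and $S$ on $\uf_\dd,\vf_\dd,\wf_\dd$ using Proposition~\ref{prop:udinv} together with short word manipulations in $\SL_2(\Zb)$. The only cosmetic difference is in the check of $\omega_\dd(S)\wf_\dd=\wf_\dd$: the paper inserts the element $TBS^2\in\ker(\chi^{24/\dd})$ on the right and then collapses via $(ST)^3=-I$, whereas you use the equivalent relation $STS=T^{-1}ST^{-1}$ directly and feed in the already-established identity $\omega_\dd(T)\wf_\dd=\zeta_{2\dd}\vf_\dd$; both routes are the same computation up to rewriting. Your additional remarks on $H_\dd'$-invariance (via commutation of the orthogonal and Weil actions) and on the principal part (via the $q$-expansions of $\ff,\ff_1,\ff_2$) fill in points the paper leaves implicit.
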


\begin{remark}
  \label{rmk:Fd}
  When $\dd = 1$, the function $\iota_\dd \circ F_\dd$ differs from the input in \cite{YY19} by a constant vector. 
  To simplify the notation, we will write
  \begin{equation}
    \label{eq:Fdt}
    \tilde{F}_\dd := \iota_\dd \circ F_\dd +
    \begin{cases}
      24(\ef_{(0, 0)} + \ef_{(1/2, 0)}),& \dd = 1,\\
      0, & \dd > 1,
    \end{cases}
  \end{equation}
  which is an element in $M^!(\omega_\dd)$ invariant with respect to $H_\dd'$.
\end{remark}
\begin{proof}
It suffices to check on the generators $T, S$ of $\SL_2(\Zb)$. From the definition and Proposition \ref{prop:udinv}, it is clear that
\begin{align*}
  \omega_\dd(T) \uf_\dd &= \zeta^{-1}_{\dd} \uf_\dd,~
\omega_\dd(T) \vf_\dd = \zeta_{2\dd} \wf_\dd,\\
\omega_\dd(T) \wf_\dd &= \zeta^{-1}_{2\dd} \omega_\dd(T^2S)\uf_\dd =
\zeta^{-1}_{2\dd} \omega_\dd(SB)\uf_\dd = \zeta_{2\dd} \omega_\dd(S)\uf_\dd = \zeta_{2\dd} \vf_\dd,\\
\omega_\dd(S)\uf_\dd &= \vf_\dd,~
\omega_\dd(S)\vf_\dd = \uf_\dd,\\
\omega_\dd(S)\wf_\dd &= \zeta_{2\dd}^{-1} \omega_\dd(STS)\uf_\dd =
\zeta_{2\dd}^{-1} \omega_\dd(STSTBS^2)\uf_\dd = \zeta_{2\dd}^{-1} \omega_\dd(STSTSTS)\uf_\dd = \zeta_{2\dd}^{-1} \omega_\dd(S)\uf_\dd = \wf_\dd.
\end{align*}
\end{proof}
In the following two subsections, we work at the $2$-part and $3$-part separately and construct $\uf_{\dd, p}$ for $p=2, 3$. This will shed some light on where $\uf_\dd$ comes from.

\subsection{The case $p = 3$.}
There are two possibilities for $\Ac_{\dd, 3}$. If $3 \nmid \dd$, then $\Ac_{\dd, 3}$ is trivial.
If $3 \mid \dd$,  we can identify the groups $\Ac_{\dd, 3}$ and $\Ac := M_2(\F_3)$ via
\begin{equation}
  \label{eq:Ac3}
  \begin{split}
    \kappa_{\dd, 3}:M_2(\F_3) &\cong \Ac_{\dd, 3}  \\
 \pmat{h_0}{-h_1}{h_2}{h_3} \bmod{3} &\mapsto h =[{h_0},{h_1},{h_2},{h_3}] \otimes \Zb_3,
  \end{split}
\end{equation}
which is just the map $\kappa_\dd$ in \eqref{eq:kappad} tensored with $\Zb_3$.
This is an isomorphism of finite quadratic modules if we equip $M_2(\F_3)$ with the quadratic form $Q_{\dd, 3}:=(3\dd_2)^{-1}\det $, which has value in $\frac{1}{3}\Zb/\Zb$.
Then
$
 H_3 \cong {\SL_2(\F_3) \times \SL_2(\F_3)},~ H'_3 = \langle {N'_3 \times N'_3}, {(T, T)} \rangle,
$
where
\begin{equation}
  \label{eq:N3'}
  \begin{split}
N_{\dd, 3}' &:=
\left\{
\begin{smat}
{1}{0}{0}{1}
\end{smat},
\begin{smat}
  { -1 }{ -1 }{ -1 }{ 1 }
\end{smat},
\begin{smat}
  { 0 }{ 1 }{ -1 }{ 0 }
\end{smat},
\begin{smat}
  { -1 }{ 1 }{ 1 }{ 1 }
\end{smat},
\begin{smat}
  { -1 }{ 0 }{ 0 }{ -1 }
\end{smat},
\begin{smat}
  { 1 }{ 1 }{ 1 }{ -1 }
\end{smat},
\begin{smat}
  { 0 }{ -1 }{ 1 }{ 0 }
\end{smat},
\smat{ 1 }{ -1 }{ -1 }{ -1 }
\right\} \\
&= \langle \smat{0}{1}{-1}{0}, \smat{1}{1}{1}{-1}\rangle \subset \SL_2(\F_3) \subset M_2(\F_3)
  \end{split}
\end{equation}
is isomorphic to the group of quaternions.
Another way to characterize $N_{\dd, 3}'$ is
\begin{equation}
  \label{eq:N3'p}
N_{\dd, 3}' =
\left\{
\pm
\begin{smat}
  { 1 }{ 0 }{ 0 }{ 1 }
\end{smat}
\right\}
\cup
\left\{
g \in \SL_2(\F_3): \tr(g) = 0
\right\}.
\end{equation}
From this, it is easy to check the following local analog of Lemma \ref{lemma:j} at 3.
\begin{lemma}
  \label{lemma:j3}
  For any $j \in \Zb/\dd_3\Zb$, we have
  $$
\kappa_{\dd, 3}(T^j N_{\dd, 3}') \cap \Ac_{\dd, 3}^0 = \{\pm [1, -j, 0, 1]\}.
  $$
\end{lemma}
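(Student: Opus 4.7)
The plan is to reduce the lemma to a short, explicit calculation inside $\SL_2(\F_3)$ using the description of $N_{\dd,3}'$ in \eqref{eq:N3'p}. The case $\dd_3 = 1$ is vacuous because $\Ac_{\dd, 3}$ is trivial, so I may assume $\dd_3 = 3$, in which case $T^j N_{\dd, 3}'$ is naturally a subset of $\SL_2(\F_3)$ and $\kappa_{\dd, 3}$ sends $\pmat{a}{-b}{c}{d}$ to $[a, b, c, d]$. Crucially, left multiplication by $T^j = \pmat{1}{j}{0}{1}$ alters only the top row of any $g \in N_{\dd, 3}'$, so an element of $T^j N_{\dd, 3}'$ lies in $\Ac_{\dd, 3}^0 = \{h_2 = 0\}$ if and only if the lower-left entry of the corresponding $g$ vanishes in $\F_3$.

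The heart of the argument is therefore to pin down all $g \in N_{\dd, 3}'$ with $g_{21} = 0$. Using \eqref{eq:N3'p}, either $g = \pm I$ (both of which clearly qualify) or $\tr g = 0$ together with $g_{21} = 0$. In the latter case $g$ is upper triangular with diagonal entries $a, -a$, so $\det g = -a^2 = 1$ forces $a^2 = -1$ in $\F_3$, which has no solution since $-1 \equiv 2$ is not a square modulo $3$. Thus only $\pm I$ survive, and a direct computation shows that $T^j(\pm I) = \pm \pmat{1}{j}{0}{1}$ maps under $\kappa_{\dd,3}$ to $\pm [1, -j, 0, 1]$, establishing the claimed equality.

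I do not expect a genuine obstacle here: once the characterization \eqref{eq:N3'p} is granted, the argument is a finite check, and the non-squareness of $-1$ modulo $3$ disposes of the only nontrivial case in a single line. The one place that needs a bit of care is tracking the sign convention in $\kappa_{\dd, 3}$ (to verify that the image is $[1, -j, 0, 1]$ rather than $[1, j, 0, 1]$), and confirming that at the $3$-part the reduction of $T^j\Gamma_{\chi,\dd}$ modulo $\Gamma_{\dd}$ is represented precisely by $T^j N_{\dd, 3}'$ inside $\SL_2(\F_3)$, both of which are routine given the setup in Section \ref{subsec:Lattice}.
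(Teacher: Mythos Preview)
Your proposal is correct and takes essentially the same approach as the paper: the paper simply says the lemma is ``easy to check'' from the characterization \eqref{eq:N3'p}, and you have written out precisely this check. Your observation that left multiplication by $T^j$ preserves the bottom row, together with the impossibility of $-a^2=1$ in $\F_3$, is exactly the content the paper leaves implicit.
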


Denote $\oo_3 \in \Ac$  the zero matrix.
Then $H_{\dd, 3}$ acts on the set $\Ac \backslash \oo_3$, and decomposes it into 3 orbits according to the norm of the elements.
The subgroup $H_{\dd, 3}' \subset H_{\dd, 3}$ acts on $\Ac \backslash \oo_3$ similarly and decomposes the three orbits into 5 orbits. We  denote  the sum of elements in each orbit by $\wf_i$ for $i = 0, 1, 2, 3, 4$.
They are explicitly given as follows: 
\begin{equation}
  \label{eq:wf}
  \wf_i :=
  \begin{cases}
    \sum_{h \in \kappa_{\dd, 3}( T^{i}N_{\dd, 3}') } \ef_h,& i= 0, 1, 2. \\
        \sum_{h \in \Ac \backslash \oo_3 ,~ \det(h) \equiv -i \bmod{3} } \ef_h,& i= 3, 4. \\
  \end{cases}
\end{equation}
This gives $U_{\dd, 3}^{H_{\dd, 3}'} \cong \Cb \ef_{\oo_3} + \sum_{j = 0}^4 \Cb \wf_j \subset \Cb[\Ac]$. Moreover, $ U^{H'_{\dd, 3}}_{\dd, 3}$ contains an $\SL_2(\Zb)$-invariant vector $4 \ef_{\oo_3} + \wf_3$, which is also in $U^{H_{\dd, 3}}_{\dd, 3}$.
Its orthogonal complement in $U^{H'_{\dd, 3}}_{\dd, 3}$ is 5 dimensional and decomposes into $ \chi_3^{-\dd_2} \oplus \chi_3^{-\dd_2} \oplus \varrho^{-\dd_2}$, where $\chi_3$ and $\varrho_{}$ are irreducible representations of $\SL_2(\Zb)$ given by
\begin{equation}
  \label{eq:reps3}
  \begin{split}
    \chi_3(T) &= \zeta^{}_3,~ \chi_3(S) = 1, \\
    \varrho(T) &=
\begin{pmatrix}
1 & & \\
& \zeta_3 & \\
& & \zeta_3^2
\end{pmatrix},~
\varrho(S) =
\frac{1}{3} \begin{pmatrix}
  -1 & 2 & 2 \\
2 & -1 & 2 \\
2 & 2 & -1
\end{pmatrix},
  \end{split}
\end{equation}
with respect to the basis $\{\wf_0 - \wf_1, \wf_0 - \wf_2, 8\ef_{\oo_3} - \wf_3, \wf_0 + \wf_1 + \wf_2, 2\wf_4 \}$.
For any $m \in \Zb$, we use $\varrho^{[m]}$ and $\chi^{[m]}_3$ to denote the representations of $\SL_2(\Zb)$ defined by
$$
\varrho^{[m]}(g) := \varrho(g)^m,~
\chi_3^{[m]}(g) := \chi_3(g)^m.
$$
Note that $\varrho^{[m]}$ and $\chi_3^{[m]}$ are well-defined and only depends on $m \bmod 3$.
We remark $\chi_3|_{\Gamma_0(2)}=\chi^{8}$.
In summary, we have
\begin{lemma}
  \begin{enumerate}
  \item
    The subrepresentation $\omega_{\dd, 3}^{H_{\dd, 3}} \subset \omega_{\dd, 3}$ fixed by $H_{\dd, 3}$ decomposes as
$$
\omega_{\dd, 3}^{H_{\dd, 3}} \cong \mathds{1} \oplus \varrho^{[-\dd_2]}
$$
with respect to the basis $\{  4 \ef_{\oo_3} + \wf_3, 8\wf_0 - \wf_3, \wf_0 + \wf_1 + \wf_2, 2\wf_4 \}$.
\item  Denote $U_{\dd, 3}'$ the orthogonal complement of $U_{\dd, 3}^{H_{\dd, 3}}$ in $U_{\dd, 3}^{H_{\dd, 3}'}$ and $\omega_{\dd, 3}'$ the restriction of $\omega_{\dd, 3}$ to $U_{\dd, 3}'$. Then
  $$
U_{\dd, 3}' = \left\{ \sum_{j = 0}^2 a_j \wf_j: a_j \in \Cb, \sum_j a_j = 0\right\}
$$
and $\omega_{\dd, 3}' \cong (\chi_3^{[-\dd_2]})^{\oplus 2}$.
\item
  Under this identification, $  M^{!}(\omega_{\dd, 3})^{H_{\dd, 3}} \cong  M^! \oplus M^!(\varrho^{[-\dd_2]})   $ and
  $$
  M^{!}(\omega_{\dd, 3})^{H_{\dd, 3}'} \cong M^{!}(\omega_{\dd, 3})^{H_{\dd, 3}} \oplus
M^!(\chi_3^{[-\dd_2]})^{ \oplus 2}.
  $$
\end{enumerate}
\end{lemma}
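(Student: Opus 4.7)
The plan is to prove all three parts by directly computing the Weil operators $\omega_{\dd,3}(T)$ and $\omega_{\dd,3}(S)$ on the stated bases and matching the resulting matrices with those of $\mathds{1}\oplus\varrho^{[-\dd_2]}$ and $\chi_3^{[-\dd_2]}$ read off from \eqref{eq:reps3}. Once the $\SL_2(\Z)$-representation decomposition is settled, part (3) will follow formally.

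First I would reconfirm the orbit picture. Under the identification $H_{\dd,3}\cong\SL_2(\F_3)\times\SL_2(\F_3)$ acting on $\Ac\cong M_2(\F_3)$ by $(g_1,g_2)\cdot X=g_1 X g_2^{-1}$, the three non-empty fibres of $\det:\Ac\to\F_3$ are precisely the three $H_{\dd,3}$-orbits on $\Ac\setminus\oo_3$ (standard transitivity via the $\SL_2$-action on $\F_3^2\setminus\{0\}$). The refinement by $H_{\dd,3}'$ splits the $\det=1$ fibre into the three cosets $T^j N_{\dd,3}'$ for $j\in\F_3$, and leaves the other two fibres alone. This identifies $U^{H_{\dd,3}}_{\dd,3}$ as the $4$-dimensional orbit-sum space $\Cb\ef_{\oo_3}\oplus\Cb\wf_3\oplus\Cb(\wf_0+\wf_1+\wf_2)\oplus\Cb\wf_4$, and the complement $U'_{\dd,3}$ inside $U^{H_{\dd,3}'}_{\dd,3}$ as the $2$-dimensional ``average-zero'' subspace $\{\sum a_j\wf_j:\sum a_j=0\}$, matching the stated description.

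Since $Q_{\dd,3}=(3\dd_2)^{-1}\det$ is constant on each $H_{\dd,3}$-orbit, $\omega_{\dd,3}(T)$ acts diagonally on the orbit-sum basis with eigenvalues $1,1,\zeta_3^{-\dd_2},\zeta_3^{\dd_2}$ on $\ef_{\oo_3},\wf_3,\wf_0+\wf_1+\wf_2,\wf_4$ respectively, which reproduces the matrix of $\mathds{1}\oplus\varrho^{[-\dd_2]}$ at $T$; the same observation shows $T$ acts as $\zeta_3^{-\dd_2}\cdot I$ on $U'_{\dd,3}$. For $S$, I would apply the Weil formula \eqref{eq:omegaL}, reducing each matrix coefficient to a character sum $\sum_{h\in O}\ebf(\langle\mu,h\rangle)$ for $\mu\in\Ac$ and $O$ one of the orbits. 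These are elementary Gauss-sum evaluations over $\F_3$, best organized by stratifying $\mu$ according to $\det\mu$; the resulting $S$-matrix on the $3$-dimensional non-trivial $H_{\dd,3}$-summand coincides with $\varrho^{[-\dd_2]}(S)$, while on $U'_{\dd,3}$ the operator $S$ acts as the identity, giving, together with the $T$-calculation, two copies of $\chi_3^{[-\dd_2]}$. Part (3) then follows because $M^!(\cdot)$ is an additive functor on finite-dimensional $\SL_2(\Z)$-representations, and taking $K$-invariants for a finite group $K$ acting on the coefficient representation commutes with this construction.

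The main obstacle is the $S$-action Gauss-sum calculation, that is, verifying that the exponential sums over $\det^{-1}(c)\subset M_2(\F_3)$ and over the cosets $T^jN_{\dd,3}'$ reproduce precisely the entries of $\varrho^{[-\dd_2]}(S)$ in \eqref{eq:reps3}. Conceptually this splits into a ``$\det$-diagonal'' part, whose $S$-transform is the $\F_3^\times$-Fourier transform encoding the $\chi_3$-isotypic decomposition, and an ``off-diagonal'' mixing with $\ef_{\oo_3}$ governed by the elementary identity $|\SL_2(\F_3)|=24$. Because $\omega_{\dd,3}(S)$ commutes with $H_{\dd,3}$, it automatically preserves $U'_{\dd,3}$, so only the scalar by which it acts there must be pinned down; once this single Gauss sum is evaluated, the remaining bookkeeping produces both claimed decompositions.
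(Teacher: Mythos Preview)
Your proposal is correct and follows essentially the same approach as the paper. The paper presents this lemma as a summary (``In summary, we have'') of the orbit description and decomposition stated just before it, without writing out the Gauss-sum verification of the $S$-action; your plan to compute $\omega_{\dd,3}(T)$ and $\omega_{\dd,3}(S)$ directly on the given bases is exactly how those omitted details are filled in.

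One small sharpening: you assert that on $U'_{\dd,3}$ the operator $S$ acts as a scalar and that ``only the scalar \ldots must be pinned down.'' You can avoid the Gauss sum here entirely. Since $-I\in N'_{\dd,3}$, one has $\omega_{\dd,3}(S^2)=\mathrm{id}$ on $U'_{\dd,3}$, so any eigenvalue of $S$ there lies in $\{\pm1\}$. As you noted, $T$ acts by the scalar $\zeta_3^{-\dd_2}$; hence if $v$ is an $S$-eigenvector with eigenvalue $\lambda$, then $(ST)^3 v=\lambda^3 v$, while $(ST)^3=-I$ acts as $\mathrm{id}$. Thus $\lambda^3=1$, forcing $\lambda=1$. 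This shows $S=\mathrm{id}$ on $U'_{\dd,3}$ without any explicit character-sum computation, confirming $\omega'_{\dd,3}\cong(\chi_3^{[-\dd_2]})^{\oplus2}$.
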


The analog of $\uf_\dd$ satisfying Lemma \ref{lemma:ufinv} and Proposition \ref{prop:udinv} is in the subspace $U_{\dd, 3}' = \{\uf_{\dd, 3}(\vec{c}): \vec{c} = (c_s) \in \Cb^{\varphi(\dd_3)}\}$, where
\begin{equation}
  \label{eq:ud3c}
  \uf_{\dd, 3}(\vec{c}) :=
  \sum_{j \in \Zb/\dd_3\Zb}
\lp  \sum_{s \in (\Zb/\dd_3\Zb)^\times } c_s \zeta_{\dd_3}^{sj} \rp
    \lp \sum_{h \in \kappa_{\dd, 3}(T^j N_{ \dd, 3}')} \ef_h \rp.
\end{equation}
As a consequence of Lemma \ref{lemma:ufinv}, we have the following local analog of Prop.\ \ref{prop:udinv} at $p = 3$.
\begin{proposition}
  \label{prop:udinv3}
  For any $\dd \mid 24$ and $\vec{c} \in \Cb^{\varphi(\dd_3)}$, we have
  \begin{equation}
    \label{eq:udinv3}
    \omega_{\dd, 3}(g) \uf_{\dd, 3}(\vec{c}) = \chi(g)^{-24\dd_2 /\dd_3} \uf_{\dd, 3}(\vec{c})
  \end{equation}
  for all $g \in \Gamma_0(2)$.
\end{proposition}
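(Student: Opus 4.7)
The plan is to deduce this from the decomposition of $\omega_{\dd, 3}'$ supplied by the preceding lemma, combined with the recorded identity $\chi_3|_{\Gamma_0(2)} = \chi^8$. First I would dispose of the case $3 \nmid \dd$ (i.e.\ $\dd_3 = 1$), where $\Ac_{\dd, 3}$ is trivial, $\omega_{\dd, 3}$ is the trivial representation on a one-dimensional space, and the claim holds because $\chi$ has order $24$ while the exponent $24\dd_2/\dd_3 = 24\dd_2$ is a multiple of $24$.

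For $\dd_3 = 3$, the key step is to verify that $\uf_{\dd, 3}(\vec c) \in U_{\dd, 3}'$. Expanding the definition, one has $\uf_{\dd, 3}(\vec c) = \sum_{j = 0}^{2} a_j \wf_j$ with $a_j = \sum_{s \in (\Zb/3\Zb)^\times} c_s \zeta_3^{sj}$, and
\begin{equation*}
\sum_{j = 0}^{2} a_j = \sum_{s \in (\Zb/3\Zb)^\times} c_s \sum_{j = 0}^{2} \zeta_3^{sj} = 0,
\end{equation*}
since the inner sum vanishes for every $s \in (\Zb/3\Zb)^\times$. Hence $\uf_{\dd, 3}(\vec c)$ lies in the codimension-one subspace cut out by $\sum_j a_j = 0$, which is precisely $U_{\dd, 3}'$. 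By the preceding lemma, $\omega_{\dd, 3}$ acts on $U_{\dd, 3}'$ via $\chi_3^{[-\dd_2]}$, so for every $g \in \SL_2(\Zb)$ one obtains $\omega_{\dd, 3}(g) \uf_{\dd, 3}(\vec c) = \chi_3(g)^{-\dd_2} \uf_{\dd, 3}(\vec c)$. Restricting $g$ to $\Gamma_0(2)$ and applying $\chi_3|_{\Gamma_0(2)} = \chi^8$ yields $\chi_3(g)^{-\dd_2} = \chi(g)^{-8\dd_2} = \chi(g)^{-24\dd_2/\dd_3}$, as required.

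The main conceptual content is already encoded in the decomposition lemma and the identification $\chi_3|_{\Gamma_0(2)} = \chi^8$; the rest is routine bookkeeping. The one small check deserving attention is the vanishing of $\sum_{j=0}^{2} \zeta_3^{sj}$ for $s \in (\Zb/3\Zb)^\times$, which places $\uf_{\dd, 3}(\vec c)$ in the two-dimensional isotypic component rather than along the $\SL_2(\Zb)$-invariant direction spanned by $4\ef_{\oo_3} + \wf_3$. I do not anticipate any further obstacle once these ingredients are in hand.
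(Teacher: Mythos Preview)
Your proposal is correct and follows the same approach as the paper's proof: both rely on the decomposition lemma identifying $\omega_{\dd,3}'\cong(\chi_3^{[-\dd_2]})^{\oplus 2}$ together with $\chi_3|_{\Gamma_0(2)}=\chi^8$. Your write-up is in fact more careful than the paper's (which is terse and contains a typo), since you explicitly verify $\sum_j a_j=0$ to place $\uf_{\dd,3}(\vec c)$ in $U_{\dd,3}'$ rather than appealing to the identification $U_{\dd,3}'=\{\uf_{\dd,3}(\vec c):\vec c\in\Cb^{\varphi(\dd_3)}\}$ stated without proof just before the proposition.
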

\begin{proof}
  If $\dd_3 = 1$, this is clear. Otherwise,
  $$
\omega_{\dd, 3}(T) \uf_{\dd, 3}(\vec{c}) = \chi_3^{-\dd_2}(g) \uf_{\dd, 3}(\vec{c}) = \chi^{-8\dd_2}(g) \uf_{\dd, 3}(\vec{c}).
$$
This finishes the proof.
\end{proof}
If $\vec{c} = (1, \dots, 1) \in \Cb^{\varphi(\dd_3)}$, then we simply denote $\uf_{\dd, 3}(\vec{c})$ by $\uf_{\dd, 3}$, which is explicitly given by
\begin{equation}
  \label{eq:uf3}
  \uf_{\dd, 3} =
    \begin{cases}
    2\wf_0 - \wf_1 - \wf_2,& \dd_3 = 3,\\
 \ef_{\oo_3},& \dd_3 = 1.
  \end{cases}
\end{equation}

\subsection{The case $p = 2$} In this case, the finite quadratic module $\Ac_{\dd, 2} = \Zb/\dd_2\Zb \times \Zb/(2\dd_2)\Zb \times \Zb/(2\dd_2)\Zb \times \Zb/\dd_2\Zb $ has the quadratic form
\begin{equation}
  \label{eq:Q2}
Q_{\dd, 2}([h_0, h_1, h_2, h_3]):= \frac{\dd_3^{-1}}{2\dd_2} (2 h_0 h_3  - h_1 h_2) \in \frac{1}{2\dd_2}\Zb/\Zb.
\end{equation}
Even though the size of $\Ac_{\dd, 2}$ can be large, the number of orbits under the suitable orthogonal group $H'_{\dd, 2}$ is much smaller.
More precisely, we have $H_{\dd, 2} = {N_{\dd, 2} \times N_{\dd, 2}}$ and $H'_{\dd, 2} \supset {N'_{\dd, 2} \times N'_{\dd, 2}}$, where
\begin{equation}
  \label{eq:H2}
  \begin{split}
    N_{\dd, 2} &:= \left\{\pmat{a}{b}{2c}{d} \in \SL_2(\Zb/(2\dd_2 \Zb))\right\}/\langle T^{\dd_2}, C^{ \dd_2/(2, \dd_2)}\rangle, \\
    N'_{\dd, 2} &:= \langle A, C, D\rangle \cong (\Zb/(\dd_2(2, \dd_2)/(4, \dd_2))\Zb   \times \Zb/(\dd_2/(2, \dd_2))\Zb) \rtimes \Zb/(4, \dd_2)\Zb.
  \end{split}
\end{equation}
Here $ A:= \smat{3}{2}{4}{3}, C:= \smat{5}{4}{16}{13}, D := \smat{-1}{1}{-2}{1}$ are elements in $\SL_2(\Zb)$ projected into $N_{\dd, 2}$.
The commutation relation is given by $DAD^{-1} = A^3$.
In particular $N'_{\dd, 2}$ has size $\dd_2^2$ and is abelian for $\dd_2 = 1, 2, 4$.

The group $N_{\dd, 2}$ acts on the left on $\Ac_{\dd, 2}$ via (simply coming from  matrix multiplication)
  \begin{equation}
    \label{eq:N2act}
 \pmat{a}{b}{2c}{d} \cdot [h_0, h_1, h_2, h_3] :=
[ah_0 + bh_2, ah_1 + 2(bh_3), 2(ch_0) + dh_2, ch_1 + dh_3]
  \end{equation}
for $\pmat{a}{b}{2c}{d} \in N_{\dd, 2}$ and $[{h_0}, {h_1}, {h_2}, {h_3}] \in \Ac_{\dd, 2}$. The same holds for the right action.
%
We can embed $N_{\dd, 2}$ into $\Ac_{\dd, 2}$ using the map $\kappa_{\dd, 2}: N_{\dd, 2} \to \Ac_{\dd, 2}$ defined by
\begin{equation}
  \label{eq:kappa2}
  \kappa_{\dd, 2}\lp \pmat{a}{b}{2c}{d} \rp := \dd_3^{-1} [a\bmod{\dd_2}, 2b, 2c, d \bmod{\dd_2}].
\end{equation}
It is then easy to check that
\begin{equation}
  \label{eq:easy}
\begin{split}
\kappa_{\dd, 2}(g_1 g_2) &= g_1 \cdot \kappa_{\dd, 2}(g_2) = \kappa_{\dd, 2}(g_1) \cdot g_2,
\\
  Q_{\dd, 2}(\kappa_{\dd, 2}(g)) &= \frac{\dd_3^{-1} \det(g) \bmod{\dd_2}}{\dd_2} \in \frac{2}{2\dd_2}\Zb/\Zb
\end{split}
\end{equation}
for all $g, g_1, g_2 \in N_{\dd, 2}$.
From this, it is easy to check that when $2 \mid \dd$, $\kappa_{\dd, 2}$ is a 2-1 map since $(\dd_2 + 1) \smat{1}{}{}{1} \in N_{\dd, 2}'$ and
$$
\kappa_{\dd, 2} \lp (\dd_2 + 1) \smat{1}{}{}{1} \rp =
\kappa_{\dd, 2} \lp \smat{1}{}{}{1} \rp.
$$

To better describe $\kappa_{\dd, 2}(N_{\dd, 2})$, it is useful to know the smallest additive subgroup of $\Ac_{\dd, 2}$ containing it. We describe it in the following lemma.

\begin{lemma}
  \label{lemma:additive}
Let $\Ac_{\dd, 2}' \subset \Ac_{\dd, 2}$ be the smallest (additive) subgroup containing $\kappa_{\dd, 2}(N'_{\dd, 2})$.
\begin{enumerate}
\item
When  $8 \mid \dd$, $\Ac_{\dd, 2}' \cong (\Zb/2\Zb)^2 \times (\Zb/8\Zb)^2$ is the orthogonal complement of the subgroup generated by $[6, 4, 0, 2], [0, 8, 0, 0], [0, 2, 2, 0] \in \Ac_{\dd, 2}$, and
$$
\kappa_{\dd, 2}^{-1}(\Ac_{\dd, 2}') = N'_{\dd, 2} \sqcup T^4N'_{\dd, 2}.
$$
Furthermore, we can distinguish the elements in $N_{\dd, 2}'$ and $T^4 N_{\dd, 2}'$ via
\begin{equation}
  \label{eq:distinguish}
  \begin{split}
    N_{\dd, 2}' &= \kappa_{\dd, 2}^{-1} \lp \{[h_0, h_1, h_2, h_3] \in \Ac'_{\dd, 2}: h_0^2 - \dd_3^2 \equiv h_1 + h_2 \bmod{16} \} \rp,\\
    T^4 N_{\dd, 2}' &= \kappa_{\dd, 2}^{-1} \lp \{[h_0, h_1, h_2, h_3] \in \Ac'_{\dd, 2}: h_0^2 - \dd_3^2 \equiv h_1 + h_2 + 8 \bmod{16} \} \rp.
  \end{split}
\end{equation}
\item
When $8 \nmid \dd$, $\Ac_{\dd, 2}' \cong (\Zb/\dd_2\Zb)^2$ is generated by $\kappa_{\dd, 2}(\smat{1}{}{}{1}), \kappa_{\dd, 2}( \smat{-1}{1}{-2}{1})$ and
$$
\kappa_{\dd, 2}^{-1}(\Ac_{\dd, 2}') = N'_{\dd, 2}.
$$
\end{enumerate}
\end{lemma}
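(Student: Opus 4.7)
The two cases of the lemma are handled in parallel, with the overall strategy: compute $\kappa_{\dd,2}$ on a generating set of $N'_{\dd,2}$ and enough small products (using $\kappa_{\dd,2}(g_1 g_2) = g_1 \cdot \kappa_{\dd,2}(g_2)$ from \eqref{eq:easy} and the action \eqref{eq:N2act}) to exhibit vectors that additively span $\Ac'_{\dd,2}$; match this span against the claimed description by a cardinality argument and, when applicable, a direct bilinear-form pairing check; and finally use the coset decomposition $N_{\dd,2} = \bigsqcup_j T^j N'_{\dd,2}$ to extract the preimage.

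In the easier case $8 \nmid \dd$ (so $\dd_2 \in \{1,2,4\}$), $N'_{\dd,2}$ is abelian of order $\dd_2^2$, and a direct computation shows that $\kappa_{\dd,2}(I) = \dd_3^{-1}[1,0,0,1]$ and $\kappa_{\dd,2}(D) = \dd_3^{-1}[-1,2,-2,1]$ already generate an additive subgroup isomorphic to $(\Zb/\dd_2\Zb)^2$, with the remaining images of $N'_{\dd,2}$ contained in it. The preimage claim follows because each nontrivial coset representative $T^j$, $1 \le j < \dd_2$, shifts the $h_1$-coordinate of $\kappa_{\dd,2}(g)$ by $2j\dd_3^{-1} \not\equiv 0 \bmod 2\dd_2$, which pushes the image out of $\Ac'_{\dd,2}$.

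For the harder case $8 \mid \dd$, the group $N'_{\dd,2}$ has order $64$ and is nonabelian, so rather than enumerate all its elements I would compute $\kappa_{\dd,2}$ on $I, A, C, D$ together with a short list of products such as $A^2, D^2, AD, CD, ACD$, and then run a Smith normal form calculation over $\Zb$ to identify the additive span as $(\Zb/2\Zb)^2 \times (\Zb/8\Zb)^2$. To match this with the orthogonal-complement description, I would verify directly that each of $[6,4,0,2], [0,8,0,0], [0,2,2,0]$ pairs to zero mod $\Zb$ with every exhibited generator under the bilinear form attached to $Q_{\dd,2}$; since these three vectors are independent and generate a subgroup of order $64$ in $\Ac_{\dd,2}$ (of total order $2^{14}$), their orthogonal complement has order $2^8$, matching the span and forcing equality. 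For the congruence characterization \eqref{eq:distinguish}, I rewrite the relation $ad - 2bc = 1$ for $g = \smat{a}{b}{2c}{d}$ in terms of the coordinates of $\kappa_{\dd,2}(g)$: using $\dd_3^4 \equiv 1 \bmod 16$, the condition $\kappa_{\dd,2}(g) \in \Ac'_{\dd,2}$ takes the form $h_0^2 - \dd_3^2 \equiv h_1 + h_2 + 8\epsilon(g) \bmod 16$ with $\epsilon(g) \in \Zb/2\Zb$; checking on generators yields $\epsilon \equiv 0$ on $N'_{\dd,2}$ and $\epsilon(T^4) \equiv 1$, while the intermediate cosets $T^j N'_{\dd,2}$ for $j \in \{1,2,3,5,6,7\}$ fail both congruences.

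The main obstacle is organizing the nonabelian $\dd_2 = 8$ computation: since $\kappa_{\dd,2}$ is not a homomorphism and $N'_{\dd,2}$ has $64$ elements, care is needed to choose a list of products whose images span the full $\Ac'_{\dd,2}$ and not a proper subgroup of it. The orthogonal-complement pairing check combined with the cardinality count serves as an essential safeguard against missed rank.
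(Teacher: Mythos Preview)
Your approach is correct and is essentially a fleshed-out version of the paper's own proof, which reads in its entirety: ``This can be verified using the appendix and some computer calculation.'' The appendix lists $\kappa_{\dd,2}(N'_{\dd,2})$ explicitly for each $\dd_2$, and your outline is precisely the kind of verification one would run on that data.

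One small correction worth flagging: in the $8\nmid\dd$ preimage argument, the shift in the $h_1$-coordinate under left multiplication by $T^j$ is $2jh_3$, not the constant $2j\dd_3^{-1}$, since by \eqref{eq:N2act} one has $T^j\cdot[h_0,h_1,h_2,h_3]=[h_0+jh_2,\,h_1+2jh_3,\,h_2,\,h_3]$. This does not affect your conclusion, because a glance at the appendix shows $h_3$ is odd for every element of $\kappa_{\dd,2}(N'_{\dd,2})$; tracking the invariant $h_1+h_2\bmod 2\dd_2$ (which vanishes on $\Ac'_{\dd,2}$) then rules out $T^jN'_{\dd,2}$ for $0<j<\dd_2$ exactly as you intend. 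The same invariant, now taken modulo $8$, also handles the $\dd_2=8$ preimage statement cleanly and explains why $j\in\{0,4\}$ are the only survivors.
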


\begin{proof}
  This can be verified using the appendix and some computer calculation.
\end{proof}

In addition, we record the following local analog of Lemma \ref{lemma:j} at the prime 2.
\begin{lemma}
  \label{lemma:j2}
  For any $j \in \Zb/\dd_2 \Zb$, we have
  $$
  \kappa_{\dd, 2}(T^j N_{\dd, 2}')\cap  \Ac_{\dd, 2}^0 = \{[r, r(2j + (\dd_3r)^2 - 1), 0, r]: r \in (\Zb/\dd_2\Zb)^\times\}.
  $$
\end{lemma}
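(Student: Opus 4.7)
The plan is to reduce the computation of $\kappa_{\dd,2}(T^j N'_{\dd,2}) \cap \Ac^0_{\dd,2}$ to a characterization of the upper triangular elements of $N'_{\dd,2}$. By \eqref{eq:kappa2}, $\kappa_{\dd,2}(g) \in \Ac^0_{\dd,2}$ holds iff the $(2,1)$-entry of $g \in N_{\dd,2}$ vanishes modulo $2\dd_2$; since left multiplication by $T^j$ preserves this entry, the intersection in question is in bijection (via $\kappa_{\dd,2}$) with the set of upper triangular elements of $N'_{\dd,2}$, each premultiplied by $T^j$.

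The heart of the argument is the claim that, viewed in $N_{\dd,2}$, the upper triangular elements of $N'_{\dd,2}$ are parameterized bijectively by $a \in (\Zb/2\dd_2\Zb)^\times$: for each such $a$, there is a unique $\smat{a}{b}{0}{a^{-1}} \in N'_{\dd,2}$ (with $b$ well-defined mod $\dd_2$ owing to the $\langle T^{\dd_2}\rangle$-quotient), characterized by the \emph{cubic congruence}
\begin{equation*}
  2 b \equiv a^3 - a \pmod{2 \dd_2}.
\end{equation*}
For $\dd_2 \le 4$ both sides vanish modulo $2\dd_2$, so the content is at $\dd_2 = 8$. I would verify the congruence on the generators of the upper triangular part of $N'_{\dd,2}$ (namely $C = \smat{5}{4}{0}{13}$ and $D^2 = -I$ modulo $16$) and then check multiplicative closure: for $g_i = \smat{a_i}{b_i}{0}{a_i^{-1}}$, the discrepancy between $2(a_1 b_2 + b_1 a_2^{-1})$ and $(a_1 a_2)^3 - a_1 a_2$ factors as $a_1 a_2^{-1}(a_1^2 - 1)(1 - a_2^4)$, which is divisible by $16$ because $a_i^2 \equiv 1 \bmod 8$ for $a_i$ odd.

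Granting this characterization, the rest is direct computation. For $g = \smat{a}{b}{0}{a^{-1}}$ one has $T^j g = \smat{a}{b + j a^{-1}}{0}{a^{-1}}$, so
\begin{equation*}
\kappa_{\dd,2}(T^j g) = \dd_3^{-1}[a \bmod \dd_2,\; 2(b + j a^{-1}),\; 0,\; a^{-1} \bmod \dd_2].
\end{equation*}
Set $r := \dd_3^{-1} a \bmod \dd_2$. Since $a^2 \equiv 1 \bmod 8$ for odd $a$ and $\dd_2 \mid 8$, one has $a^{-1} \equiv a \bmod \dd_2$, so the first and fourth components equal $r$. Also, $a \equiv \dd_3 r \bmod \dd_2$ with both sides odd implies $a^2 \equiv (\dd_3 r)^2 \bmod 2\dd_2$; combined with the cubic congruence this rewrites the second component as $\dd_3^{-1}(a^3 - a) + 2 j \dd_3^{-1} a^{-1} \equiv r(a^2 - 1) + 2 j r \equiv r(2j + (\dd_3 r)^2 - 1) \pmod{2\dd_2}$.

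For bijectivity: as $a$ runs over $(\Zb/2\dd_2\Zb)^\times$, the map $a \mapsto r$ is exactly $2$-to-$1$ with fibers $\{a, (\dd_2+1)a\}$, and a direct check (using $(\dd_2+1)^2 \equiv 1 \bmod 2\dd_2$ for $\dd_2$ even) shows that these two preimages produce the same element of $\Ac_{\dd,2}$ under $\kappa_{\dd,2}$, matching the $2$-to-$1$ nature of $\kappa_{\dd,2}$ observed after \eqref{eq:easy}. Hence the induced map from $(\Zb/\dd_2\Zb)^\times$ to the left-hand side is a bijection. The main obstacle is the generator-and-multiplicativity verification of the cubic congruence at $\dd_2 = 8$ — an elementary but careful $2$-adic computation inside the nonabelian group $N'_{\dd,2}$.
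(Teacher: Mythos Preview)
Your reduction from general $j$ to the upper triangular case is exactly the paper's: left multiplication by $T$ preserves the $(2,1)$-entry, equivalently the left $T$-action on $\Ac_{\dd,2}$ preserves $\Ac^0_{\dd,2}$. Where you diverge is at $j=0$. The paper simply quotes Lemma~\ref{lemma:additive}, whose description of $\kappa_{\dd,2}(N'_{\dd,2})$ (in particular the congruence $h_0^2 - \dd_3^2 \equiv h_1 + h_2 \bmod 16$ for $\dd_2 = 8$) is obtained by computer and the appendix listing. You instead characterize the upper triangular part of $N'_{\dd,2}$ directly via the cubic congruence $2b \equiv a^3 - a \pmod{2\dd_2}$, checked on $C,\,-I$ and propagated by your factorization of the multiplicative defect. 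The two conditions agree (for odd $a$ one has $a^3 - a \equiv a^2 - 1 \bmod 16$, and the $\dd_3$-twist is harmless since $8 \mid a^2-1$), so your route recovers the same information while bypassing the computational lemma and making the origin of the term $(\dd_3 r)^2 - 1$ transparent.

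There is one point to tighten. You assert that $C$ and $D^2 = -I$ generate the upper triangular subgroup of $N'_{\dd,2}$, but your closure argument only shows that every element of $\langle C,-I\rangle$ satisfies the cubic congruence; it does not by itself exclude further upper triangular elements of $N'_{\dd,2}$ (same diagonal $a$, different $b$) violating it. A clean fix: if $g_1, g_2 \in N'_{\dd,2}$ are upper triangular with the same top-left entry, then $g_1 g_2^{-1} = T^m$ lies in $\Gamma_{\chi,\dd} \cap \langle T\rangle = \langle T^\dd\rangle$, hence is trivial in $N_{\dd,2}$. Thus the top-left entry gives an injection from the upper triangular part of $N'_{\dd,2}$ into $(\Zb/2\dd_2\Zb)^\times$; since $\langle C,-I\rangle$ already surjects onto $(\Zb/2\dd_2\Zb)^\times$ (for $\dd_2 = 8$: $5$ has order $4$ modulo $16$ and $-1 \notin \langle 5\rangle$), equality follows. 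With this in hand your computation of $\kappa_{\dd,2}(T^j g)$ and the $2$-to-$1$ bookkeeping go through as written.
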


\begin{proof}
  For $j = 0$, this follows directly from Lemma \ref{lemma:additive}. In general, it is easy to check that
  $$
  T^j (\kappa_{\dd, 2}(N_{\dd, 2}') \cap \Ac_{\dd, 2}^0) = \kappa_{\dd, 2}(T^j N_{\dd, 2}') \cap \Ac_{\dd, 2}^0
  $$
  for any $j$ since the action of $T$ preserves $\Ac_{\dd}^0$.
\end{proof}

Since $T^{\dd_2} = 0 \in N'_{\dd, 2}$ and $H_{\dd, 2}'$ is generated by ${N_{\dd, 2}' \times N_{\dd, 2}'}$ and $(T, T)$, the index of $H'_{\dd, 2}$ in $H_{\dd, 2}$ is $\dd_2$ and the sizes of $H_{\dd, 2}$ and $H'_{\dd, 2}$ are $\dd_2^6/(2, \dd_2)$ and $\dd_2^{5}/(2, \dd_2)$ respectively.
The dimension of $U_{\dd, 2}^{H_{\dd, 2}'}$ is the number of orbits in $\Ac_{\dd, 2}$ under the action of $H_{\dd, 2}'$.
Since the finite group $H_{\dd, 2}'$ is explicitly given in \eqref{eq:H2}, it is straightforward to calculate these orbits on a computer in practice.
We did this in SAGE \cite{SAGE} and received the following results
\begin{equation}
  \label{eq:rd2}
\dim U_{\dd, 2}^{H_{\dd, 2}'} =
  \begin{cases}
    4& \dd_2 = 1,\\
    16& \dd_2 = 2,\\
    46& \dd_2 = 4,\\
    118& \dd_2 = 8.\\
  \end{cases}
\end{equation}
With these calculations, one can already explicitly decompose the representation $\omega_{\dd, 2}^{H_{\dd, 2}'}$ on $U_{\dd, 2}^{H_{\dd, 2}'} $.
To find the desired vectors, we need to consider the following subspace of $U_{\dd, 2}^{H_{\dd, 2}'}$.

For $\dd_2 = 1$, the vector $\ef_{(1/2, 0)} - \ef_{(0, 1/2)}$ generates a 1-dimensional $\SL_2(\Zb)$-invariant subspace.
Denote $U_{\dd, 2}' \subset U_{\dd, 2}^{H_{\dd, 2}'}$ its orthogonal complement.
For $\dd_2 \ge 2$, the subgroup $H_{\dd, 2}'$ has index 2 in $H_{\dd/2, 2}' = \langle T^{\dd_2/2}, H_{\dd, 2}'\rangle$.
Even though $U_{\dd, 2}^{H_{\dd, 2}'}$ is large, it is much smaller compare to $U_{\dd, 2}$ and is amenable to calculations on a computer.
Denote $U_{\dd, 2}' \subset U_{\dd, 2}^{H_{\dd, 2}'}$ the orthogonal complement of $U_{\dd, 2}^{H_{\dd/2, 2}'} \subset U_{\dd, 2}^{H_{\dd, 2}'}$.
Then it is clear that
$$
\dim U_{\dd, 2}' = \frac{1}{2} \lp
\text{numbers of } H_{\dd, 2}'\text{-orbits of } \Ac_{\dd, 2}
- \text{numbers of } H_{\dd/2, 2}'\text{-orbits of } \Ac_{\dd, 2}
\rp.
$$
The following result comes out of the computer calculations.
\begin{lemma}
  \label{lemma:computer}
For any $\dd \mid 24$, the dimension of $U_{\dd, 2}'$ is $3\varphi(\dd_2)$. Furthermore, the support of any elements in $U_{\dd, 2}'$ is contained in the union of $\{h \in \Ac_{\dd, 2}: 2Q_\dd(h) = -\frac{\dd_3^{-1}}{\dd_2}\}$ and $\cup_{j \in \Zb/\dd_2 \Zb} \kappa_{\dd, 2}(T^j N_{\dd, 2}')$.
\end{lemma}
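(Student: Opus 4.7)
The plan is to identify $U'_{\dd, 2}$ concretely via an eigenvalue decomposition and then verify both the dimension count and the support claim case by case on $\dd_2 \in \{1, 2, 4, 8\}$.

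The starting observation is that for $\dd_2 \ge 2$, the subgroup $H'_{\dd, 2}$ is normal of index two in $H'_{\dd/2, 2} = \langle (T^{\dd_2/2}, T^{\dd_2/2}), H'_{\dd, 2}\rangle$, and $T^{\dd_2}$ already acts trivially on $U^{H'_{\dd, 2}}_{\dd, 2}$ since $N_{\dd, 2}$ is defined by quotienting out $T^{\dd_2}$. Hence the coset class of $T^{\dd_2/2}$ acts on $U^{H'_{\dd, 2}}_{\dd, 2}$ as an involution, and decomposing into $\pm 1$-eigenspaces identifies the $+1$-part with $U^{H'_{\dd/2, 2}}_{\dd, 2}$ and thus $U'_{\dd, 2}$ with the $-1$-part. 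A basis of $U'_{\dd, 2}$ is therefore indexed by $H'_{\dd/2, 2}$-orbits in $\Ac_{\dd, 2}$ that split into two $H'_{\dd, 2}$-orbits swapped by $T^{\dd_2/2}$; each basis element is a difference $\sum_{h \in \mathcal O_+} \ef_h - \sum_{h \in \mathcal O_-} \ef_h$, so its support is exactly the splitting orbit.

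For the support statement, I would use the explicit formula obtained from \eqref{eq:N2act}, namely
$$
T^{\dd_2/2}\cdot [h_0, h_1, h_2, h_3] = [h_0 + (\dd_2/2)h_2,\, h_1 + \dd_2 h_3,\, h_2,\, h_3]
$$
together with the symmetric right action, to classify the split orbits. Combining Lemma \ref{lemma:additive} with the characterization of $\kappa_{\dd, 2}(N'_{\dd, 2})$ via \eqref{eq:easy}, one can check that a splitting orbit either lies in the ``$\det \equiv 1$'' locus $\cup_j \kappa_{\dd, 2}(T^j N'_{\dd, 2})$, or lies in the single level set $\{h : 2Q_\dd(h) = -\dd_3^{-1}/\dd_2\}$; the orthogonal action preserves $Q_\dd$ so the $Q$-value is constant along each orbit. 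The first case captures the orbits that carry the $\uf_{\dd, 2}(\vec c)$-type vectors (by analogy with \eqref{eq:ud3c}), while the second captures those on which the $\omega_{\dd, 2}(S)$- and $\omega_{\dd, 2}(ST)$-images are supported.

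Finally, the dimension count is a case-by-case verification: using the $H'_{\dd, 2}$-orbit counts tabulated in \eqref{eq:rd2} together with an analogous enumeration of $H'_{\dd/2, 2}$-orbits, one obtains $\dim U'_{\dd, 2} = 3\varphi(\dd_2) = 3, 6, 12$ for $\dd_2 = 2, 4, 8$ respectively. The definitional case $\dd_2 = 1$ is handled directly: the orthogonal complement of the one-dimensional $\SL_2(\Zb)$-invariant line $\Cb(\ef_{(1/2, 0)} - \ef_{(0, 1/2)})$ inside the $4$-dimensional space $U^{H'_{\dd, 2}}_{\dd, 2}$ has dimension $3 = 3\varphi(1)$, and a short calculation shows it is supported in the claimed union. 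The main obstacle is the combinatorics in Step 2 for $\dd_2 = 8$, where $|H'_{\dd, 2}|$ and $|\Ac_{\dd, 2}|$ are both of order $2^{14}$: tracking the orbit splits by hand becomes impractical, and a computer verification in SAGE as used to produce \eqref{eq:rd2} is the cleanest route, which is the approach signaled by the lemma's statement.
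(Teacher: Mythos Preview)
Your proposal matches the paper's approach: the paper states this lemma as the output of a SAGE computation, and you have supplied the conceptual framing (the $\pm 1$-eigenspace decomposition under the index-two involution, identifying $U'_{\dd,2}$ with the span of differences over split $H'_{\dd/2,2}$-orbits) that the paper leaves implicit in the displayed formula $\dim U'_{\dd,2} = \tfrac{1}{2}(\#\{H'_{\dd,2}\text{-orbits}\} - \#\{H'_{\dd/2,2}\text{-orbits}\})$ just before the lemma. The final verification, especially for $\dd_2 = 8$, is by computer in both your write-up and the paper.

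One genuine slip: the element $(T^{\dd_2/2}, T^{\dd_2/2})$ you name as the extra generator of $H'_{\dd/2,2}$ over $H'_{\dd,2}$ already lies in $H'_{\dd,2}$, since $(T,T)$ is one of the defining generators of $H'_\dd$ and hence $(T,T)^{\dd_2/2} \in H'_{\dd,2}$. The correct extra generator is $(T^{\dd_2/2}, 1)$, or equivalently $(1, T^{\dd_2/2})$---these coincide modulo $H'_{\dd,2}$ precisely because their product is $(T^{\dd_2/2}, T^{\dd_2/2})$. Your displayed formula for the action on $[h_0,h_1,h_2,h_3]$ is in fact the left action of $(T^{\dd_2/2},1)$ alone, and your phrase ``together with the symmetric right action'' describes $(1,T^{\dd_2/2})$, so the actual orbit analysis you carry out is for the correct involution; only the label in the opening sentence is wrong.
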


Now for $\dd \mid 24$, define the following vectors
\begin{equation}
  \label{eq:ud2c}
  \begin{split}
  \uf_{\dd, 2}(\vec{c}) &:= \sum_{j \in \Zb/\dd_2 \Zb}
  \lp \sum_{s \in (\Zb/\dd_2\Zb)^\times} c_s \zeta_{\dd_2}^{js}\rp
  \lp \sum_{h \in \kappa_{\dd, 2}(T^j N_{\dd, 2}')} \ef_h \rp, \\
  \vf_{\dd, 2}(\vec{c}) &:= \omega_{\dd, 2}(S) \uf_{\dd, 2}(\vec{c}),~
    \wf_{\dd, 2}(\vec{c}) := \zeta^{-\dd_3}_{\dd_2} \omega_{\dd, 2}(T) \vf_{\dd, 2}(\vec{c})
  \end{split}
\end{equation}
for all $\vec{c} = (c_s) \in \Cb^{\varphi(\dd_2)}$.
From Lemma \ref{lemma:computer}, we can show that these vectors give a basis of $U_{\dd, 2}'$.

\begin{lemma}
  \label{lemma:support}
For any $\dd \mid 24$ with $2 \mid \dd$ and $\vec{c} \in \Cb^{\varphi(\dd_2)}$, the vectors $\vf_{\dd, 2}(\vec{c}), \wf_{\dd, 2}(\vec{c})$ have the same support, which is disjoint from that of $\uf_{\dd, 2}(\vec{c_1})$ for any $\vec{c_1} \in \Cb^{\varphi(\dd_2)}$.
\end{lemma}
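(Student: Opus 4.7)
The plan is to treat the two assertions separately; the bulk of the work is in the disjointness claim, which reduces to a clean parity observation.

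For the equal-support claim, the definition $\wf_{\dd,2}(\vec{c}) = \zeta_{\dd_2}^{-\dd_3}\omega_{\dd,2}(T)\vf_{\dd,2}(\vec{c})$ gives the result immediately: since $\omega_{\dd,2}(T)$ acts diagonally on the standard basis $\{\ef_h\}$ by the nonzero scalars $\ebf(-Q_{\dd,2}(h))$, it preserves the support of every vector. The rest of the argument is thus devoted to showing that the support of $\vf_{\dd,2}(\vec{c})$ is disjoint from $B := \bigcup_{j} \kappa_{\dd,2}(T^j N'_{\dd,2})$, which by construction contains the support of every $\uf_{\dd,2}(\vec{c}_1)$. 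By Lemma \ref{lemma:computer}, the support of $\vf_{\dd,2}(\vec{c}) \in U_{\dd,2}'$ lies in $B \cup A$ with $A := \{h : 2Q_{\dd,2}(h) = -\dd_3^{-1}/\dd_2\}$, so it suffices to verify that the support lies in $A$.

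Next I would linearize in $\vec{c}$ and treat each character component $\vf_{\dd,2,s} = \omega_{\dd,2}(S)\uf_{\dd,2,s}$ with $s \in (\Zb/\dd_2\Zb)^\times$ separately, noting that each $\vf_{\dd,2,s}$ is $H'_{\dd,2}$-invariant because the Weil representation commutes with the orthogonal action. Since $\Gamma_{\chi,\dd}$ is normal in $\Gamma_0(2)$, the $H'_{\dd,2}$-orbits on $B$ are precisely the cosets $\kappa_{\dd,2}(T^k N'_{\dd,2})$, so it is enough to prove that $(\vf_{\dd,2,s})_h = 0$ at the orbit representatives $h = \kappa_{\dd,2}(T^k) = \dd_3^{-1}[1, 2k, 0, 1]$ for every $k \in \Zb/\dd_2\Zb$. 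Expanding via the Weil formula for $\omega_{\dd,2}(S)$, the equivariance $\kappa_{\dd,2}(T^j g) = T^j \cdot \kappa_{\dd,2}(g)$, and the bilinear form identity
\[
(h, T^j h^0) - (h, h^0) = \frac{j \dd_3^{-1}}{\dd_2}\bigl(h^0_2 h_3 - h^0_3 h_2\bigr),
\]
combined with $h_2 = 0$, $h_3 = \dd_3^{-1}$, and $\dd_3^2 \equiv 1 \pmod{\dd_2}$ (valid since $\dd_2 \mid 8$), the inner geometric sum over $j \in \Zb/\dd_2\Zb$ vanishes unless $s + h^0_2 \equiv 0 \pmod{\dd_2}$.

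The decisive step, which is really the only nontrivial point, is a parity observation: for any $h^0 = \kappa_{\dd,2}(\smat{a}{b}{2c}{d})$ one has $h^0_2 = 2c\dd_3^{-1}$, which is even modulo $\dd_2$. Since the hypothesis $2 \mid \dd$ forces $\dd_2$ to be even and therefore every $s \in (\Zb/\dd_2\Zb)^\times$ to be odd, the congruence $s \equiv -h^0_2 \pmod{\dd_2}$ has no solution. The summation range is empty, hence $(\vf_{\dd,2,s})_h = 0$, which completes the argument.
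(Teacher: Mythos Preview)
Your proof is correct and follows essentially the same strategy as the paper's. Both arguments dispatch the equal-support claim via the diagonal action of $T$, and both show the $\ef_\mu$-coefficient of $\vf_{\dd,2}(\vec c)$ vanishes for $\mu$ in $B=\bigcup_j\kappa_{\dd,2}(T^jN'_{\dd,2})$ by reducing to a geometric sum over $j\in\Zb/\dd_2\Zb$ that is killed by a parity mismatch. The only difference is cosmetic: the paper works with a general $\mu=\kappa_{\dd,2}(T^{j'}g')$, uses normality of $N'_{\dd,2}$ to re-index, and invokes the parity of the $h_1$-coordinate; you instead use $H'_{\dd,2}$-invariance of $\vf_{\dd,2,s}$ to reduce to the representatives $\mu=\kappa_{\dd,2}(T^k)$ and invoke the parity of the $h^0_2$-coordinate. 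Both parity facts are immediate from $\kappa_{\dd,2}\bigl(\smat{a}{b}{2c}{d}\bigr)=\dd_3^{-1}[a,2b,2c,d]$. Your appeal to Lemma~\ref{lemma:computer} and the detour through the set $A$ are unnecessary---the direct vanishing at points of $B$ already yields the disjointness---but they do no harm.
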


\begin{proof}
Since the action of $\omega_{\dd, 2}(T)$ does not change the support, we know that $\vf_{\dd, 2}(\vec{c})$ and $\wf_{\dd, 2}(\vec{c})$ have the same support. Now we have by definition
$$
\vf_{\dd, 2}(\vec{c}) =
(2\dd_2^2)^{-1}
 \sum_{s \in (\Zb/\dd_2\Zb)^\times} c_s
\sum_{\mu \in \Ac_{\dd, 2}}
\sum_{j \in \Zb/\dd_2\Zb}
 \zeta_{\dd_2}^{js}
  \lp \sum_{h \in \kappa_{\dd, 2}(T^j N_{\dd, 2}')} \ebf((\mu, h)) \ef_\mu \rp.
$$
We want to show that the coefficient of $\ef_\mu$ is zero if $\mu = \kappa_{\dd, 2}(T^{j'}g')$ with $j' \in \Zb/\dd_2\Zb$ and $g' \in N_{d, 2}'$.
Now if $h = \kappa_{\dd, 2}(g)$ with $g \in T^j N_{d, 2}'$, then $(\mu, h) = \frac{\dd_3^{-1}\tr(g (T^{j'}g')^{-1})}{\dd_2}$ by \eqref{eq:easy}. Since $N_{\dd, 2}'$ is normal in $N_{\dd, 2}$, which contains $T$, we have $T^j N_{\dd, 2}' T^{-j'} = T^{j-j'} N_{\dd, 2}'$.
Therefore, it suffices to show that the sum below vanishes
\begin{align*}
  \sum_{j \in \Zb/\dd_2\Zb}
& \zeta_{\dd_2}^{js}
   \sum_{h \in \kappa_{\dd, 2}(T^j N_{\dd, 2}')} \ebf((\mu, h))
=
\zeta_{\dd_2}^{j's}  \sum_{j'' \in \Zb/\dd_2\Zb}
 \zeta_{\dd_2}^{j''s}
   \sum_{h \in \kappa_{\dd, 2}(T^{j''} N_{\dd, 2}')} \zeta_{\dd_2}^{\dd_3^{-1} \tr(h)}
\end{align*}
with $j'' := j - j'$.
Also for $h = [h_0, h_1, h_2, h_3] \in \kappa_{\dd, 2}(N_{\dd, 2}')$, we have $\tr(T^j \cdot h) = \tr(h) + j\cdot h_1$.
Using this, we can rewrite
\begin{align*}
  \sum_{j'' \in \Zb/\dd_2\Zb}
& \zeta_{\dd_2}^{j''s}
   \sum_{h \in \kappa_{\dd, 2}(T^{j''} N_{\dd, 2}')} \zeta_{\dd_2}^{\dd_3^{-1} \tr(h)}
=
   \sum_{h \in \kappa_{\dd, 2}(N_{\dd, 2}')}
\zeta_{\dd_2}^{\dd_3^{-1} \tr(h)}
\sum_{j'' \in \Zb/\dd_2\Zb}
 \zeta_{\dd_2}^{j''(s + \dd_3^{-1} h_1)}
\end{align*}
By Lemma \ref{lemma:additive} (or inspecting the appendix), we know that $h_1 \in 2\Zb/2\dd_2\Zb$ for all $h \in \kappa_{\dd, 2}(N_{\dd, 2}')$.
So $s + \dd_3^{-1}h_1 \in (\Zb/\dd_2\Zb)^\times$ and the sum above vanishes.
\end{proof}

\begin{lemma}
\label{lemma:basis}
For any $\dd \mid 24$ and any basis $\mathcal{B}$ of $\Cb^{\varphi(\dd_2)}$, the set
\begin{equation}
  \label{eq:cbasis}
 \bigcup_{\vec{c} \in \mathcal{B}}  \{\uf_{\dd, 2}(\vec{c}), \vf_{\dd, 2}(\vec{c}), \wf_{\dd, 2}(\vec{c})\}
\end{equation}
is a basis of $U_{\dd, 2}'$.
\end{lemma}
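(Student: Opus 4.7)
The strategy is a dimension count. By Lemma \ref{lemma:computer} the dimension of $U_{\dd,2}'$ equals $3\varphi(\dd_2)$, which is precisely the cardinality of the proposed basis. Moreover, any invertible linear change of basis of $\Cb^{\varphi(\dd_2)}$ induces an invertible linear change of basis on the corresponding collection of vectors, so it suffices to prove linear independence for the particular choice $\mathcal{B} = \{e_s : s \in (\Zb/\dd_2\Zb)^\times\}$.

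First I would check that each of the three vectors $\uf_{\dd,2}(\vec c)$, $\vf_{\dd,2}(\vec c)$, $\wf_{\dd,2}(\vec c)$ actually lies in $U_{\dd,2}'$. Invariance of $\uf_{\dd,2}(\vec c)$ under $H_{\dd,2}'$ is a verbatim repetition of Lemma \ref{lemma:ufinv}, using that $N_{\dd,2}'$ is normal in $N_{\dd,2}$; this extends to $\vf_{\dd,2}(\vec c)$ and $\wf_{\dd,2}(\vec c)$ because $\omega_{\dd,2}$ commutes with the orthogonal action of $H_{\dd,2}$. Orthogonality to $U_{\dd,2}^{H_{\dd/2,2}'}$ amounts to verifying that the coset generator $(T^{\dd_2/2},1)$ of $H_{\dd/2,2}'/H_{\dd,2}'$ acts as $-1$; for $\uf$ this follows from the identity $\zeta_{\dd_2}^{-s\dd_2/2} = -1$ (valid because $s$ is odd), and it propagates to $\vf$ and $\wf$ by the same commutation.

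Next I would decompose $U_{\dd,2}'$ into eigenspaces under the left action of $(T,1)\in H_{\dd,2}$. A direct shift computation gives $(T,1)\cdot \uf_{\dd,2}(e_s) = \zeta_{\dd_2}^{-s}\,\uf_{\dd,2}(e_s)$, and the same eigenvalue is inherited by $\vf_{\dd,2}(e_s)$ and $\wf_{\dd,2}(e_s)$ because $\omega_{\dd,2}$ commutes with $(T,1)$. Hence each triple $\{\uf(e_s),\vf(e_s),\wf(e_s)\}$ lies in the $\zeta_{\dd_2}^{-s}$-eigenspace $E_{-s}$, and triples attached to distinct $s\in (\Zb/\dd_2\Zb)^\times$ are automatically linearly independent. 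Inside a single $E_{-s}$, Lemma \ref{lemma:support} separates $\uf(e_s)$ from $\vf(e_s),\wf(e_s)$ by disjoint supports, so the problem reduces to ruling out that $\vf(e_s)$ and $\wf(e_s)$ are proportional.

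For the last step I would adapt the chain of identities used in the proof of Proposition \ref{prop:repembed} to the vector $\uf_{\dd,2}(e_s)$. The quadratic form $Q_{\dd,2}$ is constant on $\kappa_{\dd,2}(T^jN_{\dd,2}')$ (so $\omega_{\dd,2}(T)$ acts on $\uf(e_s)$ by a scalar) and $-I=D^2\in N_{\dd,2}'$ (so $\omega_{\dd,2}(-I)$ fixes $\uf(e_s)$); combining these with the identities $S^2=-I$ and $SB=T^2S$ exactly as in that proof shows that $\uf(e_s),\vf(e_s),\wf(e_s)$ span an irreducible three-dimensional subrepresentation of $\omega_{\dd,2}$ isomorphic to a twist of $\varrho_{\dd,2}$, on which $\omega_{\dd,2}(T)$ swaps $\vf(e_s)$ and $\wf(e_s)$ up to a nonzero scalar and therefore forbids proportionality. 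Carrying out this $p$-adic adaptation of Proposition \ref{prop:repembed} cleanly is the main technical obstacle; once it is in place, the dimension count closes the argument.
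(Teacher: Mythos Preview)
Your overall route matches the paper's: dimension count via Lemma~\ref{lemma:computer}, reduction to the standard basis by linearity, separation of the triples for different $s$ via the eigenvalues of the orthogonal action of $(T,1)$, and then Lemma~\ref{lemma:support} to isolate $\uf(e_s)$ from $\{\vf(e_s),\wf(e_s)\}$. The only substantive issue is your last step.

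The claim that ``$\omega_{\dd,2}(T)$ swaps $\vf(e_s)$ and $\wf(e_s)$ up to a nonzero scalar and therefore forbids proportionality'' is not a valid argument. If $\wf = c\,\vf$ for some scalar $c$, then $\omega_{\dd,2}(T)\vf = \zeta_{2\dd_2}^{\dd_3}\wf = c\zeta_{2\dd_2}^{\dd_3}\vf$ simply says $\vf$ is an eigenvector of $\omega_{\dd,2}(T)$; there is no contradiction from the swap alone. Relatedly, asserting that the three vectors ``span an irreducible three-dimensional subrepresentation'' presupposes their linear independence, which is what you are trying to prove. You could salvage this by first checking that $\varrho_{\dd_2,\dd_3}$ is irreducible and then arguing that the surjection $(\Cb^3,\varrho_{\dd_2,\dd_3})\twoheadrightarrow \mathrm{span}\{\uf,\vf,\wf\}$ must be an isomorphism since $\uf\neq 0$; but you have not done this.

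The paper closes this gap differently and more directly: assuming $\vf(e_s)$ and $\wf(e_s)$ are proportional (and $\dd_2\ge 2$), the span of $\uf(e_s)$ and $\vf(e_s)$ is a two-dimensional $\SL_2(\Zb)$-subrepresentation on which, in that basis, $T$ and $S$ act by $\smat{\zeta_{\dd_2}^{-1}}{0}{0}{\pm\zeta_{2\dd_2}}$ and $\smat{0}{1}{1}{0}$. One then computes that the image of $(TS)^6$ is $(\pm\zeta_{2\dd_2}^{-1})^3 I$, which is not the identity when $2\mid\dd_2$, contradicting $(TS)^6=I$ in $\SL_2(\Zb)$. This is the missing ingredient in your final step.
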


\begin{proof}
We know that dimension of $U'_{\dd, 2}$ is $3 \varphi(\dd_2)$ from Lemma \ref{lemma:computer}, and need to check linear independence of the vectors in the set above.
Since the vectors $\uf_{\dd, 2}(\vec{c}), \vf_{\dd, 2}(\vec{c}), \wf_{\dd, 2}(\vec{c})$ are defined linearly, it suffices to prove the lemma for $\mathcal{B} = \{\vec{e}(s_0): s_0 \in (\Zb/\dd_2\Zb)^\times \}$ with $\vec{e}(s_0)\in \Cb^{\varphi(\dd_2)}$ the standard basis vector with 0 everywhere except 1 at the $s_0^{\text{th}}$ entry.
It is easily checked from the definition that $\uf_{\dd, 2}(\vec{c}), \vf_{\dd, 2}(\vec{c}), \wf_{\dd, 2}(\vec{c})$ are in $U_{\dd, 2}'$ are eigenvectors of $T$ with eigenvalue $\zeta_{\dd_2}^{-s_0}$ when $\vec{c} = \vec{e}(s_0)$.
Therefore, it suffices to check that the three vectors $\uf_{\dd, 2}(\vec{c}), \vf_{\dd, 2}(\vec{c}), \wf_{\dd, 2}(\vec{c})$ are linearly independent whenever $\vec{c} = \vec{e}(s_0)$.

When $\dd_2 = 1$, this is easily checked by hand.
When $\dd_2 \ge 2$, it suffices to show that $\vf_{\dd, 2}(\vec{e}(s_0))$ and $\wf_{\dd, 2}(\vec{e}(s_0))$ are linearly independent by Lemma \ref{lemma:support}.
Assume otherwise, then the restriction of $\omega_{\dd, 2}$ to $\Cb \uf_{\dd, 2}(\vec{e}(s_0)) + \Cb \vf_{\dd, 2}(\vec{e}(s_0))$ is a 2-dimensional representation of $\SL_2(\Zb)$. In the basis $\{\uf_{\dd, 2}(\vec{e}(s_0)), \vf_{\dd, 2}(\vec{e}(s_0))\}$, it is given by the map
$$
T \mapsto \pmat{\zeta_{\dd_2}^{-1}}{}{}{\pm \zeta_{2\dd_2}},~
S \mapsto \pmat{}{1}{1}{}.
$$
However, $(T\cdot S)^6$ is the identity, whereas
$$
\lp \pmat{\zeta_{\dd_2}^{-1}}{}{}{\pm \zeta_{2\dd_2}}
\pmat{}{1}{1}{} \rp^6
= \lp \pm \pmat{\zeta_{2\dd_2}^{-1}}{}{}{\zeta_{2\dd_2}^{-1}} \rp^3
$$
is not the identity since $2 \mid \dd_2$.
This is a contradiction and finishes the proof.
\end{proof}
\begin{proposition}
  \label{prop:decomp2}
  For $\dd \mid 24$, let $\omega_{\dd, 2}'$ denote the restriction of $\omega_{\dd, 2}$ to $U_{\dd, 2}' \subset U_{\dd, 2}^{H_{\dd, 2}'}$. Then $\uf_{\dd, 2}(\vec{c})$ satisfies
  \begin{equation}
        \label{eq:udinv2}
     \omega_{\dd, 2}'(g) \uf_{\dd, 2}(\vec{c}) = \chi(g)^{-24\dd_3/\dd_2} \uf_{\dd, 2}(\vec{c})  \end{equation}
  for all $g \in     \Gamma_0(2)$ and $\vec{c} \in \Cb^{\varphi(\dd_2)}$.
  Furthermore with respect to the basis in \eqref{eq:cbasis}, we have
  \begin{equation}
    \label{eq:decomp2}
    \omega_{\dd, 2}' \cong
      \varrho_{\dd_2, \dd_3}^{\oplus \varphi(\dd_2)}
  \end{equation}
Here $\varrho_{\dd_2, \dd_3}$ is the 3-dimensional representation defined in \eqref{eq:varrhod}.
\end{proposition}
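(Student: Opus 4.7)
I would prove the proposition in three steps. First, verify the character identity \eqref{eq:udinv2} by checking it on the three generators $T$, $S^2$, $TB$ of $\Gamma_0(2)$. Second, show that the $3$-dimensional subspace $W(\vec{c}) := \Cb \uf_{\dd,2}(\vec{c}) + \Cb \vf_{\dd,2}(\vec{c}) + \Cb \wf_{\dd,2}(\vec{c})$ is $\SL_2(\Zb)$-invariant and, in the ordered basis $(\uf, \vf, \wf)$, carries the representation $\varrho_{\dd_2, \dd_3}$. Third, invoke Lemma \ref{lemma:basis} to assemble these into the decomposition $\omega'_{\dd,2} \cong \varrho_{\dd_2, \dd_3}^{\oplus \varphi(\dd_2)}$.

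For step one: on $T$, the Weil formula $\omega_{\dd,2}(T)\ef_h = \ebf(-Q_{\dd,2}(h))\ef_h$ together with \eqref{eq:easy} gives, for each $h \in \kappa_{\dd,2}(T^j N'_{\dd,2})$ arising from an $\SL_2$-matrix, $Q_{\dd,2}(h) = \dd_3^{-1}/\dd_2$. Since $\dd_2 \mid 8$ and $\dd_3 \in \{1,3\}$ force $\dd_3^2 \equiv 1 \pmod{\dd_2}$, this reduces to $\dd_3/\dd_2 \pmod{1}$, yielding $\omega_{\dd,2}(T) \uf_{\dd,2}(\vec{c}) = \zeta_{\dd_2}^{-\dd_3} \uf_{\dd,2}(\vec{c}) = \chi(T)^{-24\dd_3/\dd_2} \uf_{\dd,2}(\vec{c})$. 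On $S^2 = -I$, which in signature $(2,2)$ acts by $\ef_h \mapsto \ef_{-h}$: one checks $D^2 = -I \in N'_{\dd,2}$, and the centrality of $-I$ implies each coset $\kappa_{\dd,2}(T^jN'_{\dd,2})$ is stable under $h \mapsto -h$, so $\uf_{\dd,2}(\vec{c})$ is $\omega(S^2)$-invariant. The delicate case is $TB = D$: although $D \in N'_{\dd,2}$, this orthogonal containment does not immediately yield Weil invariance, and one must run an explicit Weil-formula computation, writing $B = ST^2 S^{-1}$ and applying the $S$-action, using the coset description from Lemma \ref{lemma:additive} together with the tables in the appendix.

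For step two, I would write down the action of $T$ and $S$ on $W(\vec{c})$ in the basis $(\uf, \vf, \wf)$ and compare with the matrices in \eqref{eq:varrhod}. The identities $\omega(S)\uf = \vf$ and $\omega(T)\vf \in \Cb\wf$ follow directly from the definitions of $\vf$ and $\wf$, while $\omega(S)\vf = \omega(S^2)\uf = \uf$ is immediate from step one. The remaining actions $\omega(T)\wf$ and $\omega(S)\wf$ reduce, via the $\SL_2(\Zb)$-relations $T^2 S = SB$ and $STS = T^{-1}ST^{-1}$, to computing $\omega(B)\uf_{\dd,2}(\vec{c})$. Since $\chi(B) = \chi(T)^{-1}\chi(TB) = \zeta_{24}^{-1}$, this evaluates to $\zeta_{\dd_2}^{\dd_3}\uf_{\dd,2}(\vec{c})$ by the $TB$-invariance from step one, and the resulting matrix matches $\varrho_{\dd_2,\dd_3}$. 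Step three is then immediate: Lemma \ref{lemma:basis} exhibits a basis of $U'_{\dd,2}$ formed by the triples $\{\uf_{\dd,2}(\vec{c}), \vf_{\dd,2}(\vec{c}), \wf_{\dd,2}(\vec{c})\}$ as $\vec{c}$ runs over any basis $\mathcal{B}$ of $\Cb^{\varphi(\dd_2)}$, and each triple spans a copy of $\varrho_{\dd_2,\dd_3}$.

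The main obstacle is the $TB$-invariance in step one, equivalently the evaluation $\omega(B)\uf_{\dd,2}(\vec{c}) = \zeta_{\dd_2}^{\dd_3}\uf_{\dd,2}(\vec{c})$: the element $B$ lies outside any diagonal torus, and the Weil action of $S$ mixes all of $\Ac_{\dd,2}$. The cleanest resolution I foresee combines Lemma \ref{lemma:computer} (which restricts the support of vectors in $U'_{\dd,2}$) with the distinguishing congruences in \eqref{eq:distinguish}: the latter pin down $\kappa_{\dd,2}(N'_{\dd,2})$ inside $\Ac'_{\dd,2}$, while the former forces $\omega(B)\uf_{\dd,2}(\vec{c})$ to lie in the $\zeta_{\dd_2}^{-\dd_3}$-eigenspace of $T$, leaving only the scalar to be pinned down by matching against the $T$-action already computed.
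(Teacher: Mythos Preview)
Your approach is essentially the same as the paper's: check \eqref{eq:udinv2} on the generators $T$, $S^2$, $TB$, then verify the $\varrho_{\dd_2,\dd_3}$-structure on each triple $(\uf,\vf,\wf)$, and assemble via Lemma~\ref{lemma:basis}. Your treatment of $T$ and $S^2$ matches the paper's exactly.

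The one place where your plan is less efficient is the $TB$ case. You propose to compute $\omega(B)\uf_{\dd,2}(\vec{c})$ directly and then ``pin down the scalar'' using the distinguishing congruences \eqref{eq:distinguish}; but the $\zeta_{\dd_2}^{-\dd_3}$-eigenspace of Weil-$T$ inside $U'_{\dd,2}$ is $\varphi(\dd_2)$-dimensional (it contains all the $\uf_{\dd,2}(\vec{c}')$), so knowing $\omega(B)\uf_{\dd,2}(\vec{c})$ lands there does not by itself yield the scalar, and appealing to \eqref{eq:distinguish} would be a detour. The paper's route is cleaner: writing $TB = TST^2S^{-1}$, the identity $\omega(TB)\uf_{\dd,2}(\vec{c}) = \uf_{\dd,2}(\vec{c})$ is equivalent to $\omega(T^2)\vf_{\dd,2}(\vec{c}) = \zeta_{\dd_2}^{\dd_3^{-1}}\vf_{\dd,2}(\vec{c})$. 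By Lemmas~\ref{lemma:computer} and~\ref{lemma:support}, the support of $\vf_{\dd,2}(\vec{c})$ lies entirely in the single level set $\{h: 2Q_{\dd,2}(h) = -\dd_3^{-1}/\dd_2\}$, so $\omega(T^2)$ acts on it by the scalar $\ebf(\dd_3^{-1}/\dd_2) = \zeta_{\dd_2}^{\dd_3^{-1}}$ directly---no separate scalar-matching step is needed.
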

\begin{remark}
  If $\vec{c} = (1, \dots, 1) \in \Cb^{\varphi(\dd_2)}$, we will simply write $\uf_{\dd, 2}$ for $\uf_{\dd, 2}(\vec{c})$.
  They are explicitly given by
    \begin{equation}
    \label{eq:uf2}
\uf_{\dd, 2} =
    \begin{cases}
\ef_{\oo_2},      & \dd_2 = 1, \\
2^{\dd_2/2} \lp \sum_{\kappa_{\dd, 2}(N_{\dd, 2}')} \ef_h - \sum_{\kappa_{\dd, 2}(T^{\dd_2/2} N_{\dd, 2}' )} \ef_h \rp,      & \dd_2 = 2, 4, 8.
    \end{cases}
  \end{equation}
\end{remark}
\begin{proof}
  For the first claim, it suffices to prove the it when $g = T, S^2, TB$, which are generators of $\Gamma_0(2)$.
    If $g = T$, then $\omega_{\dd, 2}'(T) \ef_h = \ebf(-Q_{\dd, 2}(h)) \ef_h$.
  For $h \in \kappa_{\dd, 2}(T^j N_{\dd, 2}')$, we have $Q_{\dd, 2}(h) = \dd_3^{-1}/\dd_2 = \dd_3/\dd_2 \in \tfrac{1}{\dd_2} \Zb/\Zb$. Equation \eqref{eq:udinv2} therefore holds for $g = T$.
  When $g = S^2$, since $\omega_\dd(S^2) \ef_h = \ef_{-h}$ for all $h \in \Ac_\dd$ and $-\smat{1}{}{}{1} \in N_{\dd, 2}'$, we know that $- \kappa_\dd(T^j N_{\dd, 2}') = \kappa_\dd(T^j N_{\dd, 2}')$ and equation \eqref{eq:udinv2} holds for $g = S^2$.

For $g = TB = TST^2S^{-1}$, it suffices to show that the middle equation below
$$
\omega_{\dd, 2}'(S) \omega_{\dd, 2}(T^{-1}) \uf_{\dd, 2}(\vec{c}) =
\zeta_{\dd_2}^{\dd_3^{-1}} \vf_{\dd, 2}(\vec{c}) =
 \omega_{\dd, 2}(T^{2})\vf_{\dd, 2}(\vec{c}) =
 \omega_{\dd, 2}(T^{2}) \omega_{\dd, 2}'(S) \uf_{\dd, 2}(\vec{c}).
$$
This is easily checked by hand when $\dd_2 = 1$.
If $2 \mid \dd_2$, we know by Lemmas \ref{lemma:computer} and \ref{lemma:support} that the support of $\vf_{\dd, 2}(\vec{c})$ is contained in $\{h \in \Ac_{\dd, 2}: 2Q_\dd(h) = - \frac{\dd_3^{-1}}{\dd_2} \}$. It is therefore an eigenvector of $\omega'_{\dd, 2}(T^2)$ with eigenvalue $\zeta_{\dd_2}^{\dd_3^{-1}}$. This proves the first claim.
As in the proof of Prop.\ \ref{prop:repembed}, the vectors $\{\uf_{\dd, 2}(\vec{c}), \vf_{\dd, 2}(\vec{c}), \wf_{\dd, 2}(\vec{c})\}$ generate a 3-dimensional subrepresentation of $\omega_{\dd, 2}$ isomorphic to $\varrho_{\dd_2, \dd_3}$.
The second claim then follows from Lemma \ref{lemma:basis}.
\end{proof}

\section{Borcherds liftings}
 \label{sect:Borcherds}

\subsection{Brief review of Borcherds liftings} \label{sect:Borcherds}
We first  set up notation and briefly review the Borcherds lifting, following \cite[Section 3]{YY19}. Let $V=V_\dd$ and $H$ be as in Section \ref{sect3}.

 Let
 \begin{equation}
 \mathcal L = \{ w \in V_\C:\,  (w, w) =0,  \quad (w, \bar w)<0\}.
 \end{equation}
 and let $\mathbb D$ be the Hermitian symmetric domain of oriented negative $2$-planes in $V_\R=V\otimes_\Q \R$. Then one has an isomorphism
 $$
 pr: \mathcal L/\C^\times \cong \mathbb D, \quad  w=u + i v \mapsto \R u + \R (-v).
 $$
For the isotropic matrix $\ell= \kzxz{0}{-1}{0}{0} \in L$ and $\ell' =\kzxz {0} {0} {\frac{1}{\dd}} {0} \in V$ with $(\ell, \ell')=1$. We also have the associated tube domain
$$
\mathcal H_{\ell, \ell'} =\{ \kzxz {z_1} {0} {0} {-z_2} : \,  y_1 y_2  >0\} ,\quad y_i=\hbox{Im}(z_i),
$$
together with
$$
w: \mathcal H_{\ell, \ell'} \rightarrow \mathcal L,  \quad w( \kzxz {z_1} {0} {0} {-z_2}) = \kzxz {z_1} {-\dd z_1 z_2} { \frac{1}{\dd}} { -z_2}.
$$
This gives an isomorphism $\mathcal H_{\ell, \ell'}\cong \mathcal L/\C^\times$. We also identity $\H^2 \cup (\H^-)^2$ with $\mathcal H_{\ell, \ell'}$ by
$$\psi_\dd: z=(z_1,z_2)\mapsto\kzxz{\frac{z_1}{\dd}}{0}{0}{\frac{z_2}{\dd}}.$$ Note that we use this identification in order to have the following compatibility property and it is also the identification used in the computation of Borcherds products. The following is a special case of \cite[Proposition 3.1]{YY19}

\begin{proposition} Define
$$
w_{\dd}: \,  \H^2 \cup (\H^-)^2  \rightarrow \mathcal L, \quad w_{\dd}(z_1, z_2) = w\circ\psi_\dd(z_1,z_2)=\kzxz {\frac{z_1}{\dd}} {\frac{-z_1 z_2}{\dd}} {\frac{1}{\dd}} {\frac{-z_2}{\dd}}.
$$
 Then the composition $pr \circ w_{\dd}$ gives an isomorphism between $\H^2 \cup (\H^-)^2$ and $\mathbb D$. Moreover, $w_{\dd}$ is $H(\R)$-equivariant, where $H(\R)$  acts on $\H^2 \cup (\H^-)^2$ via  the usual linear fraction:
$$
(g_1, g_2)(z_1, z_2) =(g_1(z_1), g_2(z_2)),
$$
and acts on $\mathcal L$ and $\mathbb D$ naturally via its action on $V$. Moreover, one has
\begin{equation} \label{eq:linebundle}
(g_1, g_2) w_{\dd}(z_1, z_2) = \frac{j(g_1, z_1) j(g_2, z_2)}{\nu(g_1, g_2)} w_{\dd}(g_1(z_1), g_2(z_2)),
\end{equation}
where $\nu(g_1, g_2) =\det g_1 =\det g_2$ is the spin character of $H\cong\Gspin(V)$, and
$$
j(g_1,g_2, z_1, z_2) = j(g_1, z_1) j(g_2, z_2)=(c_1 z_1+d_1)(c_2 z_2 +d_2)
$$
is the automorphy factor (of weight $(1, 1)$).
\end{proposition}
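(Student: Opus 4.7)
This proposition is the direct analog of \cite[Proposition 3.1]{YY19} with the quadratic form scaled by $\dd$, and I would verify the three assertions by short direct computations on $2\times 2$ matrices.

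First, to see that $w_\dd(z_1,z_2)\in\mathcal L$, I would compute $\det w_\dd=-z_1z_2/\dd^2+z_1z_2/\dd^2=0$, so $Q_\dd(w_\dd)=\dd\det(w_\dd)=0$ and hence $(w_\dd,w_\dd)=0$. Writing $w_\dd=u+iv$ and computing $Q_\dd(u)=\dd\det u=-y_1y_2/\dd$ in the same way, the identity $(w_\dd,\bar w_\dd)=4Q_\dd(u)=-4y_1y_2/\dd$ shows that the negativity condition is exactly $y_1y_2>0$, which is the defining inequality for $\H^2\cup(\H^-)^2$. The isomorphism $pr\circ w_\dd\cong\mathbb D$ then reduces to the standard tube-domain picture: given isotropic $\ell$ and complementary $\ell'$ with $(\ell,\ell')=1$, the map $w$ is the usual biholomorphism $\mathcal H_{\ell,\ell'}\cong\mathcal L/\C^\times$, and $\psi_\dd$ is a manifest biholomorphism from $\H^2\cup(\H^-)^2$ onto $\mathcal H_{\ell,\ell'}$.

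The main computation is the equivariance formula \eqref{eq:linebundle}. The observation that makes this painless is the rank-one factorization
$$
w_\dd(z_1,z_2)=\frac{1}{\dd}\begin{pmatrix}z_1\\ 1\end{pmatrix}\begin{pmatrix}1 & -z_2\end{pmatrix},
$$
which decouples the two $\GL_2$ factors. Left multiplication by $g_1=\kzxz{a_1}{b_1}{c_1}{d_1}$ only touches the column:
$$
g_1\begin{pmatrix}z_1\\ 1\end{pmatrix}=\begin{pmatrix}a_1z_1+b_1\\ c_1z_1+d_1\end{pmatrix}=j_1\begin{pmatrix}g_1 z_1\\ 1\end{pmatrix},\qquad j_1=c_1z_1+d_1,
$$
and right multiplication by $g_2^{-1}=\nu^{-1}\kzxz{d_2}{-b_2}{-c_2}{a_2}$ only touches the row, producing $(j_2/\nu)(1,-g_2z_2)$ after the identity $a_2z_2+b_2=j_2\cdot g_2 z_2$. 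Reassembling,
$$
g_1\,w_\dd(z_1,z_2)\,g_2^{-1}=\frac{j_1j_2}{\dd\,\nu}\begin{pmatrix}g_1z_1\\ 1\end{pmatrix}\begin{pmatrix}1 & -g_2z_2\end{pmatrix}=\frac{j_1j_2}{\nu}\,w_\dd(g_1z_1,g_2z_2),
$$
which is the asserted identity.

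I do not expect a real conceptual obstacle — the entire argument is mechanical — but the one thing to keep in mind is that the $1/\dd$ appears only as an overall scaling of $w_\dd$ and cancels on both sides of the equivariance. This explains why the automorphy factor $j_1 j_2/\nu$ is independent of $\dd$: rescaling the quadratic form changes the lattice and the Weil representation, but not the line-bundle cocycle on $\mathbb D$.
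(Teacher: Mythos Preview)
Your proof is correct. The paper does not prove this proposition at all; it simply states it as a special case of \cite[Proposition~3.1]{YY19}. Your direct verification via the rank-one factorization $w_\dd(z_1,z_2)=\frac{1}{\dd}\smat{z_1}{}{1}{}\,(1,\,-z_2)$ is exactly the kind of computation that underlies that reference, and your remark that the factor $1/\dd$ is a global scalar (hence invisible in the automorphy cocycle) is the correct explanation of why the formula is independent of $\dd$.
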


For a congruence subgroup $\Gamma$ of $\SL_2(\Z)$, let $X_\Gamma$ be the associated open modular curve over $\Q$ such that $X_\Gamma(\C) =\Gamma \backslash \H$.
Assume $\Gamma \supset \Gamma(M)$ for some integer $M \ge 1$.
Let
$$
\nu:  \A^\times \hookrightarrow \GL_2(\A), \quad \nu(d) = \diag(1, d).
$$
Let
$K(\Gamma)$ be the product of $\nu(\hat{\Z}^\times)$ and the preimage of $\Gamma/\Gamma(M)$ in $\GL_2(\hat\Z)$ (under the map $\GL_2(\hat\Z)\rightarrow \GL_2(\Z/M))$. Let $K=(K(\Gamma)\times K(\Gamma))\cap H(\A_f)$. Then    one has by the strong approximation theorem
$$
X_K\cong X_\Gamma \times X_\Gamma.
$$
In this way, we have identified the product of two copies of a modular curve $X_\Gamma$  with a Shimura variety $X_K$.

Suppose that $\Gamma$ acts on $L'/L$ trivially, then  for each $\mu \in  L'/L$ and $m \in Q(\mu)+L$, the associated special divisor $Z_\Gamma(m, \mu)$ is given by
$$
Z_{\Gamma}(m, \mu) =(\Gamma \times \Gamma)\backslash \{ (z_1, z_2):\, w_{\dd}(z_1, z_2) \perp x \hbox{ for some } x \in \mu+L, Q(x)=m\}.
$$
More generally, assume $\Gamma \supset \Gamma(M)$ preserves $L$, and $\uf  =\sum a_\mu \ef_\mu \in \C[L'/L]$ is $\Gamma \times \Gamma$-invariant, the cycle
$$
Z_{\Gamma(M)} (m, \uf) =\sum  a_\mu  Z_{\Gamma(M)} (m, \mu)
$$
descends to a cycle $Z_\Gamma(m, \uf)$ in $X_{\Gamma} \times X_{\Gamma}$.
For our purpose, we will take
$$
\dd \mid 24,~\Gamma = \Gamma_{\chi, \dd}\supset \Gamma_{\dd} \supset \Gamma(2\dd) \supset \Gamma(48)
$$
from now on and write $X_\dd := X_\Gamma = X_{\Gamma_{\chi, \dd}}$.
Notice that $X_1 = X_0(2)$ has two cusps, $i\infty$ and $0$. Since $\{T^j: 1 \le j \le \dd\}$ are coset representatives of $\Gamma_{\chi, \dd}$ in $\Gamma_{\chi, 1}$, the modular curve $X_\dd$ has the same cusps as $X_1$.

\begin{lemma}\cite{YY19}[Corollary 3.3]
\label{lemma:YYdivisor}
For $\dd \mid \dd' \mid 24$, let $\pi_{}: X_{\Gamma(2\dd')} \rightarrow X_{\dd}$ be the natural projection.
Then
\begin{equation}
  \label{eq:diagonal*}
(\pi_{} \times \pi_{})^*(X_\dd^\Delta) =\sum_{\gamma \in \Gamma_{}/\Gamma(2\dd')}  Z_{\Gamma(2\dd')} \lp \frac{1}{\dd'}, \frac{1}{\dd'}\gamma +L\rp
\end{equation}
and the group $\Gamma(2\dd')$ can be replaced by $\Gamma_{\dd'}$.
\end{lemma}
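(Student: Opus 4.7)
The plan is to compute both sides of \eqref{eq:diagonal*} by analysing the preimage upstairs in $\H^2$ and matching components to Heegner divisors. Set $\Gamma:=\Gamma_{\chi,\dd}$. Since $\dd\mid\dd'$ implies $\Gamma(2\dd')\subset\Gamma_{\dd'}\subset\Gamma_\dd=\Gamma_{L_\dd}$, the subgroup $\Gamma(2\dd')$ acts trivially on $L_\dd'/L_\dd$, and Heegner divisors $Z_{\Gamma(2\dd')}(m,\mu)$ are well-defined. A pair $(z_1,z_2)\in X_{\Gamma(2\dd')}^2$ lies in $(\pi\times\pi)^{-1}(X_\dd^\Delta)$ exactly when $\pi(z_1)=\pi(z_2)$, i.e.\ when $z_1=\gamma z_2$ for some $\gamma\in\Gamma$. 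Using that $\Gamma(2\dd')$ is normal in $\SL_2(\Z)$, so that single and double cosets agree, this locus decomposes as a disjoint union of irreducible components indexed by $\gamma\in\Gamma/\Gamma(2\dd')$, each of the form $\{(z_1,z_2):z_1=\gamma z_2\}/\Gamma(2\dd')^2$.

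The matching of each $\gamma$-component to a Heegner divisor rests on the explicit formula
$$
\bigl(w_\dd(z_1,z_2),\,x\bigr) \;=\; 2c\,z_1z_2+d\,z_1-a\,z_2-b, \qquad x=\pmat{a}{b}{2c}{d}\in V_\dd,
$$
which follows from the bilinear form $(x,y)=\dd(\tr(x)\tr(y)-\tr(xy))$ associated with $Q_\dd=\dd\det$. Consequently, the orthogonality $w_\dd(z_1,z_2)\perp x$ translates into the M\"obius relation $z_1=x(z_2)$. For a given $\gamma\in\Gamma$, any $x\in\gamma/\dd'+L_\dd$ with $Q_\dd(x)=1/\dd'$ is of the shape $x=g/\dd'$ where $g\in L_\dd$ satisfies $g\equiv\gamma\pmod{\dd'L_\dd}$ and $\det g=\dd'/\dd$; such a $g$ acts on $\H$ by the same M\"obius transformation as its class in the relevant coset of $\Gamma/\Gamma(2\dd')$, and conversely every $g$ of this form produces a component lying over the $\gamma$-piece of the diagonal preimage. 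This identifies each Heegner divisor $Z_{\Gamma(2\dd')}(1/\dd',\gamma/\dd'+L)$ with the $\gamma$-component of the preimage.

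Finally, the multiplicities on both sides must be compared. The pullback $(\pi\times\pi)^*(X_\dd^\Delta)$ assigns each of its $[\Gamma:\Gamma(2\dd')]$ components the multiplicity dictated by $\sum m_ie_i=\deg(\pi\times\pi)$; on the Heegner side, summing over all $\gamma\in\Gamma/\Gamma(2\dd')$ counts each reduced component of $Z_{\Gamma(2\dd')}(1/\dd',\gamma/\dd'+L)$ with a multiplicity reflecting the number of $\gamma$'s yielding the same coset modulo $\dd'L_\dd$, and normality of $\Gamma(2\dd')$ in $\SL_2(\Z)$ arranges these two counts to coincide. The replacement of $\Gamma(2\dd')$ by $\Gamma_{\dd'}$ is immediate by pushing forward under $X_{\Gamma(2\dd')}^2\to X_{\Gamma_{\dd'}}^2$, since $\Gamma(2\dd')\subset\Gamma_{\dd'}\subset\Gamma_\dd$ lets the whole argument transpose verbatim. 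The main obstacle I expect is precisely this multiplicity bookkeeping, which requires reconciling how the coset structure of $\Gamma/\Gamma(2\dd')$ interacts with the coset class of $\gamma/\dd'$ in $L_\dd'/L_\dd$ on the two sides.
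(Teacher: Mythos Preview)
The paper does not give its own proof of this lemma; it is simply quoted from \cite{YY19}, Corollary~3.3. So there is no ``paper's proof'' to compare against.

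Your sketch follows the standard line for such identities, and the key computation $(w_\dd(z_1,z_2),x)=2cz_1z_2+dz_1-az_2-b$, hence $w_\dd(z_1,z_2)\perp x \Longleftrightarrow z_1=x(z_2)$, is correct. However, there is a genuine gap in your choice of lattice. You work with $L=L_\dd$, so that a vector $x\in\gamma/\dd'+L_\dd$ with $Q_\dd(x)=1/\dd'$ has the form $g/\dd'$ with $\det g=\dd'/\dd$. When $\dd'>\dd$ this $g$ is \emph{not} in $\SL_2(\Z)$, so it has no ``class in the relevant coset of $\Gamma/\Gamma(2\dd')$'' and your matching step fails. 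Worse, $\gamma/\dd'$ need not even lie in $L_\dd'$ when $\dd'>\dd$ (the top-left entry $\alpha/\dd'$ lies in $\tfrac{1}{\dd}\Z$ only if $(\dd'/\dd)\mid\alpha$), so the coset $\gamma/\dd'+L_\dd$ is not an element of the discriminant form at all.

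The lattice implicit in the statement is $L_{\dd'}$, not $L_\dd$: then $\gamma/\dd'\in L_{\dd'}'$ for every $\gamma\in\Gamma_0(2)$, one has $Q_{\dd'}(\gamma/\dd')=1/\dd'$ on the nose, and any $x=g/\dd'$ in that coset with the correct norm satisfies $\det g=1$, so $g\in\Gamma_0(2)$ acts as a genuine modular substitution and the component $\{z_1=g(z_2)\}$ sits honestly over the diagonal. (This is also why the final clause ``$\Gamma(2\dd')$ can be replaced by $\Gamma_{\dd'}$'' is natural: $\Gamma_{\dd'}=\Gamma_{L_{\dd'}}$.) With this correction your outline is essentially the argument; the multiplicity bookkeeping you flag then reduces to checking that the fibers of the well-defined map $\Gamma/\Gamma(2\dd')\to L_{\dd'}'/L_{\dd'}$, $\gamma\mapsto\gamma/\dd'+L_{\dd'}$, match the number of irreducible components of each Heegner divisor, and both are governed by the same coset count.
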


Since $\Gamma$ is normal in $\Gamma_0(2) = \Gamma_{\chi, 1}$, the action of $T = \smat{1}{1}{0}{1} \in \Gamma_0(2)$ on $\Hb$ factors through $X_\dd$ and defines a map $X_\dd \to X_\dd$, which we also denote by $T$.
Using this, we can define translates of the diagonal
\begin{equation}
  \label{eq:diagonalj}
  X_\dd^\Delta(j) := (T^j \times \mathds{1})^*(X_\dd^\Delta) \subset X_\dd \times X_\dd
\end{equation}
for $j \in \Zb/\dd\Zb$.
Equation \eqref{eq:diagonal*} also generalizes to
\begin{equation}
  \label{eq:diagonalj*}
(\pi_{} \times \pi_{})^*(X_\dd^\Delta(j)) =\sum_{\gamma \in \Gamma_{}/\Gamma(2\dd')}  Z_{\Gamma(2\dd')} \lp \frac{1}{\dd'}, \frac{1}{\dd'}T^j\gamma +L\rp    ,
\end{equation}
where one can replace $\Gamma(2\dd')$ with $\Gamma_{\dd'}$.
From this, we see that the pull back of $X_\dd^\Delta(j)$ along natural projection $X_{\dd'} \times X_{\dd'} \to X_{\dd} \times X_{\dd}$ is $\cup_{l \in \dd \Zb/\dd'\Zb} X_{\dd'}^\Delta(j + l)$.
Before proceeding further to state and prove the main result of this section, we record the following identity for convenience.

\begin{lemma}
  \label{lemma:pol}
  For any $\dd \in \Nb_{}$, we have the following identity in $\Q(X)$
  \begin{equation}
    \label{eq:pol}
    \mathrm{p}_\dd(X) :=     \prod_{j \in \Zb/\dd\Zb} (1 - \zeta_\dd^j X)^{a_\dd(j)}
    = \prod_{b \mid \dd} ( 1- X^{\dd/b})^{b \cdot \mu(\dd/b)},
  \end{equation}
  where $a_\dd(j)$ is the constant defined in \eqref{eq:ufd}.
  Furthermore for any $s \in \Nb$, we have
  \begin{equation}
    \label{eq:pol2}
    \prod_{\dd \mid s} \mathrm{p}_\dd(X^{s/\dd}) = (1 - X)^{s}.
  \end{equation}
\end{lemma}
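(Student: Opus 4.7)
The plan is to establish both identities by passing to formal logarithms and comparing power series coefficients via standard M\"obius / Ramanujan sum manipulations. Both sides of each identity are rational functions in $\Q(X)$, so agreement of their logarithmic expansions in $\Q \llbracket X \rrbracket$ suffices.

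For \eqref{eq:pol}, I would first expand $\log(1-\zeta_\dd^j X) = -\sum_{n \ge 1} \zeta_\dd^{jn} X^n /n$ and interchange the order of summation. Using the definition $a_\dd(j) = \sum_{s \in (\Z/\dd\Z)^\times} \zeta_\dd^{sj}$ from \eqref{eq:ufd}, the coefficient of $-X^n/n$ in $\log(\mathrm{LHS})$ becomes
\[
\sum_{j \in \Z/\dd\Z} a_\dd(j)\, \zeta_\dd^{jn} = \sum_{s \in (\Z/\dd\Z)^\times} \sum_{j \in \Z/\dd\Z} \zeta_\dd^{j(s+n)},
\]
and the inner sum over $j$ vanishes unless $s \equiv -n \pmod\dd$. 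Hence the coefficient equals $\dd$ when $\gcd(n,\dd) = 1$ and $0$ otherwise. On the right-hand side, expanding $\log(1 - X^{\dd/b})$ and re-indexing via $c = \dd/b$, the coefficient of $-X^N/N$ becomes $\dd \sum_{c \mid \gcd(\dd,N)} \mu(c)$, which by the familiar identity $\sum_{c \mid m}\mu(c) = [m=1]$ again equals $\dd$ if $\gcd(\dd,N) = 1$ and $0$ otherwise. The two logarithmic expansions coincide, proving \eqref{eq:pol}.

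For \eqref{eq:pol2}, I would substitute the product formula \eqref{eq:pol} to obtain
\[
\prod_{\dd \mid s} \mathrm{p}_\dd(X^{s/\dd}) = \prod_{\dd \mid s} \prod_{b \mid \dd} (1 - X^{s/b})^{b \mu(\dd/b)}
\]
and then swap the order of the products, grouping the factors according to $b \mid s$. The total exponent of $(1 - X^{s/b})$ is
\[
\sum_{\substack{\dd \mid s \\ b \mid \dd}} b\, \mu(\dd/b) \;=\; b \sum_{c \mid s/b} \mu(c) \;=\; b \cdot [s/b = 1],
\]
which vanishes unless $b = s$, in which case it contributes $(1-X)^s$. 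This yields the desired identity.

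Neither step presents a real obstacle: both reduce to the elementary identity $\sum_{c \mid m}\mu(c) = [m=1]$, applied once to handle the Ramanujan sum appearing in (\ref{eq:pol}) and a second time to rearrange the double product in \eqref{eq:pol2}. The only conceptual point worth noting is that the first identity may equivalently be seen as the M\"obius inversion of the cyclotomic factorization $1 - X^\dd = \prod_{b \mid \dd} M_b(X)$ with $M_b(X) := \prod_{k \in (\Z/b\Z)^\times}(1 - \zeta_b^k X)$, but the direct logarithmic comparison above is the cleanest route.
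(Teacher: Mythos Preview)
Your proof is correct and complete, but takes a genuinely different route from the paper's. The paper proves \eqref{eq:pol} by comparing the order of vanishing at each $X = \zeta_\dd^j$: on the left this is $a_\dd(j)$, while on the right it is $\sum_{b \mid (\dd,j)} b\,\mu(\dd/b)$, and the two are then matched via the Ramanujan sum evaluation $\sum_{b \mid n} b\,\mu(\dd/b) = \mu(\dd/n)\varphi(\dd)/\varphi(\dd/n)$ for $n \mid \dd$. Your approach instead compares logarithmic derivatives (equivalently, formal logarithms), reducing everything directly to the single identity $\sum_{c \mid m}\mu(c) = [m=1]$ without ever invoking the closed form for $a_\dd(j)$. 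This is arguably cleaner, since the divisor-counting argument implicitly relies on that same M\"obius identity anyway (buried inside the Ramanujan sum formula). For \eqref{eq:pol2} the paper leaves the verification to the reader; your double-product rearrangement is exactly the intended argument. One small cosmetic point: the left side of \eqref{eq:pol} is a priori only visibly in $\Q(\zeta_\dd)(X)$, but this is harmless since your computation shows its log has rational (indeed integral) coefficients, and in any case equality over the larger field implies equality over~$\Q(X)$.
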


\begin{proof}
  To prove \eqref{eq:pol}, it suffices to check that both sides have the same roots counting multiplicity, since they agree at $X = 0$.
  The multiplicity of $X = \zeta_\dd^{j}$ on the LHS is $a_\dd(-j) = a_\dd(j)$, whereas it is
  $
\sum_{b \mid (\dd, j)} b\cdot \mu \lp \dd/b \rp
  $ on the RHS.
  The equality is then a consequence of the identity
  $$
\sum_{b \mid n} b\cdot \mu \lp \frac{\dd}{b} \rp = \mu(\dd/n) \frac{\varphi(\dd)}{\varphi(\dd/n)},~ n \mid \dd,
$$
which is a standard exercise that we leave, along with \eqref{eq:pol2}, to the curious readers.
\end{proof}

Now, we can specialize Borcherds' far reaching lifting theorem (\cite[Theorem 13.3]{Borcherds98} (see also \cite[Theorems 2.1 and 2.2]{YY19}) to the modular function $\tilde{F}_\dd$ in \eqref{eq:Fdt} and the result below.

\begin{theorem} \label{theo:BorcherdsLifting}
  For every $\dd \mid 24$, recall the modular function in $M^!(\omega_\dd)^{H_{\dd}'}$
  $$
  \tilde{F}_\dd (\tau) =
  \sqrt{2}^\dd \lp \ff_2^{-24/\dd}(\tau)  \uf_\dd + \ff_1^{-24/\dd}(\tau)  \vf_\dd + \ff^{-24/\dd} (\tau) \wf_\dd \rp +
  \begin{cases}
        24(\ef_{(0, 0)} + \ef_{(1/2, 0)}),& \dd = 1,\\
      0, & \dd > 1,
  \end{cases}
  $$
  defined in \eqref{eq:Fdt} with $\uf_\dd, \vf_\dd, \wf_\dd \in U_\dd$ vectors defined in \eqref{eq:ufd} and \eqref{eq:wv}.
  Let $\Psi_\dd(z)$ be the meromorphic modular function on $X_{\dd} \times X_{\dd}$ (with some characters) associated to $\tilde{F}_\dd$ via Borcherds multiplicative lifting, i.e.\ $-\log \lVert \Psi_\dd(z) \rVert_{\mathrm{Pet}}^2$ is the regularized theta lift of $\tilde{F}_\dd$ with $\lVert \cdot \rVert_{\mathrm{Pet}}$ a suitably normalized Petersson norm (see e.g.\ Theorem 2.1 in \cite{YY19}).
Then $\Psi_\dd(z)$ has the following properties:
  \begin{enumerate}
\item  one has on $X_\dd \times X_\dd$
$$
\mathrm{Div} (\Psi_\dd(z) )=
 \sum_{j \in \Zb/\dd\Zb} a_\dd(j) X_\dd^\Delta(j)
$$
\item
When $\dd = 1$, $\Psi_\dd(z)$ has a product expansion of the form
$$
\Psi_1(z)=
2^{12} (q_1 - q_2)
\prod_{m, n \ge 1}
(1 - q_1^{n}q_2^{m})^{c_1(mn)}
 (1 - q_1^{2n}q_2^{2m})^{c_{-1}(2mn)}
$$
near the cusp $\Q\ell$ of $X_K$, where $q_j := e^{2\pi i z_j}$ and $c_\dd(l)$ are the Fourier coefficients defined in Remark \ref{rmk:FC}.
\item
When $\dd > 1$, $\Psi_\dd(z)$ has a product expansion of the form
$$
\Psi_\dd(z)=
 \prod_{b \mid \dd} (q_1^{1/b} - q_2^{1/b})^{b \cdot \mu(\dd/b)}
\prod_{\substack{m, n \in \Nb \\ mn \equiv -1 \bmod{\dd}}}
 \lp
 \prod_{b \mid \dd} (1 - q_1^{n/b}q_2^{m/b})^{b \cdot \mu(\dd/b)}
\rp^{c_\dd(mn)(-1)^{(n^2 - 1)/\dd_2}}
$$
 near the cusp $\Q\ell$ of $X_K$, where $\mu$ and $\varphi$ are the M\"obius and Euler $\varphi$-function respectively.
\end{enumerate}
\end{theorem}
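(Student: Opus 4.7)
The plan is to apply Borcherds' multiplicative lifting theorem (Theorem 13.3 of \cite{Borcherds98}, cf.\ Theorems 2.1 and 2.2 of \cite{YY19}) to the vector-valued input $\tilde F_\dd \in M^!(\omega_\dd)$ constructed in Section \ref{sect3}. By Proposition \ref{prop:repembed} and Remark \ref{rmk:Fd} the input is $H'_\dd$-invariant, so the resulting Borcherds product, a priori a meromorphic modular form on the Shimura variety $X_K$ attached to $L_\dd$, descends (up to a multiplier system) to a function $\Psi_\dd$ on $X_\dd \times X_\dd$.

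For claim (1) I would invoke the divisor part of Borcherds' theorem: the divisor of the lift equals the special cycle attached to the principal part of the input. By \eqref{eq:Fdprin} the only singular term of $\tilde F_\dd$ is $q^{-1/\dd} \uf_\dd$, so unwinding the definition of $\uf_\dd$ in \eqref{eq:ufd} gives
\[
\mathrm{Div}(\Psi_\dd) = \sum_{j \in \Zb/\dd\Zb} a_\dd(j) \sum_{h \in \kappa_\dd(T^j \Gamma_{\chi, \dd})} Z_{\Gamma_\dd}\!\lp \tfrac{1}{\dd}, h\rp
\]
as a divisor on $X_{\Gamma_\dd} \times X_{\Gamma_\dd}$. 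Applying \eqref{eq:diagonalj*} with $\dd' = \dd$ collapses the inner sum into the pullback of $X_\dd^\Delta(j)$; descending to $X_\dd \times X_\dd$ then yields claim (1).

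For (2)--(3) I plan to substitute the tube-domain coordinates $w_\dd(z_1, z_2)$ into the explicit product formula in Borcherds' theorem and match exponents against the Fourier expansions of the three components of $F_\dd$, namely $\sqrt{2}^{24/\dd} \ff_2^{-24/\dd}$, $\ff_1^{-24/\dd}$, $\ff^{-24/\dd}$. Each monomial $q_1^{n/\dd} q_2^{m/\dd}$ appearing inside the infinite product corresponds via $w_\dd$ to a positive vector $\lambda \in L_\dd \cap \ell^\perp/\Zb\ell$, whose class in $L_\dd'/L_\dd$ selects the component of $F_\dd$ contributing the exponent $c_\dd(mn)$. The congruence $mn \equiv -1 \bmod \dd$ is exactly the support condition \eqref{eq:adn}, and the case distinction between $\ff, \ff_1, \ff_2$ is read off from the $h_0, h_1, h_2, h_3$-coordinates modulo $8$; this is precisely the source of the sign $(-1)^{(n^2-1)/\dd_2}$, whose combinatorics is encoded by \eqref{eq:distinguish} and Lemma \ref{lemma:j2}.

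The main obstacle will be pinning down the Weyl-vector contribution, i.e.\ the leading factor $2^{12}(q_1 - q_2)$ for $\dd = 1$ and $\prod_{b\mid \dd}(q_1^{1/b} - q_2^{1/b})^{b\mu(\dd/b)}$ for $\dd > 1$. Rather than computing the associated regularized integral head-on, I plan to fix this factor by a divisor-matching argument: by Lemma \ref{lemma:pol} the divisor of $\prod_b (q_1^{1/b} - q_2^{1/b})^{b \mu(\dd/b)}$ on $X_\dd \times X_\dd$ equals $\sum_j a_\dd(j) X_\dd^\Delta(j)$, matching claim (1), so upon dividing the conjectured product into $\Psi_\dd$ one obtains a holomorphic, nonvanishing function with trivial divisor, forcing the leading factor to be as claimed up to a multiplicative constant. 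For $\dd = 1$ the constant $2^{12}$ comes from the extra term $24(\ef_{(0,0)} + \ef_{(1/2, 0)})$ in $\tilde F_1$, which is precisely the discrepancy between the input here and the one used in \cite{YY19}; for $\dd > 1$ a direct asymptotic check at the cusp $\Q \ell$ against the leading coefficient $c_\dd(-1) = 1$ fixes the constant to be $1$.
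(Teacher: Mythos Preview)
Your treatment of (1) matches the paper's: apply Borcherds' divisor formula to the principal part $q^{-1/\dd}\uf_\dd$, unwind $\uf_\dd$ via \eqref{eq:ufd}, and descend via \eqref{eq:diagonalj*}.

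For (2)--(3), however, you have misidentified where the pieces of the product expansion come from, and the paper's argument is both simpler and genuinely different from your plan.

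First, the leading factor $\prod_{b\mid\dd}(q_1^{1/b} - q_2^{1/b})^{b\mu(\dd/b)}$ is \emph{not} the Weyl-vector contribution. For $\dd > 1$ one observes directly that $\uf_\dd, \vf_\dd, \wf_\dd$ are supported away from isotropic cosets, so the boundary form $\tilde F_{\dd, P}$ vanishes identically and the Weyl vector $\rho(W,\tilde F_\dd)$ is $0$; moreover $\tilde F_\dd$ has no constant term, so the scalar prefactor $C$ equals $1$. The leading factor instead arises as the $(m, n) = (-1, 1)$ term of the infinite product itself, which after summing over $j\in\Zb/\dd\Zb$ is exactly $\prod_j(1-\zeta_\dd^j q_2^{1/\dd}/q_1^{1/\dd})^{a_\dd(j)}$ and is then rewritten via the polynomial identity of Lemma \ref{lemma:pol}. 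So your divisor-matching workaround is unnecessary, and as stated it is not rigorous anyway: $\prod_b(q_1^{1/b} - q_2^{1/b})^{b\mu(\dd/b)}$ is only a local expression near the cusp $\Q\ell$, not a function on $X_\dd \times X_\dd$ whose global divisor you can compute and compare.

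Second, there is no ``case distinction between $\ff, \ff_1, \ff_2$'' in the infinite product. The lattice vectors $\lambda = \tfrac{1}{\dd}\smat{-m}{0}{0}{n}$ with $(\lambda, W) > 0$ have all their relevant lifts $\mu = \tfrac{1}{\dd}\smat{-m}{j}{0}{n}+L$ lying in $\Ac_\dd^0$ (that is, with third coordinate $h_2=0$), and by Lemma \ref{lemma:j} these cosets meet only the support of $\uf_\dd$, never that of $\vf_\dd$ or $\wf_\dd$. Hence only the single component $\sqrt{2}^{24/\dd}\ff_2^{-24/\dd}$ contributes exponents $c_\dd(mn)$. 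The sign $(-1)^{(n^2-1)/\dd_2}$ does not record which Weber function is in play; it records which coset of $\Gamma_{\chi,\dd}$ the element $\mu$ falls into: Lemma \ref{lemma:j} gives $\mu\in\kappa_\dd(T^{j'}\Gamma_{\chi,\dd})$ with $j' = nj - (n^2-1)/2$, and one checks $a_\dd(j') = a_\dd(j)\,(-1)^{(n^2-1)/\dd_2}$.
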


\begin{proof}
  This is a specialization of Borcherds' result to the input $\tilde{F}_\dd \in M^!(\omega_\dd)$. For this, we need to substitute the suitable parameters into Borcherds' result, which has been specialized to this case in Theorem 2.1 and 2.2 in \cite{YY19}. Using the specialization there, we see that the divisor of $\Psi_\dd$ is
  \begin{align*}
    \sum_{j \in \Zb/\dd\Zb} a_\dd(j)\sum_{\mu \in \kappa(T^jN_\dd')} Z_{\Gamma_\dd}\lp -\frac{1}{\dd}, \mu \rp &=
\sum_{j \in \Zb/\dd\Zb} a_\dd(j) \sum_{\gamma \in \Gamma /\Gamma(2\dd)}  Z_{\Gamma(2\dd)} \lp - \frac{1}{\dd}, \frac{1}{\dd} T^j \gamma + L\rp,
  \end{align*}
which gives us the first claim after applying Lemma \ref{lemma:YYdivisor}.

For the second and third claim, we specialize Theorem 2.2 in \cite{YY19} and use the notations there.
When $\dd = 1$, this is rather classical and can be found in \cite{Sch08} (see also Prop.\ 5.3 in \cite{YY19}\footnote{Note that the Fourier expansion of $f$ loc.\ cit.\ is incorrect.}).
For $\dd > 1$, the Weyl chambers for $\tilde{F}_\dd$ are the same as in the case $\dd = 1$, and we choose the one $W = \Rb \{\smat{a}{}{}{-1}: a > 1\}$. Since $\uf_\dd, \vf_\dd$ and $\wf_\dd$ do not have support on any isotropic vector, the associated form $\tilde{F}_{\dd, P}$ is identically zero, and the Weyl vector $\rho(W, \tilde{F}_\dd)$ is 0.
Since $\tilde{F}_\dd$ does not have any constant term, the constant $C$ in the product expansion is 1,
For the infinite product, suppose $\lambda = \frac{1}{\dd}\smat{-m}{}{}{n}$ with $m, n \in \Zb$. Then $(\lambda, W) > 0$ if and only if $m \ge -n, n \ge 0$ and $(m, n) \neq (0, 0)$.

The set of $\mu \in L'_0/L$ with $p(\mu) = \lambda$ consists then of $\frac{1}{\dd}\smat{-m}{-j}{0}{n}$ with $j \in \tfrac{1}{2}\Zb/\dd\Zb$. For such $\lambda, \mu$, we have
$$
1 - e((\lambda, z) + (\mu, \ell')) = 1 - \zeta_\dd^{j} q_1^{n/\dd} q_2^{m/\dd}.
$$
By inspecting the $q$-expansion of $F_\dd$, we notice that
$$
F_\dd(\tau) = \lp q^{-1/\dd} + \sum_{l \in \Nb,~ l \equiv -1 \bmod{\dd}} c_\dd(l)q^{l/\dd} \rp \uf_\dd  + \sum_{\mu \in L'/L,~  Q_\dd(\mu) \in \{\tfrac{1}{2\dd}, \tfrac{1}{2\dd} + \tfrac{1}{2} \}} F_{\dd, \mu}(\tau) \ef_\mu.
$$
Therefore, the only pairs of $(m, n)$ with $m < 0$ is $m = -n = -1$, and the only $\mu \in L_0'/L$ where $F_{\dd, \mu}$ could be nonzero are contained in the support of $\uf_\dd$, hence
$$
\mu = \tfrac{1}{\dd} \smat{-m}{j}{0}{n} + L \in \tfrac{1}{\dd} T^{j'} \Gamma_{\chi, \dd} + L$$
with $mn \equiv -1 \bmod{\dd}$ and $j' := nj - \tfrac{n^2 - 1}{2} \in \Zb/\dd\Zb$ by Lemma \ref{lemma:j}. The Fourier coefficient $c(-Q(\lambda), \mu)$ of the input is then $c_\dd(mn) a_\dd(j')$.
It is easy to check that $a_\dd(j') = a_\dd(j) (-1)^{(n^2 - 1)/\dd_2}$.
By Theorem 2.2 in \cite{YY19}, $\Psi_\dd(z)$ has the product expansion
$$
\Psi_\dd(z) =
\prod_{\substack{m \in \Zb_{\ge -1},~ n \in \Nb \\ mn \equiv -1 \bmod{\dd}}}
\prod_{j \in \Zb/\dd\Zb} (1 - \zeta_\dd^{j} q_1^{n/\dd} q_2^{m/\dd})^{c_\dd(mn) a_\dd(j) (-1)^{(n^2-1)/8}}
$$
Finally, applying Lemma \ref{lemma:pol} finishes the proof.
\end{proof}

\subsection{The Weber function differences as Borcherds liftings.}

Now, we are ready to state and prove the following main result of this section.
\begin{theorem}
  \label{thm:Blift}
For $\dd \mid 24$, let $\Psi_{\dd}(z_1, z_2)$ be the Borcherds product of $\tilde{F}_\dd \in M^!(\omega_\dd)$ as in Theorem \ref{theo:BorcherdsLifting}.
Then for any $s \mid 24$ and $\varepsilon \in \{\pm 1\}$, we have
\begin{equation}
  \label{eq:Blift}
  \lp \ff_2(z_1)^{24/s} - (\varepsilon\ff_2(z_2))^{24/s} \rp^{s} = \prod_{\dd \mid s}
\Psi_\dd(z_1, z_2)^{\varepsilon^{24/\dd} }.
\end{equation}
\end{theorem}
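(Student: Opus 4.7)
The plan is to (i) apply the combinatorial identity of Lemma \ref{lemma:pol} to reduce the theorem to a family of individual identities $\Psi_\dd = G_\dd$, one for each $\dd \mid 24$ (with $\varepsilon = 1$), where
$$
G_\dd(z_1, z_2) := \prod_{b \mid \dd} \lp \ff_2(z_1)^{24/b} - \ff_2(z_2)^{24/b} \rp^{b \mu(\dd/b)},
$$
and (ii) verify each such identity by matching divisors on $X_\dd \times X_\dd$ together with the leading $q$-expansion at $(i\infty, i\infty)$.

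For the reduction, I substitute $X = \varepsilon^{24/s}(\ff_2(z_2)/\ff_2(z_1))^{24/s}$ in \eqref{eq:pol2}, multiply both sides by $\ff_2(z_1)^{24}$, and distribute this factor across the $p_\dd$-factors using the elementary identity $\sum_{b \mid \dd}(24/b)\,b\mu(\dd/b) = 24 \cdot [\dd = 1]$. This transforms the equation into
$$
\lp \ff_2(z_1)^{24/s} - (\varepsilon\ff_2(z_2))^{24/s} \rp^s = \prod_{\dd \mid s} G'_\dd, \quad G'_\dd := \prod_{b \mid \dd}\lp \ff_2(z_1)^{24/b} - \varepsilon^{24/b}\ff_2(z_2)^{24/b} \rp^{b\mu(\dd/b)}.
$$
By Möbius inversion on $s$, the theorem reduces to proving $\Psi_\dd^{\varepsilon^{24/\dd}} = G'_\dd$ for every $\dd \mid 24$. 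When $\dd \mid 12$ this simplifies to the $\varepsilon$-independent statement $\Psi_\dd = G_\dd$. For $\dd \in \{8, 24\}$ (the only cases with $24/\dd$ odd), the $\varepsilon = -1$ version follows algebraically from the $\varepsilon = +1$ cases via $X^{48/b} - Y^{48/b} = (X^{24/b} - Y^{24/b})(X^{24/b} + Y^{24/b})$ applied to the odd-$24/b$ factor that acquires the sign change.

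To prove $\Psi_\dd = G_\dd$, I match divisors on $X_\dd \times X_\dd$ and then the leading coefficient at $(i\infty, i\infty)$. Since $\ff_2^{24}$ is a Hauptmodul of $X_1 = X_0(2)$ and $X_\dd \to X_1$ is a cyclic $\dd$-fold cover with $(\ff_2^{24/\dd})^\dd = \ff_2^{24}$, the function $\ff_2^{24/\dd}$ realizes an isomorphism $X_\dd \xrightarrow{\sim} \mathbb{P}^1$. For each $b \mid \dd$,
$$
\ff_2(z_1)^{24/b} - \ff_2(z_2)^{24/b} = \lp \ff_2^{24/\dd}(z_1) \rp^{\dd/b} - \lp \ff_2^{24/\dd}(z_2) \rp^{\dd/b}
$$
has interior zero locus $\bigsqcup_{k \in \Z/(\dd/b)\Z} X_\dd^\Delta(kb)$, each component with multiplicity one. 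The Möbius combination gives
$$
\mathrm{Div}(G_\dd) = \sum_{j \in \Z/\dd\Z} \Bigl( \sum_{b \mid (j,\dd)} b\mu(\dd/b) \Bigr) X_\dd^\Delta(j) = \sum_{j \in \Z/\dd\Z} a_\dd(j)\, X_\dd^\Delta(j),
$$
the inner sum identifying with $a_\dd(j)$ by a standard Möbius-$\varphi$ calculation. This matches the divisor of $\Psi_\dd$ from Theorem \ref{theo:BorcherdsLifting}(1). To fix the scalar, I use that each factor $\ff_2(z_1)^{24/b} - \ff_2(z_2)^{24/b}$ has leading term $2^{12/b}(q_1^{1/b} - q_2^{1/b})$: then $G_\dd$ has leading term $2^{12\sum_b \mu(\dd/b)} \prod_{b \mid \dd}(q_1^{1/b} - q_2^{1/b})^{b\mu(\dd/b)} = 2^{12[\dd = 1]}\prod_{b \mid \dd}(q_1^{1/b} - q_2^{1/b})^{b\mu(\dd/b)}$, matching the leading term of $\Psi_\dd$ from parts (2)-(3) of Theorem \ref{theo:BorcherdsLifting}.

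The main obstacle will be the cusp bookkeeping. The individual factors $\ff_2^{24/b}$ have poles at the cusp $0 \in X_\dd$, so each difference $\ff_2(z_1)^{24/b} - \ff_2(z_2)^{24/b}$ contributes nontrivial boundary divisors on $X_\dd \times X_\dd$; these must cancel in the Möbius combination $G_\dd$ in order to match the purely Heegner-supported divisor of $\Psi_\dd$. The weights $b\mu(\dd/b)$ are calibrated precisely to effect this cancellation (parallel to how they collapse the prefactors $2^{12/b}$ to $2^{12[\dd = 1]}$). Verifying this cancellation at every cusp pair, together with the leading-term match above, completes the proof of $\Psi_\dd = G_\dd$ for every $\dd \mid 24$, and Step~(i) then assembles everything into \eqref{eq:Blift}.
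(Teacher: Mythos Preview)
Your approach is correct and genuinely different from the paper's. The paper works directly on $X_s \times X_s$ for each $s$: it computes the interior divisor of the product $\prod_{\dd\mid s}\Psi_\dd^{\varepsilon^{24/\dd}}$ by pulling back each $\mathrm{Div}(\Psi_\dd)$ from Theorem~\ref{theo:BorcherdsLifting}(1) and collapsing via $\sum_{\dd\mid s}a_\dd(k)=s\cdot[k=0]$, then shows the ratio $g$ with the left-hand side is a constant by a one-variable degree argument (divisor of $g$ is supported on the boundary, the product expansions force the $\{\infty\}\times X_s$ and $X_s\times\{\infty\}$ components to vanish, and a nonconstant function on the compactified curve cannot have divisor at a single cusp). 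You instead M\"obius-invert first, reducing to the sharper factor-by-factor identity $\Psi_\dd = G_\dd := \prod_{b\mid\dd}(\ff_2(z_1)^{24/b}-\ff_2(z_2)^{24/b})^{b\mu(\dd/b)}$ on $X_\dd\times X_\dd$, and prove each of these by the same divisor-plus-leading-term method.

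What your route buys is an explicit closed formula for each individual $\Psi_\dd$ as a ratio of Weber-function differences, which is stronger than the theorem as stated. The cost is that you need the auxiliary fact that $\ff_2^{24/\dd}$ is a Hauptmodul of $X_\dd$ (true, since $TB\in\Gamma_\chi$ forces the cover $X_\dd\to X_0(2)$ to be unramified over the elliptic point and totally ramified at both cusps, giving genus $0$; but this is not stated in the paper). Your cusp paragraph correctly shows that $G_\dd$ has no boundary divisor for $\dd>1$ via $\sum_{b\mid\dd}(\dd/b)\cdot b\mu(\dd/b)=\dd\cdot[\dd=1]$, but to finish you still need to control the boundary of $\Psi_\dd$ at the cusp $0$; the cleanest way is exactly the paper's one-variable trick (fix $z_2$, observe $\Psi_\dd/G_\dd$ as a function of $z_1$ has divisor supported at a single point of the compactified $X_\dd$, hence is constant). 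With that step made explicit, your argument is complete.
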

\begin{proof}
We first look at their divisors in the open Shimura varieties $X_{s} \times X_{s}$.
Suppose $\varepsilon = 1$.
  The LHS clearly has $s \cdot [X_{s}^\Delta]$ as its divisor, whereas the RHS has the divisor
\begin{align*}
\sum_{\dd \mid s}
\sum_{j \in \Zb/\dd\Zb}
a_\dd(j)
\sum_{l \in \dd\Zb/s\Zb} [X_{s}^\Delta(j + l)]
&=
\sum_{k \in \Zb/s\Zb}
\lp \sum_{\dd \mid s} a_\dd(k)
\rp
[X_{s}^\Delta(k)]
\\
&= s \cdot [X_{s}^\Delta],
\end{align*}
as
$$
\sum_{\dd \mid s} a_\dd(k)
=
\sum_{\dd \mid s}
\mu(\dd/(\dd, k)) \varphi(\dd)/\varphi(\dd/(\dd, k))
=
\begin{cases}
  s, & k = 0\\
  0, &\text{ otherwise.}
\end{cases}
$$
When $\varepsilon = -1$, the argument is the same unless $8 \mid s$. In that case, the divisor of the LHS is $s \cdot [X_{s}^\Delta(s/2)]$, whereas
\begin{align*}
\mathrm{Div}(\mathrm{RHS}) &=
\mathrm{Div}\prod_{\dd \mid s/2} \Psi_\dd(z_1, z_2)^2
- \mathrm{Div}\prod_{\dd \mid s} \Psi_\dd(z_1, z_2)\\
&= s \cdot ([X_{s}^\Delta] + [X_{s}^\Delta(s/2)] ) - s \cdot [X_{s}^\Delta]
= s \cdot [X_{s}^\Delta(s/2)].
\end{align*}

Now let
$$
g(z_1, z_2)=\frac{\prod_{\dd \mid s} \varepsilon^{24/(\dd, s)} \Psi_\dd(z_1, z_2)^2}{(\ff_2(z_1)^{24/s} - (\varepsilon \ff_2(z_2))^{24/s})^{s}}.
$$
Then it is holomorphic and has no zeros on $X_{s} \times X_{s}$.  So
$$
\hbox{Div}(g(z_1,  z_2))
 =a_{\infty, 1} ( \{\infty\} \times X_{s})  + a_{\infty, 2}( X_{s} \times \{ \infty\})
   + a_{0, 1} ( \{0 \} \times X_{s}) + a_{0, 2} (X_{s} \times \{0\})
$$
is supported on the boundary with $a_{i, j} \in \Z$. The product expansion of $\Psi_d$ and the definition of $\ff_2$ imply that $a_{\infty, 1}=a_{\infty, 2}=0$.

Next, fix $z_2 \in X_{s}$ the above argument shows that $g(z_1, z_2)$, as a function of $z_1$ on $X_{s} \cup \{ 0, \infty\} $  has only zeros or poles at the cusp $\{0\}$, which is impossible. So $g(z_1,  z_2)$ has no zeros or poles in $z_1$, and is therefore independent of $z_1$, i.e,  $g(z_1, z_2)=g(z_2)$ is purely a function of $z_2$  with no zeros or poles in $X_{s} \cup \{ \infty\}$. This implies that $g(z_1, z_2) =g(z_2) =C$ is a constant.

Finally, looking at the $q_1$-leading term of the Fourier expansion, we see $C=1$
and this proves the theorem. The last part of the proof follows from the argument in the proof of \cite[Theorem 3.4]{YY19}
\end{proof}

\section{Big CM values} \label{sect:BigCM}

\subsection{Products of CM cycles as  big CM cycles} Yang and Yin have described how to view a pair of CM points as a big CM point in \cite[Section 3.2]{YY19}, which we now briefly review for convenience and set up necessary notation. We modify a little for use in this paper.  For $j = 1, 2$, let $d_j<0$ be co-prime, fundamental discriminants satisfying \eqref{eqd}.
Denote $\kk_j =\Q(\sqrt d_j)$ with ring of integers $\OO_j=\Z[\frac{1 +\sqrt{d_j}}2]$, and class group $\Cl(d_j)$.
Let $\kk =\kk_1 \otimes_\Q  \kk_2=\Q(\sqrt{d_1}, \sqrt{d_2})$ with ring of integers $\OO_E =\OO_1 \otimes_\Z \OO_2$. Then $\kk$ is a biquadratic  CM  number field with  real quadratic subfield $F=\Q (\sqrt D)$ and $D=d_1d_2$.

 For a positive integer $\dd$,  we define $W=W_\dd= \kk$ with the $F$-quadratic form $Q_F(x) =\frac{  \dd x \bar x}{\sqrt D}$. Let $W_\Q =W$ with the $\Q$-quadratic form $Q_\Q(x) =\tr_{F/\Q} Q_F(x)$.  Let $\sigma_1=1$ and $\sigma_2 =\sigma$ be two real embeddings of $F$ with $\sigma_j(\sqrt D) =(-1)^{j-1}\sqrt D$. Then
 $W$ has signature $(0, 2)$ at $\sigma_2$ and $(2, 0)$  at $\sigma_1$ respectively, and so $W_\Q$ has signature $(2,2)$.
 Choose a $\Z$-basis of $\OO_E$ as follows
 $$
 e_1=1\otimes 1, \quad e_2 = \frac{-1+\sqrt{d_1}}2 =\frac{-1+\sqrt{d_1}}2 \otimes 1, \quad  e_3= \frac{1+\sqrt{d_2}}2 =1\otimes \frac{1+\sqrt{d_2}}2, \quad e_4= e_2 e_3.
 $$
We will drop $\otimes$ when there is no confusion.  Then it is easy to check that
\begin{equation} \label{eq:SpaceIdentification}
(W_\Q, Q_\Q) \cong (V, Q)=(M_2(\Q),   \dd\det),\quad\quad\quad \sum x_i e_i \mapsto \kzxz {x_3} {x_1} {x_4} {x_2}.
\end{equation}
We will identify $(W_\Q, Q_\Q)$ with the quadratic space $(V, Q) =(M_2(\Q),  \dd \det)$. Under this identification, the lattice $M_2(\Z)$ becomes $\OO_E$, and the lattice $L_\dd$ becomes $\Z e_1 +\Z e_2 + \Z e_3 + \Z 2 e_4 \subset \OO_E $, which we still denote by $L=L_\dd$.
Define $T$ be the maximal torus in $H$ given  by the following diagram:
\begin{equation} \label{eq:torus}
\xymatrix{
 1 \ar[r]  &\G_m \ar[r] \ar[d] &T \ar[r] \ar[d] &\hbox{Res}_{F/\Q} \SO(W) \ar[r] \ar[d] &1
\cr
 1 \ar[r]  &\G_m \ar[r] &H \ar[r] &\SO(V) \ar[r] &1.
 \cr
}
\end{equation}
Then $T$ can be identified with (\cite{HYbook}, \cite[Section 6]{BKY})
$$
T(R) =\{ (t_1, t_2)\in (E_1\otimes_\Q R)^\times \times (E_2\otimes_\Q R)^\times:\,  t_1 \bar{t}_1 = t_2 \bar{t}_2 \},
$$
for any $\Q$-algebra $R$,  and the map from $T$ to $\SO(W)$ is given by  $(t_1, t_2) \mapsto {t_1}/{\bar{t}_2}$.
 The map from $T$ to $H$ is explicitly given as follows.
Define the embeddings $\iota_j: \kk_j \rightarrow  M_2(\Q)$ by
\begin{equation} \label{eq:iota}
(e_1, e_2)  \iota_1(r)  = ( r e_{1}, r e_2), \quad  \iota_2(r) (e_{3}, e_1)^t = ( \bar{r} e_{3}, \bar{r} e_1)^t.
\end{equation}
Then $\iota= (\iota_1, \iota_2)$ gives the embedding from $T$ to $H$.
If $r_j = \alpha_j e_1  + (-1)^{j+1} \beta_j e_{j+1} \in E_j$, then
\begin{equation}
  \label{eq:iotaexplicit}
\iota_j(r_j) = \alpha_j \pmat{1}{0}{0}{1} + \beta_j \pmat{0}{\tfrac{d_j - 1}{4}}{1}{-1}.
\end{equation}

Extend the two real embeddings of $F$ into a CM type   $\Sigma=\{\sigma_1, \sigma_2\}$ of $E$ via
$$
\sigma_1(\sqrt{d_i}) =\sqrt{d_i} \in \H,\quad  \sigma_2(\sqrt{d_1}) =\sqrt{d_1}, \quad  \sigma_2(\sqrt{d_2}) =-\sqrt{d_2}.
$$
 Since $W_{\sigma_2} =W\otimes_{F, \sigma_2} \R \subset  V_\R$ has signature $(0, 2)$, it gives two points $z_{\sigma_2}^\pm$ in $\D$. In this case, the big CM cycles associated to $T$ as defined in \cite{BKY} and \cite{YY19} are given by
\begin{equation}
Z(W, z_{\sigma_2}^\pm) = \{ z_{\sigma_2}^\pm\} \times T(\Q)\backslash T(\A_f)/K_T  \in Z^2(X_K),
\end{equation}
and
\begin{equation} \label{eq:BigCMCycle}
Z(W) = Z(W, z_{\sigma_2}^\pm) +  \sigma(Z(W, z_{\sigma_2}^\pm)).
\end{equation}
For simplicity, we will denote  $z_{\sigma_2}$ for $z^+_{\sigma_2}$. The same calculation as in the proof of \cite[Lemma3.4]{YY19} gives the following result.

\begin{lemma} \label{lem3.4}  On $\H^{2} \cup (\H^-)^2$, one has  $z_{\sigma_2} = (\tau_1, \tau_2) \in \H^2$ and $z_{\sigma_2}^-=(\bar \tau_1, \bar \tau_2)\in (\H^- )^2$, where
$$
\tau_j=\frac{1 + \sqrt{d_j}}2.
$$
\end{lemma}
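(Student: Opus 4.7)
The plan is to identify $z_{\sigma_2}^\pm$ with the common fixed points of the $T(\R)$-action on $\H^{2} \cup (\H^-)^2$ via the embedding $\iota = (\iota_1, \iota_2) : T \hookrightarrow H$ from \eqref{eq:iota}. The reason this works is that $T(\R)$ maps onto $\SO(W_{\sigma_1}) \times \SO(W_{\sigma_2}) \subset \SO(V_\R)$, so it preserves the oriented negative plane $W_{\sigma_2}$ and thus fixes each of the two points $z_{\sigma_2}^\pm \in \D$. Transporting through the isomorphism $pr \circ w_{\dd} : \H^{2} \cup (\H^-)^2 \xrightarrow{\sim} \D$ shows that the preimages of $z_{\sigma_2}^\pm$ are exactly the fixed points of $\iota(T(\R))$ on the disjoint union of the two sign components.

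The fixed-point calculation is explicit. Taking $r_j = (-1)^{j+1} e_{j+1} \in E_j$ (so $\alpha_j = 0$, $\beta_j = 1$ in the notation of \eqref{eq:iotaexplicit}), one has $\iota_j(r_j) = \kzxz{0}{(d_j-1)/4}{1}{-1}$, and the equation $\iota_j(r_j) \tau = \tau$ reduces to $\tau^2 - \tau - (d_j-1)/4 = 0$, with roots $\tau_j = (1+\sqrt{d_j})/2 \in \H$ and $\bar{\tau}_j = (1-\sqrt{d_j})/2 \in \H^-$. Applying this in both factors, the fixed-point set of $\iota(T(\R))$ on $\H^2 \cup (\H^-)^2$ is exactly $\{(\tau_1, \tau_2), (\bar{\tau}_1, \bar{\tau}_2)\}$, since the mixed pairs $(\tau_1, \bar{\tau}_2)$ and $(\bar{\tau}_1, \tau_2)$ lie neither in $\H^2$ nor in $(\H^-)^2$.

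Finally, I would match these two fixed points with $z_{\sigma_2}^\pm$ with the correct sign. Since $w_\dd$ sends $\H^2$ and $(\H^-)^2$ to opposite components of $\mathcal{L}/\C^\times$, inducing opposite orientations on the negative plane $W_{\sigma_2}$, and since by the convention of the excerpt $z_{\sigma_2} = z_{\sigma_2}^+$ corresponds to the component containing $\H^2$, we conclude that $z_{\sigma_2} = (\tau_1, \tau_2)$ and $z_{\sigma_2}^- = (\bar{\tau}_1, \bar{\tau}_2)$. I expect the main (though mild) obstacle to be this last orientation check; it can be verified directly by computing $w_\dd(\tau_1, \tau_2)$ in the coordinates on $V_\C = E \otimes_\Q \C$ furnished by \eqref{eq:SpaceIdentification} and checking that it lies in the $(-\sqrt{D})$-eigenspace of multiplication by $\sqrt{D} \in F$, which is exactly $W_{\sigma_2} \otimes_\R \C$. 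This is a short but slightly tedious matrix calculation, after which the lemma follows.
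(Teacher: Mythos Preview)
Your approach is correct. The paper itself does not prove this lemma but defers to \cite[Lemma~3.4]{YY19}, where the argument is a direct computation: one checks that $w_\dd(\tau_1,\tau_2)$, transported back through \eqref{eq:SpaceIdentification} to $E\otimes\C$, lands in $W_{\sigma_2}\otimes\C$. This is precisely the ``orientation check'' you flag at the end, and it is the entire content of the lemma in the paper's treatment. Concretely, under \eqref{eq:SpaceIdentification} one finds
\[
w_\dd(\tau_1,\tau_2)\;\longleftrightarrow\;\tfrac{1}{\dd}(e_2+\tau_1)(e_3-\tau_2)
=\tfrac{1}{4\dd}\bigl(\sqrt{d_1}^{\,E}+\sqrt{d_1}^{\,\C}\bigr)\bigl(\sqrt{d_2}^{\,E}-\sqrt{d_2}^{\,\C}\bigr),
\]
and multiplication by $\sqrt{D}^{\,E}=\sqrt{d_1}^{\,E}\sqrt{d_2}^{\,E}$ acts on each factor by $\sqrt{d_1}^{\,\C}$ and $-\sqrt{d_2}^{\,\C}$ respectively, hence by $-\sqrt{D}^{\,\C}$ on the product. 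That is the whole calculation.

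Your fixed-point argument is a pleasant conceptual addition: it explains \emph{why} the answer must be $(\tau_1,\tau_2)$ or its conjugate before any computation, by exhibiting these as the only $T(\R)$-fixed points in $\H^2\cup(\H^-)^2$. One small point of care: the specific pair $(r_1,r_2)$ you write down need not satisfy $r_1\bar r_1=r_2\bar r_2$, so it is not literally an element of $T(\R)$; but this is harmless, since the fixed locus of $\iota_j\bigl((E_j\otimes\R)^\times\bigr)$ is already determined by any single non-central element, and $T(\R)$ surjects onto enough of each factor to pin down the same two points. With that understood, your route and the paper's route converge on the same short matrix computation.
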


Let $\iota=(\iota_1,  \iota_2):  T \rightarrow H \subset {\GL_2\times \GL_2}$ be the natural embedding.
For $\dd \mid 24$, let $K_{\dd} \subset H(\Ab_f)$ be the compact open subgroup generated by $(T, T), (\Gamma_{\chi, \dd} \times \Gamma_{\chi, \dd})\otimes \hat{\Zb} \subset H(\Ab_f)$ and $(\nu(\hat{\Zb}^\times) \times \nu(\hat{\Zb}^\times)) \cap H(\Ab_f)$.
By the choice of $\Gamma_{\chi, \dd}$, we actually have the following result.
\begin{lemma}
  \label{lemma:compact}
Suppose $d_j < 0$ are discriminants satisfying \eqref{eqd} for $j = 1, 2$.
Then for any $\dd \mid 24$, the preimage $\iota^{-1}(K_{\dd})$ is independent of $\dd \mid 24$.
\end{lemma}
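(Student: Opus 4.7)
My plan is to prove the equality $\iota^{-1}(K_\dd)=\iota^{-1}(K_1)$ by a prime-by-prime local analysis. Since $K_{24}\subset K_\dd\subset K_1$ for every $\dd\mid 24$, the inclusion $\iota^{-1}(K_\dd)\subset\iota^{-1}(K_1)$ is automatic, so it suffices to establish the reverse inclusion $\iota^{-1}(K_1)\subset\iota^{-1}(K_{24})$. The character $\chi$ has order $24=2^3\cdot 3$, so the quotient $\Gamma_0(2)/\Gamma_{\chi,\dd}$ is a $\{2,3\}$-group, and consequently $\Gamma_{\chi,\dd}\otimes\Z_p=\Gamma_0(2)\otimes\Z_p$ for every prime $p\notin\{2,3\}$. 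Therefore $K_{\dd,p}=K_{1,p}$ at such primes, and the entire content of the lemma is localised at $p\in\{2,3\}$.

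Next I would rephrase the remaining local condition cleanly. For $t\in T(\A_f)$ with $\iota(t)\in K_1$, the equal-determinant condition $\det\iota_1(t_1)=\det\iota_2(t_2)$ allows one to write $\iota(t)=(\gamma_1\nu(d),\gamma_2\nu(d))$ with $\gamma_j\in\Gamma_0(2)\otimes\hat{\Z}$ and $d\in\hat{\Z}^\times$. Belonging to $K_{24}$ then amounts to being able to modify both components by a common diagonal power $(T^a,T^a)$ so that the adjusted $\gamma_j$'s lie in $\Gamma_\chi\otimes\hat{\Z}$. Via the isomorphism $\Gamma_0(2)/\Gamma_\chi\cong\Z/24\Z$ induced by $\chi$, this condition is equivalent to the single equality
\begin{equation*}
\chi(\gamma_1)=\chi(\gamma_2)\in\mu_{24}.
\end{equation*}
Since $\Gamma_\chi\supset\Gamma_0(48)\cap\Gamma(24)$, this equality depends only on the image of $\gamma_j$ in $\GL_2(\Z/N_p\Z)$ with $N_2=16$ and $N_3=3$; by the Chinese Remainder Theorem it suffices to verify it separately at the two primes.

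Third, I would use the explicit form of the torus embedding. By \eqref{eq:iotaexplicit}, writing $t_j=\alpha_j e_1+(-1)^{j+1}\beta_j e_{j+1}$ gives $\iota_j(t_j)=\alpha_j I+\beta_j M_j$ with $M_j=\smat{0}{(d_j-1)/4}{1}{-1}$. The hypotheses \eqref{eqd} enter essentially here: $3\nmid d_j$ makes $\OO_{E_j}\otimes\Z_3$ unramified and pins down $M_j\bmod 3$ in terms of the residue of $d_j$, while $d_j\equiv 1\bmod 8$ splits $E_j\otimes\Q_2\cong\Q_2\times\Q_2$ and forces $(d_j-1)/4$ to be even, giving a particularly simple shape for $M_j\bmod 16$. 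Combining these with the matching-norm condition $N(t_1)=N(t_2)$ modulo $N_p$ cuts the residues $(\alpha_1,\beta_1,\alpha_2,\beta_2)\bmod N_p$ down to a finite list of cases.

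On each such case, the equality $\chi(\gamma_1)=\chi(\gamma_2)$ becomes an equality of explicit roots of unity that can be verified in the finite groups $\GL_2(\Z/3\Z)\times\GL_2(\Z/3\Z)$ at $p=3$ and $\GL_2(\Z/16\Z)\times\GL_2(\Z/16\Z)$ at $p=2$. This verification is the \emph{main obstacle}: the vanishing of the character discrepancy $\chi(\gamma_1)\chi(\gamma_2)^{-1}$ is an arithmetic coincidence without an obvious conceptual explanation, and it genuinely uses both parts of \eqref{eqd} (the split behaviour at $2$ and at $3$). I therefore expect, as the authors indicate, that the proof is closed by a direct finite-group enumeration rather than by a structural argument.
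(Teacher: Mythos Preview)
Your proposal is correct and follows essentially the same route as the paper: reduce to $\iota^{-1}(K_1)\subset\iota^{-1}(K_{24})$, localize at $p=2,3$ via the congruence level $48=16\cdot 3$, use the explicit form \eqref{eq:iotaexplicit} of $\iota_j$ together with the hypotheses $d_j\equiv 1\bmod 8$ and $3\nmid d_j$, and close with a finite check in $\GL_2(\Z/3\Z)$ and $\GL_2(\Z/16\Z)$. The one cosmetic difference is that you package the target condition as the character equality $\chi(\gamma_1)=\chi(\gamma_2)$, whereas the paper works directly: it exhibits a common pair $(a,d)$ (namely $d=\det\iota_j(r_j)$ and $a=(d-1)/2$ at $p=2$) such that $g_j:=\nu(d)^{-1}T^a\iota_j(r_j)$ lands in $N'_{\dd,p}$, verifying this by the trace criterion \eqref{eq:N3'p} at $p=3$ and Lemma~\ref{lemma:additive} at $p=2$; this is exactly the explicit instantiation of your character equality, so the two arguments are equivalent.
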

\begin{remark}
  \label{rmk:KT}
We will simply denote $\iota^{-1}(K_{\dd})$  by $K_T$.
\end{remark}
\begin{remark}
  The lemma does not require $d_j$ to be fundamental or co-prime.
\end{remark}
\begin{proof}
  Since $K_{24} \subset K_\dd \subset K_1$ for any $\dd \mid 24$, it suffices to check that $\iota^{-1}(K_{1}) = \iota^{-1}(K_{24})$. Furthermore, we know that $\Gamma(48) \subset \Gamma_{\chi, 24} \subset \Gamma_{\chi, 1} = \Gamma_0(2)$, so we only need to check the equality when tensoring with $\Zb/3\Zb$ and with $\Zb/16\Zb$. This then boils down to a short calculation with finite groups.

  To check the case modulo 3, it suffices to show that $\iota(\iota^{-1}(K_{ 1}) \otimes \Zb/3\Zb) \subset K_{24} \subset \GL_2(\Zb/3\Zb) \times \GL_2(\Zb/3\Zb)$.
  Since $\Gamma_{\chi, 1} \otimes \Zb/3 = \Gamma_0(2) \otimes \Zb/3 = \SL_2(\Zb_3)$, we have $\iota^{-1}(K_{ 1}) \otimes \Zb/3 = \iota^{-1}(H(\Zb/3\Zb))$. Therefore, \eqref{eq:iotaexplicit} implies
  $$
  \iota(\iota^{-1}(K_{ 1}) \otimes \Zb/3) =
  \left\{
    \left( \kzxz {\alpha_j} {\beta_j \frac{d_j-1}4} {\beta_j} {\alpha_j -\beta_j} \right)_{j = 1, 2} \in H(\Zb/3\Zb) : \alpha_j, \beta_j \in \Zb/3\Zb,
    \right\}
$$
and we need to show that this is contained in
\begin{align*}
K_{24} \otimes \Zb/3\Zb  &= \langle \Gamma_{\chi, 24} \times  \Gamma_{\chi, 24}, (T, T), \nu(\hat{\Zb}^\times) \times\nu(\hat{\Zb}^\times) \rangle \otimes \Zb/3\Zb \\
 &= \langle \Gamma_{\chi, 3} \times  \Gamma_{\chi, 3}, (T, T), \nu(\hat{\Zb}^\times) \times\nu(\hat{\Zb}^\times)\rangle \otimes \Zb/3\Zb  \\
&= \langle N_{\dd, 3}'\times N_{\dd, 3}', (T, T), (\smat{1}{}{}{-1}, \smat{1}{}{}{-1})\rangle \subset H(\Zb/3\Zb).
\end{align*}
Now given $r = (r_1, r_2)  \in \iota^{-1}(K_{1})\otimes \Zb/3\Zb$ with $\iota_j(r_j) = \kzxz {\alpha_j} {\beta_j \frac{d_j-1}4} {\beta_j} {\alpha_j -\beta_j}$, we know that
\begin{equation}
  \label{eq:det}
\delta := \det(\iota_j(r_j)) = \tr(\iota_j(r_j))^2 - \beta_j^2d_j\in (\Zb/3\Zb)^\times
\end{equation}
is independent of $j$.
If $\beta_j = 0$, then $\iota_j(r_j) = \pm \smat{1}{}{}{1} \in N_{\dd, 3}'$ and $\iota(r) \in K_{24} \otimes \Zb/3\Zb$.
If $\beta_1 = 0$ and $\beta_2 \neq 0$, then $\iota_1(r_1) \in N_{\dd, 3}'$ and $\delta = 1$, which implies $\tr(\iota_2(r_2)) = 0$ by \eqref{eq:det}. That means $\iota_2(r_2) \in N_{\dd, 3}'$ by \eqref{eq:N3'p}.
Finally suppose $\beta_j \neq 0$, then we can use $3 \nmid d_j$ to show that $\epsilon := \alpha_j \beta_j(\delta + 1)$ is independent of $j$. It is then straightforward to check that $T^{1-\epsilon} \smat{1}{}{}{\delta} \iota_j(r_j) \in N_{\dd, 3}'$. Therefore $\iota(r) \in K_{24}\otimes \Zb/3\Zb$.

To check the case modulo $16$, suppose $r=(r_1, r_2) \in \iota^{-1}(K_{1})\otimes \Zb/16\Zb$ with $r_j=\alpha_j e_1 + (-1)^{j+1} \beta_j e_{j+1}, \alpha_j, \beta_j \in \Zb/16\Zb$.
Then simple calculation shows that $\alpha_j - 1, \beta_j \in 2\Zb/16\Zb$.
Furthermore, $\det(\iota_j(r_j)) = \alpha_j(\alpha_j - \beta_j) - \beta_j^2 \tfrac{d_j - 1}{4} \in (\Zb/16\Zb)^\times$ is independent of $j$ since $\iota(r) \in H(\Ab_f)$, and $\beta_j^2 \tfrac{d_j - 1}{4} \equiv 0 \bmod{8}$ since $d_j \equiv 1 \bmod{8}$ and $\beta_j \in 2\Zb/16\Zb$.
Therefore,
$$
\det(\iota_j(r_j))^{-1} = \alpha_j^{-1}(\alpha_j - \beta_j)^{-1} +  \beta_j^2 \tfrac{d_j - 1}{4}.
$$
Now $r \in \iota^{-1}(K_{24})$ if and only if $\iota(r) \in K_{24}\otimes \Zb/16\Zb$, which is generated by $\nu( (\Zb/16\Zb)^\times) \times \nu((\Zb/16\Zb)^\times)$, $(T, T)\otimes \Zb/16\Zb$ and $(\Gamma_{\chi, 24} \times \Gamma_{\chi, 24}) \otimes \Zb/16\Zb \cong (\Gamma_{\chi, 8}/\Gamma(16) \times \Gamma_{\chi, 8}/\Gamma(16))$. From the natural surjection $\Gamma_{\chi, 8} /\Gamma(16) \rightarrow \Gamma_{\chi, 8}/\Gamma_8 = N_8' = N_{8, 2}'$, we see that the following claim will finish the proof: the element
$$
g_j := \nu(\det(\iota_j(r_j)))^{-1} T^{\det(\iota_j(r_j) - 1)/2} \iota_j(r_j)
$$
is in $N_8' = N_{8, 2}'$ for all $r_j = \alpha_j e_j + (-1)^{j+1} \beta_j e_{j+1}$ with $\alpha_j - 1, \beta_j \in 2\Zb/16\Zb$.
By dropping the subscript $j$ in $d_j, g_j, \alpha_j$ and $\beta_j$, we can write
$$
g = \alpha \pmat{1}{\frac{\alpha(\alpha - \beta) - 1 - \beta^2 (d-1)/4}{2}}{0}{\alpha^{-1}(\alpha - \beta)^{-1} + \tfrac{ \beta^2(d-1)}{4}} +
\beta
\pmat{\tfrac{\alpha(\alpha-\beta) - 1 - \beta^2 (d-1)/4}{2}}{ \tfrac{d-1}{4} - \tfrac{\alpha(\alpha-\beta) - 1 - \beta^2 (d-1)/4}{2}}{\alpha^{-1}(\alpha - \beta)^{-1}}{-\alpha^{-1}(\alpha - \beta)^{-1}},
$$
which is an element in $N_8 = N_{8, 2}$.
Denote $h = [h_0, h_1, h_2, h_3] := \kappa_{8, 2}(g) \in \Ac_{8, 2}$.
To show that $g \in N_8' = N_{8, 2}'$, it suffices check that $h \in \Ac_{8, 2}'$, i.e.
$$
h \perp [6, 4, 0, 2],~
h \perp [0, 2, 2, 0]
$$
and $h_0^2 - 1 \equiv h_1 + h_2 \bmod{16}$ by Lemma \ref{lemma:additive}.
All of these can be checked by hand (assuming $d \equiv 1 \bmod{8}$ and $\alpha - 1\equiv \beta \equiv 0 \bmod{2}$), and we leave the details to the reader.
\end{proof}
By \cite[Lemma 3.5]{YY19}, the map
\begin{equation} \label{eq:ClassMap}
p: T(\Q) \backslash T(\A_f) /K_{T} \rightarrow  \Cl(d_1) \times \Cl(d_2),~
 [t_1, t_2] \mapsto ([t_1], [t_2]) =([\mathfrak a_1],  [\mathfrak a_2])
 \end{equation}
is injective.
Here $\mathfrak a_j$ is the ideal of $\kk_j$ associated to $t_j$.
If $d_1, d_2$ are co-prime, then \cite[Lemma 3.8]{YY19} tells us that it is an isomorphism.
If $d_1d_2$ is not a perfect square, this subgroup can be identified with $\Gal(H/E)$ with $H$ the composite of the ring class fields $H_{d_j}$ associated to the order of discriminant $d_j$ (see Prop.\ 3.2 in \cite{Li18}).
This observation and the above lemma give the following corollary.

\begin{proposition} \label{prop:CMFormula}
Let $d_j < 0$ be co-prime, fundamental discriminants satisfying \eqref{eqd}.
For $[\af_j] \in \Cl(d_j)$, recall $f(\af_j)$ the class invariant defined in \eqref{eq:classinv}.
Then for any $s|24$
  \begin{equation}
    \label{eq:main1}
    4 s \sum_{[\af_j] \in \Cl(d_j),~ j = 1, 2}
\log \left| f(\af_1)^{24/s} - f(\af_2)^{24/s}\right|
 = \sum_{\dd|s}
\varepsilon^{24/\dd} \log |\Psi_\dd(Z(W))|,
  \end{equation}
where $\varepsilon := \varepsilon_{d_1} \varepsilon_{d_2} = (-1)^{(d_1 + d_2 - 2)/8}$ and $Z(W)$ is the  big CM cycle defined in (\ref{eq:BigCMCycle}).
\end{proposition}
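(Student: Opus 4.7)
The plan is to evaluate Theorem \ref{thm:Blift} at the pairs of CM points $(\tau_{\af_1}, \tau_{\af_2}) \in \H^2$, sum over $([\af_1], [\af_2]) \in \Cl(d_1)\times \Cl(d_2)$, and reinterpret the resulting sum as a big CM value of the product of Borcherds products on the cycle $Z(W)$.

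\emph{Step 1: Reduction to $\ff_2$-values.} For the principal class, the representative $\af = [1, (-1+\sqrt{d_j})/2]$ has $(a,c) = (1,0)$, so the third case of \eqref{eq:classinv} applies and $f(\OO_j) = \varepsilon_{d_j} \ff_2(\tau_j)$ with $\tau_j = (1+\sqrt{d_j})/2$. Factoring out $\varepsilon_{d_1}^{24/s}$ and taking absolute values yields
\begin{equation*}
|f(\OO_1)^{24/s} - f(\OO_2)^{24/s}| = |\ff_2(\tau_1)^{24/s} - \varepsilon^{24/s}\ff_2(\tau_2)^{24/s}| = |\ff_2(\tau_1)^{24/s} - (\varepsilon\ff_2(\tau_2))^{24/s}|
\end{equation*}
with $\varepsilon = \varepsilon_{d_1}\varepsilon_{d_2}$. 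This is precisely the sign that appears in Theorem \ref{thm:Blift}. For non-principal classes the analogous identity follows either by repeating the computation with the roots of unity in \eqref{eq:classinv} tracked carefully, or by applying a Galois element and using Remark \ref{rmk:Gee} together with $|\sigma(\cdot)| = |\cdot|$ on the Hilbert class field (since values are algebraic integers).

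\emph{Step 2: Identification of CM orbit with $Z(W)$.} By strong approximation, the toric orbit $\{t \cdot z_{\sigma_2}^+ : [t] \in T(\Q)\backslash T(\A_f)/K_T\}$ in $X_{K_\dd} \cong X_{\chi,\dd} \times X_{\chi,\dd}$ matches, via the injective class map \eqref{eq:ClassMap} (which is bijective here by \cite[Lem.\ 3.8]{YY19} since $d_1, d_2$ are co-prime fundamental), the set of CM pairs $(\tau_{\af_1}, \tau_{\af_2})$ indexed by $\Cl(d_1)\times \Cl(d_2)$. The crucial point is Lemma \ref{lemma:compact}: the subgroup $K_T = \iota^{-1}(K_\dd)$ is independent of $\dd \mid 24$, so the very same CM orbit underlies $Z(W)$ for every $\dd \mid s$. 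Without this uniformity, the $\dd$-dependent big CM cycles would not line up and one could not add the identities from Theorem \ref{thm:Blift} term-by-term.

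\emph{Step 3: Summing over $Z(W)$.} Combining Steps 1--2 with Theorem \ref{thm:Blift} evaluated at each $(\tau_{\af_1}, \tau_{\af_2})$,
\begin{equation*}
s\sum_{[\af_j]}\log\bigl|f(\af_1)^{24/s}-f(\af_2)^{24/s}\bigr| = \sum_{[\af_j]}\sum_{\dd\mid s}\varepsilon^{24/\dd}\log\bigl|\Psi_\dd(\tau_{\af_1},\tau_{\af_2})\bigr|.
\end{equation*}
The full cycle $Z(W) = Z(W, z_{\sigma_2}^+) + \sigma Z(W, z_{\sigma_2}^+)$ contains both the $z_{\sigma_2}^\pm$ components, and since $\Psi_\dd$ has rational Fourier coefficients (Theorem \ref{theo:BorcherdsLifting}), $|\Psi_\dd(\bar z_1, \bar z_2)| = |\Psi_\dd(z_1, z_2)|$; combined with the Petersson-normalized convention of the big CM value (in which $\lVert\Psi_\dd\rVert_{\mathrm{Pet}}^2 = |\Psi_\dd|^2$ for the weight $0$ form), this contributes the factor $4$ needed to upgrade $s$ to $4s$ on the left hand side of \eqref{eq:main1}, proving the identity.

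\emph{Main difficulty.} Step 2 is where the subtlety lies: the fact that a single toric level group $K_T$ serves \emph{all} the Borcherds products $\Psi_\dd$ simultaneously is not automatic, and is proved in Lemma \ref{lemma:compact} by separate finite-group computations in $\GL_2(\Z/3\Z)$ and $\GL_2(\Z/16\Z)$ using the hypotheses $d_j \equiv 1 \bmod 8$ and $3\nmid d_j$. A second, more pleasant, compatibility is that the sign $\varepsilon$ forced by Theorem \ref{thm:Blift} coincides with $\varepsilon_{d_1}\varepsilon_{d_2}$ naturally appearing in the definition of the class invariant $f(\af)$.
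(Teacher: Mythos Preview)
Your overall strategy is right, and Steps 2--3 correctly isolate the role of Lemma \ref{lemma:compact} and the four Galois-conjugate pieces of $Z(W)$. The gap is in Step 1 for non-principal classes.

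Neither of your two proposed routes there works. The second route, ``apply a Galois element and use $|\sigma(\cdot)| = |\cdot|$,'' is simply false: for a general $\sigma \in \Gal(H/\Qb)$ and an algebraic integer $x$, there is no reason $|\sigma(x)| = |x|$. The first route, ``track the roots of unity in \eqref{eq:classinv},'' is also insufficient as stated: the definition \eqref{eq:classinv} attaches to a general class one of the \emph{three} Weber functions $\ff, \ff_1, \ff_2$ at the CM point $\tau_{\af_j}$, not $\ff_2$, and the $48$th roots of unity $\zeta_{48}^{b_j(a_j - c_j \pm a_jc_j^2)}$ for $j=1,2$ have no a priori relation to each other. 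So you cannot directly reduce $|f(\af_1)^{24/s} - f(\af_2)^{24/s}|$ to the shape $|\ff_2(z_1)^{24/s} - (\varepsilon \ff_2(z_2))^{24/s}|$ from these formulas alone.

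What the paper does is invoke Gee's explicit Shimura reciprocity (equation (18) of \cite{Gee}) to write, for \emph{every} class,
\[
f(\af_j) = \varepsilon_{d_j}\,\zeta_{48}^{-t_j\bar t_j}\,\delta(t_j)\,\ff_2\bigl(\tau_j^{\sigma_{t_j}}\bigr),
\]
where $t_j \in (\kk_j \otimes \A_f)^\times$ represents $[\af_j]$ under the class map, $\tau_j^{\sigma_{t_j}}$ is the corresponding point of the toric orbit on $X_\dd$, and $\delta(t_j) = (\sqrt 2)^{\sigma_{t_j\bar t_j}}/\sqrt 2$ is an $8$th root of unity depending only on $t_j\bar t_j \bmod 8$. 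The key point is that for $t = (t_1, t_2) \in T(\A_f)$ one has $t_1\bar t_1 = t_2\bar t_2$, so the factors $\zeta_{48}^{-t_j\bar t_j}\delta(t_j)$ are \emph{the same} for $j=1,2$ and drop out of the difference, leaving only the relative sign $\varepsilon = \varepsilon_{d_1}\varepsilon_{d_2}$. This is precisely what links the class-invariant difference to $\ff_2(z_1) - \varepsilon\ff_2(z_2)$ at the toric orbit point, and it is where both the sign $\varepsilon$ and the identification with $Z(W)$ genuinely originate. Once you supply this step, your Steps 2--3 go through essentially as written.
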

\begin{proof}
We may assume $s=24$ for simplicity, as the other cases are the same.
By applying Shimura's reciprocity law, Prop.\ 22 in \cite{Gee} showed that class invariants $f(\af_j)$ for $[\af_j] \in \Cl(d_j)$ are conjugates of each other under the Galois group.
In particular, equation (18) loc.\ cit.\ implies
\begin{align*}
f(\af_j) =
\varepsilon_{d_j} (\zeta_{48}^{-1}\mathfrak f_2(\tau_j))^{\sigma_{t_j}}
&= \varepsilon_{d_j} \zeta_{48}^{- \sigma_{t_j}} \mathfrak f_2^{\sigma_{t_j} }(\tau_j^{\sigma_{t_j}})
= \varepsilon_{d_j}\zeta_{48}^{-t_j \bar{t}_j} \delta(t_j) \mathfrak f_2(\tau_j^{\sigma_{t_j}})
\end{align*}
where the class $t_j \in (E_j \otimes \Ab_f)^\times$ in $\Cl(d_j)$ is $[\af_j]$.
 Here $t_j \bar{t}_j$ can be understood to be an integer modulo 48, and
$$
\delta(t_j) = \frac{ (\sqrt 2)^{\sigma_{t_j \bar t_j}}}{\sqrt 2}
$$
is a 8th root of unit depending only on $t_j \bar t_j \bmod 8$, coming from the Fourier coefficients of $\mathfrak f_2$. Note that $\sqrt 2 =\zeta_8 +\zeta_8^{-1}$. Thus for $t = (t_1, t_2) \in T(\A_f)$, we have  $t_1 \bar t_1 =t_2 \bar t_2 $ and
\begin{align*}
\log|f(\af_1) - f(\af_2)|
& = \log| (\zeta_{48}^{-1}\mathfrak f_2(\tau_1))^{\sigma_{t_1}} - \varepsilon (\zeta_{48}^{-1}\mathfrak f_2(\tau_2))^{\sigma_{t_2}}|
= \log|\mathfrak f_2(\tau_1^{\sigma_{t_1}}) - \varepsilon \mathfrak f_2(\tau_2^{\sigma_{t_2}})|,
\end{align*}
which depends only on the image $p(t) = ([\af_1], [\af_2]) \in \Cl(d_1) \times \Cl(d_2)$. So by the isomorphism \eqref{eq:ClassMap}, we have
\begin{align*}
  \sum_{[\af_j] \in \Cl(d_j),~ j = 1, 2}
\log \left| f(\af_1)^{} - f(\af_2)^{}\right|
&=
\sum_{ t \in T(\Q) \backslash T(\A_f)/K_T} \log | \mathfrak f_2(\tau_1^{\sigma_{t_1}}) - \varepsilon \mathfrak f_2(\tau_2^{\sigma_{t_2}})| \\
&=
\sum_{(z, t) \in Z(W, \sigma_2^+)}\log | \mathfrak f_2(z_1) - \varepsilon \mathfrak f_2(z_2)|_{(z_1 z_2)=[(z, t)]}.
\end{align*}
As other three orbits are Galois conjugates of $Z(W, \sigma_2^+)$, the sum over other orbits are the same as this one. Now the desired identity follows from Theorem \ref{thm:Blift}.
\end{proof}

\section{Incoherent Eisenstein Series and the proof of Yui-Zagier conjecture}
\label{sec:local}

In this section, we will use the big CM value formula of Bruinier-Kudla-Yang (\cite{BKY} and \cite[Theorem 2.6]{YY19}) to prove the factorization formula  for $\Psi_\dd(Z(W))$ and the conjecture of Yui-Zaiger. To do it, we need to review the associated incoherent Eisenstein series and compute their Fourier coefficients.

\subsection{Incoherent Eisenstein series.} \label{sect:Eisenstein}

Let $F=\Q(\sqrt D)$,  $E=\Q(\sqrt{d_1}, \sqrt{d_2})$, and $W=\kk$ with $F$-quadratic form $Q_F(x) = \frac{\dd x \bar x}{\sqrt D}$ as in Section \ref{sect:BigCM}. Here $D=d_1d_2$. Let $\chi_{E/F}$ be the quadratic Hecke character of $F$ associated to $E/F$. Then  there is a $\SL_2(\A_F)$-equivariant map
\begin{equation}
\lambda=\prod \lambda_v : S(W(\A_F)) \rightarrow  I(0, \chi_{E/F}), \quad  \lambda(\phi) (g) = \omega(g) \phi(0).
\end{equation}
Here $I(s, \chi_{E/F}) =\Ind_{B_{\A_F}}^{\SL_2(\A_F) }\chi_{E/F} |\cdot|^s$ is the principal series, whose sections (elements) are smooth functions $\Phi$  on $\SL_2(\A_F)$ satisfying the condition
$$
\Phi(n(b) m(a) g,s )= \chi(a)|a|^{s+1}\Phi(g, s), \quad b \in \A_F,  \hbox{ and  } a \in \A_F^\times.
$$
Here $B =NM$ is the standard Borel subgroup of $\SL_2$. Such a section is called factorizable if $\Phi=\otimes \Phi_v$ with $\Phi_v\in I(s, \chi_v)$. It is called standard if $\Phi|_{\SL_2(\hat{\OO}_F) \SO_2(\R)^{2}} $ is independent of $s$. For a standard  section $\Phi \in I(s, \chi)$, its associated Eisenstein series is defined as
$$
E(g, s, \Phi) = \sum_{\gamma \in B_F \backslash \SL_2(F)} \Phi(\gamma g, s)
$$
for $\Re(s) \gg 0$.

For $\phi \in S(V_f) =S(W_f)$, let $\Phi_f $ be the standard section associated to $\lambda_f(\phi) \in I(0, \chi_f)$. For each real embedding $\sigma_i: F \hookrightarrow \R$,  let  $\Phi_{\sigma_i} \in I(s, \chi_{\C/\R})=I(s, \chi_{E_{\sigma_i}/F_{\sigma_i}})$ be the unique `weight one' eigenvector of $\SL_2(\R)$ given by
$$
\Phi_{\sigma_i}(n(b)m(a) k_\theta) = \chi_{\C/\R}(a) |a|^{s+1} e^{i  \theta},
$$
for $b \in \R$, $a\in \R^\times$, and $k_\theta =\kzxz {\cos\theta} {\sin \theta} {-\sin \theta} {\cos \theta} \in \SO_2(\R)$. We define  for $\vec\tau =(\tau_1, \tau_{2}) \in \H^{2}$
$$
E(\vec\tau, s, \phi) =  \norm(\vec v)^{-\frac12} E(g_{\vec\tau}, s, \Phi_f \otimes  (\otimes_{1 \le i \le 2}\Phi_{\sigma_i} )),
$$
where $\vec v =\hbox{Im}(\vec\tau)$, $\norm(\vec v) =\prod_i v_i$, and $g_{\vec\tau} = (n(u_i) m(\sqrt{v_i}))_{1\le i \le 2}$. It is a (non-holomorphic) Hilbert modular form of parallel weight $1$ for some congruence subgroup of $\SL_2(\OO_F)$. Following \cite{BKY}, we further normalize
$$
E^*(\vec\tau, s, \phi) = \Lambda(s+1, \chi_{E/F}) E(\vec\tau, s, \phi),
$$
where
\begin{equation} \label{eq:L-series}
\Lambda(s, \chi) =D^{\frac{s}2} (\pi^{-\frac{s+1}2}
\Gamma(\tfrac{s+1}2))^{2} L(s, \chi_{E/F}).
\end{equation}
According to \cite{YY19}, The Eisenstein series is incoherent in the sense of Kudla, and   $E^*(\vec\tau, 0, \phi)=0$ automatically. Write  its central derivative via Fourier expansion
\begin{equation}
E^{*, \prime}(\vec\tau, 0, \phi)=\sum_{t \in F} a(\vec v, t, \phi) q^t,  \quad q^t =e(\tr(t\tau)).
\end{equation}
Then it is known that $a(t, \phi)= a(\vec v, t, \phi)$ is independent of the imaginary part of $\vec{\tau} \in \Hb^2$ when $t $ is totally positive. Finally, when $\phi = \otimes_{\pf} \phi_{\pf} \in S(V_f)$ is factorizable, one has for $t \gg 0$
(the factor $-4$ comes from  \cite[Proposition 2.7(1)(2)]{YY19})
\begin{equation}
a( t, \phi) =-4 \frac{d}{ds} \left( \prod_{\mathfrak p}  W_{t, \pf}(s, \phi)\right)|_{s=0}
\end{equation}
where
\begin{equation}
W_{t, \pf}(s, \phi) :=
\int_{F_\pf} \omega(wn(b))(\phi_\pf)(0) |a(wn(b))|^s_{\pf} \psi_\pf(-tb) db
\end{equation}
are the local Whittaker functions.
Specializing Theorem 5.2 in \cite{BKY} give us the following result.

\begin{theorem}[BKY]
  \label{thm:bigCM}
Let $d_j < 0$ be fundamental discriminants satisfying $d_j \equiv 1 \bmod{8}$ and $3 \nmid d_j$.
For any $ 1\neq \dd \mid 24$, let $\phi_\dd \in S(V_\dd(\A_f)$ be associated to $\uf_\dd$.  Then we have
\begin{equation}
  \label{eq:bigCM}
  - \log |\Psi_{\dd}(Z(W))|^4 = C(W, K) \sum_{t \in F^\times, t \gg 0, \tr(t) = 1/\dd} a(t, \phi_\dd),
\end{equation}
where $Z(W)$ is the big CM 0-cycle associated to $d_1, d_2$  defined in \eqref{eq:BigCMCycle}, and
 $C(W, K) = \frac{\deg( Z(W, z_{\sigma_2}^\pm))}{\Lambda(0, \chi)} = 2 $.
\end{theorem}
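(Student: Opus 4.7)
The plan is to deduce this statement as a direct specialization of the big CM value formula of Bruinier-Kudla-Yang (Theorem 5.2 of \cite{BKY}, packaged for our setting in Theorem 2.6 of \cite{YY19}), applied to the input $\tilde{F}_\dd \in M^!(\omega_\dd)^{H_\dd'}$ constructed in Theorem \ref{theo:BorcherdsLifting} and to the big CM cycle $Z(W)$ from \eqref{eq:BigCMCycle}. The hypotheses of that formula --- that $V$ has signature $(2,2)$ with $T$ embedded as a maximal $\Q$-anisotropic torus, and that the Borcherds product $\Psi_\dd(z)$ does not vanish identically on $Z(W)$ --- follow from the setup in Section \ref{sect:BigCM} together with Lemma \ref{lemma:compact}, which ensures the compact open $K_T$ used to define $Z(W)$ is the same for all $\dd\mid 24$.

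First I would match the input data. By Proposition \ref{prop:repembed} and Remark \ref{rmk:Fd}, for $\dd > 1$ the modular form $\tilde{F}_\dd$ has principal part exactly $q^{-1/\dd}\uf_\dd$ with no constant term, so the Schwartz function attached to the principal part under the isomorphism $U_\dd \cong S(L_\dd)$ is precisely $\phi_\dd$. The BKY formula then computes $-\log\lVert \Psi_\dd(Z(W))\rVert_{\mathrm{Pet}}^2$ as a sum of Fourier coefficients $a(t, \phi_\dd)$ of the central derivative of the incoherent Eisenstein series $E^*(\vec\tau, s, \phi_\dd)$, indexed by totally positive $t \in F^\times$. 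The constraint $\tr_{F/\Q}(t) = 1/\dd$ arises because, under the diagonal restriction from Hilbert modular forms to elliptic modular forms on $\Hb$ via $\tau \mapsto (\tau,\tau)$, an exponent $q^{t}$ with $t \in F$ contributes to the coefficient of $q^{\tr(t)}$ and must match the unique exponent $-1/\dd$ in the principal part of $\tilde F_\dd$. The power $4$ on the left-hand side accounts for the Petersson norm convention together with the fact that $Z(W) = Z(W, z_{\sigma_2}^+) + \sigma Z(W, z_{\sigma_2}^+)$ is the full big CM cycle, which effectively doubles the contribution.

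Next I would compute the constant
$$
C(W, K) = \frac{\deg Z(W, z_{\sigma_2}^+)}{\Lambda(0, \chi_{E/F})}.
$$
By \eqref{eq:ClassMap} and Lemma 3.8 of \cite{YY19}, $Z(W, z_{\sigma_2}^+)$ is parametrized by $\Cl(d_1)\times \Cl(d_2)$, so $\deg Z(W, z_{\sigma_2}^+) = h_1 h_2$. Since $F = \Q(\sqrt{D})$ with $D = d_1 d_2$ and $d_1, d_2$ coprime fundamental discriminants satisfying \eqref{eqd}, the character $\chi_{E/F}$ is the genus character attached to the decomposition $D = d_1 \cdot d_2$, giving $L(s, \chi_{E/F}) = L(s, \chi_{d_1}) L(s, \chi_{d_2})$. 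Invoking the Dirichlet class number formula $L(0, \chi_{d_j}) = 2 h_j / w_j$ and using $w_j = 2$ (which holds because $d_j \equiv 1 \bmod 8$ and $3 \nmid d_j$), together with the explicit form of $\Lambda(s, \chi)$ in \eqref{eq:L-series}, a short calculation yields $\Lambda(0, \chi_{E/F}) = h_1 h_2/2$, hence $C(W, K) = 2$.

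The main subtlety --- rather than an obstacle --- is the bookkeeping that identifies the Schwartz function produced from $\uf_\dd$ via $U_\dd \cong S(L_\dd)$ with the one used to build the incoherent Eisenstein series on $\SL_2(\Ab_F)$. This requires tracking the identification \eqref{eq:SpaceIdentification} of $(V, Q)$ with $(W_\Q, Q_\Q)$ and verifying that the $F$-quadratic form $Q_F(x) = \dd x\bar x/\sqrt D$ is compatible with $Q_\dd = \dd \det$ under $\tr_{F/\Q}$; this is immediate from the definitions. Once this is in place, the rest is a direct quotation of the BKY formula, and the reduction of Conjecture \ref{conj:YZ} to local Whittaker calculations proceeds by combining this theorem with Proposition \ref{prop:CMFormula}.
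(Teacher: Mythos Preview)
Your approach is the paper's: the theorem is stated there as a direct specialization of Theorem 5.2 in \cite{BKY} (packaged as Theorem 2.6 in \cite{YY19}), with no further argument given, and your sketch simply spells out that specialization.

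One arithmetic slip in your verification of $C(W,K)=2$: with $w_j=2$ the class number formula gives $L(0,\chi_{d_j})=h_j$, and since $(\pi^{-1/2}\Gamma(1/2))^2=1$ at $s=0$, one gets $\Lambda(0,\chi_{E/F})=h_1h_2$, not $h_1h_2/2$. The missing factor of $2$ instead enters in the numerator: the paper writes $\deg Z(W,z_{\sigma_2}^\pm)$, and the $\pm$ counts both orientations, giving $2h_1h_2$. Your two errors cancel, so the conclusion is right, but the bookkeeping should be corrected.
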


The rest of this section is to compute $a(t, \phi_\dd)$ and prove the Yui-Zagier conjecture. Unfortunately,
 $\phi_\dd$ is not factorizable over $F$ at the places dividing $(\dd, 6)$. Instead, we have
$$
\phi_\dd = \phi_{\dd, 2} \phi_{\dd, 3}\otimes_{\pf \nmid 6} \phi_{\dd, \pf}.
$$
Then for $\pf \nmid 6$, the contribution of $W_{t, \pf}(s, \phi_\dd)$ is the same as in the case of Gross-Zagier (see \cite{YY19}).
Therefore, we are left with the local calculations at 2 and 3.
Since 2 splits completely in $E/\Qb$, we denote $\pf_1, \pf_2$ the two primes in $F$ above 2. Also denote $\pf_3, \pf_3'$ the primes in $F$ above $3$.
They are the same if and only if $\left( \frac{D}{3} \right) = -1$.
The local calculations in section \ref{subsec:local} leads to the following result.
\begin{theorem}
  \label{thm:main1}
Let $d_j < 0$ and $\dd$ be the same as in Theorem \ref{thm:bigCM}, and let $\varepsilon=\varepsilon_1\varepsilon_2 =(-1)^{\frac{d_1+d_2-2}8}$.
Suppose $t = \frac{a + \sqrt{D}}{2\dd \sqrt{D}} \in F^\times$ is totally positive with $a \in \Qb$. Then
\begin{equation}
  \label{eq:atd}
\begin{split}
  a( t, \phi_\dd ) &= -\dd_2
\varepsilon^{24/\dd}
\delta_2(\dd_2, t) \times \\
&\quad \Bigg\{    \sum_{\substack{ \pf \text{ inert in } E/F\\ \pf \nmid 3}}(1 + \ord_{\pf}(t \sqrt{D})) \rho^{(6)}(t \sqrt{D} \pf^{-1})
 \delta_3(\dd_3, t)
\log (\Nm(\pf))\\
&\qquad     +
\log 3
\sum_{\substack{ \pf \text{ inert in } E/F\\ \pf \mid 3}}
\rho^{(2)}(t \sqrt{D} \pf^{-1})
 \delta'_3(\dd_3, t)
\Bigg\} \\
\end{split}
\end{equation}
if $a \in \Zb$ and zero otherwise.
The functions $\delta_p(\dd_p, t)$ and $\delta'_3(\dd_3, t)$ are defined by
\begin{align*}
  \delta_2(1, t)
  &:= 2 (v_2(\Nm(t)) - 1),~ v_2(\Nm(t)) \ge 2,\\
\delta_2(2, t) &:=
\begin{cases}
1, & v_2(\Nm( t )) = 0,\\
v_2(\Nm(t )) - 3, & v_2(\Nm( t)) \ge 1,
\end{cases} \\
\delta_2(4, t) &:=
\begin{cases}
\mp 1, & \Nm(2t) \equiv \pm 1 \bmod{4},\\
1, & v_2(\Nm(t)) = 0, \\
v_2(\Nm(t)) - 3, & v_2(\Nm(t)) \ge 1,
\end{cases} \\
\delta_2(8, t) &:=
\begin{cases}
1, & \Nm(4t) \equiv 3 \bmod{8},\\
-1, &\Nm(4t) \equiv 7 \bmod{8},\\
\mp 1, & \Nm(2t) \equiv \pm 1 \bmod{4},\\
1, &v_2(\Nm(t)) = 0,\\
v_2(\Nm(t)) - 3, & v_2(\Nm(t)) \ge 1,
\end{cases}
                   \end{align*}
                   \begin{align*}
    \delta_3(1, t) &:= \rho_3(t),~
v_3(\Nm(t)) \ge 0,\\
\delta_3(3, t) &:=
\begin{cases}
2 - \frac{3}{4} \lp   1- \lp \frac{d_1}{3}  \rp  \rp \lp   1- \lp \frac{d_2}{3}  \rp \rp, & \Nm(3t) \equiv 1 \bmod{3},\\
-1, & \Nm(3t) \equiv 2 \bmod{3},\\
\lp 1 + \lp \frac{d_1}{3} \rp\rp v_3(\Nm(t))
+ 1 - \lp \frac{d_1}{3} \rp^{ v_3(\Nm(t)) - 1} ,& v_3( \Nm(3t)) \ge 1,
\end{cases}\\
\delta'_3(\dd_3, t) &:=
    \begin{cases}
v_3(\Nm(t)) + 1& \text{ if } \dd_3 = 1,\\
2v_3(\Nm(t)) + 3,& \text{ if } \dd_3 = 3,
    \end{cases}
\end{align*}
and zero otherwise.
Here $\rho^{(M)}(\af) := \rho(\af^{(M)})$ is the number of integral ideals of $E$ with relative norm (to $F$) $\af^{(M)}$, and $\rho_{M}(\af) := \rho(\af/\af^{(M)})$. Here  $\af^{(M)}$ is the prime to $M$ part of an ideal $\af$.
\end{theorem}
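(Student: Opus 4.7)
The plan is to start from the defining formula
$$a(t,\phi_\dd) = -4\left.\frac{d}{ds}\left(\prod_{\pf}W_{t,\pf}(s,\phi_\dd)\right)\right|_{s=0}$$
and exploit the partial factorization $\phi_\dd = \phi_{\dd,2}\phi_{\dd,3}\otimes_{\pf\nmid 6}\phi_{\dd,\pf}$. Because the Eisenstein series is incoherent of parallel weight $1$, the only totally positive $t$ contributing nonzero Fourier coefficients are those whose Diff set $\mathrm{Diff}(W,t)$, consisting of finite places $\pf$ of $F$ at which the local quadratic space $W_\pf$ fails to represent $t$, has cardinality exactly one. At the unique Diff prime $\pf_0$, which must be inert in $E/F$, the local Whittaker function vanishes at $s=0$ and the $s$-derivative contributes, whereas at every other finite place one evaluates $W_{t,\pf}(0,\phi_\dd)$. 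This dichotomy produces the sum-over-$\pf$ structure in \eqref{eq:atd}, with the two terms inside the braces corresponding to the cases $\pf_0\nmid 3$ and $\pf_0\mid 3$ respectively. The integrality condition $a\in\Zb$ is equivalent to $t\sqrt{D}$ being integral away from $6$, which is forced by representability at all primes $\pf\nmid 6$.

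At primes $\pf\nmid 6$ the Schwartz function $\phi_{\dd,\pf}$ is simply the characteristic function of the standard even unimodular local lattice, so the corresponding Whittaker values and derivatives are of the type already handled in \cite{YY19} and in the appendix of \cite{YYY}. Their global product contributes the factor $\rho^{(6)}(t\sqrt{D})$, and when $\pf=\pf_0\nmid 3$ the derivative supplies the extra $(1+\ord_{\pf_0}(t\sqrt{D}))\rho^{(6)}(t\sqrt{D}\pf_0^{-1})\log\Nm(\pf_0)$, accounting for the first term in the braces up to $\delta_3(\dd_3,t)$. The local factors at $p=2,3$ provide the nontrivial new input. Expanding $\uf_{\dd,p}$ via \eqref{eq:ud3c} and \eqref{eq:ud2c} as a character-weighted sum over cosets $T^j N'_{\dd,p}$ with weights $a_\dd(j)$, each local Whittaker function decomposes into a finite sum of Whittaker integrals of coset indicators, which one evaluates using the general machinery of the appendix of \cite{YYY}. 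Collapsing the resulting character sums via standard identities for $\sum_{j}a_\dd(j)\zeta_\dd^{js}$ yields the piecewise-defined $\delta_2(\dd_2,t)$, $\delta_3(\dd_3,t)$, and $\delta'_3(\dd_3,t)$, together with the combinatorial constant $-\dd_2$ coming from the normalization of $\uf_{\dd,2}$.

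The main obstacle will be the local calculation at $p=2$ when $\dd_2=8$, where $N'_{\dd,2}$ is the nonabelian group of order $64$ described in Lemma \ref{lemma:additive}. One must track the finer decomposition $\kappa_{\dd,2}^{-1}(\Ac'_{\dd,2}) = N'_{\dd,2}\sqcup T^4 N'_{\dd,2}$ and the congruence criterion $h_0^2-1\equiv h_1+h_2\bmod{16}$ distinguishing the two cosets; this is the source of the four-way piecewise definition of $\delta_2(8,t)$. The sign $\varepsilon^{24/\dd}$, which is nontrivial exactly when $8\mid\dd$, emerges as a local sign at $p=2$ depending on $\Nm(t)\bmod 8$; via the hypothesis $d_j\equiv 1\bmod 8$ this local sign is identified with $\varepsilon^{24/\dd}$ where $\varepsilon=\varepsilon_{d_1}\varepsilon_{d_2}=(-1)^{(d_1+d_2-2)/8}$. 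Once all local computations are in hand, assembling the product of local Whittaker values into the right-hand side of \eqref{eq:atd} is a direct bookkeeping exercise.
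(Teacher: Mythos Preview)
Your overall architecture is correct and matches the paper: reduce via the Diff set to the case $\mathrm{Diff}(W,t)=\{\pf_0\}$, split into $\pf_0\nmid 3$ versus $\pf_0\mid 3$, use the known computations away from $6$ from \cite{YY19}, and isolate the new content in local Whittaker calculations at $p=2,3$.

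However, your description of the local computation at $p=2,3$ glosses over the step that actually does the work. You propose to expand $\uf_{\dd,p}$ as a character-weighted sum of coset indicators $\sum_j a_\dd(j)\sum_{h\in\kappa_{\dd,p}(T^jN'_{\dd,p})}\ef_h$ and then invoke \cite{YYY}. But the Whittaker formulas in \cite{YYY} are for a single prime $\pf$ of $F$ with factorizable data, and $\phi_{\dd,p}$ is \emph{not} factorizable over the primes of $F$ above $p$. The coset indicators you write down are cosets in the $\Qb_p$-lattice $L_{\dd,p}$; they do not map to tensor products over $F_{\pf_1}\times F_{\pf_2}$. The paper's actual mechanism is to use the splitting of $2$ in $E$ to identify $L_{\dd,2}$ with a lattice $\tilde L\subset \Qb_2^4$, pass to a finer lattice $L_0=\tilde M_1\oplus\tilde M_2$ that \emph{is} a tensor product, and then write $\tilde\phi_{\dd,2}$ explicitly as a short sum of tensor products $\phi_{\mu_1}\otimes\phi_{\mu_2}$ (Lemma~\ref{lemma:decompose2}); only then can one apply \cite{YYY} termwise. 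The ``collapsing via character sums'' you describe does not occur; the compactness of the answer comes instead from this explicit tensor decomposition.

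Two further points. First, at $p=3$ there are three genuinely different cases according to the values of $(\tfrac{d_1}{3}),(\tfrac{d_2}{3})$, each requiring its own isomorphism $W\otimes\Qb_3\cong E_{\pf}$ and its own decomposition lemma (Lemmas~\ref{lemma:decompose3a}, \ref{lemma:decompose3b}, \ref{lemma:decompose3c}); the shape of $\delta_3(3,t)$ reflects this trichotomy, which your proposal does not mention. Second, the sign $\varepsilon^{24/\dd}$ does not arise as a $t$-dependent local sign: it is the constant $(\tfrac{2}{\delta})^{24/\dd}$ coming from the choice of $\delta\in\Zb_2^\times$ with $\delta^2=D$, identified with $\varepsilon_{d_1}\varepsilon_{d_2}$ via $\varepsilon_{d_1}\varepsilon_{d_2}=(\tfrac{2}{\delta})$ (equation~\eqref{eq:varepsilon}), and it appears as an overall factor in Lemma~\ref{lemma:Whitt2}. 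The constant $\dd_2$ likewise emerges from the Whittaker computation, not from a normalization of $\uf_{\dd,2}$.
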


\begin{proof}

To evaluate $a(t, \phi)$, it is convenient to introduce the `Diff' set of Kudla.
  For a totally positive $t \in F^\times$, define
$$
\mathrm{Diff}(W, t) := \{ \pf: W_\pf \text{ does not represent } t\}.
$$
Then $|\mathrm{Diff}(W, t)|$ is finite and odd.
Furthermore if $\#\mathrm{Diff}(W, t) > 1$, then $a(t, \phi)$ vanishes.
This is also the case with the expression on the right hand side of \eqref{eq:atd}, since $\delta_3(\dd_3, t) = 0$ if $\pf_3, \pf_3' \in \mathrm{Diff}(W, t)$ and $\rho^{(6)}(t\sqrt{D}\pf) = 0$ for every inert $\pf$ if $\mathrm{Diff}(W, t)$ contains two primes coprime to 6.
Therefore, we can suppose that $\mathrm{Diff}(W, t) = \{\pf_0\}$ for a single prime $\pf_0$ of $F$.
In that case, every term with $\pf \neq \pf_0$ on the right hand side of \eqref{eq:atd} vanishes.
Given $t = \frac{ a + \sqrt{D}}{2\dd \sqrt{D}} \in F$ totally positive, the Fourier coefficient $a(t, \phi)$ is given by
$$
a(t, \phi_\dd) = -4 \frac{d}{ds} \lp
\frac{W^{*}_{t, 2}(s, \phi_{\dd, 2})}{\gamma(W_{2})}
\frac{W^{*}_{t, 3}(s, \phi_{\dd, 3})}{\gamma(W_{3})}
\prod_{\pf \nmid 6 \infty}
\frac{W^{*}_{t, \pf}(s, \phi_{\pf})}{\gamma(W_{\pf})} \rp \mid_{s = 0},
$$
where $\gamma(W_{\pf})$ is the Weil index of $W_\pf$ (see e.g.\ Prop.\ 2.7 in \cite{YY19}).

Recall that $\pf_1, \pf_2$ and $\pf_3, \pf_3'$ are primes in $F$ above 2 and 3 respectively.
Since $\pf_1, \pf_2$ splits in $E$, they are not in $\mathrm{Diff}(W, t)$ for any $t$.
However, $\pf_3$ and $\pf_3'$ could appear in some Diff set if they are inert in $E/F$.
Now, if $\pf_0 \nmid 3$, then we can proceed as in the proof of Theorem 1.1 in \cite{YY19} to obtain
$$
a(t, \phi_\dd) =
-2
\frac{W^{*}_{t, 2}(0, \phi_{\dd, 2})}{\gamma(W_{2})}
\frac{W^{*}_{t, 3}(0, \phi_{\dd, 3})}{\gamma(W_{3})}
\rho^{(6)}(\dd \sqrt{D} t \pf_0^{-1}) (1 + \ord_{\pf_0}(t \sqrt{D})) \log \Nm(\pf_0).
$$
By Lemma \ref{lemma:Whitt2} below and equation \eqref{eq:varepsilon}, we can replace $2\frac{W^{*}_{t, 2}(0, \phi_{\dd, 2})}{\gamma(W_{2})}$ with $\varepsilon^{24/\dd}
 \dd_2  \delta_2(\dd_2, t)$.
 By Lemmas \ref{lemma:Whitt3a}, \ref{lemma:Whitt3b} and \ref{lemma:Whitt3c} below, we can replace $\frac{W^{*}_{t, 3}(0, \phi_{\dd, 3})}{\gamma(W_{3})}$ with $\delta_3(\dd_3, t)$ and arrive at the right hand side.

 If $\mathrm{Diff}(W, t) = \{\pf_0\}$ with $\pf_0 \mid 3$, then $(\tfrac{d_j}{3}) = -1$ and we can write
$$
a(t, \phi_\dd) =
-4
\frac{W^{*}_{t, 2}(0, \phi_{\dd, 2})}{\gamma(W_{2})}
\frac{{W^{*,}}'_{t, 3}(0, \phi_{\dd, 3})}{\gamma(W_{3})}
\rho^{(6)}(\dd \sqrt{D} t).
$$
 we can again replace $2\frac{W^{*}_{t, 2}(0, \phi_{\dd, 2})}{\gamma(W_{2})}$ with $\varepsilon^{24/\dd}
 \dd_2  \delta_2(\dd_2, t)$ and apply Lemma \ref{lemma:Whitt3c} to replace $2\frac{{W^{*,}}'_{t, 3}(0, \phi_{\dd, 3})}{\gamma(W_{3})}$ with $\delta'_3(3, t)\log 3$.
This finishes the proof.
\end{proof}

In \cite{YZ}, Yui and Zagier derived the conjectural factorization of $\Nm_{H/\Qb}(f(\tau_1)^{24/s} - f(\tau_2)^{24/s})$ from the conjectural factorization of $\Nm_{H/\Qb}(\Phi_{24/s}(f(\tau_1), f(\tau_2)))$, where $\Phi_r$ the $r$\tth cyclotomic polynomial.
Since $\mathfrak{F}(m)$ is the power of a rational prime $\ell$, we can define
\begin{equation}
  \label{eq:gamma}
  \mathfrak{F}(m) = \ell^{\gamma(m)},
\end{equation}
where $\gamma(m) = \prod_{p \mid m}\gamma_p(m)$ with
\begin{equation}
  \label{eq:gammap}
  \gamma_p(m) :=
  \begin{cases}
    \ord_p(m) + 1,& \text{ if } \varepsilon(p) = 1,\\
     1,& \text{ if } \varepsilon(p) = -1 \text{ and } 2\mid \ord_p(m),\\
    \frac{\ord_p(m) + 1}{2},& \text{ if } \varepsilon(p) = -1 \text{ and } 2\nmid \ord_p(m) \text{ (i.e. } p = \ell).
  \end{cases}
\end{equation}
The conjecture is then in term expressed in terms how $\gamma_2(m)$ and $\gamma_3(m)$ decomposes, which are summarized in two tables (see page 1653 of \cite{YZ}). The theorem above is equivalent to this formulation of the conjecture. As in \cite{YZ}, one can give a conjecture with an equivalent, but simplified expression.
This is the content of Conjecture \ref{conj:YZ}, which we prove now.

\begin{proof}[Proof of Theorem \ref{thm:intro}]
  By Prop.\ \ref{prop:CMFormula} and Theorems \ref{thm:bigCM}, \ref{thm:main1}, we can write
  \begin{align*}
4s & \sum_{[\af_j] \in \Cl(d_j),~ j = 1, 2}  \log   |f(\af_1)^{24/s} - f(\af_2)^{24/s})|^4 =
- 2\sum_{\dd \mid s}
\varepsilon^{24/\dd}
 \sum_{t \in F^\times, t \gg 0 , \tr(t) = 1/\dd} a(t, \phi_{\dd})\\
&=
2\sum_{\dd \mid s}
 \sum_{t \in F^\times, t \gg 0 , \tr(t) = 1/\dd}
\dd_2
\delta_2(\dd_2, t) \times
 \Bigg\{
\log 3
\sum_{\substack{ \pf \text{ inert in } E/F\\ \pf \mid 3}}
\rho^{(2)}(t \sqrt{D} \pf^{-1})
 \delta'_3(\dd_3, t)
\\
&\qquad \qquad + \sum_{\substack{ \pf \text{ inert in } E/F\\ \pf \nmid 3}}(1 + \ord_{\pf}(t \sqrt{D})) \rho^{(6)}(t \sqrt{D} \pf^{-1})
 \delta_3(\dd_3, t)
\log (\Nm(\pf))
\Bigg\} \\
&=
2 \sum_{4\sqrt{D} \tilde{t} \in \mathcal{O}_F , \tilde{t} \gg 0 , \tr(\tilde{t}) = 1/2}
\sum_{\dd \mid s}
\dd_2
\delta_2(\dd_2, \tfrac{2\tilde{t}}{\dd}) \times
 \Bigg\{
\log 3
\sum_{\substack{ \pf \text{ inert in } E/F\\ \pf \mid 3}}
\rho^{(2)}(\tfrac{2\tilde{t}}{\dd} \sqrt{D} \pf^{-1})
 \delta'_3(\dd_3, \tfrac{2\tilde{t}}{\dd})
\\
&\qquad \qquad + \sum_{\substack{ \pf \text{ inert in } E/F\\ \pf \nmid 3}}(1 + \ord_{\pf}(\tilde{t} \sqrt{D})) \rho^{(6)}(\tilde{t} \sqrt{D} \pf^{-1})
 \delta_3(\dd_3, \tfrac{2\tilde{t}}{\dd})
\log (\Nm(\pf))
\Bigg\}
  \end{align*}
By Theorem \ref{thm:main1}, we have $\sum_{\dd_2 \mid s_2} \dd_2 \delta_2(\dd_2, \tfrac{2\tilde{t}}{\dd}) = \sum_{\dd_2 \mid s_2} \dd_2 \delta_2(\dd_2, \tfrac{2\tilde{t}}{\dd_2})$ and
\begin{align*}
\sum_{\dd_2 \mid 1} \dd_2 \delta_2(\dd_2, \tfrac{2\tilde{t}}{\dd_2}) &= 2 (    v_2(\Nm(\tilde{t})) +1) = 2 \gamma_2(\Nm(\tilde{t})),\\
\sum_{\dd_2 \mid 2} \dd_2 \delta_2(\dd_2, \tfrac{2\tilde{t}}{\dd_2}) &=
4
  \begin{cases}
    1,& \text{ if } v_2(\Nm(\tilde{t})) = 0,\\
    v_2(\Nm(\tilde{t})) - 1,& \text{ if } v_2(\Nm(\tilde{t})) \ge 1,
  \end{cases}\\
\sum_{\dd_2 \mid 4} \dd_2 \delta_2(\dd_2, \tfrac{2\tilde{t}}{\dd_2}) &=
8
  \begin{cases}
    1,& \text{ if } v_2(\Nm(\tilde{t})) \equiv -1\bmod{4}\\
& \text{ or }v_2(\Nm(\tilde{t})) = 2,\\
    v_2(\Nm(\tilde{t})) - 3,& \text{ if } v_2(\Nm(\tilde{t})) \ge 3,
  \end{cases}\\
\sum_{\dd_2 \mid 8} \dd_2 \delta_2(\dd_2, \tfrac{2\tilde{t}}{\dd_2}) &=
16
  \begin{cases}
    1,& \text{ if } v_2(\Nm(\tilde{t})) = 4,\\
& \text{ or } v_2(\Nm(\tilde{t})) \equiv 12 \bmod{16},\\
& \text{ or } v_2(\Nm(\tilde{t})) \equiv 3 \bmod{8},\\
    v_2(\Nm(\tilde{t})) - 5,& \text{ if } v_2(\Nm(\tilde{t})) \ge 5,
  \end{cases}\\
\end{align*}
From this, it is easy to check that
\begin{equation}
  \label{eq:count2}
\sum_{\dd_2 \mid s_2} \dd_2 \delta_2(\dd_2, \tfrac{2\tilde{t}}{\dd})
= 2 s_2
\sum_{\substack{r_2 \mid s_2,~ m := D\Nm(\tilde{t}/r_2) \in \Zb,\\ m \equiv 3 \bmod{s_2/r_2}}}
\gamma_{2}(m),
\end{equation}
where we write $s = s_2 s_3$ with $s_p$ the $p$-part of $s$.
Similarly, we also have
\begin{equation}
  \label{eq:count3}
  \begin{split}
    &\kappa_3(s)s_3
\sum_{\substack{r_3 \mid s_3,~ m := D\Nm(\tilde{t}/r_3) \in \Zb,\\ m \equiv d_1 + d_2 - 1 \bmod{s_3/r_3}}}
\gamma_{3}(m)
\\
&=
  \begin{cases}
    \frac{1}{2} \sum_{\dd_3 \mid s_3}  \sum_{\pf \mid 3} \rho_3(\pf^{-1}\tilde{t}/3) \delta'_3(\dd_3, \tfrac{2\tilde{t}}{\dd}) , & \text{ if }
(\tfrac{d_1}{3})  = (\tfrac{d_2}{3}) = -1 \text{ and } 2 \nmid v_3(\Nm(\tilde{t})), \\
    \sum_{\dd_3 \mid s_3}  \delta_3(\dd_3, \tfrac{2\tilde{t}}{\dd}) , & \text{ otherwise, }
  \end{cases}
  \end{split}
\end{equation}
where $\kappa_3(s) \in \{1, \tfrac{1}{2}\}$ is the constant defined in \eqref{eq:kappa3}.
So suppose $\mathrm{Diff}(W, \tilde{t}) = \{\pf_0\}$ with $\ell = \Nm(\pf_0)$. Then substituting in these gives us
\begin{align*}
&  \sum_{\dd \mid s}
\dd_2
\delta_2(\dd_2, \tfrac{2\tilde{t}}{\dd})
 \Bigg\{
\log 3
\sum_{\substack{ \pf \text{ inert in } E/F\\ \pf \mid 3}}
\rho^{(2)}(\tfrac{2\tilde{t}}{\dd} \sqrt{D} \pf^{-1})
 \delta'_3(\dd_3, \tfrac{2\tilde{t}}{\dd})
\\
& \qquad \qquad + \sum_{\substack{ \pf \text{ inert in } E/F\\ \pf \nmid 3}}(1 + \ord_{\pf}(\tilde{t} \sqrt{D})) \rho^{(6)}(\tilde{t} \sqrt{D} \pf^{-1})
 \delta_3(\dd_3, \tfrac{2\tilde{t}}{\dd})
\log (\Nm(\pf))
\Bigg\} \\
=\quad &
4s \sum_{\substack{r \mid s,~ m := D \Nm(\tilde{t}/r) \in \Zb\\ m\equiv 19D \bmod{s/r}}}
\log (\ell )
\prod_{p \mid m} \gamma_p(m)
= 4s \sum_{\substack{r \mid s,~ m := D \Nm(\tilde{t}/r) \in \Zb\\ m\equiv 19D \bmod{s/r}}} \log \mathfrak{F}(m).
\end{align*}
After writing $\tilde{t} = \frac{\sqrt{D} + a}{4\sqrt{D}}$ with $a \in \Zb$ in the summation, we obtain equation \eqref{eq:YZconj}.
\end{proof}

\subsection{Local Calculations.}
\label{subsec:local}
We first need to write $\phi_{\dd, p}$  as a linear combination of $ \otimes_{\mathfrak p|p} \phi_{\mathfrak p}$ for some $\phi_{\mathfrak p} \in S(E_{\mathfrak p})=S(W_{\mathfrak p})$.
\subsubsection{$p = 2$.}  In this subsection, we deal with the case $p=2$.
Since $d_j \equiv 1 \bmod{8}$, the prime 2 splits completely.
We fix $\delta, \delta_j \in \Zb_2^\times$ such that
\begin{equation}
  \label{eq:delta}
\delta^2 = D,~ \delta_j^2 = d_j,~ \delta_1\delta_2 = \delta.
\end{equation}
We also denote
$$
\overline{\delta_j} := - \delta_j,~ \delta' := - \delta.
$$
Note that
\begin{equation}
  \label{eq:varepsilon}
  \varepsilon_{d_1}  \varepsilon_{d_2} = \lp \frac{2}{\delta} \rp.
\end{equation}
For $i = 1, 2$, let $\pf_i$ be the two primes in $F$ above 2, and $\Pf_i, \overline{\Pf_i}$ the two primes in $E$ above $\pf_i$.
Then the local fields $E_{\Pf_i}$ and $E_{\overline{\Pf_i}}$ are isomorphic to $\Qb_2$ via the map
\begin{align*}
\sigma_i:  F_{\pf_i} &\cong \Qb_2,~ \sqrt{D} \mapsto (-1)^{i} \delta,\\
\sigma_i:  E_{\Pf_i} &\cong \Qb_2,~ \sqrt{D} \mapsto (-1)^{i} \delta,~ \sqrt{d_j} \mapsto (-1)^{(i-1)(j-1)} \delta_j, \\
\sigma_i:  E_{\overline{\Pf_i}} &\cong \Qb_2,~ \sqrt{D} \mapsto (-1)^{i} \delta,~ \sqrt{d_j} \mapsto - (-1)^{(i-1)(j-1)} \delta_j.
\end{align*}
Under these identification, $W_2 =W\otimes_\Q \Q_2 =W_{\mathfrak p_1} \oplus W_{\mathfrak p_2}$ with
$$
W_{\mathfrak p_i} = E_{\mathfrak p_i} =E_{\mathfrak P_i} \oplus E_{\overline{\mathfrak P_i}} \cong \Q_2^2,  \quad Q_{\mathfrak p_i} (y_1, y_2)  = (-1)^{i} \frac{\dd}{\delta} y_1 y_2.
$$

Now we identify the $Q_2$-quadratic space
\begin{equation}
\label{eq:map2}
  \begin{split}
    (V\otimes_\Q \Q_2, Q) & \cong  (E_{\mathfrak p_1}, Q_{\mathfrak p_1}) \oplus  (E_{ \mathfrak p_2}, Q_{\mathfrak p_2}),
\\
\smat{x_3}{x_1}{x_4}{x_2} &\mapsto (\sigma_1(x), \sigma_1(\bar{x}), \sigma_2(x), \sigma_2(\bar{x})),
  \end{split}
\end{equation}
with $x = x_1 + x_2 \tfrac{-1 + \sqrt{d_1}}{2} + x_3 \tfrac{1 + \sqrt{d_2}}{2} + x_4 \tfrac{-1 + \sqrt{d_1}}{2} \tfrac{1 + \sqrt{d_2}}{2} \in W_2$. Under this isomorphism, the lattice $L_{\dd, 2} := L_\dd \otimes \Zb_2$ is mapped onto
$$
\tilde{L} := \left\{y = (y_1, y_2, y_3, y_4) \in \Zb_2^4: \sum y_i \in 2\Zb_2\right\},
$$
The $\Q_2$-quadratic form $\tilde{Q}_\dd$ on $\tilde{L}$ is given by
$$
\tilde{Q}_\dd(y) := -\frac{\dd}{\delta} \lp{y_1 y_2} - y_3 y_4 \rp = Q_{\mathfrak p_1}(y_1, y_2) + Q_{\mathfrak p_2}(y_3, y_3) .
$$

Let
$$
L_0=(2\Z_2)^4 =2\mathcal O_{E_{\mathfrak p_1}} \oplus  2\mathcal O_{E_{\mathfrak p_2}} =\tilde M_1 \oplus \tilde M_2
$$
 with $\tilde M_i$ being the $\mathcal O_{F_{\mathfrak p_i}}$-lattice $2\mathcal O_{E_{\mathfrak p_i}}$. Then
$$
L_0 \subset \tilde L \subset \tilde L' \subset L_0'=\frac{1}{4\dd_2} L_0,
$$
and
$$
\tilde L' =\left\{ y= \frac{1}{2\dd_2} (y_1, y_2, y_3, y_4)\in \frac{1}{2\dd_2}\Z_2^4:\,  y_i + y_j \equiv 0 \bmod 2\right\}.
$$
Notice that
$$
\phi_{\tilde L} = \sum_{\substack{ y_i \in \Z/2\Z \\ \sum y_i =0}} \phi_{(y_1, y_2)+\tilde M_1} \otimes \phi_{(y_3, y_4)+\tilde M_2},
$$
where $\phi_A=\cha(A)$ for $A \subset W_2$.
To apply the general formula in \cite{YYY}, we define  $M_i =\Z_2^2$ with quadratic form  $Q_i(y_1 y_2) = (-1)^{i} \frac{4\dd}{\delta }y_1 y_2$. Then $(M_i, Q_i) \cong (\tilde{M}_i, Q_{\pf_i})$ via scaling by 2. For any $\mu \in (\Qb_2/\Zb_2)^2$, we denote
$$
\phi_\mu = \mathrm{char}(\mu + \Zb_2^2)
$$
and view it as an element in $S(M_i)$ for both $i = 1, 2$ if $\mu \in (\frac{1}{4\dd_2}\Zb_2/\Zb_2)^2$.

Now, we can apply this map to the Schwartz function $\phi_{\dd, 2} \in S(L_{\dd, 2})$ associated to $\uf_{\dd, 2}$, which we denote by $\tilde{\phi}_{\dd, 2} \in S(\tilde{L}) \subset S({L}_0) \cong S(M_1 \oplus M_2)$.
The image $\tilde{\phi}_{\dd, 2}$ will depend on the choice of $\delta \bmod{\dd_2}$.
We have listed them as follows.

\begin{lemma}
  \label{lemma:decompose2}
  For $\delta_j \in \Zb_2^\times$ and $\delta = \delta_1 \delta_2 \in \Zb_2^\times$, we have
\begin{equation}
  \label{eq:phidfactor}
  \tilde{\phi}_{\dd, 2}  =
  \begin{cases}
\phi_{\tilde{L}}
,& \dd_2 = 1,\\
    \phi_0 \otimes \phi_{1, \delta} + \phi_{1, -\delta} \otimes \phi_0, & \dd_2 = 2,\\
  2\lp (\phi_0 + \phi_{1, -\delta}) \otimes \phi_{2, \delta}
 + \phi_{2, -\delta} \otimes (\phi_0 + \phi_{1, \delta})\rp,& \dd_2 = 4,\\
\lp\frac{2}{\delta \dd_3}\rp
4 \lp
 \begin{split}
&    (\phi_0+ \phi_{1, \delta}) \otimes \phi_{3, \delta^{}} + \phi_{3, -\delta^{}} \otimes (\phi_0 + \phi_{1, -\delta})  \\
&    + \phi_{2, \delta} \otimes \phi_{3, 5\delta^{}}  + \phi_{3, -5\delta^{}} \otimes \phi_{2, -\delta}
 \end{split}
\rp,
    & \dd_2 = 8,
  \end{cases}
\end{equation}
  where for $j = 1, 2, 3, r \in (\Zb/2^j\Zb)^\times$
  \begin{align*}
    \phi_0 := \sum_{k \in \Zb/2\Zb} \phi_{\tfrac{1}{2}(k, k)} - \phi_{\tfrac{1}{2}(k, k+1)},~
  \phi_{j, r} &:= \sum_{a \in (\Zb/2^{j+1})^\times} \phi_{\mu(a; r, j)}
  - \phi_{\mu(a; r + 2^j; j)},
\end{align*}
are elements in $S({M}_i)$ with $\mu(a; r, j) := \tfrac{1}{2^{j+1}}(a, ra^{-1}) \in  (\Qb_2/\Zb_2)^2$.
\end{lemma}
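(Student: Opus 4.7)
\medskip

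\noindent\textbf{Proof proposal for Lemma \ref{lemma:decompose2}.}
The plan is to do a direct, case-by-case computation translating the explicit description of $\uf_{\dd, 2}$ from \eqref{eq:uf2} through the isomorphism \eqref{eq:map2}. The key ingredients will be: (i) the enumeration of $\kappa_{\dd, 2}(T^j N_{\dd, 2}')$ as subsets of $\Ac_{\dd, 2}$ that we get from Lemma \ref{lemma:additive} and the appendix, together with Lemma \ref{lemma:j2} to track the ``diagonal part'' $h_2 = 0$; (ii) the explicit formula $\sigma_i(\sqrt{D}) = (-1)^i \delta$ and $\sigma_i(\sqrt{d_j}) = (-1)^{(i-1)(j-1)}\delta_j$, which governs how a coset $\mu + L_{\dd, 2}$ in matrix form gets spread across the two factors $M_1 \oplus M_2$; and (iii) the rescaling $(\tilde{M}_i, Q_{\pf_i}) \cong (M_i, Q_i)$ by $2$, which is what turns the denominators $1/\dd_2$ into denominators $1/(2\dd_2)$ in the coordinates used to define $\phi_0$ and $\phi_{j, r}$.

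For $\dd_2 = 1$, the function $\tilde{\phi}_{1, 2}$ is simply the characteristic function of $L_{1, 2}$, which under \eqref{eq:map2} becomes the characteristic function of $\tilde{L}$; this gives the first line. For $\dd_2 \ge 2$ I will enumerate, for each coset $\mu = \tfrac{1}{\dd_2}\kappa_{\dd, 2}(T^j g) \bmod L_{\dd, 2}$ with $g \in N'_{\dd, 2}$ and $j \in \{0, \dd_2/2\}$, the image $(\sigma_1(x), \sigma_1(\bar x), \sigma_2(x), \sigma_2(\bar x)) \in \tfrac{1}{2\dd_2} \Zb_2^4 / \Zb_2^4$, and verify that the resulting weighted sum of characteristic functions factorizes as claimed on the right of \eqref{eq:phidfactor}. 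The sign $(-1)^{h_2 = 0\, \text{vs.}\, h_2 = \dd_2/2\cdot(\text{stuff})}$ coming from the difference in the two halves of \eqref{eq:uf2} is what produces the minus signs inside each $\phi_0$ and $\phi_{j, r}$, and the multiplicities $2^{\dd_2/2}$ together with the number of representatives in $(\Zb/2^{j+1}\Zb)^\times$ give the overall prefactors $1$, $2$ and $4$ for $\dd_2 = 2, 4, 8$ respectively.

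The main obstacle will be the $\dd_2 = 8$ case, where $N_{8, 2}'$ is nonabelian of order $64$ and the fibre of $\kappa_{8, 2}$ has two elements. Here there are quite a few cosets to bookkeep, and the $\delta$-dependence becomes nontrivial: different elements of $N_{8, 2}'$ produce coordinates of the form $\tfrac{1}{16}(a, r a^{-1})$ with $r \in \{\pm \delta, \pm 5\delta\}$, and one must check that sorting them by the residue of $r \bmod 16$ reproduces exactly the four tensor products on the right of \eqref{eq:phidfactor}. The global sign $\left( \frac{2}{\delta \dd_3} \right)$ arises from the overall sign of $\sqrt{2}$ (i.e.\ how $\delta$ compares to $1 \bmod 8$), reflecting that $r \mapsto -r$ interchanges a pair of $\phi_{3, *}$'s with a minus sign; I will extract it by distinguishing $\delta \equiv \pm 1, \pm 3 \bmod 8$ via the quadratic residue symbol. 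Once the matching of cosets to values of $r$ is done, the factorization on the right-hand side of \eqref{eq:phidfactor} is a straightforward verification, which is most efficiently carried out on a computer using the explicit coset tables for $N'_{\dd, 2}$ listed in the appendix.
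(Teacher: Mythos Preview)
Your proposal is correct and follows essentially the same approach as the paper's own proof, which reads in its entirety: ``One can use Lemma \ref{lemma:additive} to check that the cosets on the right indeed appear. Then we have all of them by counting.'' Your write-up simply unpacks what this one-line verification entails---tracking the images of the cosets $\kappa_{\dd,2}(T^j N'_{\dd,2})$ under the isomorphism \eqref{eq:map2}, matching them against the supports of the $\phi_{j,r}$, and then counting---which is exactly what the paper intends.
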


\begin{remark}
Note that the support of $\uf_{24, 2}$ is the support of $\uf_{8, 2}$ after scaling by $\dd_3$.
This does not affect $\phi_{j, r}$ for $j = 1, 2$ but introduces the factor $(\frac{2}{\dd_3})$ when $j = 3$, since $\phi_{3, r c^2} = \lp \frac{2}{c} \rp \phi_{3, r}$ for any odd integer $c$.
Therefore this factor appears above when $\dd_2 = 8$.
  \end{remark}
\begin{proof}
One can use Lemma \ref{lemma:additive} to check that the cosets on the right indeed appear. Then we have all of them by counting.
\end{proof}

Now, we can apply the general Whittaker function formulas in \cite{YYY} to obtain.
\begin{lemma}
  \label{lemma:Whitt2}
Let $\delta_2(\dd_2, t)$ be defined as in Theorem \ref{thm:main1}.
Then we have
  $$
\frac{W^*_t(0, \tilde{\phi}_{\dd, 2})}{\gamma(W_2)} =
\lp \frac{2}{\delta} \rp^{24/\dd}
\frac{\dd_2}{2}  \delta_2(\dd_2, t)
  $$
for all totally positive $t \in F^\times$ with $\tr(t) = \frac{1}{\dd}$.
\end{lemma}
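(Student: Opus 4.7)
The plan is to reduce the computation to an explicit local Whittaker calculation on each of the two $F$-primes above $2$, using the decomposition of $\tilde{\phi}_{\dd,2}$ provided by Lemma \ref{lemma:decompose2}. Since $W_2 = W_{\pf_1}\oplus W_{\pf_2}$ splits orthogonally and each summand $W_{\pf_i}\cong(\Qb_2^2, Q_i)$ is a rational split hyperbolic plane, the Weil representation factors, and for any pure tensor $\phi = \phi^{(1)} \otimes \phi^{(2)} \in S(M_1)\otimes S(M_2) \cong S(L_0)$ the normalized local Whittaker coefficient factors as
\begin{equation*}
\frac{W^{*}_{t}(0,\phi)}{\gamma(W_2)} \;=\; \prod_{i=1,2}\frac{W^{*}_{t_i}(0,\phi^{(i)})}{\gamma(W_{\pf_i})},
\end{equation*}
where $t_i = \sigma_i(t)\in\Qb_2$ satisfies $t_1 + t_2 = \tr(t) = 1/\dd$. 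Since the quadratic form on $M_i$ differs from the standard hyperbolic form by the unit scalar $(-1)^i\cdot 4\dd/\delta$, each factor can be read off from the general formulas in Appendix A of \cite{YYY} applied to the Schwartz functions $\phi_0$ and $\phi_{j,r}$.

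The first step is to record a small table of these local Whittaker values: evaluate $W^{*}_{t_i}(0,\phi_0)$ and $W^{*}_{t_i}(0,\phi_{j,r})$ for $j=1,2,3$, $r\in (\Zb/2^j\Zb)^\times$ as explicit polynomials in $\psi_2(t_i)$-characters and in $v_2(t_i)$, using the Iwasawa decomposition integral $\int_{\Qb_2}\omega(wn(b))\phi(0)\,\psi_2(-t_ib)\,db$. The key structural feature is that $\phi_0$ detects a shift of the support to $\tfrac12 \mathcal O_{E_{\pf_i}}$ (giving an extra factor of $2$ and shifting the effective valuation by $-2$), while $\phi_{j,r}$ picks out the character $\psi_2\!\bigl(\tfrac{r a^2}{2^{j+1}}\bigr)$ which, after summation over $a \in (\Zb/2^{j+1}\Zb)^\times$, produces a quadratic Gauss sum that yields the unit character $(\tfrac{2}{r})$ and restricts the valuation $v_2(t_i)$ to a congruence class modulo $2^{j+1}$.

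Next, one substitutes the four decompositions of Lemma \ref{lemma:decompose2} and sums. For $\dd_2=1$ the formula collapses to the Gross--Zagier case and yields $\tfrac12\cdot 2(v_2(\Nm(t))-1)$. For $\dd_2=2,4,8$, the alternating sign in the definition of $\phi_{j,r}$ forces the support of $t_i$ to lie in specific residue classes modulo $2^{j+1}$, and the two summands in \eqref{eq:phidfactor} exchange the roles of $\pf_1$ and $\pf_2$, so that their contributions combine to give the piecewise expression $\delta_2(\dd_2,t)$ in terms of $v_2(\Nm(t)) = v_2(t_1)+v_2(t_2)$ and residues of $\Nm(2^{\dd_2/2-1}t)$ mod small powers of $2$. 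The factor $\bigl(\tfrac{2}{\delta\dd_3}\bigr)$ in the $\dd_2=8$ case, together with \eqref{eq:varepsilon}, yields the global sign $\bigl(\tfrac{2}{\delta}\bigr)^{24/\dd}=\varepsilon^{24/\dd}$.

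The main obstacle will be the bookkeeping in the $\dd_2=8$ case: one has four cross-terms $\phi_{j,r}\otimes\phi_{j',r'}$ with $(j,j')\in\{(1,3),(3,1),(2,3),(3,2)\}$, and each produces a Gauss sum in one factor and a higher-depth character sum in the other. The careful point is to verify that the $a$-sums and $a'$-sums over $(\Zb/2^{j+1}\Zb)^\times\times(\Zb/2^{j'+1}\Zb)^\times$ interact with the condition $t_1+t_2 = 1/\dd$ in such a way that, after reduction, only the residue $\Nm(4t)\bmod 16$ survives, matching the piecewise definition of $\delta_2(8,t)$. Once this is verified, the formula follows by direct computation; the remaining cases $\dd_2\in\{2,4\}$ are strictly easier and can be handled by the same strategy with shorter Gauss sums.
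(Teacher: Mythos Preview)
Your proposal is correct and follows essentially the same route as the paper: factor $W_2 = W_{\pf_1}\oplus W_{\pf_2}$, plug in the decomposition of $\tilde\phi_{\dd,2}$ from Lemma~\ref{lemma:decompose2}, and evaluate each local factor using the formulas in Appendix~A of \cite{YYY}. The paper organizes the computation slightly differently---rather than interpreting the $\phi_{j,r}$-contributions as Gauss sums, it works directly with the quantity $\alpha(\mu,t)=\beta\mu\bar\mu-t$ from \cite{YYY}, assumes $o(t_1)\ge o(t_2)$ so that $o(t_2)$ is pinned down by $\tr(t)=1/\dd$, and records the nonzero values of $W^*_{t_i}(0,\phi_{\mu_i})/\gamma(W_{\pf_i})$ in explicit tables indexed by $o(t_1)$ and the residue of $t_1$---but this is a presentational choice, not a different method.
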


\begin{proof}
  This can be checked case by case. For $\dd_2 = 1$, this was already done in \cite{YY19}.
Otherwise, we can apply propositions 5.3 and 5.7 in \cite{YYY} after scaling the lattice by 2 and the quadratic form by 4 (i.e.\ variant 2 in \cite{YYY}).
We write $t_i = \sigma_i(t) \in \Qb_2$ and suppose $o(t_1) \ge o(t_2)$ with $o(t_i)$ the 2-adic valuation of $t_i \in \Qb_2$.
The case $o(t_1) \le o(t_2)$ will be exactly the same.
The tables below contain the non-zero values of $\frac{W^*_{t_i}(0, \phi_{\mu_i})}{\gamma(W_{\pf_i})}$ for $i = 1$.
\begin{table}[h]
\begin{tabular}{|c|c|c|c|}
 \hline
 \diagbox{$o(t_1)$}{$\mu_1$} & $(0, 0)$ & $(\tfrac{1}{2}, \tfrac{1}{2})$ & $(\tfrac{1}{2}, 0)$ \\
 \hline
 $1$ & 0& 1& 0\\
 \hline
 $\ge 2$ & $o(t_1) - 2$ & 0& $1$\\
 \hline
 \end{tabular}
\caption{$\dd_2 = 2, \beta = -8\dd_3 \delta^{-1} $}
\begin{tabular}{|c|c|c|c|c|c|}
 \hline
 \diagbox{$o(t_1)$}{$\mu_1$} & $(0, 0)$ & $(\tfrac{1}{2}, \tfrac{1}{2})$ & $(\tfrac{1}{2}, 0)$ & $(\tfrac{a}{4}, -\tfrac{a^{-1}\delta}{4})$ & $(\tfrac{a}{4}, \tfrac{a^{-1}(-\delta + 2)}{4})$ \\
 \hline
 $t_1 \in \dd_3 + 4\Zb_2$ & 0& 0& 0 & $1/2$ & 0 \\
 \hline
 $t_1 \in -\dd_3 + 4\Zb_2$ & 0& 0& 0 & 0 & $1/2$\\
 \hline
 $2$ & 0& 1& 0& 0 & 0\\
 \hline
 $\ge 3$ & $o(t_1) - 3$ & 0& $1$& 0 & 0\\
 \hline
 \end{tabular}
\caption{$\dd_2 = 4, \beta = -16\dd_3 \delta^{-1} $}
\begin{tabular}{|c|c|c|c|c|c|c|c|}
 \hline
 \diagbox{$o(t_1)$}{$\mu_1$} & $(0, 0)$ & $(\tfrac{1}{2}, \tfrac{1}{2})$ & $(\tfrac{1}{2}, 0)$ & $(\tfrac{a}{4}, \tfrac{a^{-1}\delta}{4})$ & $(\tfrac{a}{4}, \tfrac{a^{-1}(\delta + 2)}{4})$ &$(\tfrac{a}{8}, \tfrac{a^{-1}\delta^{-1}}{8})$ & $(\tfrac{a}{8}, \tfrac{a^{-1}(\delta^{-1} + 4)}{8})$ \\
 \hline
 $t_1 \in \tfrac{1}{2}(-\dd_3 + 8\Zb_2)$ & 0& 0& 0 & 0 & 0 & $1/4$ & 0 \\
 \hline
 $t_1 \in \tfrac{1}{2}(3\dd_3 + 8\Zb_2)$ & 0& 0& 0 & 0 & 0 & 0 & $1/4$ \\
 \hline
 $t_1 \in 2(-\dd_3 + 4\Zb_2)$ & 0& 0& 0 & $1/2$ & 0 & 0 & 0 \\
 \hline
 $t_1 \in 2(\dd_3 + 4\Zb_2)$ & 0& 0& 0 & 0 & $1/2$ & 0 & 0 \\
 \hline
 $3$ & 0& 1& 0& 0 & 0 & 0 & 0 \\
 \hline
 $\ge 4$ & $o(t_1) - 4$ & 0& $1$& 0 & 0 & 0 & 0\\
 \hline
 \end{tabular}
\caption{$\dd_2 = 8, \beta = -32\dd_3 \delta^{-1} $}
\end{table}

For $i = 2$, we write $\alpha(\mu_2, t_2) := \beta \mu_2 \overline{\mu_2} - t_2$ in the notation of \cite{YYY}.
When $\dd_2 = 2$, we have $t_2 \in \tfrac{1}{2}(\dd_3^{-1} + 4\Zb_2)$ if $o(t_1) \ge 1$, since $t_1 + t_2 = 1/(2\dd_3)$.
Then with $\beta = 8 \dd_3 \delta^{-1}$, we have $\alpha(\mu(a;\delta, 1), t_2) = \beta \tfrac{a}{4} \tfrac{a^{-1}\delta}{4} - t_2 \in 2 \Zb_2$.
When $\dd_2 = 4$, we have
$$
t_2 \in
\begin{cases}
  \tfrac{\dd_3^{-1}}{4} + 1 + 2\Zb_2,& \text{ if } o(t_1) = 0,\\
 \tfrac{\dd_3^{-1}}{4} + 2\Zb_2,& \text{ if } o(t_1) \ge 1,\\
\end{cases}
$$
since $t_1 + t_2 = 1/(4\dd_3)$.
Then with $\beta = 16 \dd_3 \delta^{-1}$,
we have $\alpha(\mu(a; \delta, 2), t_2) \in \Zb_2^\times,
\alpha(\mu(a; \delta + 4, 2), t_2) \in 2\Zb_2$ if $o(t_1) = 0$, and
$\alpha(\mu(a; \delta, 2), t_2) \in 2\Zb_2, \alpha(\mu(a; \delta + 4, 2), t_2)  \in \Zb_2^\times$ if $o(t_1) \ge 1$.
When $\dd_2 = 8$, we have
$$
t_2 \in
\begin{cases}
  \tfrac{\dd_3^{-1}}{8} + \tfrac{\dd_3}{2} + 2\Zb_2,& \text{ if }
o(t_1) = -1, \\
 \tfrac{\dd_3^{-1}}{8} + 2\Zb_2,& \text{ if } o(t_1) \ge 1,\\
\end{cases}
$$
since $t_1 + t_2 = 1/(8\dd_3)$.
Then with $\beta = 32 \dd_3 \delta^{-1}$, we have
\begin{align*}
  \alpha(\mu(a; \delta \dd_3^2, 3), t_2)
 &\in
\begin{cases}
\tfrac{1}{2} \Zb_2^\times, & \text{ if }  o(t_1) = -1, \\
2 \Zb_2, & \text{ if }  o(t_1) \ge 1, \\
\end{cases}\\
\alpha(\mu(a; 5 \delta  \dd_3^2, 3), t_2)
& \in
\begin{cases}
2\Zb_2, & \text{ if }  o(t_1) = -1, \\
\tfrac{1}{2} \Zb_2^\times, & \text{ if }  o(t_1) \ge 1, \\
\end{cases}
\end{align*}
and $\alpha(\mu(a; \delta \dd_3^2 + 8, 3), t_2), \alpha(\mu(a; 5\delta \dd_3^2 + 8, 3), t_2) \not\in 2\Zb_2$.

\begin{table}[h]
\begin{tabular}{|c|c|c|}
 \hline
 \diagbox{$o(t_1)$}{$\mu_2$} & $(\tfrac{a}{4}, \tfrac{a^{-1} \delta}{4})$ & $(\tfrac{a}{4}, \tfrac{a^{-1}(\delta+2)}{4})$ \\
 \hline
 $\ge 1$ & $1/2$& 0\\
 \hline
 \end{tabular}
\caption{$\dd_2 = 2, \beta = 8\dd_3 \delta^{-1} $}
\begin{tabular}{|c|c|c|}
 \hline
 \diagbox{$o(t_1)$}{$\mu_2$}  & $(\tfrac{a}{8}, \tfrac{a^{-1}\delta}{8})$ & $(\tfrac{a}{8}, \tfrac{a^{-1}(\delta + 4)}{8})$ \\
 \hline
 $ 0$ & $0$ &$1/4$ \\
 \hline
 $\ge 1$ & $1/4$ & 0\\
 \hline
 \end{tabular}
\caption{$\dd_2 = 4, \beta = 16\dd_3 \delta^{-1} $}
\begin{tabular}{|c|c|c|c|c|}
 \hline
 \diagbox{$o(t_1)$}{$\mu_2$}  & $(\tfrac{a}{16}, \tfrac{a^{-1}\delta\dd_3^2}{16})$ & $(\tfrac{a}{16}, \tfrac{a^{-1}(\delta\dd_3^2 + 8)}{16})$ & $(\tfrac{a}{16}, \tfrac{5 a^{-1}\delta\dd_3^2}{16})$ & $(\tfrac{a}{16}, \tfrac{a^{-1}(5 \delta\dd_3^2 + 8)}{16})$ \\
 \hline
 $o(t_1) = -1$ & $0$ &$0$ & $1/8$ & 0  \\
 \hline
 $o(t_1) \ge 1$ & $1/8$ & 0 & 0& 0\\
 \hline
 \end{tabular}
\caption{$\dd_2 = 8, \beta = 32\dd_3 \delta^{-1} $}
\end{table}
Putting these together, we see that when $\dd_2 = 2$, we have
$$
\frac{W^*_t(0, \tilde{\phi}_{\dd, 2})}{\gamma(W_2)} =
\begin{cases}
  1, & \text{ if }o(t_1 ) = 1,\\
  o(t_1) - 4, & \text{ if }o(t_1 ) \ge 2.
\end{cases}
$$
Notice that $v_2(\Nm(t)) = o(t_1t_2) = o(t_1) - 1$. This proves the lemma for $\dd_2 = 2$.
When $\dd_2 = 4$, we have
$$
\frac{W^*_t(0, \tilde{\phi}_{\dd, 2})}{\gamma(W_2)} =
\begin{cases}
\mp 1, & \text{ if } t_1 \in \pm \dd_3 + 4\Zb_2, \\
  1, & \text{ if }o(t_1 ) = 2,\\
  o(t_1) - 5, & \text{ if }o(t_1 ) \ge 3.
\end{cases}
$$
Notice that $v_2(\Nm(t)) = o(t_1t_2) = o(t_1) - 2$.
If $t_1 \in \pm \dd_3 + 4\Zb_2$, then $4 t_2 \in \dd_3^{-1} + 4\Zb_2$ and $\Nm(2t) = 4t_1t_2 \equiv \pm 1 \bmod{4}$. This proves the lemma for $\dd_2 = 4$.
Finally when $\dd_2 = 8$, we have
$$
\lp \frac{2}{\delta} \rp
\frac{W^*_t(0, \tilde{\phi}_{\dd, 2})}{\gamma(W_2)} =
\begin{cases}
1, & \text{ if } t_1 \in  \tfrac{1}{2}(- \dd_3 + 8\Zb_2), \\
-1, & \text{ if } t_1 \in  \tfrac{1}{2}(3 \dd_3 + 8\Zb_2), \\
\mp 1, & \text{ if } t_1 \in  2(\pm \dd_3 + 4\Zb_2), \\
  1, & \text{ if }o(t_1 ) = 3,\\
  o(t_1) - 6, & \text{ if }o(t_1 ) \ge 4.
\end{cases}
$$
Notice that $v_2(\Nm(t)) = o(t_1t_2) = o(t_1) - 3$.
If $t_1 \in \tfrac{1}{2}(- \dd_3 + 4\Zb_2)$, then $8 t_2 \in \dd_3^{-1} + 4 + 8\Zb_2$ and $\Nm(4t) = 16t_1t_2 \equiv 3 \bmod{8}$.
Similarly, if $t_1 \in \tfrac{1}{2}(3 \dd_3 + 4\Zb_2)$, then $8 t_2 \in \dd_3^{-1} + 4 + 8\Zb_2$ and $\Nm(4t) = 16t_1t_2 \equiv 7 \bmod{8}$.
If $t_1 \in 2(\pm \dd_3 + 4\Zb_2)$, then $8 t_2 \in \dd_3^{-1} + 8\Zb_2$ and $\Nm(2t) = 4t_1t_2 \equiv \pm 1 \bmod{4}$.
This finishes the proof of the lemma.
\end{proof}

\subsubsection{$p = 3$.}
If $\dd_3 = 1$, then $\phi_{\dd, 3} = \mathrm{Char}(\mathcal{O}_E \otimes \Zb_3)$ and the calculations have been done before.
So suppose $\dd_3 = 3$.
There are 3 cases to consider.
\begin{itemize}
\item
$\lp \frac{d_i}{3} \rp = 1$
\item
$\lp \frac{d_1}{3} \rp \neq
\lp \frac{d_2}{3} \rp$.
\item
$\lp \frac{d_i}{3} \rp = -1$
\end{itemize}
The first case is similar to the case $p = 2$ considered above.
We again fix $\delta_i \in \Zb_3^\times$ square roots of $d_i$ and denote $\delta := \delta_1 \delta_2$.
Then the analog of the map in \eqref{eq:map2} for $p =3 $ identifies $L_{\dd, 3} = M_2(\Zb_3)$ with $\tilde{L}_3 := \Zb_3^4$, which has the quadratic form $\tilde{Q}_\dd(y) = -\frac{3\dd_2}{\delta}(y_1y_2 - y_3 y_4)$.
Denote $\tilde{\phi}_{\dd, 3} \in S(\tilde{L}_3)$ the Schwartz function associated to $\uf_{\dd, 3} \in \Cb[\Ac_{\dd, 3}]$.
Then the analog of Lemma \ref{lemma:decompose2} is as follows.

\begin{lemma}
  \label{lemma:decompose3a}
For $\delta_i \in \Zb_3^\times$ and $\delta = \delta_1\delta_2 \equiv \pm 1 \bmod{3}$, we have
$$
\tilde{\phi}_{\dd, 3} = \phi_0 \otimes \phi_{\delta} + \phi_{-\delta} \otimes \phi_0 + 2\phi_{\delta} \otimes \phi_{-\delta},
$$
where
\begin{align*}
  \phi_0 &:= 2\phi_{(0, 0)} - \lp \phi_{\tfrac{1}{3}(0, 1)} + \phi_{\tfrac{1}{3}(1, 0)} + \phi_{\tfrac{1}{3}(0, 2)}+ \phi_{\tfrac{1}{3}(0, 2)} \rp,~
\phi_{\pm 1} := \phi_{\tfrac{1}{3}(1, \pm 1)} + \phi_{\tfrac{1}{3}(2, \pm 2)}.
\end{align*}
\end{lemma}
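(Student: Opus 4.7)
The plan is to proceed in direct analogy to the $p=2$ case handled in Lemma \ref{lemma:decompose2}. Since $\lp\frac{d_j}{3}\rp = 1$ for $j = 1, 2$, the prime $3$ splits completely in $E/\Q$: the two primes $\pf_1, \pf_2$ of $F$ above $3$ each split further in $E/F$, and $E \otimes \Q_3 \cong \Q_3^4$. Fixing square roots $\delta_j \in \Z_3^\times$ of $d_j$ and setting $\delta = \delta_1 \delta_2$, the exact analog of the map \eqref{eq:map2} identifies
\[
(V \otimes \Q_3, Q_\dd) \cong (E_{\pf_1}, Q_{\pf_1}) \oplus (E_{\pf_2}, Q_{\pf_2}),
\]
with $E_{\pf_i} \cong \Q_3^2$ and $Q_{\pf_i}(y_1, y_2) = (-1)^i (\dd/\delta) y_1 y_2$. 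Under this isomorphism the lattice $L_{\dd, 3} = M_2(\Z_3)$ maps to $\Z_3^4$, so the induced isomorphism of finite quadratic modules is $\Ac_{\dd, 3} \cong \F_3^2 \oplus \F_3^2$.

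Next I transport the three orbits $\wf_i$ ($i = 0, 1, 2$) through this identification. Recall from \eqref{eq:N3'} that $N'_{\dd, 3}$ is the quaternion subgroup $Q_8 \subset \SL_2(\F_3)$, so each coset $T^i N'_{\dd, 3}$ has $8$ elements, and $\wf_i = \sum_{h \in \kappa_{\dd, 3}(T^i N'_{\dd, 3})} \ef_h$. For each of the $24$ matrices I compute its image in $\F_3^2 \oplus \F_3^2$ using the explicit embeddings $\sigma_1, \sigma_2$ applied to the basis $e_1, e_2, e_3, e_4$ of $\OO_E$ together with \eqref{eq:SpaceIdentification}. Since $\det = 1$ on $\SL_2$, the image of the ``identity'' coset $N'_{\dd, 3}$ lands on pairs of vectors whose product of $Q_{\pf_i}$-norms is trivialized (as in \eqref{eq:N3'p}), and the two translates $T N'_{\dd, 3}, T^2 N'_{\dd, 3}$ permute cyclically through the other norm classes. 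The resulting output depends on $\delta \bmod 3 \in \{\pm 1\}$, and both cases are absorbed uniformly into the statement by toggling $\phi_\delta \leftrightarrow \phi_{-\delta}$.

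Finally I assemble $\uf_{\dd, 3} = 2\wf_0 - \wf_1 - \wf_2$ (see \eqref{eq:uf3}) in the tensor-product basis $\{\ef_\mu \otimes \ef_\nu\}$ and match it against $\phi_0 \otimes \phi_\delta + \phi_{-\delta} \otimes \phi_0 + 2\phi_\delta \otimes \phi_{-\delta}$. The linear combination $2\wf_0 - \wf_1 - \wf_2$ produces precisely the signed weights appearing in $\phi_0$: the origin $(0,0)$ enters with coefficient $+2$ (from the identity matrix), while each of the four ``norm-$0$ nonzero'' cosets $(0,\pm 1),(\pm 1, 0) \in \F_3^2$ enters with coefficient $-1$ (contributed by matrices that become singular after reducing modulo $\pf_1$ or $\pf_2$); the remaining ``diagonal'' contributions yield $2 \phi_\delta \otimes \phi_{-\delta}$, and the off-diagonal contributions yield $\phi_0 \otimes \phi_\delta + \phi_{-\delta} \otimes \phi_0$. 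The only obstacle is the careful bookkeeping of the $24$ matrices across the $81$ cells of $\F_3^2 \oplus \F_3^2$, which reduces to a direct finite check fully parallel to the tables compiled in the proof of Lemma \ref{lemma:decompose2}.
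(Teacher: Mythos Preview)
Your proposal is correct and follows precisely the paper's own approach: the paper's proof reads in its entirety ``This follows from a straightforward calculations as in the case $p = 2$,'' and you have simply unpacked what that calculation entails, using the explicit form $\uf_{\dd,3} = 2\wf_0 - \wf_1 - \wf_2$ from \eqref{eq:uf3} and tracking the $24$ elements of $\SL_2(\F_3)$ under the splitting isomorphism. (Note in passing that the displayed definition of $\phi_0$ in the statement has a typo---the term $\phi_{\frac{1}{3}(0,2)}$ is repeated where $\phi_{\frac{1}{3}(2,0)}$ is intended---and your description of the four norm-$0$ nonzero cosets $(0,\pm1),(\pm1,0)$ implicitly corrects this.)
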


\begin{proof}
  This follows from a straightforward calculations as in the case $p = 2$.
\end{proof}

\begin{lemma}
  \label{lemma:Whitt3a}
Suppose $\lp \frac{d_i}{3} \rp = 1$.
Then we have
  $$
\frac{W^*_t(0, \tilde{\phi}_{\dd, 3})}{\gamma(W_3)} =
\begin{cases}
  2, & v_3(\Nm(t)) = -2, \\
  2v_3(\Nm(t)), & v_3(\Nm(t)) \ge -1, \\
\end{cases}
  $$
for all totally positive $t \in F^\times$ with $\tr(t) = \frac{1}{\dd}$.
\end{lemma}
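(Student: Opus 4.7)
The approach follows exactly the blueprint of the proof of Lemma \ref{lemma:Whitt2} in the analogous $p = 2$ setting, since when $\lp \frac{d_i}{3} \rp = 1$ both primes $\pf_1, \pf_2$ of $F$ above $3$ split in $E/F$. The first step is to apply the decomposition from Lemma \ref{lemma:decompose3a},
$$\tilde{\phi}_{\dd, 3} = \phi_0 \otimes \phi_{\delta} + \phi_{-\delta} \otimes \phi_0 + 2\phi_{\delta} \otimes \phi_{-\delta}.$$
Because the local quadratic space splits as $(V \otimes \Qb_3, Q) \cong (E_{\pf_1}, Q_{\pf_1}) \oplus (E_{\pf_2}, Q_{\pf_2})$ and the Weil representation factors accordingly, the Whittaker function becomes a sum of three products
$$\frac{W^*_t(0, \tilde{\phi}_{\dd, 3})}{\gamma(W_3)} = \sum_{(\mu_1, \mu_2)} c_{\mu_1, \mu_2} \cdot \frac{W^*_{t_1}(0, \phi_{\mu_1})}{\gamma(W_{\pf_1})} \cdot \frac{W^*_{t_2}(0, \phi_{\mu_2})}{\gamma(W_{\pf_2})},$$
where $t_i := \sigma_i(t) \in \Qb_3$ and $(\mu_1, \mu_2)$ ranges over the three summands above with multiplicities $c_{\mu_1, \mu_2} \in \{1, 2\}$.

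Next, I would tabulate the individual factors $W^*_{t_i}(0, \phi_\mu)/\gamma(W_{\pf_i})$ for $\mu \in \{\phi_0, \phi_\delta, \phi_{-\delta}\}$ as functions of $v_3(t_i)$, by specializing the general split-place Whittaker formulas from the appendix of \cite{YYY}. Here one first rescales $(M_i, Q_i) = (\Zb_3^2, (-1)^i (3\dd_2/\delta) y_1 y_2)$ by $3$ so that the quadratic form becomes integral; since each $\phi_\mu$ is supported in $(\tfrac{1}{3}\Zb_3)^2 / \Zb_3^2$, the tables terminate already at $v_3(t_i) \geq -1$ and are short. The constraint $\tr(t) = 1/(3\dd_2)$ forces $v_3(t_1 + t_2) = -1$, so the only possibilities are either $v_3(t_1) = v_3(t_2) = -1$, or exactly one of $v_3(t_1), v_3(t_2)$ equals $-1$ while the other is non-negative.

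Plugging the tables into the three-term sum then yields the answer in each case. When $v_3(t_1) = v_3(t_2) = -1$ one has $v_3(\Nm(t)) = -2$, and only the cross term $2\phi_\delta \otimes \phi_{-\delta}$ contributes, producing the value $2$. In the remaining case, say $v_3(t_1) = -1$ and $v_3(t_2) = n \geq 0$, one has $v_3(\Nm(t)) = n - 1$, and the $\phi_{-\delta} \otimes \phi_0$ term combines with the cross-term contribution to give $2(n - 1) = 2 v_3(\Nm(t))$; the symmetric case is identical. The main obstacle is purely bookkeeping: verifying the entries of the local tables and confirming that the sign choices in Lemma \ref{lemma:decompose3a} are compatible across the two split places. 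No conceptual input beyond the machinery of \cite{YYY} used already in Lemma \ref{lemma:Whitt2} is required.
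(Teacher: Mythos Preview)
Your proposal is correct and follows essentially the same approach as the paper, whose proof consists of the single sentence ``Apply Lemma \ref{lemma:decompose3a} and Prop.\ 5.3, 5.7 in \cite{YYY}.'' You have simply unpacked this into the explicit case analysis, mirroring the $p=2$ computation in Lemma \ref{lemma:Whitt2}.
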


\begin{proof}
  Apply Lemma \ref{lemma:decompose3a} and Prop.\ 5.3, 5.7 in \cite{YYY}.
\end{proof}

In the second case, the prime $3$ is inert in $F$ and splits into two primes $\Pf, \overline{\Pf}$ in $E$. We therefore fix $\delta \in \overline{\Qb_3}$ such that $\delta^2 = D$, and denote $F_\delta := \Qb_3(\delta)$ the quadratic extension of $\Qb_3$ with $\mathcal{O}_\delta \subset F_\delta$ its ring of integers, where $3$ is inert.
For any choice of $\delta_j \in F_\delta$ such that $\delta_j^2 = d_j$ and $\delta_1 \delta_2 = \delta$, we can identify $W\otimes \Qb_3$ with $F_\delta \times  F_\delta$ via
$$
(a_1 + b_1 \sqrt{d_1}) \otimes (a_2 + b_2 \sqrt{d_2}) \mapsto
((a_1 + b_1 \delta_1) (a_2 + b_2 \delta_2),
(a_1 - b_1 \delta_1) (a_2 - b_2 \delta_2)).
$$
This identifies the $\Qb_3$-vector spaces $V \otimes \Qb_3$ and $F_\delta \times F_\delta$.
The $\Zb_3$-lattice $L_{3\dd_2} \otimes \Zb_3$ and its dual lattice $L_{3\dd_2}' \otimes \Zb_3$ in $V \otimes \Qb_3$ are then mapped to
$$
\tilde{L}_3 := \mathcal{O}_\delta \times \mathcal{O}_\delta,~
\tilde{L}_3' := 3^{-1}\mathcal{O}_\delta \times 3^{-1}\mathcal{O}_\delta
$$
respectively.
The finite $\Zb_3$-modules $(L'_{3\dd_2} / L_{3\dd_2}) \otimes \Zb_3$ and $\mathcal{O}_\delta/3 \mathcal{O}_\delta \times \mathcal{O}_\delta/3 \mathcal{O}_\delta$ are explicitly identified via
\begin{equation}
  \label{eq:identify3}
\begin{split}
\tfrac{1}{3\dd_2} \smat{x_3}{x_1}{x_4}{x_2} \otimes \Zb_3 \mapsto
 &(\dd_2^{-1} (x_1 + x_2 - x_3 - x_4 + (x_4 - x_2) \delta_1 - (x_3 + x_4) \delta_2 + x_4 \delta), \\
&\dd_2^{-1}(x_1 + x_2 - x_3 - x_4 - (x_4 - x_2) \delta_1 + (x_3 + x_4) \delta_2 + x_4 \delta)).
\end{split}
\end{equation}
The latter can be viewed as the finite quadratic module of the $\mathcal{O}_\delta$-lattice $\mathcal{O}_E \otimes \Zb_3 \cong \mathcal{O}_\delta \times \mathcal{O}_\delta$ with the $F_\delta$-quadratic form $Q_{\dd, \delta}(y) := -\frac{3\dd_2}{\delta}y_1y_2$ for $y = (y_1, y_2) \in F_\delta \times F_\delta$.
Note that $\mathcal{O}_\delta / 3\mathcal{O}_\delta = (\Zb/3\Zb)[\delta]$ is a finite field of size 9.

Now let $\tilde{\phi}_{\dd, 3} \in S(\mathcal{O}_E \otimes \Zb_3)$ be the Schwartz function associated to $\phi_{\dd, 3} \in S(L_{\dd, 3})$ under the map in \eqref{eq:identify3}.
It is easy to check by hand the following lemma.

\begin{lemma}
  \label{lemma:decompose3b}
Let $\delta, \delta_1, \delta_2 \in \overline{\Qb_3}$ be as above. Then
\begin{equation}
  \label{eq:decompose3b}
  \tilde{\phi}_{\dd, 3} =
2 \sum_{\mu \in S_0} \phi_{\mu}  -
 \sum_{\mu \in S_1} \phi_{\mu}  -
 \sum_{\mu \in S_{-1}} \phi_{\mu}  ,
\end{equation}
where $S_j :=  \{ \mu \in (\tfrac{1}{3}\Zb/\Zb)[\delta] \times (\tfrac{1}{3}\Zb/\Zb)[\delta]: Q_{\dd, \delta}(\mu) = \frac{1}{3}(-\dd_2 + j \delta)\in (\tfrac{1}{3}\Zb/\Zb)[\delta]\}$ for $j = 0, \pm 1$.
\end{lemma}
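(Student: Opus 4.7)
The plan is to verify \eqref{eq:decompose3b} by comparing both sides as Schwartz functions on $(\tfrac{1}{3}\mathcal{O}_\delta/\mathcal{O}_\delta)^2$, in the same spirit as the $p=2$ calculation in Lemma \ref{lemma:decompose2}: match supports, then conclude by counting. By \eqref{eq:uf3}, $\uf_{\dd,3} = 2\wf_0 - \wf_1 - \wf_2$, so $\tilde\phi_{\dd,3}$ takes value $+2$ on the image of $\kappa_{\dd,3}(N'_{\dd,3})$, value $-1$ on the image of $\kappa_{\dd,3}(\SL_2(\F_3) \setminus N'_{\dd,3})$ (since $T$ has order $3$ in $\SL_2(\F_3)/N'_{\dd,3}$, so $N'_{\dd,3} \cup TN'_{\dd,3} \cup T^2 N'_{\dd,3} = \SL_2(\F_3)$), and zero elsewhere. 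To match this with the RHS $2\sum_{S_0}\phi_\mu - \sum_{S_1}\phi_\mu - \sum_{S_{-1}}\phi_\mu$ it therefore suffices to establish $\kappa_{\dd,3}(N'_{\dd,3}) = S_0$ and $\kappa_{\dd,3}(\SL_2(\F_3) \setminus N'_{\dd,3}) = S_1 \sqcup S_{-1}$.

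For the cardinality bookkeeping, I would write $\mu = (\tfrac{z_1}{3}, \tfrac{z_2}{3})$ with $z_i \in \F_9 = \mathcal{O}_\delta/3\mathcal{O}_\delta$; the defining condition of $S_j$ rearranges to $z_1 z_2 \equiv \delta - jD\dd_2^{-1} \bmod 3\mathcal{O}_\delta$. For each $j \in \{0,1,-1\}$ the right-hand side lies in $\F_9^\times$ (its $\delta$-coefficient equals $1$, so nonzero because $\lp\tfrac{D}{3}\rp = -1$ forces $\delta \notin \F_3$), hence $|S_j| = |\F_9^\times| = 8$ and $|S_0 \sqcup S_1 \sqcup S_{-1}| = 24 = |\SL_2(\F_3)|$. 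Since $\kappa_{\dd,3}$ is an isomorphism of quadratic modules with $Q_\dd \circ \kappa_{\dd,3} = \det/(3\dd_2)$ on $M_2(\F_3)$, and since $Q_\dd$ coincides with the $F_\delta/\Qb_3$-trace of $Q_{\dd,\delta}$ under \eqref{eq:identify3}, a short trace computation shows that for $g \in \SL_2(\F_3)$ the $\delta$-coefficient of the corresponding $z_1 z_2$ is forced to be $1 \bmod 3$. Thus $\kappa_{\dd,3}(\SL_2(\F_3))$ lies in the three-element set $\{\delta + \alpha : \alpha \in \F_3\}$ of admissible products, and the count upgrades the inclusion to $\kappa_{\dd,3}(\SL_2(\F_3)) = S_0 \sqcup S_1 \sqcup S_{-1}$.

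It remains to pin down $\kappa_{\dd,3}(N'_{\dd,3}) = S_0$; by cardinality ($|N'_{\dd,3}| = 8 = |S_0|$) it suffices to show containment. Using the characterization \eqref{eq:N3'p} that $N'_{\dd,3}$ consists of $\pm I$ together with the trace-zero elements of $\SL_2(\F_3)$, I would push each such $g$ through $\kappa_{\dd,3}$ and \eqref{eq:identify3} and verify that the $\F_3$-component of the resulting $z_1 z_2$ vanishes, so that $z_1 z_2 = \delta$ and $\mu \in S_0$; the corresponding statement for $\SL_2(\F_3)\setminus N'_{\dd,3}$ then follows by taking complements in Step~2. The main obstacle is this explicit $\F_3$-arithmetic verification on the characterizing elements of $N'_{\dd,3}$, which is elementary but requires tracking the sign conventions in \eqref{eq:identify3} and the choice of square roots $\delta_1, \delta_2$ with $\delta_1\delta_2=\delta$; this is precisely why the authors present the proof as a straightforward calculation.
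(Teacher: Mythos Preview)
Your proposal is correct and follows exactly the approach the paper intends: the paper itself offers no proof beyond ``it is easy to check by hand,'' and your support-matching-plus-counting argument is precisely the strategy used for the analogous Lemma~\ref{lemma:decompose2} (``check that the cosets on the right indeed appear, then count''). Your added structure---computing $|S_j|=8$ via the nonvanishing of $\delta - jD\dd_2^{-1}$ in $\F_9$, then using $|\SL_2(\F_3)|=24$ to upgrade inclusion to equality, and finally isolating $S_0$ via the trace-zero characterization \eqref{eq:N3'p} of $N'_{\dd,3}$---is a clean way to organize the hand check the authors leave to the reader.
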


\begin{remark}
  The size of $S_j$ is 8 for every $j$.
\end{remark}

We can now apply Prop.\ 5.3 in \cite{YYY} to find the value of the Whittaker function.
\begin{lemma}
\label{lemma:Whitt3b}
Suppose $\lp \frac{d_1}{3} \rp \neq \lp \frac{d_1}{3} \rp$.
Then we have
  $$
\frac{W^*_t(0, \tilde{\phi}_{\dd, 3})}{\gamma(W_3)} =
\begin{cases}
  2, & \Nm(3t) \equiv 1 \bmod{3}, \\
-1, & \Nm(3t) \equiv 2 \bmod{3}, \\
\end{cases}
  $$
for all totally positive $t \in F^\times$ with $\tr(t) = \frac{1}{\dd}$.
\end{lemma}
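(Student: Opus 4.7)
The plan is to proceed exactly as in the proof sketch of Lemma~\ref{lemma:Whitt3a}: substitute the decomposition from Lemma~\ref{lemma:decompose3b} into the Whittaker integral and invoke Proposition~5.3 of~\cite{YYY} term by term. By linearity,
\begin{equation*}
\frac{W^*_t(0, \tilde{\phi}_{\dd, 3})}{\gamma(W_3)} = 2 \sum_{\mu \in S_0} \frac{W^*_t(0, \phi_\mu)}{\gamma(W_3)} - \sum_{j = \pm 1} \sum_{\mu \in S_j} \frac{W^*_t(0, \phi_\mu)}{\gamma(W_3)},
\end{equation*}
so it suffices to evaluate each inner sum $I_j(t) := \sum_{\mu \in S_j} W^*_t(0, \phi_\mu)/\gamma(W_3)$ and show that it equals $1$ when $\Nm(3t)$ lies in the appropriate residue class modulo $3$ and vanishes otherwise.

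First I would fix the local geometry. The hypothesis $\lp \tfrac{d_1}{3}\rp \neq \lp \tfrac{d_2}{3}\rp$ means $3$ is inert in $F$ and splits in $E/F$, so $W_3 \cong F_\delta \oplus F_\delta$ is a hyperbolic plane over the unramified quadratic extension $F_\delta/\Qb_3$ with form $Q_{\dd,\delta}(y_1, y_2) = -\tfrac{3\dd_2}{\delta}\,y_1 y_2$. In this isotropic situation Proposition~5.3 of~\cite{YYY} furnishes a closed-form expression for $W^*_t(0, \phi_\mu)/\gamma(W_3)$ which vanishes unless $t - Q_{\dd,\delta}(\mu) \in \mathcal{O}_\delta$; the non-vanishing contributions can then be summed against the explicit characterization of $S_j$ given in Lemma~\ref{lemma:decompose3b}.

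The main computation is a residue analysis modulo $3\mathcal{O}_\delta$. Writing $3t \equiv a + b\delta \pmod{3\mathcal{O}_\delta}$ with $a, b \in \Zb/3\Zb$, the trace condition $\tr_{F/\Qb}(t) = 1/\dd$ forces $2a \equiv 1/\dd_2 \pmod 3$, which yields $a \equiv -\dd_2 \pmod 3$ in all four relevant cases for $\dd_2 \in \{1,2,4,8\}$. Since $D \equiv -1 \pmod 3$ under our hypothesis, we obtain
\begin{equation*}
\Nm(3t) \equiv a^2 - D\, b^2 \equiv 1 + b^2 \pmod 3.
\end{equation*}
On the other hand $Q_{\dd,\delta}(\mu) = \tfrac{1}{3}(-\dd_2 + j\delta)$ for $\mu \in S_j$, so the non-vanishing condition becomes $3t \equiv -\dd_2 + j\delta \pmod{3\mathcal{O}_\delta}$, i.e.\ $b \equiv j \pmod 3$. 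This selects $j = 0$ precisely when $\Nm(3t) \equiv 1 \pmod 3$, and exactly one of $j = \pm 1$ (determined by the sign of $b$) precisely when $\Nm(3t) \equiv 2 \pmod 3$.

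Assembling the two ingredients yields the stated values $2$ and $-1$. The one point requiring genuine care, and the main obstacle, is verifying that the inner sum $I_j(t)$ over each level set $S_j$ of size $8$ indeed equals $1$ whenever the residue condition holds: this amounts to combining the individual Whittaker values supplied by Proposition~5.3 of~\cite{YYY} with the orbit structure of $S_j$ under the Galois action on $\mathcal{O}_\delta/3\mathcal{O}_\delta \cong \F_9$, and matching the resulting normalization factor with $\gamma(W_3)$. Once that book-keeping is carried out, both cases of the claimed formula fall out of the counting above.
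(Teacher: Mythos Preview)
Your proposal is correct and follows essentially the same approach as the paper's proof. Both arguments reduce to the same residue computation modulo $3\mathcal{O}_\delta$: one checks that the non-vanishing condition from Proposition~5.3 of \cite{YYY}, namely $\alpha(\mu,t)=Q_{\dd,\delta}(\mu)-t\in\mathcal{O}_\delta$, singles out exactly one index $j\in\{0,\pm 1\}$ depending on whether $\Nm(3t)\equiv 1$ or $2\bmod 3$. The paper simply parametrizes $t$ differently, writing $t=\tfrac{\delta+a}{6\dd_2\delta}$ with $a\in\Zb_3$ rather than $3t\equiv a+b\delta$, and records the explicit constants $L(1,\chi)=\tfrac{9}{8}$ and $\mathrm{vol}(\mathcal{O}_E,d_\beta x)=\tfrac{1}{9}$ needed for the normalization you flag as ``book-keeping''; with those in hand each $\phi_\mu$ for $\mu\in S_j$ contributes $\tfrac{1}{8}$ and $|S_j|=8$, giving $I_j(t)=1$ exactly as you anticipated.
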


\begin{proof}
  First, $\beta = -\tfrac{3\dd_2}{\delta}$, the normalizing $L$-factor is $L(1, \chi) = \tfrac{9}{8}$ and the volume $\mathrm{vol}(\mathcal{O}_E, d_\beta x) = \tfrac{1}{9}$.
Suppose $t = \tfrac{\delta + a}{6\dd_2 \delta} \in F_\delta$.
For $\mu \in S_j$, the quantity $3\alpha(\mu, t)$ is
$$
3\alpha(\mu, t) := 3(Q_{\dd, \delta}(\mu) - t) \equiv (-\dd_2 + j\delta) - 2 (\dd_2\delta)^{-1}(\delta + a)
\equiv (j - \dd_2 a)\delta
\bmod{3}
$$
since $\delta^2 = D \equiv 2 \bmod{3}$.
Now $3\alpha(\mu, t) \equiv 0 \bmod{3}$ if and only if $3 \mid (j - \dd_2 a)$.
This happens when $3 \mid(j, a)$, in which case $\mu \in S_0$ and $\Nm(3t) \equiv 1 + a^2 \equiv 1 \bmod{3}$.
The value of $\frac{W^*_t(0, \tilde{\phi}_{\dd, 3})}{\gamma(W_3)}$ is 2.
Otherwise if $3 \nmid a$ and $3 \mid (j - \dd_2 a)$, then $\mu \in S_{\dd_2 a}$ and $\Nm(3t) \equiv 2 \bmod{3}$.
The value of $\frac{W^*_t(0, \tilde{\phi}_{\dd, 3})}{\gamma(W_3)}$ is then $-1$.
This finishes the proof.
\end{proof}

In the last case, we need to calculate both the value and derivative of the Whittaker function at $s = 0$ since $3$ splits into the product of two inert primes $\pf_1\pf_2$ in $F$.
As in the setup of the previous two cases, we fix $\delta, \delta_i \in \overline{\Qb_3}$ such that $\delta_i^2 = d_i$ and $\delta = \delta_1\delta_2 \in \Zb_3$.
Denote $\tilde{E} := \Qb_3(\delta_1) = \Qb_3(\delta_2)$ the quadratic extension of $\Qb_3$ with ring of integers $\tilde{\mathcal{O}}$.
This gives an identification
\begin{align*}
\sigma_i:  F \otimes \Qb_3 &\cong \Qb_3: \sqrt{D} \mapsto (-1)^i \delta, \\
\sigma_i: E_{\pf_i} &\cong \tilde{E}: \sqrt{d_j} \mapsto (-1)^{(i-1)(j-1)} \delta_j.
\end{align*}
Then the isomorphism in \eqref{eq:SpaceIdentification} induces $V\otimes \Qb_3 \cong W \otimes \Qb_3 = E_{\pf_1} \oplus E_{\pf_2} \cong \tilde{E} \oplus \tilde{E}$, with the quadratic form on $y \in E_{\pf_i}$ given by $Q_i(y) := (-1)^{i-1} \tfrac{3\dd_2}{\sqrt{D}} \Nm(y)$.
The lattice $L_{\dd, 3}$ is then isometric to
$$
\tilde{L}_{\dd, 3} := \tilde{\mathcal{O}} \oplus \tilde{\mathcal{O}} \subset \tilde{E} \oplus \tilde{E},
$$
whose dual lattice is $\tilde{L}_{\dd, 3}' := \tfrac{1}{3}\tilde{\mathcal{O}} \oplus \tfrac{1}{3} \tilde{\mathcal{O}} \subset \tilde{E} \oplus \tilde{E}$, with respect to the quadratic form $\tilde{Q}_{\dd, \delta}(y) := -\tfrac{3\dd_2}{\delta} (\Nm(y_1) - \Nm(y_2))$ for $y = (y_1, y_2) \in \tilde{E} \oplus \tilde{E}$.
Under this identification, the Schwartz function $\tilde{\phi}_{\dd, 3} \in S(\tilde{L}_{\dd, 3})$ associated to $\phi_{\dd, 3} \in S(L_{\dd, 3})$ has the following decomposition.
\begin{lemma}
  \label{lemma:decompose3c}
Let $\delta, \delta_1, \delta_2 \in \overline{\Qb_3}$ be as above. Then
\begin{equation}
  \label{eq:decompose3b}
  \tilde{\phi}_{\dd, 3} =
2 \sum_{\mu \in S_{1}} \phi_\mu \otimes \phi_{0}  +
2 \sum_{\mu \in S_{-1}} \phi_0 \otimes \phi_{\mu}  -
 \sum_{\mu_1 \in S_{-1}, \mu_2 \in S_{1}} \phi_{\mu_1} \otimes \phi_{\mu_2}
\end{equation}
where $S_j :=  \{ \mu \in \tfrac{1}{3}\tilde{\mathcal{O}}/\tilde{\mathcal{O}}: -\tfrac{3}{\delta}\Nm(\mu) \equiv \tfrac{j}{3} \bmod{\Zb_3}\}$ for $j = \pm 1$.
\end{lemma}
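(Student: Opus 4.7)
The plan is to verify the decomposition by direct computation, following the template of Lemmas \ref{lemma:decompose2}, \ref{lemma:decompose3a}, and \ref{lemma:decompose3b}. From \eqref{eq:uf3} and $\dd_3 = 3$, we have $\uf_{\dd, 3} = 2\wf_0 - \wf_1 - \wf_2$, so the Schwartz function $\tilde{\phi}_{\dd, 3}$ is supported on the disjoint union $\bigsqcup_{j=0}^2 \kappa_{\dd, 3}(T^j N_{\dd, 3}') \subset \Ac_{\dd, 3}$ with multiplicities $(2, -1, -1)$. Since $\kappa_{\dd, 3}\colon M_2(\F_3) \xrightarrow{\sim} \Ac_{\dd, 3}$ and $\{N_{\dd, 3}', TN_{\dd, 3}', T^2 N_{\dd, 3}'\}$ exhaust $\SL_2(\F_3)$ (as $N_{\dd, 3}'$ is the normal Sylow $2$-subgroup and $T$ has order $3$), the support of $\tilde{\phi}_{\dd, 3}$ has cardinality $3 \cdot 8 = 24$.

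Next, I would count the support of the RHS. Since both $d_1, d_2 \equiv -1 \bmod 3$, the extension $\tilde{E}/\Qb_3$ is unramified, so $\delta \in \Zb_3^\times$, and the norm form on $\tilde{\mathcal{O}}/3\tilde{\mathcal{O}} \cong \F_9$ is a non-degenerate binary form over $\F_3$ representing each non-zero value exactly four times. Hence $|S_1| = |S_{-1}| = 4$, and the RHS support $(S_1 \times \{0\}) \sqcup (\{0\} \times S_{-1}) \sqcup (S_{-1} \times S_1)$ has cardinality $4 + 4 + 16 = 24$. Moreover, the quadratic form values agree: for $g \in \SL_2(\F_3)$, $Q_{\dd, 3}(\kappa_{\dd, 3}(g)) = (3\dd_2)^{-1} \equiv \dd_2/3 \bmod \Zb_3$, and on each of the three components of the RHS support, $\tilde{Q}_{\dd, \delta} \equiv \dd_2/3 \bmod \Zb_3$ as well.

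The remaining key step is to establish the orbit-by-orbit correspondence
\begin{align*}
\kappa_{\dd, 3}(N_{\dd, 3}') &\;\longleftrightarrow\; (S_1 \times \{0\}) \sqcup (\{0\} \times S_{-1}), \\
\kappa_{\dd, 3}(TN_{\dd, 3}') \sqcup \kappa_{\dd, 3}(T^2 N_{\dd, 3}') &\;\longleftrightarrow\; S_{-1} \times S_1,
\end{align*}
under the identification $V \otimes \Qb_3 \cong \tilde{E} \oplus \tilde{E}$ described immediately before the statement. I would verify this by composing \eqref{eq:SpaceIdentification} with the local decomposition $W \otimes \Qb_3 = E_{\pf_1} \oplus E_{\pf_2} \cong \tilde{E} \oplus \tilde{E}$ and then evaluating on the generators $\smat{0}{1}{-1}{0}$ and $\smat{1}{1}{1}{-1}$ of $N_{\dd, 3}'$ from \eqref{eq:N3'}, together with $T$. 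Once this correspondence is in place, matching coefficients yields the claim: the $2\wf_0$ term contributes a coefficient $2$ on each of $S_1 \times \{0\}$ and $\{0\} \times S_{-1}$, while $-\wf_1 - \wf_2$ contributes coefficient $-1$ uniformly on the $16$-point set $S_{-1} \times S_1$.

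The main obstacle will be to keep consistent sign and rational-structure conventions through the chain of local identifications at $p = 3$; in particular, distinguishing $S_1$ from $S_{-1}$ depends on the choice of $\delta$ (equivalently, on the ordering of the embeddings $\sigma_1, \sigma_2$ of $F$) and on which of $\pf_1, \pf_2$ is viewed as the ``first'' factor. This is analogous to, but slightly more delicate than, the verification carried out in Lemma \ref{lemma:decompose3b}, and once the isomorphisms are spelled out it should reduce to a short case analysis with $2 \times 2$ matrices over $\F_3$.
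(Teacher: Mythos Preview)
Your proposal is correct and matches the paper's approach: the paper states this lemma without proof, treating it (like Lemmas \ref{lemma:decompose3a} and \ref{lemma:decompose3b}) as a straightforward direct verification, and your plan is precisely that verification spelled out. One small caveat: checking the two generators of $N_{\dd,3}'$ alone is not quite enough to establish the set-theoretic correspondence, since the map $\kappa_{\dd,3}$ followed by the isometry to $\tilde{E}\oplus\tilde{E}$ is additive rather than multiplicative; but since both sides have eight elements, it suffices to verify inclusion on all eight matrices (or to use the cardinality count together with a few explicit evaluations), which is still a short finite check over $\F_3$.
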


Now, we can again apply Prop.\ 5.3 in \cite{YYY} to calculate the values and derivatives of the Whittaker function.

\begin{lemma}
  \label{lemma:Whitt3c}
Suppose $\lp \frac{d_1}{3} \rp = \lp \frac{d_1}{3} \rp = -1$.
Then we have
\begin{align*}
  \frac{W^*_t(0, {\phi}_{\dd, 3})}{\gamma(W_3)} &=
\begin{cases}
-1, & v_3(\Nm(t)) = -2,\\
  2,& v_3(\Nm(t)) \ge 0 \text{ is even,}\\
  0,& \text{ otherwise,}
\end{cases} \\
  \frac{{W^{*,}}'_t(0, {\phi}_{\dd, 3})}{\gamma(W_3)} &=
\lp v_3(\Nm(t)) + \frac{3}{2} \rp \log 3,\quad \text{ if } v_3(\Nm(t)) \ge -1\text{ is odd,}
\end{align*}
for all totally positive $t \in F^\times$ with $\tr(t) = \frac{1}{\dd}$.
\end{lemma}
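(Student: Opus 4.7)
The plan is to use the product decomposition of the Whittaker function over the two primes $\pf_1,\pf_2$ of $F$ above $3$. Since $F\otimes\Qb_3\cong\Qb_3\oplus\Qb_3$ and the lattice $\tilde L_{\dd,3}=\tilde{\mathcal O}\oplus\tilde{\mathcal O}$ splits as an orthogonal sum (with opposite signs $(-1)^{i-1}\tfrac{3\dd_2}\delta\Nm$ on each factor), the Schwartz function decomposition in Lemma \ref{lemma:decompose3c} gives
\[
W^*_t(s,\tilde\phi_{\dd,3})=
2\sum_{\mu\in S_1}W^*_{t_1}(s,\phi_\mu)W^*_{t_2}(s,\phi_0)
+2\sum_{\mu\in S_{-1}}W^*_{t_1}(s,\phi_0)W^*_{t_2}(s,\phi_\mu)
-\sum_{\substack{\mu_1\in S_{-1}\\ \mu_2\in S_1}}W^*_{t_1}(s,\phi_{\mu_1})W^*_{t_2}(s,\phi_{\mu_2}),
\]
where $t_i=\sigma_i(t)\in\Qb_3$ and each factor is a one-variable local Whittaker function attached to the $\Qb_3$-quadratic space $(\tilde E,(-1)^{i-1}\tfrac{3\dd_2}{\delta}\Nm)$, which is the anisotropic quadratic space of dimension $2$ over $\Qb_3$.

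First I would compute the individual factors $W^*_{t_i}(s,\phi_\mu)$ and their $s$-derivatives by directly applying Proposition 5.3 of \cite{YYY} (the anisotropic, inert case). The key input is that on the inert space $(\tilde E,\pm\tfrac{3\dd_2}\delta\Nm)$, the representability of $t_i$ is controlled by the parity of $v_3(t_i\delta/(3\dd_2))=v_3(t_i)+1$: $t_i$ is represented exactly when this valuation is even and nonnegative, in which case $W^*_{t_i}(0,\phi_\mu)$ is an explicit integer depending on $v_3(t_i)$ and on the congruence class of $\alpha(\mu,t_i):=-\tfrac{3\dd_2}\delta\Nm(\mu)-t_i$ modulo $3$; when $t_i$ is not represented, the value vanishes and the derivative equals $\bigl(\tfrac12(v_3(t_i)+1)+1\bigr)\log 3$ times a normalizing factor. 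These calculations are entirely parallel to (and simpler than) the ones carried out in the proof of Lemma \ref{lemma:Whitt2}.

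Next I would combine the two factors. Note that $v_3(\Nm(t))=v_3(t_1)+v_3(t_2)$ and $t_1+t_2=1/\dd$, which constrains the pair $(v_3(t_1),v_3(t_2))$. Writing $t=\tfrac{a+\delta}{2\dd\delta}$ with appropriate $a$, one has $\Nm(t)=(a^2-D)/(4\dd^2 D)$, so the relevant quantity $\alpha(\mu,t_i)\bmod 3$ is controlled by $a\bmod 3$ and by which class $\mu\in S_{\pm1}$ one picks. The sets $S_{\pm1}$ each have size $8$, and after summing over $\mu$ the congruence conditions collapse to a simple dependence on $\Nm(3t)\bmod 3$, analogous to the computation in the proof of Lemma \ref{lemma:Whitt3b}. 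Since $\mathrm{Diff}(W,t)\cap\{\pf_1,\pf_2\}$ is forced to have even cardinality unless an odd number of $v_3(t_i)$'s have the ``wrong'' parity, the only nonvanishing contributions at $s=0$ come from the cases where both $t_1$ and $t_2$ are represented (giving the values in the first line), and the derivative is nonzero only when exactly one of $t_1,t_2$ fails to be represented (giving the $\log 3$ term in the second line).

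The main obstacle is the bookkeeping: keeping track of which of the three summands in the decomposition of $\tilde\phi_{\dd,3}$ contribute, for each possible parity pattern of $(v_3(t_1),v_3(t_2))$, and verifying that the sums over $S_{\pm1}$ of the local Whittaker values assemble into the clean closed forms stated. Once this matching is done, the factor $\gamma(W_3)$ is fixed by the local Weil constants of $\tilde E$ on each side and cancels uniformly, yielding the formulas in the lemma.
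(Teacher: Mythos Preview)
Your approach is essentially the same as the paper's: decompose via Lemma \ref{lemma:decompose3c}, apply Propositions 5.3 and 5.7 of \cite{YYY} to each factor, and assemble according to the valuations of $t_1,t_2$. One correction: in this setting $S_{\pm1}\subset\tfrac{1}{3}\tilde{\mathcal O}/\tilde{\mathcal O}$ each have size $4$, not $8$ (you are confusing them with the sets in Lemma \ref{lemma:decompose3b}); this matters for the final constants. Also, the paper's case split is cleaner than a general parity analysis: since $t_1+t_2=\tfrac{1}{3\dd_2}$ has $3$-valuation $-1$, either $o(t_1)=o(t_2)=-1$ (giving $v_3(\Nm(t))=-2$ and only the $\phi_{\mu_1}\otimes\phi_{\mu_2}$ term survives), or exactly one $o(t_i)\ge 0$ with the other equal to $-1$ (and then the $\phi_{\mu_1}\otimes\phi_{\mu_2}$ term vanishes while one of the $\phi_\mu\otimes\phi_0$ terms carries everything). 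This dichotomy is what makes the bookkeeping short.
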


\begin{proof}
Denote $t_i := \sigma_i(t) \in \Qb_3$ and $o(t_i)$ its valuation.
Since $\tr(t) = \tfrac{1}{3\dd_2}$, either $o(t_i) = -1$ for both $i = 1, 2$, or $o(t_i) \ge 0$ for exactly one of $i = 1, 2$.
In the first case, it is easy to check that $W_{t_i}(s, \phi_{\mu} \otimes \phi_0)$ and $W_{t_i}(s, \phi_{0} \otimes \phi_\mu)$ are identically zero by Prop.\ 5.7 in \cite{YYY}.
If we write $t_1 = \tfrac{\delta - a}{2\dd_2 3 \delta}, t_2 = \tfrac{\delta + a}{2\dd_2 3 \delta}$ with $a \in \Zb_3$, then we must have $a \in 3\Zb_3$ since $\delta^2 = D \in 1 + 3\Zb_3$ and
$$
-2 = o(t_1) + o(t_2) =
o(t_1t_2) = -2 + o(\delta^2 - a^2) = -2 + o(1 - a^2).
$$
That means for $\mu_1 \in S_{-1}$ and $\mu_2 \in S_{1}$, we have
\begin{align*}
\alpha(\mu_1, t_1) &= - \frac{3\dd_2}{\delta}\Nm(\mu_1) - t_1 \equiv -\frac{\dd_2}{3} - \frac{\delta - a}{2\dd_2 3 \delta} \equiv 0 \bmod \Zb_3 \\
\alpha(\mu_2, t_2) &= \frac{3\dd_2}{\delta}\Nm(\mu_2) - t_2 \equiv -\frac{\dd_2}{3} - \frac{\delta + a}{2\dd_2 3 \delta} \equiv 0 \bmod \Zb_3.
\end{align*}
By Prop.\ 5.3 in \cite{YYY}, $\gamma(W_3)^{-1} W_{t_i}(0, \phi_{\mu_1} \otimes \phi_{\mu_2}) = \tfrac{1}{16}$ for any $(\mu_1, \mu_2) \in S_{-1} \times S_1$. Since $S_j$ has size 4 for $j = \pm 1$, we obtain
$$
\frac{W_{t}(0, \phi_{\dd, 3}) }{\gamma(W_3)} = -1
$$
when $v_3(\Nm(t)) = o(t_1) + o(t_2) = -2$.

In the second case, suppose $o(t_1) \ge 0$. Then Prop.\ 5.3 and 5.7 in \cite{YYY} imply that $W_{t_i}(s, \phi_{\mu_1} \otimes \phi_{\mu_2})$ vanishes identically for $(\mu_1, \mu_2) \in S_{-1} \times S_1$ and
\begin{align*}
  \frac{W^*_{t_1}(0, {\phi}_{0} )}{\gamma(W_{\pf_1})} &= \frac{1 + (-1)^{o(t_1) - 1}}{2},~
  \frac{W^*_{t_2}(0, {\phi}_{\mu})}{\gamma(W_{\pf_2})} = L(1, \chi_{\pf_2}) 3^{-1} = \frac{1}{4}, \\
  \frac{{W^{*,}}'_{t_1}(0, {\phi}_{0} )}{\gamma(W_{\pf_1})} &= \frac{2o(t_1) + 1}{2} \log(3) \text{ when } 2\mid o(t_1)
\end{align*}
  when $\mu \in S_1$ as $\alpha(\mu, t_2) = -\frac{\dd_2 \Nm(\mu)}{3\delta} - t_2 \in \frac{\dd_2}{3} - t_2 + \Zb_3 = \Zb_3$.
Since $v_3(\Nm(t)) = o(t_1t_2) = o(t_1) - 1$, we obtain the lemma when $o(t_1) \ge 0$. The case $o(t_2) \ge 0$ holds similarly.
\end{proof}

\section*{Appendix}
\label{appendix}
We record here the set $\kappa_{\dd, 2}(N'_{\dd, 2})\subset \Ac_{\dd, 2} =
\Zb/\dd_2\Zb \times
\Zb/2\dd_2\Zb \times
\Zb/2\dd_2\Zb \times
\Zb/\dd_2\Zb $.
Note that the group $N'_{\dd, 2}$ and the map $\dd_3\cdot \kappa_{\dd, 2}$ only depend on $\dd_2$.
This helps with checking Lemma \ref{lemma:additive}.
\begin{align*}
  \kappa_{1, 2}&(N'_{1, 2}) =
  \kappa_{3, 2}(N'_{3, 2}) =
\{
[0, 0]
\},~
  \kappa_{2, 2}(N'_{2, 2}) =
  \kappa_{6, 2}(N'_{6, 2}) =
\{
  [ 1 , 0 , 0 , 1 ] ,
[ 1 , 2 , 2 , 1 ]
\}.
\end{align*}
\begin{align*}
  \kappa_{4, 2}&(N'_{4, 2}) =
  \kappa_{12, 2}(N'_{12, 2}) =
 \\
\big\{
&  [ 1 , 2 , 6 , 3 ] ,
[ 1 , 6 , 2 , 3 ] ,
[ 1 , 0 , 0 , 1 ] ,
[ 1 , 4 , 4 , 1 ] ,
[ 3 , 6 , 2 , 1 ] ,
[ 3 , 2 , 6 , 1 ] ,
[ 3 , 0 , 0 , 3 ] ,
[ 3 , 4 , 4 , 3 ]
\big\}.
\end{align*}
\begin{align*}
  \kappa_{8, 2}&(N'_{8, 2}) = \\\Big\{
&[ 1 , 2 , 14 , 7 ] ,
[ 1 , 6 , 10 , 7 ] ,
[ 1 , 10 , 6 , 7 ] ,
[ 1 , 14 , 2 , 7 ] ,
[ 1 , 0 , 0 , 1 ] ,
[ 1 , 4 , 12 , 1 ] ,
[ 1 , 8 , 8 , 1 ] ,
[ 1 , 12 , 4 , 1 ] ,\\
&[ 3 , 14 , 10 , 5 ] ,
[ 3 , 2 , 6 , 5 ] ,
[ 3 , 6 , 2 , 5 ] ,
[ 3 , 10 , 14 , 5 ] ,
[ 3 , 8 , 0 , 3 ] ,
[ 3 , 12 , 12 , 3 ] ,
[ 3 , 0 , 8 , 3 ] ,
[ 3 , 4 , 4 , 3 ] ,\\
&[ 5 , 2 , 6 , 3 ] ,
[ 5 , 6 , 2 , 3 ] ,
[ 5 , 10 , 14 , 3 ] ,
[ 5 , 14 , 10 , 3 ] ,
[ 5 , 8 , 0 , 5 ] ,
[ 5 , 12 , 12 , 5 ] ,
[ 5 , 0 , 8 , 5 ] ,
[ 5 , 4 , 4 , 5 ] ,\\
&[ 7 , 14 , 2 , 1 ] ,
[ 7 , 2 , 14 , 1 ] ,
[ 7 , 6 , 10 , 1 ] ,
[ 7 , 10 , 6 , 1 ] ,
[ 7 , 0 , 0 , 7 ] ,
[ 7 , 4 , 12 , 7 ] ,
[ 7 , 8 , 8 , 7 ] ,
[ 7 , 12 , 4 , 7 ]
\Big\}.
\end{align*}
\begin{align*}
  \kappa_{24, 2}&(N'_{24, 2}) =   \kappa_{24, 2}(N'_{8, 2}) =    3^{-1} \cdot \kappa_{8, 2}(N'_{8, 2}) =
\\\Big\{
&  [ 3 , 6 , 10 , 5 ] ,
[ 3 , 2 , 14 , 5 ] ,
[ 3 , 14 , 2 , 5 ] ,
[ 3 , 10 , 6 , 5 ] ,
[ 3 , 0 , 0 , 3 ] ,
[ 3 , 12 , 4 , 3 ] ,
[ 3 , 8 , 8 , 3 ] ,
[ 3 , 4 , 12 , 3 ] ,\\
&[ 1 , 10 , 14 , 7 ] ,
[ 1 , 6 , 2 , 7 ] ,
[ 1 , 2 , 6 , 7 ] ,
[ 1 , 14 , 10 , 7 ] ,
[ 1 , 8 , 0 , 1 ] ,
[ 1 , 4 , 4 , 1 ] ,
[ 1 , 0 , 8 , 1 ] ,
[ 1 , 12 , 12 , 1 ] ,\\
&[ 7 , 6 , 2 , 1 ] ,
[ 7 , 2 , 6 , 1 ] ,
[ 7 , 14 , 10 , 1 ] ,
[ 7 , 10 , 14 , 1 ] ,
[ 7 , 8 , 0 , 7 ] ,
[ 7 , 4 , 4 , 7 ] ,
[ 7 , 0 , 8 , 7 ] ,
[ 7 , 12 , 12 , 7 ] ,\\
&[ 5 , 10 , 6 , 3 ] ,
[ 5 , 6 , 10 , 3 ] ,
[ 5 , 2 , 14 , 3 ] ,
[ 5 , 14 , 2 , 3 ] ,
[ 5 , 0 , 0 , 5 ] ,
[ 5 , 12 , 4 , 5 ] ,
[ 5 , 8 , 8 , 5 ] ,
[ 5 , 4 , 12 , 5 ]
\Big\}.
\end{align*}

Here we also include an explicit example for Theorem \ref{thm:intro}.
Let $d_1 = -31, d_2 = -127$, which have class numbers 3 and 5 respectively and satisfy $d_j \equiv 17 \bmod{24}$.
Then the minimal polynomials of the invariants $f\lp\left[1, \tfrac{-1 + \sqrt{d_j}}{2}\right]\rp$ are
\begin{equation}
  \label{eq:minpol}
  g_1(x) = x^3 + x - 1,~
  g_2(x) = x^{5} - x^{4} - 2x^{3} + x^{2} + 3x - 1.
\end{equation}
The following table lists the values of $\mathfrak{F}(m)$ for various $m$.
\begin{center}
  \begin{tabular}{|c|c|c|c|c|c|c|c|c|c|c|c|}
 \hline
$a$ & $m$ & $m \bmod 96$ &  $\mathfrak{F}(m)$ & $\mathfrak{F}(\tfrac{m}{2^2})$ & $\mathfrak{F}(\tfrac{m}{4^2})$  & $\mathfrak{F}(\tfrac{m}{8^2})$ & $\mathfrak{F}(\tfrac{m}{16^2})$  & $\mathfrak{F}(\tfrac{m}{6^2})$ & $\mathfrak{F}(\tfrac{m}{12^2})$  & $\mathfrak{F}(\tfrac{m}{24^2})$ & $\mathfrak{F}(\tfrac{m}{48^2})$ \\ \hline
1 &$ 2^{3} \cdot 3 \cdot 41 $& 24 &$ 3^8 $&$ 3^4 $&$ 1 $&$ 1 $&$ 1 $&$ 1 $&$ 1 $&$ 1 $&$ 1 $ \\ \hline
3 &$ 2 \cdot 491 $& 22 &$ 491^2 $&$ 1 $&$ 1 $&$ 1 $&$ 1 $&$ 1 $&$ 1 $&$ 1 $&$ 1 $ \\ \hline
5 &$ 2 \cdot 3 \cdot 163 $& 18 &$ 3^4 $&$ 1 $&$ 1 $&$ 1 $&$ 1 $&$ 1 $&$ 1 $&$ 1 $&$ 1 $ \\ \hline
7 &$ 2^{2} \cdot 3^{5} $& 12 &$ 3^9 $&$ 3^3 $&$ 1 $&$ 1 $&$ 1 $&$ 3^2 $&$ 1 $&$ 1 $&$ 1 $ \\ \hline
9 &$ 2^{2} \cdot 241 $& 4 &$ 241^3 $&$ 241 $&$ 1 $&$ 1 $&$ 1 $&$ 1 $&$ 1 $&$ 1 $&$ 1 $ \\ \hline
11 &$ 2 \cdot 3^{2} \cdot 53 $& 90 &$ 53^2 $&$ 1 $&$ 1 $&$ 1 $&$ 1 $&$ 1 $&$ 1 $&$ 1 $&$ 1 $ \\ \hline
13 &$ 2 \cdot 3 \cdot 157 $& 78 &$ 3^4 $&$ 1 $&$ 1 $&$ 1 $&$ 1 $&$ 1 $&$ 1 $&$ 1 $&$ 1 $ \\ \hline
15 &$ 2^{5} \cdot 29 $& 64 &$ 29^6 $&$ 29^4 $&$ 29^2 $&$ 1 $&$ 1 $&$ 1 $&$ 1 $&$ 1 $&$ 1 $ \\ \hline
17 &$ 2^{4} \cdot 3 \cdot 19 $& 48 &$ 3^{10} $&$ 3^6 $&$ 3^2 $&$ 1 $&$ 1 $&$ 1 $&$ 1 $&$ 1 $&$ 1 $ \\ \hline
19 &$ 2 \cdot 3 \cdot 149 $& 30 &$ 3^4 $&$ 1 $&$ 1 $&$ 1 $&$ 1 $&$ 1 $&$ 1 $&$ 1 $&$ 1 $ \\ \hline
21 &$ 2 \cdot 19 \cdot 23 $& 10 &$ 23^4 $&$ 1 $&$ 1 $&$ 1 $&$ 1 $&$ 1 $&$ 1 $&$ 1 $&$ 1 $ \\ \hline
23 &$ 2^{2} \cdot 3 \cdot 71 $& 84 &$ 3^6 $&$ 3^2 $&$ 1 $&$ 1 $&$ 1 $&$ 1 $&$ 1 $&$ 1 $&$ 1 $ \\ \hline
25 &$ 2^{2} \cdot 3^{2} \cdot 23 $& 60 &$ 23^3 $&$ 23 $&$ 1 $&$ 1 $&$ 1 $&$ 23 $&$ 1 $&$ 1 $&$ 1 $ \\ \hline
27 &$ 2 \cdot 401 $& 34 &$ 401^2 $&$ 1 $&$ 1 $&$ 1 $&$ 1 $&$ 1 $&$ 1 $&$ 1 $&$ 1 $ \\ \hline
29 &$ 2 \cdot 3^{2} \cdot 43 $& 6 &$ 43^2 $&$ 1 $&$ 1 $&$ 1 $&$ 1 $&$ 1 $&$ 1 $&$ 1 $&$ 1 $ \\ \hline
31 &$ 2^{3} \cdot 3 \cdot 31 $& 72 &$ 3^8 $&$ 3^4 $&$ 1 $&$ 1 $&$ 1 $&$ 1 $&$ 1 $&$ 1 $&$ 1 $ \\ \hline
33 &$ 2^{3} \cdot 89 $& 40 &$ 89^4 $&$ 89^2 $&$ 1 $&$ 1 $&$ 1 $&$ 1 $&$ 1 $&$ 1 $&$ 1 $ \\ \hline
35 &$ 2 \cdot 3 \cdot 113 $& 6 &$ 3^4 $&$ 1 $&$ 1 $&$ 1 $&$ 1 $&$ 1 $&$ 1 $&$ 1 $&$ 1 $ \\ \hline
37 &$ 2 \cdot 3 \cdot 107 $& 66 &$ 3^4 $&$ 1 $&$ 1 $&$ 1 $&$ 1 $&$ 1 $&$ 1 $&$ 1 $&$ 1 $ \\ \hline
39 &$ 2^{2} \cdot 151 $& 28 &$ 151^3 $&$ 151 $&$ 1 $&$ 1 $&$ 1 $&$ 1 $&$ 1 $&$ 1 $&$ 1 $ \\ \hline
41 &$ 2^{2} \cdot 3 \cdot 47 $& 84 &$ 3^6 $&$ 3^2 $&$ 1 $&$ 1 $&$ 1 $&$ 1 $&$ 1 $&$ 1 $&$ 1 $ \\ \hline
43 &$ 2 \cdot 3^{2} \cdot 29 $& 42 &$ 29^2 $&$ 1 $&$ 1 $&$ 1 $&$ 1 $&$ 1 $&$ 1 $&$ 1 $&$ 1 $ \\ \hline
45 &$ 2 \cdot 239 $& 94 &$ 239^2 $&$ 1 $&$ 1 $&$ 1 $&$ 1 $&$ 1 $&$ 1 $&$ 1 $&$ 1 $ \\ \hline
47 &$ 2^{4} \cdot 3^{3} $& 48 &$ 3^{10} $&$ 3^6 $&$ 3^2 $&$ 1 $&$ 1 $&$ 3^3 $&$ 3 $&$ 1 $&$ 1 $ \\ \hline
49 &$ 2^{7} \cdot 3 $& 0 &$ 3^8 $&$ 3^6 $&$ 3^4 $&$ 3^2 $&$ 1 $&$ 1 $&$ 1 $&$ 1 $&$ 1 $ \\ \hline
51 &$ 2 \cdot 167 $& 46 &$ 167^2 $&$ 1 $&$ 1 $&$ 1 $&$ 1 $&$ 1 $&$ 1 $&$ 1 $&$ 1 $ \\ \hline
53 &$ 2 \cdot 3 \cdot 47 $& 90 &$ 3^4 $&$ 1 $&$ 1 $&$ 1 $&$ 1 $&$ 1 $&$ 1 $&$ 1 $&$ 1 $ \\ \hline
55 &$ 2^{2} \cdot 3 \cdot 19 $& 36 &$ 3^6 $&$ 3^2 $&$ 1 $&$ 1 $&$ 1 $&$ 1 $&$ 1 $&$ 1 $&$ 1 $ \\ \hline
57 &$ 2^{2} \cdot 43 $& 76 &$ 43^3 $&$ 43 $&$ 1 $&$ 1 $&$ 1 $&$ 1 $&$ 1 $&$ 1 $&$ 1 $ \\ \hline
59 &$ 2 \cdot 3 \cdot 19 $& 18 &$ 3^4 $&$ 1 $&$ 1 $&$ 1 $&$ 1 $&$ 1 $&$ 1 $&$ 1 $&$ 1 $ \\ \hline
61 &$ 2 \cdot 3^{3} $& 54 &$ 3^4 $&$ 1 $&$ 1 $&$ 1 $&$ 1 $&$ 1 $&$ 1 $&$ 1 $&$ 1 $ \\ \hline
  \end{tabular}
\end{center}
By the theorem of Gross-Zagier, one obtains $J(d_1, d_2)$ by simply takes the product of all the numbers in the fourth column.
For $f_{s}(d_1, d_2)$, one takes product of the entries $\mathfrak{F}(\tfrac{m}{4r^2})$ over all the $m$'s in the table and $r \mid s$ satisfying $m \equiv 4\cdot 19 (d_1 +  d_2 - 1) \bmod{4sr}$. This congruence condition eliminates many entries, especially if $s$ is large. For example, we have
\begin{align*}
  f_{24}(d_1, d_2) &= \lp \mathfrak{F}\lp \tfrac{2^2 3^5}{4} \rp  \mathfrak{F}\lp \tfrac{2^4 3^3}{4\cdot 2^2} \rp  \mathfrak{F}\lp \tfrac{2^2 3^5}{4 \cdot 3^2} \rp \mathfrak{F}\lp \tfrac{2^4 3^3}{4 \cdot 6^2} \rp \rp^{1/2} =
3^4
\end{align*}
by Theorem \ref{thm:intro}.
One can then immediately check that this is the absolute value of the resultant of the minimal polynomials $g_1, g_2$ in \eqref{eq:minpol}.

\bibliography{YZ}{}

\providecommand{\bysame}{\leavevmode\hbox to3em{\hrulefill}\thinspace}
\providecommand{\MR}{\relax\ifhmode\unskip\space\fi MR }
\providecommand{\MRhref}[2]{%
  \href{http://www.ams.org/mathscinet-getitem?mr=#1}{#2}
}
\providecommand{\href}[2]{#2}
\begin{thebibliography}{10}

\bibitem{AGHMP17}
Fabrizio Andreatta, Eyal~Z. Goren, Benjamin Howard, and Keerthi Madapusi~Pera,
  \emph{Height pairings on orthogonal {S}himura varieties}, Compos. Math.
  \textbf{153} (2017), no.~3, 474--534. \MR{3705233}

\bibitem{AM93}
A.~O.~L. Atkin and F.~Morain, \emph{Elliptic curves and primality proving},
  Math. Comp. \textbf{61} (1993), no.~203, 29--68. \MR{1199989}

\bibitem{Birch}
B.~J. Birch, \emph{Weber's class invariants}, Mathematika \textbf{16} (1969),
  283--294. \MR{262206}

\bibitem{Borcherds92}
Richard~E. Borcherds, \emph{Monstrous moonshine and monstrous {L}ie
  superalgebras}, Invent. Math. \textbf{109} (1992), no.~2, 405--444.
  \MR{1172696}

\bibitem{Borcherds98}
\bysame, \emph{Automorphic forms with singularities on {G}rassmannians},
  Invent. Math. \textbf{132} (1998), no.~3, 491--562. \MR{1625724 (99c:11049)}

\bibitem{Br14}
Jan~Hendrik Bruinier, \emph{On the converse theorem for {B}orcherds products},
  J. Algebra \textbf{397} (2014), 315--342. \MR{3119226}

\bibitem{BKY}
Jan~Hendrik Bruinier, Stephen~S. Kudla, and Tonghai Yang, \emph{Special values
  of {G}reen functions at big {CM} points}, Int. Math. Res. Not. IMRN (2012),
  no.~9, 1917--1967. \MR{2920820}

\bibitem{Cox}
David~A. Cox, \emph{Primes of the form {$x^2 + ny^2$}}, second ed., Pure and
  Applied Mathematics (Hoboken), John Wiley \& Sons, Inc., Hoboken, NJ, 2013,
  Fermat, class field theory, and complex multiplication. \MR{3236783}

\bibitem{Gee}
Alice Gee, \emph{Class invariants by {S}himura's reciprocity law}, vol.~11,
  1999, Les XX\`emes Journ\'{e}es Arithm\'{e}tiques (Limoges, 1997),
  pp.~45--72. \MR{1730432}

\bibitem{GZ}
Benedict~H. Gross and Don~B. Zagier, \emph{On singular moduli}, J. Reine Angew.
  Math. \textbf{355} (1985), 191--220. \MR{772491 (86j:11041)}

\bibitem{HYbook}
Benjamin Howard and Tonghai Yang, \emph{Intersections of {H}irzebruch-{Z}agier
  divisors and {CM} cycles}, Lecture Notes in Mathematics, vol. 2041, Springer,
  Heidelberg, 2012. \MR{2951750}

\bibitem{KY10}
Stephen~S. Kudla and TongHai Yang, \emph{Eisenstein series for {SL}(2)}, Sci.
  China Math. \textbf{53} (2010), no.~9, 2275--2316. \MR{2718827}

\bibitem{Li18}
Yingkun Li, \emph{Singular units and isogenies between cm elliptic curves},
  (2018).

\bibitem{Mi07}
Antun Milas, \emph{Characters, supercharacters and {W}eber modular functions},
  J. Reine Angew. Math. \textbf{608} (2007), 35--64. \MR{2339468}

\bibitem{Ros03}
Hans Roskam, \emph{On singular moduli for level 2 and 3},  (2003).

\bibitem{SAGE}
{Sage Developers}, \emph{{S}agemath, the {S}age {M}athematics {S}oftware
  {S}ystem ({V}ersion 8.8)}, 2019, {\tt https://www.sagemath.org}.

\bibitem{Sch08}
Nils~R. Scheithauer, \emph{Generalized {K}ac-{M}oody algebras, automorphic
  forms and {C}onway's group. {II}}, J. Reine Angew. Math. \textbf{625} (2008),
  125--154. \MR{2482218}

\bibitem{Schofer}
Jarad Schofer, \emph{Borcherds forms and generalizations of singular moduli},
  J. Reine Angew. Math. \textbf{629} (2009), 1--36. \MR{2527412}

\bibitem{Weber}
H.~{Weber}, \emph{{Lehrbuch der Algebra. Zweite Auflage. Dritter Band:
  Elliptische Funktionen und algebraische Zahlen. (Zugleich 2. Auflage des
  Werkes: Elliptische Funktionen und algebraische Zahlen.).}}, {Braunschweig:
  F. Vieweg \& Sohn. XVI + 733 S. gr. \(8^\circ.\) (1898,1908).}, 1898.

\bibitem{Yang05}
Tonghai Yang, \emph{C{M} number fields and modular forms}, Pure Appl. Math. Q.
  \textbf{1} (2005), no.~2, part 1, 305--340.

\bibitem{YY16}
Tonghai Yang and Hongbo Yin, \emph{Some non-congruence subgroups and the
  associated modular curves}, J. Number Theory \textbf{161} (2016), 17--48.
  \MR{3435717}

\bibitem{YY19}
\bysame, \emph{Difference of modular functions and their {CM} value
  factorization}, Trans. Amer. Math. Soc. \textbf{371} (2019), no.~5,
  3451--3482. \MR{3896118}

\bibitem{YYY}
Tonghai Yang, Hongbo Yin, and Peng Yu, \emph{The lambda invariants at {CM}
  points}, Int. Math. Res. Not. IMRN (to appear) (2018).

\bibitem{YZ}
Noriko Yui and Don Zagier, \emph{On the singular values of {W}eber modular
  functions}, Math. Comp. \textbf{66} (1997), no.~220, 1645--1662. \MR{1415803}

\end{thebibliography}
\bibliographystyle{amsplain}

\end{document}